\newtheorem{theorem}{Theorem}[section]
\newtheorem{definition}{Definition}[section]
\newtheorem{lemma}{Lemma}[section]
\newtheorem{remark}{Remark}[section]
\newtheorem{assumption}{Assumption}[section]
\newenvironment{proof}[1][Proof]{\noindent\textbf{#1.} }{\ \hfill\rule{0.3em}{0.5em}}
\newcommand*{\R}{\mathbb{R}}
\newcommand*{\N}{\mathbb{N}}
\def\beq#1{\begin{equation}\label{#1}}
	\def\eeq{\end{equation}}
\def\bep{\begin{proof}}
	\def\ep{\end{proof}}
\def\M{\mathcal M}
\def\A{{\mathcal A}}
\def\B{\mathcal B}
\def\D{\mathcal D}
\def\ignore#1{}
\def\X{\mathcal X}
\def\Y{\mathcal Y}
\def\Z{\mathcal Z}
\def\W{\mathcal W}
\def\U{\mathcal U}
\def\V{\mathcal V}
\def\Q{\mathcal Q}
\def\rank{\operatorname {rank}}
\begin{document}

\title{Efficient Recovery of Low Rank Tensor via Triple Nonconvex Nonsmooth Rank Minimization}
%
%
%

\author{Quan~Yu
\thanks{Quan Yu is with School of Mathematics, Hunan University, Hunan 410082, P.R. China. (e-mail:quanyu@tju.edu.cn).}
}

%
%

\markboth{}
{Shell \MakeLowercase{\textit{et al.}}: Bare Demo of IEEEtran.cls for IEEE Journals}
%



\maketitle

\begin{abstract}
A tensor nuclear norm (TNN) based method for solving the tensor recovery problem was recently proposed, and it has achieved state-of-the-art performance. However, it may fail to produce a highly accurate solution since it tends to treats each frontal slice and each rank component of each frontal slice equally. In order to get a recovery with high accuracy, we propose a general and flexible rank relaxation function named double weighted nonconvex nonsmooth rank (DWNNR) relaxation function for efficiently solving the third order tensor recovery problem. The DWNNR relaxation function can be derived from the triple nonconvex nonsmooth rank (TNNR) relaxation function by setting the weight vector to be the hypergradient value of some concave function, thereby adaptively selecting the weight vector.
To accelerate the proposed model, we develop the general inertial smoothing proximal gradient method. Furthermore, we prove that any limit point of the generated subsequence is a critical point. Combining the Kurdyka–Lojasiewicz (KL) property with some milder assumptions, we further give its global convergence guarantee.
Experimental results on a practical tensor completion problem with both synthetic and real data, the results of which demonstrate the efficiency and superior performance of the proposed algorithm.

\end{abstract}

\begin{IEEEkeywords}
Triple nonconvex nonsmooth rank (TNNR) minimization, low rank tensor recovery, tubal nuclear norm (TNN). 
\end{IEEEkeywords}

%
\IEEEpeerreviewmaketitle

\section{Introduction}
\IEEEPARstart{L}{ow} rank tensor recovery problem has gotten a lot of attention during the last decade. Furthermore, low rank tensor can be recovered efficiently using tensor (matrix) factorization \cite{LAAW20,HDXY15,XWW+18,YZH20,YZ22a,ZLLZ18} and tensor rank minimization methods \cite{HTZ+17,LPW19,QBNZ21,JNZH20,ZHZ+20a,QBNZ21b,ZBN20,ZZXC16a}, respectively. In this paper, we consider the tensor rank minimization problem.
However, unlike the matrix rank, there is no unique definition of the tensor rank. To exploit the low-rankness of tensor, various tensor decompositions and corresponding tensor ranks are proposed, such as CANDECOMP/PARAFAC (CP) decomposition \cite{CC70,Kie00,LBF12,HXZZ20}, Tucker decomposition \cite{Tuc66} and tensor singular value decomposition (t-SVD) \cite{ZLLZ18,ZEA14}. 

Among them, the t-SVD based method decomposes a tensor into the product of two orthogonal tensors and one f-diagonal tensor (see Section \uppercase\expandafter{\romannumeral2} for details). With the help of the t-SVD framework, the tensor multi-rank and tubal rank were proposed by Kilmer et al. \cite{KBHH13}. Then, Semerci et al. \cite{SHKM14} developed a new tensor nuclear norm (TNN). As the t-SVD is based on an operator theoretic interpretation of third order tensors as linear operators on the space of oriented matrices, the tubal rank and multi-rank of the tensor describe the inherent low rank structure of the tensor without the loss of information inherent in matricization \cite{KBHH13,KM11}. However, two types of prior knowledge are underutilized during the definition of TNN for further exploiting the low-rankness. To begin with, in the tensor's Fourier transform domain, the low-frequency slices mostly carry the tensor's profile information, whilst the high-frequency slices primarily carry the tensor's detail and noise information. Second, larger single values in each frequency slice mostly include information on clean data, whereas lower singular values primarily contain information with noise.

In recent years, many nonconvex nonsmooth rank relaxation functions have been proposed in order to take advantage of the prior knowledge contained in tensor data. Examples of them include partial sum of tubal nuclear norm \cite{JHZD20}, weighted tensor nuclear norm \cite{SYL18}, tensor truncated nuclear norm \cite{XQLJ18}, weighted t-Schatten-$ p $ norm \cite{LZT20}, and so on. However, these methods suffer from three disadvantages. First, these methods do not make good use of the two types of tensor prior information mentioned above at the same time; second, none of these methods establish global convergence due to the absence of convexity; finally, none of these methods design an efficient acceleration algorithm.

Fortunately, inspired by \cite{ZGQ19}, we can develop an iteratively double reweighted algorithm scheme, which aim to solve the TNNR minimization problem for obtaining nearly unbiased solution. Motivated by the general accelerated technique in \cite{WL19}, we propose a general inertial smoothing proximal gradient method for the TNNR minimization problem with both local and global convergence guarantees. We highlight the main contributions of this paper as follows:
\begin{itemize}
	\item[(1)] As the tensor rank substitute, we offer a broad and flexible double reweighted relaxation function and induce the double reweighted minimization problem, which is actually derived from the TNNR minimization problem. The TNNR minimization problem can adaptively assign weight values for nonconvex nonsmooth rank relaxation functions.
		
	\item[(2)] We propose an accelerated method for the TNNR minimization problem.
	
	\item[(3)] Under some milder assumptions, we achieve the local convergence guarantee and then use the KL property to provide the global convergence guarantee. Experimental results verify the advantages of our method.
\end{itemize}
The outline of this paper is given as follows. We recall the basic tensor notations in Section \uppercase\expandafter{\romannumeral2}. 
In Section \uppercase\expandafter{\romannumeral3}, we give the main results, including the proposed model, algorithm and the convergence analysis of algorithm. Extensive simulation results are reported in Section \uppercase\expandafter{\romannumeral4}.

\section{NOTATIONS AND PRELIMINARIES}

This section recalls some basic knowledge on tensors. We first give the basic notations and then present the tubal rank, t-SVD and TNN.

\subsection{Notations}

For a positive integer $n$,
$[n]:=\{1,2,\ldots, n\}$. Scalars, vectors and matrices are denoted as lowercase letters ($a,b,c,\ldots$), boldface lowercase letters ($\bm{a} ,\bm{b},\bm{c},\ldots$) and uppercase letters ($A,B,C,\ldots $), respectively.
Third order tensors are denoted as ($\mathcal{A},\mathcal{B},\mathcal{C},\ldots$). For a third order tensor $\mathcal{A} \in \R^{n_1\times n_2\times n_3}$, we use the Matlab notations $  \mathcal{A}(:, :, k)  $ to denote its $ k $-th frontal slice, denoted by $ A^{(k)} $ for all $k\in { [n_3]}$. The inner product of two tensors  $ \mathcal{A},\,\mathcal{B} \in {\R^{{n_1} \times {n_2} \times {n_3}}}$ is the sum of products of their entries, i.e.
$$\left\langle {\mathcal{A},\mathcal{B}} \right\rangle  = \sum\limits_{i = 1}^{{n_1}} {\sum\limits_{j = 1}^{{n_2}} {\sum\limits_{k = 1}^{{n_3}} {{\mathcal{A}_{ijk}}{\mathcal{B}_{ijk}}} } }. $$
The Frobenius norm is ${\left\| \mathcal{A} \right\|} = \sqrt {\left\langle {\mathcal{A},\mathcal{A}} \right\rangle } $. 
For a matrix $A$, $ \left\|A\right\|_2  $ represents the largest singular value of $A$. $ \mathscr{D}_r^i\left(x_i\right) $ denotes a diagonal matrix generated by vector $ \bm{x}=\left(x_1,x_2,\ldots,x_r\right) $.

\subsection{$T$-product, tubal rank, t-SVD and TNN}
Discrete Fourier Transformation
(DFT) plays a key role in tensor-tensor product (t-product). For $\A\in \mathbb{R}^{n_1 \times n_2 \times n_3}$, let ${{\bar \A}} \in {{\mathbb C}^{{n_1} \times {n_2} \times {n_3}}}$ be the result of
Discrete Fourier transformation (DFT) of ${{ \A}} \in {{\mathbb R}^{{n_1} \times {n_2} \times {n_3}}}$
along the 3rd dimension. Specifically, let  $F_{n_3}=[f_1,\dots, f_{n_3}]\in \mathbb C^{n_3\times n_3}$, where
$$f_i=\left[ \omega^{0\times (i-1)}; \omega^{1\times (i-1)};\dots; \omega^{(n_3-1)\times (i-1)}\right] \in \mathbb C^{n_3}$$
with $\omega=e^{-\frac{2\pi \mathfrak{b}}{n_3}}$ and $\mathfrak{b}=\sqrt{-1}$. Then $ \bar \A(i,j,:)=F_{n_3}\A(i,j,:) $,
which can be computed by Matlab command ``$\bar \A=fft(\A,[\; ],3)$''. Furthermore, $\A$ can be computed by $\bar \A$ with the inverse DFT $ \A=ifft({\bar \A},[\; ],3) $.

\begin{lemma}\label{lem:v}\cite{RR04}
	Given any real vector $\bm{v} \in \mathbb{R}^{n_3}$, the associated $\bar{\bm{v}}=F_{n_3} \bm{v} \in \mathbb{C}^{n_3}$ satisfies
	$$
	\bar{v}_{1} \in \mathbb{R} \text { and } \operatorname{conj} \left(\bar{v}_{i}\right)=\bar{v}_{n_3-i+2},\; i=2, \ldots,\left\lfloor\frac{n_3+1}{2}\right\rfloor.
	$$
\end{lemma}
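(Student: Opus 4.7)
The plan is to verify the claim directly from the definition of the DFT matrix $F_{n_3}$ given in the preceding paragraph, since both assertions (the realness of $\bar{v}_1$ and the conjugate symmetry) are coordinate-wise statements about a simple exponential sum. First I would write out the entries of $\bar{\bm{v}} = F_{n_3}\bm{v}$ explicitly as
\begin{equation*}
\bar{v}_i = \sum_{j=1}^{n_3} \omega^{(j-1)(i-1)} v_j, \qquad \omega = e^{-\frac{2\pi \mathfrak{b}}{n_3}},
\end{equation*}
reading off column $i$ of $F_{n_3}$ from the definition of $f_i$ in the excerpt.

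For the first assertion, I would substitute $i=1$ to obtain $\bar{v}_1 = \sum_{j=1}^{n_3} v_j$, which is a sum of real numbers and hence lies in $\mathbb{R}$. This is essentially immediate, so I would phrase it in a single line.

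For the conjugate symmetry, the main computation is to take complex conjugates and exploit the primitive root of unity identity $\omega^{n_3}=1$. Using that $v_j\in\mathbb{R}$ and $\overline{\omega}=\omega^{-1}$, I would write
\begin{equation*}
\operatorname{conj}(\bar{v}_i) = \sum_{j=1}^{n_3} \omega^{-(j-1)(i-1)} v_j,
\end{equation*}
and then compute $\bar{v}_{n_3-i+2} = \sum_{j=1}^{n_3} \omega^{(j-1)(n_3-i+1)} v_j$. The key algebraic step is to factor $\omega^{(j-1)(n_3-i+1)} = \omega^{(j-1)n_3}\cdot\omega^{-(j-1)(i-1)} = \omega^{-(j-1)(i-1)}$, which makes the two expressions termwise identical. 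The range $i=2,\dots,\lfloor (n_3+1)/2\rfloor$ simply guarantees that the index $n_3-i+2$ stays within $\{2,\ldots,n_3\}$, so no special boundary case is needed.

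I do not expect any substantive obstacle here; the only point requiring a bit of care is the index bookkeeping (verifying that $n_3-i+2$ remains a valid index and that the shift by $n_3$ in the exponent is absorbed by $\omega^{n_3}=1$). Because the statement is a well-known property of the real DFT, I would keep the argument to a short three-line computation citing only the definition of $F_{n_3}$ given in the text.
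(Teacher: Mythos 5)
Your computation is correct: with $(F_{n_3})_{ij}=\omega^{(i-1)(j-1)}$ the identity $\overline{\omega}=\omega^{-1}$, the realness of the $v_j$, and $\omega^{n_3}=1$ give exactly $\bar{v}_1=\sum_j v_j\in\mathbb{R}$ and $\operatorname{conj}(\bar{v}_i)=\bar{v}_{n_3-i+2}$, and your index bookkeeping for the range $i=2,\ldots,\lfloor (n_3+1)/2\rfloor$ is fine. Note that the paper itself supplies no proof of this lemma --- it is merely quoted from the cited reference --- so there is nothing to compare against; your direct verification is the standard argument and is complete.
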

By using Lemma \ref{lem:v}, the frontal slices of $ \bar\A $ have the following properties:
\begin{equation}\label{conj}
	\left\{\begin{array}{l}
		\bar{{A}}^{(1)} \in \mathbb{R}^{n_{1} \times n_{2}}, \\
		\operatorname{conj} \left(\bar{{A}}^{(i)}\right)=\bar{{A}}^{\left(n_{3}-i+2\right)},\; i=2, \ldots,\left\lfloor\frac{n_{3}+1}{2}\right\rfloor.
	\end{array}\right.	
\end{equation}
For $\A\in \mathbb{R}^{n_1\times n_2\times n_3}$, we define matrix ${{\bar A}} \in {{\mathbb C}^{{n_1}{n_3} \times {n_2}{n_3}}}$ as
\begin{equation}\label{bdiag}
	{{\bar A}} = bdia{g}(\bar {{\mathcal{A}}} ) \hfill \\
	= \left[ {\begin{array}{*{20}{c}}
			{\bar A^{(1)}}&{}&{}&{} \\
			{}&{\bar A^{(2)}}&{}&{} \\
			{}&{}& \ddots &{} \\
			{}&{}&{}&{\bar A^{({n_3})}}
	\end{array}} \right]. \end{equation}
Here, $ bdiag(\cdot) $ is an operator which maps the tensor $ {{\bar {\mathcal A}}} $ to the block diagonal matrix $ \bar A$. The block circulant matrix $bcirc({\A}) \in {{\mathbb R}^{{n_1}{n_3} \times {n_2}{n_3}}}$ of $\A$ is defined  as
$$bcirc({\A}) = \left[ {\begin{array}{*{20}{c}}
		{A^{(1)}}&{A^{(n_3)}}& \cdots &{A^{(2)}}\\
		{A^{(2)}}&{A^{(1)}}& \cdots &{A^{(3)}}\\
		\vdots & \vdots & \ddots & \vdots \\
		{A^{(n_3)}}&{A^{({n_3} - 1)}}& \cdots &{A^{(1)}}
\end{array}} \right].$$
Based on these notations, the $T$-product is presented as follows.
\begin{definition}\label{def:T-pro}\textbf{(T-product)} \cite{KM11}
	For $\A\in \mathbb{R}^{n_1\times r\times n_3}$ and $\B\in \mathbb R^{r\times n_2\times n_3}$, define
	$$\A\ast\B:=fold\left(bcirc(\A)\ \cdot unfold(\B)\right) \in \mathbb{R}^{n_1\times n_2\times n_3}.$$
	Here
	$$ unfold(\B) = \left[B^{(1)};B^{(2)}; \ldots ;B^{(n_3)}\right],$$
	and its inverse operator ``fold" is defined by $$fold(unfold(\B)) = \B.$$
\end{definition}

We will now present the definition of tubal rank. Before then, we need to introduce some other concepts. 
\begin{definition}\textbf{(F-diagonal tensor)} \cite{KM11}
	If each of a tensor's frontal slices is a diagonal matrix, the tensor is denoted $ f $-diagonal.
\end{definition}
\begin{definition}\textbf{(Conjugate transpose)} \cite{KM11}
	The conjugate transpose of a tensor $\mathcal{A} \in \mathbb{R}^{n_{1} \times n_{2} \times n_{3}}$, denoted as $\mathcal{A}^{*}$, is the tensor obtained by conjugate transposing each of the frontal slices and then reversing the order of transposed frontal slices 2 through $n_{3}$.
\end{definition}
\begin{definition}\textbf{(Identity tensor)} \cite{KM11}
	The identity tensor $ \mathcal{I} \in \mathbb{R}^{n \times n \times n_{3}} $ is a tensor with the identity matrix as its first frontal slice and all other frontal slices being zeros.
\end{definition}
\begin{definition}\textbf{(Tensor inverse)} \cite{KM11}
	A tensor $ \A\in \mathbb{R}^{n \times n \times n_{3}} $ has an inverse $ \B $ provided that
	$ \A*\B=\mathcal{I} $ and $ \B*\A=\mathcal{I} $.
\end{definition}
\begin{definition}\textbf{(Orthogonal tensor)} \cite{KM11}
	A tensor $\mathcal{P} \in$ $\mathbb{R}^{n \times n \times n_{3}}$ is orthogonal if it fulfills the condition $\mathcal{P}^{*} * \mathcal{P}=\mathcal{P} * \mathcal{P}^{*}=\mathcal{I}.$
\end{definition}
\begin{theorem}\textbf{(T-SVD)} \cite{KM11}
	A tensor $\mathcal{A} \in \mathbb{R}^{n_{1} \times n_{2} \times n_{3}}$ can be factored as
	$$
	\mathcal{A}=\mathcal{U} * \mathcal{S} * \mathcal{V}^{*},
	$$
	where $\mathcal{U} \in \mathbb{R}^{n_{1} \times n_{1} \times n_{3}}$ and $\mathcal{V} \in \mathbb{R}^{n_{2} \times n_{2} \times n_{3}}$ are orthogonal tensors, and $\mathcal{S} \in \mathbb{R}^{n_{1} \times n_{2} \times n_{3}}$ is a $ f $-diagonal tensor.
\end{theorem}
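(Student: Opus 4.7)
The plan is to reduce the existence of a t-SVD to standard matrix SVDs in the Fourier domain. The key observation is that the t-product is block-diagonalized by the DFT along the third mode: $\A * \B = \C$ if and only if $\bar A\,\bar B = \bar C$ in the sense of \reff{bdiag}, equivalently $\bar A^{(k)} \bar B^{(k)} = \bar C^{(k)}$ for every $k\in[n_3]$. Consequently, to construct $\U, \mathcal{S}, \V$ with $\U * \mathcal{S} * \V^{*} = \A$, it suffices to construct, frequency by frequency, matrix SVDs $\bar A^{(k)} = \bar U^{(k)} \bar S^{(k)} (\bar V^{(k)})^{*}$ in such a way that the assembled tensors $\bar\U,\bar{\mathcal{S}},\bar\V$ satisfy the conjugate symmetry condition \reff{conj}, so that their inverse DFTs along the third mode are real tensors.

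Concretely, I would proceed as follows. First, compute $\bar\A = \operatorname{fft}(\A,[\,],3)$ and its frontal slices $\bar A^{(k)}$. By Lemma \ref{lem:v} and \reff{conj}, $\bar A^{(1)}$ is real, and $\operatorname{conj}(\bar A^{(k)}) = \bar A^{(n_3-k+2)}$ for $k=2,\ldots,\lfloor (n_3+1)/2\rfloor$. Second, compute a real SVD $\bar A^{(1)} = \bar U^{(1)} \bar S^{(1)} (\bar V^{(1)})^{\top}$, and, for each $k$ in the range $2,\ldots,\lfloor (n_3+1)/2\rfloor$ (together with $k=n_3/2+1$ when $n_3$ is even, in which slice $\bar A^{(k)}$ is again real and admits a real SVD), compute any matrix SVD $\bar A^{(k)} = \bar U^{(k)} \bar S^{(k)} (\bar V^{(k)})^{*}$. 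Third, for the remaining indices $k=\lfloor (n_3+1)/2\rfloor+1,\ldots,n_3$, set
\begin{equation*}
\bar U^{(k)} := \operatorname{conj}\bigl(\bar U^{(n_3-k+2)}\bigr),\quad \bar S^{(k)} := \bar S^{(n_3-k+2)},\quad \bar V^{(k)} := \operatorname{conj}\bigl(\bar V^{(n_3-k+2)}\bigr).
\end{equation*}
Conjugating the identity $\bar A^{(n_3-k+2)} = \bar U^{(n_3-k+2)} \bar S^{(n_3-k+2)} (\bar V^{(n_3-k+2)})^{*}$ and using $\operatorname{conj}(\bar A^{(n_3-k+2)}) = \bar A^{(k)}$ shows this yields a valid SVD of $\bar A^{(k)}$.

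Fourth, assemble $\bar\U,\bar{\mathcal{S}},\bar\V$ from these frontal slices. By construction each of them satisfies \reff{conj}, so the inverse DFT along the third mode produces real tensors $\U\in\R^{n_1\times n_1\times n_3}$, $\mathcal{S}\in\R^{n_1\times n_2\times n_3}$, $\V\in\R^{n_2\times n_2\times n_3}$. Finally, verify the three required properties: orthogonality of $\U$ and $\V$ follows because $(\bar U^{(k)})^{*}\bar U^{(k)} = I$ and $(\bar V^{(k)})^{*}\bar V^{(k)} = I$ for every $k$, which translated back to the physical domain via the block-diagonalization equivalence gives $\U^{*}*\U = \mathcal{I}$ and $\V^{*}*\V = \mathcal{I}$; f-diagonality of $\mathcal{S}$ follows because every $\bar S^{(k)}$ is diagonal and diagonality is preserved under the inverse DFT applied tubewise; and the factorization $\U * \mathcal{S} * \V^{*} = \A$ follows because it holds block-diagonally in the Fourier domain and the DFT along the third mode is a bijection.

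The only real obstacle is the reality issue in step three: matrix SVDs are not unique (phase ambiguity in the singular vectors), so one must carefully couple the choices at conjugate-paired frequency indices $k$ and $n_3-k+2$ to preserve \reff{conj}. Defining the second half of the spectrum by entrywise conjugation of the first half, as above, handles this cleanly; all remaining computations are routine verifications using the equivalence between t-products and block-diagonal matrix products.
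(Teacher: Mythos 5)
The paper states this theorem as a cited result from \cite{KM11} and supplies no proof of its own, so there is nothing to diverge from: your argument --- DFT along the third mode, per-frontal-slice matrix SVDs, conjugate-coupling the paired frequency indices (plus the real middle slice when $n_3$ is even) so that $\bar\U$, $\bar{\mathcal{S}}$, $\bar\V$ satisfy \reff{conj}, then inverse DFT --- is exactly the standard construction underlying the cited reference, and it is correct. You also correctly isolated the one genuinely delicate point, the phase ambiguity of the slice-wise SVDs, and resolved it by defining the second half of the spectrum through entrywise conjugation of the first half.
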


Tensor multi-rank, tubal rank and tubal nuclear norm are now introduced.
\begin{definition}\label{def:tubal rank}{\bfseries (Tensor multi-rank and tubal rank)} \cite{KBHH13}
	For tensor $\A \in {{\mathbb R}^{{n_1} \times {n_2} \times {n_3}}}$, let $r_k=\rank\left(\bar A^{(k)}\right)$ for all $k\in {[n_3]}$.
	Then multi-rank of $\A$ is defined as $\rank_{m}(\A)=(r_1,\ldots,r_{n_3})$. The tensor tubal
	rank is defined as $ \rank_t(\A)=\max\left\lbrace r_k|k\in[n_3]\right\rbrace  $.
\end{definition}

\begin{definition}\label{def:nuclear norm}{\bfseries (Tubal nuclear norm)} \cite{LFC20} 
	The tubal nuclear norm $\|\mathcal{A}\|_{*}$ of a tensor $\mathcal{A} \in \mathbb{R}^{n_{1} \times n_{2} \times n_{3}}$ is defined as the sum of the singular values of all frontal slices of $\bar{\mathcal{A}}$, i.e., $\|\mathcal{A}\|_{*}=\frac{1}{n_{3}} \sum_{i=1}^{n_{3}}\left\|\bar{{A}}^{(i)}\right\|_{*}$.
\end{definition}

\section{Triple Nonconvex Nonsmooth Rank Minimization}

\subsection{Problem Formulation}
Matrix rank minimization problem can be expressed as
\begin{equation}\label{matrix}
	\min\limits_{X\in \mathbb{R}^{n_1 \times n_2}}\lambda\operatorname{rank}(X)+g(X),
\end{equation}
where $ \lambda $ is a positive parameter and $ g: \mathbb{R}^{n_1 \times n_2} \rightarrow[0,+\infty) $ is a differential loss function, which may be nonconvex.
Due to the discontinuous and nonconvex nature of the rank function, the above problem is generally NP-hard. 
Numerous relaxation functions, including convex nuclear norms \cite{PLYT18} and nonconvex relaxations \cite{YZ22,GYZC22,ZQZ20,SWKT19}, have been widely used to replace the rank function of the matrix. In \cite{ZGQ19}, the double singular values function, a continuous relaxation of the rank function, was adopted in matrix rank minimization problem with some advantages. The weighted singular values function of matrix $ X\in \mathbb{R}^{n_1 \times n_2} $ is defined as
$$
\rho_\beta(\sigma(X))=\sum_{i=1}^{r}\beta_i\rho\left(\sigma_{i}\right), \quad r=\min \left\lbrace n_1, n_2\right\rbrace,
$$
where $\rho(\cdot): \mathbb{R}^{+} \rightarrow \mathbb{R}^{+}$ is a differentiable concave function on $[0,+\infty]$, $ \beta=\left(\beta_1,\beta_2,\ldots,\beta_r\right)  $ is a weighting vector with $ 0\le\beta_1\le\beta_2\le\ldots\le\beta_r $, and $\sigma(X)=\left(\sigma_{1}, \sigma_{2}, \ldots, \sigma_{r}\right)$ is a singular values vector with $\sigma_{1} \geq \sigma_{2} \geq \ldots \geq \sigma_{r} \geq 0$.

Motivated by these, we consider the double weighted singular values function of tensor $ \X\in \mathbb{R}^{n_1 \times n_2 \times n_3} $:
\begin{equation}\label{eq:norm}
\left\|\X\right\|_{\rho_{\alpha}^\beta}=\sum_{k=1}^{n_3}\alpha_k\rho_\beta\left(\sigma^k\right)=\sum_{k=1}^{n_3}\sum_{i=1}^{r}\alpha_k\beta_i^k\rho\left(\sigma_i^k\right),	
\end{equation}
where $ \alpha_k>0 $ for $ k \in [n_3] $, $ \sigma^k=\sigma\left(\bar X^{(k)}\right) $ and $ \sigma_i^k=\sigma_i\left(\bar X^{(k)} \right)  $.
When acting on each singular value of the low rank tensor, $ \left\|\X\right\|_{\rho_{\alpha}^\beta} $ is a flexible rank relaxation function with different choices of $ \alpha $, $ \beta $ and $ \rho\left(\cdot\right) $, see Table \ref{tab:weighted}. 

\begin{table*}[htbp]
\centering
\caption{Flexible rank relaxation function with different choices of $ \alpha $, $ \beta $ and $ \rho\left(\cdot\right) $.}\label{tab:weighted}

\begin{tabular}{c|c|c|c}
	\hline
$ \alpha $	& $ \beta $ & $ \rho\left(\cdot\right) $ & $ \left\|\X\right\|_{\rho_{\alpha}^\beta} $ \\
	\hline
$ \alpha_k=1/n_3 $	& $ \beta_i^k=1 $ &$ \rho\left(\sigma_i^k\right)=\sigma_i^k $ & TNN \cite{LFC20} \\
	\hline
$ \alpha_k =1  $	& $ \beta_i^k=\left\{ \begin{array}{l}
	0,\;i = 1, \ldots ,N,\\
	1,\;i = N + 1, \ldots ,r.
\end{array} \right. $ &$ \rho\left(\sigma_i^k\right)=\sigma_i^k $ & PSTNN \cite{JHZD20} \\
	\hline
$ \alpha_k=1/n_3 $	& $\beta_i^k=\frac{1}{\sigma_i^k+\varepsilon}$ & $ \rho\left(\sigma_i^k\right)=\sigma_i^k $ & Weighted Tensor Nuclear Norm \cite{SYL18} \\
	\hline
$ \alpha_k=\left\{ \begin{array}{l}
	1,\;k = 1,\\
	0,\;i = 2, \ldots ,r.
\end{array} \right.$	& $ \beta_i^k=\left\{ \begin{array}{l}
0,\;i = 1, \ldots ,N,\\
1,\;i = N + 1, \ldots ,r.
\end{array} \right. $ & $ \rho\left(\sigma_i^k\right)=\sigma_i^k $ & Tensor Truncated Nuclear Norm \cite{XQLJ18}\\
	\hline
$ \alpha_k=1/n_3 $	&    $\beta_i^k=\frac{c}{\sqrt[1/2p]{\max\left(0,(\sigma_i^k)^2-(\sigma_r^k)^2\right)}+\varepsilon}$ & $ \rho\left(\sigma_i^k\right)=\left(\sigma_i^k\right)^p $ & Weighted t-Schatten-$ p $ Norm \cite{LZT20} \\
	\hline
\end{tabular}
\end{table*}

\subsection{TNNR Minimization Problem}
Based on problem \eqref{matrix} and \eqref{eq:norm}, we introduce a general rank relaxation minimization problem:
\begin{equation}\label{tensor}
	\min\limits_{\X\in \mathbb{R}^{n_1 \times n_2\times n_3}}F_\alpha^\beta\left(\X\right):=\lambda\left\|\X\right\|_{\rho_{\alpha}^\beta}+f(\X),
\end{equation}
where $ f: \mathbb{R}^{n_1 \times n_2 \times n_3} \rightarrow[0,+\infty) $ is a differential loss function, which may be nonconvex. Combining problem \eqref{tensor},
reweighted strategies \cite{XGL16,LTYL14} and supergradient concepts \cite{Bor01}, the following TNNR minimization problem comes into existence:
\begin{equation}\label{Q:TNNR}
	\min\limits_{\X}F\left(\X\right):= \lambda\sum_{k=1}^{n_3}\rho_2\left(\sum_{i=1}^{r} \rho_1\left(\rho\left(\sigma_i^k\right)\right)\right)+f(\X).
\end{equation}
Without loss of generality, we choose $ \rho\left(\cdot\right)=\rho_1\left(\cdot\right)=\rho_2\left(\cdot\right) $. 
Next, we present the relationship between \eqref{tensor} and \eqref{Q:TNNR}. Without specific explanation, Assumption \ref{ass:rho} is assumed throughout the paper.
\begin{assumption}\label{ass:rho}
	The penalty function $\rho(\cdot): \mathbb{R}^{+} \rightarrow \mathbb{R}^{+}$ is a differentiable concave function with a $ L_g $-Lipschitz continuous gradient, i.e., for any $ t,\,s $, $$ \left|\rho^{\prime}\left(s\right)-\rho^{\prime}\left(t\right)\right| \leq L_{g}\left|s-t\right|, $$
	and $ \rho^{\prime}\left(t\right)>0 $ for any $ t\in\left[ 0,+\infty\right) $.
\end{assumption}

\begin{lemma}\cite{ZGQ19}
If the function $ \rho(\cdot) $ satisfies Assumption \ref{ass:rho}, then for any $ t $ and $ s $, we have:
\begin{itemize}\label{lem:gradient}
	\item $\rho(t) \leq \rho(s)+\rho'(s)(t-s)$;
	\item $\rho\left(\rho\left(t\right)\right) \leq \rho\left(\rho\left(s\right)\right)+ \rho'\left(\rho\left(s\right)\right)\left(\rho\left(t\right)-\rho\left(s\right)\right).$
\end{itemize}
\end{lemma}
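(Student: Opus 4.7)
The plan is to establish both inequalities as immediate consequences of the concavity of $\rho$ on $\mathbb{R}^+$, with the second being nothing more than the first applied to transformed arguments. For the first inequality, I would invoke the standard supporting-hyperplane characterization of differentiable concave functions: any such function lies below each of its tangent lines. Starting from the definition of concavity, $\rho(\lambda t + (1-\lambda) s) \geq \lambda \rho(t) + (1-\lambda) \rho(s)$ for $\lambda \in (0,1)$, I would rearrange to
\[
\frac{\rho(s+\lambda(t-s))-\rho(s)}{\lambda} \geq \rho(t) - \rho(s),
\]
and then pass to the limit $\lambda \to 0^+$. Differentiability of $\rho$ (guaranteed by Assumption \ref{ass:rho}) makes the left-hand side converge to the directional derivative $\rho'(s)(t-s)$, which yields $\rho(t) \leq \rho(s) + \rho'(s)(t-s)$ as required.

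For the second inequality, I would apply the first inequality verbatim to the pair $(\rho(t), \rho(s))$ in place of $(t, s)$. This substitution is legitimate because Assumption \ref{ass:rho} posits that $\rho$ maps $\mathbb{R}^+$ into $\mathbb{R}^+$, so $\rho(t)$ and $\rho(s)$ lie in the domain on which the tangent-line inequality has just been proved. Direct substitution then gives
\[
\rho(\rho(t)) \leq \rho(\rho(s)) + \rho'(\rho(s))\bigl(\rho(t)-\rho(s)\bigr),
\]
which is the stated bound.

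There is essentially no real obstacle here: the Lipschitz-gradient hypothesis and the positivity $\rho'(t)>0$ from Assumption \ref{ass:rho} are not actually required for these two inequalities — only concavity and differentiability are used — so the proof reduces to a textbook application of the tangent-line property. The only mildly subtle point is recognizing that, because $\rho$ preserves $\mathbb{R}^+$, the composition $\rho\circ\rho$ is well-defined and the first inequality can be reused without modification on the transformed arguments, which is precisely why the lemma packages the two statements together.
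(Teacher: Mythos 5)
Your proof is correct: the first inequality is the standard tangent-line (supergradient) characterization of a differentiable concave function, and the second follows by applying it to the arguments $\rho(t),\rho(s)$, which is legitimate since $\rho$ maps $\mathbb{R}^{+}$ into $\mathbb{R}^{+}$. The paper itself gives no proof and simply cites \cite{ZGQ19}, and your argument is exactly the standard one that reference uses, so there is nothing to compare beyond noting that you are right that only concavity and differentiability are needed here.
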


From Lemma \ref{lem:gradient}, we know that when we set $ \alpha_k =  \rho'\left(\sum_{i=1}^{r} \rho\left(\rho\left(\sigma_i^k\right)\right)\right) $ and $ \beta_i^k=\rho'\left(\rho\left(\sigma_i^k\right)\right) $ in $ \left\|\X\right\|_{\rho_{\alpha}^\beta} $, we have
\begin{equation*}
	\min F_\alpha^\beta\left(\X\right) \ge \min F\left(\X\right).
\end{equation*}
From the above formula, we establish the relationship between problem \eqref{tensor} and problem \eqref{Q:TNNR}.

\begin{lemma}\cite{ZGQ19}\label{lem:SVF}
If $\sigma_{1}^k \geq \sigma_{2}^k \geq \ldots \geq \sigma_{r}^k \geq 0$ for $ k \in [n_3] $, then we have
$$
0 \leq \beta_{1}^k \leq \beta_{2}^k \leq \ldots \leq \beta_{r}^k.
$$
\end{lemma}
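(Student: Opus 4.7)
The plan is to read $\beta_i^k = \rho'(\rho(\sigma_i^k))$ as the composition of two monotone maps applied to the singular value sequence, and then to exploit Assumption \ref{ass:rho} twice: once to control the inner $\rho$, and once to control the outer $\rho'$.

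More precisely, I would proceed in three short steps. First, since $\rho'(t) > 0$ for all $t \in [0,+\infty)$, the function $\rho$ is strictly increasing on $[0,+\infty)$, so the assumption $\sigma_1^k \geq \sigma_2^k \geq \cdots \geq \sigma_r^k \geq 0$ immediately gives
\begin{equation*}
\rho(\sigma_1^k) \geq \rho(\sigma_2^k) \geq \cdots \geq \rho(\sigma_r^k) \geq 0.
\end{equation*}
Second, because $\rho$ is concave and differentiable on $[0,+\infty)$, its derivative $\rho'$ is non-increasing on $[0,+\infty)$. Applying $\rho'$ to the chain above therefore reverses the inequalities and yields
\begin{equation*}
\rho'(\rho(\sigma_1^k)) \leq \rho'(\rho(\sigma_2^k)) \leq \cdots \leq \rho'(\rho(\sigma_r^k)),
\end{equation*}
which is exactly $\beta_1^k \leq \beta_2^k \leq \cdots \leq \beta_r^k$. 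Third, positivity of $\beta_1^k$ (and hence of all of them) follows from $\rho'(\rho(\sigma_1^k)) > 0$, using again $\rho' > 0$ on $[0,+\infty)$ together with $\rho(\sigma_1^k) \in [0,+\infty)$; this gives the bound $0 \leq \beta_1^k$ demanded by the statement.

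There is essentially no obstacle here: the argument is just "monotone composed with antitone reverses order," combined with strict positivity of $\rho'$. The only small point worth flagging is to make sure the argument uses the domain hypothesis $\rho : \mathbb{R}^+ \to \mathbb{R}^+$ in Assumption \ref{ass:rho}, so that $\rho(\sigma_i^k)$ lies in the domain where $\rho'$ is defined and non-increasing. The Lipschitz bound on $\rho'$ plays no role in this particular lemma; it will be used elsewhere in the convergence analysis.
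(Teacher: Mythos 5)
Your argument is correct: $\rho'>0$ makes $\rho$ nondecreasing, concavity makes $\rho'$ nonincreasing, and composing the two with the ordered singular values gives exactly the reversed ordering $0\leq\beta_1^k\leq\cdots\leq\beta_r^k$ for $\beta_i^k=\rho'(\rho(\sigma_i^k))$. The paper itself offers no proof (it cites \cite{ZGQ19}), but your reasoning is the standard argument behind that cited result, so there is nothing substantive to contrast.
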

According to Lemma \ref{lem:SVF}, the insignificant singular values function have larger weights, otherwise inverse. Therefore, $ \left\|\X\right\|_{\rho_{\alpha}^\beta} $ can be made to relax the tensor multi-tubal rank\footnote{Similar conclusions can be extended to Tucker rank and tensor train (TT) rank in parallel.}.

\subsection{General Inertial Smoothing Proximal Gradient Algorithm}
Motivated by the efficiency of inertial method, we consider the following general inertial method of $ F_\alpha^\beta\left(\cdot\right) $, i.e.,
\begin{equation}\label{X}
	\left\{\begin{array}{l}
		\Y^{t}=\X^{t}+\theta_{1}^t\left(\X^{t}-\X^{t-1}\right), \\
		\Z^{t}=\X^{t}+\theta_2^{t}\left(\X^{t}-\X^{t-1}\right), \\
		\X^{t+1}=\mathop{\operatorname{argmin}}\limits_{\X} Q\left(\X,\Y^{t},\Z^{t}\right), 
	\end{array}\right.
\end{equation}
where $ Q\left(\X,\Y^{t},\Z^{t}\right) =\lambda\left\|\X\right\|_{\rho_{\alpha^t}^{\beta^t}}+\left\langle \X-\Y^{t}, \nabla f\left(\Z^{t}\right)\right\rangle+\frac{\mu^t}{2}\left\|\X-\Y^t\right\|^{2} $, $ \theta_1^t $, $ \theta_2^t $, $ \mu^t $ are different parameters and certain conditions are required for them.
To solve \eqref{X}, we introduce the following Theorem.
\begin{theorem}\label{thm:tSVD}
	Let $\rho(\cdot): \mathbb{R}^{+} \rightarrow \mathbb{R}^{+}$ be a function such that the proximal operator denoted by $\operatorname{Prox}_{\rho}(\cdot)$ is monotone. For any $\eta>0$, let $\Y=\U*{\mathcal S}*\V^*$ be the t-SVD of $\Y\in\mathbb{R}^{n_1 \times n_2\times n_3}$, all weighting values satisfy $ 0 \leq \beta_{1}^k \leq \beta_{2}^k \leq \ldots \leq \beta_{r}^k $ and $ \alpha_k>0 $ for $ k \in [n_3] $. Then, a minimizer to 
	\begin{equation*}
		\mathop{\operatorname{argmin}}\limits_{\X} \eta\left\|\X\right\|_{\rho_{\alpha}^\beta}+\frac{1}{2}\left\|\X-\Y\right\|^{2}
	\end{equation*}
is given by	
	$$ \X^\star=\U*\mathcal{S^\star} *\V^*, $$
where $ \mathcal{S^\star} $ satisfies $\bar\delta_{i,k}^\star=\mathcal{\bar S^\star}(i,i,k) \geq \mathcal{\bar S^\star}(j,j,k)=\bar\delta_{j,k}^\star$ for $1 \leq i \leq j \leq r,\,k \in [n_3]$, and then $ \bar\delta_{i,k}^\star $ is obtained by solving the problem as follows:
\begin{equation*}
	\bar\delta_{i,k}^\star \in \operatorname{Prox}_{\rho}\left(\sigma_i^k\right)=\mathop{\operatorname{argmin}}\limits_{\bar\delta_{i,k}\geq 0} \frac{\eta\alpha_k\beta_i^k }{n_3}\rho\left(\bar\delta_{i,k}\right)+\frac{1}{2}\left(\bar\delta_{i,k}-\sigma_{i}^k\right)^{2},
\end{equation*}
where $ \sigma_{i}^k=\mathcal{\bar S}(i,i,k) $.
\end{theorem}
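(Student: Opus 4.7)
The plan is to translate the problem into the Fourier domain so that it decouples across frontal slices, then on each slice reduce to a scalar problem via a von~Neumann trace inequality.

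First I would exploit the Parseval-type identity $\|\X-\Y\|^{2}=\tfrac{1}{n_{3}}\sum_{k=1}^{n_{3}}\|\bar X^{(k)}-\bar Y^{(k)}\|_{F}^{2}$ together with the observation that $\|\X\|_{\rho_{\alpha}^{\beta}}=\sum_{k}\alpha_{k}\sum_{i}\beta_{i}^{k}\rho(\sigma_{i}^{k})$ is already expressed slice-wise in the Fourier domain. The overall objective is therefore additive over $k$, and the conjugate-symmetry constraint in \eqref{conj} only couples index $k$ with $n_{3}-k+2$; since the weights $\alpha_{k},\beta_{i}^{k}$ and the input $\bar Y^{(k)}$ respect that pairing, solving each $\bar X^{(k)}$ subproblem independently will automatically produce a real-valued $\X^{\star}$ upon inverse DFT.

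For each fixed $k$, I would expand the fidelity as $\|\bar X^{(k)}\|_{F}^{2}-2\,\mathrm{Re}\,\langle \bar X^{(k)},\bar Y^{(k)}\rangle+\|\bar Y^{(k)}\|_{F}^{2}$ and invoke von~Neumann's trace inequality $\mathrm{Re}\,\langle \bar X^{(k)},\bar Y^{(k)}\rangle\le\sum_{i}\sigma_{i}(\bar X^{(k)})\,\sigma_{i}(\bar Y^{(k)})$, with equality precisely when $\bar X^{(k)}$ and $\bar Y^{(k)}$ share left and right singular-vector bases. Because both $\|\bar X^{(k)}\|_{F}^{2}$ and the regularizer depend only on the singular values of $\bar X^{(k)}$, any minimizer can be taken of the form $U^{(k)}\mathscr{D}_{r}^{i}(\bar\delta_{i,k})(V^{(k)})^{*}$, where $U^{(k)},V^{(k)}$ are the singular-vector factors of $\bar Y^{(k)}$. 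This reduces the slice-wise problem to the separable scalar problem stated in the theorem.

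With this reduction in hand, each scalar subproblem yields $\bar\delta_{i,k}^{\star}$ as a prox step, and it only remains to verify the ordering $\bar\delta_{1,k}^{\star}\ge\cdots\ge\bar\delta_{r,k}^{\star}$ so that the resulting diagonal really is a valid singular-value diagonal. Since $\sigma_{1}^{k}\ge\cdots\ge\sigma_{r}^{k}$ by SVD convention and $\beta_{1}^{k}\le\cdots\le\beta_{r}^{k}$ by hypothesis, the scalar input to the prox is nonincreasing in $i$ while the regularization weight is nondecreasing; combined with the assumed monotonicity of $\operatorname{Prox}_{\rho}(\cdot)$, this forces $i\mapsto\bar\delta_{i,k}^{\star}$ to be nonincreasing. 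Packaging the $\bar\delta_{i,k}^{\star}$'s into an $f$-diagonal tensor $\mathcal{S}^{\star}$ in the Fourier domain, applying the inverse DFT, and using the t-SVD $\Y=\U*\mathcal{S}*\V^{*}$ finally yields $\X^{\star}=\U*\mathcal{S}^{\star}*\V^{*}$.

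The main obstacle will be the second step: justifying that, even with a non-convex $\rho$ and index-dependent weights $\beta_{i}^{k}$, alignment of $\bar X^{(k)}$'s singular vectors with those of $\bar Y^{(k)}$ is indeed optimal. The standard von~Neumann argument is clean when the penalty is a symmetric function of the singular values (e.g.\ ordinary nuclear norm); here one must further exploit that the weights are sorted in the opposite order to the singular values, so that the matched pairing $(\sigma_{i}(\bar X^{(k)}),\sigma_{i}(\bar Y^{(k)}))$ both saturates the trace inequality and assigns the heaviest weight to the smallest singular value. This consistency is exactly what the monotonicity hypothesis on $\operatorname{Prox}_{\rho}$ is designed to turn into a rigorous statement, and it closes the loop with the ordering argument in step three.
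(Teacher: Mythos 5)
Your outline is correct, and it is essentially the argument the paper itself invokes: the paper offers no written proof of Theorem~\ref{thm:tSVD}, stating only that it ``can be easily proved by [ZGQ19, Proposition~1]'', which is precisely the matrix-level von~Neumann/prox-monotonicity result you reconstruct. What you add beyond the citation is the tensor-to-matrix reduction: the Parseval identity $\|\X-\Y\|^{2}=\tfrac{1}{n_3}\sum_k\|\bar X^{(k)}-\bar Y^{(k)}\|_F^{2}$ (Lemma~\ref{lem:equ}), the slice-wise decoupling, and the remark that conjugate symmetry \eqref{conj} must be preserved so that the inverse DFT returns a real tensor. Your handling of the two delicate points is right: the penalty is unitarily invariant because it depends only on the sorted singular values, so von~Neumann alignment applies even though the weights $\beta_i^k$ break permutation symmetry; and the ordering $\bar\delta_{1,k}^\star\ge\cdots\ge\bar\delta_{r,k}^\star$ follows from solving the separable problems without the ordering constraint and then checking, via monotonicity of $\operatorname{Prox}_\rho$ in its argument together with the antitone pairing of weights and singular values, that the unconstrained solution happens to satisfy it. Two small cautions: (i) realness of $\X^\star$ requires the weights themselves to respect the pairing $\alpha_k=\alpha_{n_3-k+2}$, $\beta_i^k=\beta_i^{n_3-k+2}$, which holds for the weights \eqref{weight} used in the algorithm but is not literally among the hypotheses of the theorem as stated, so it should be flagged as an implicit assumption; (ii) when you carry the $\tfrac{1}{n_3}$ from the Parseval identity through the per-slice reduction you should check the constant in the scalar subproblem carefully --- a direct computation gives $n_3\eta\alpha_k\beta_i^k$ rather than the $\tfrac{\eta\alpha_k\beta_i^k}{n_3}$ printed in the theorem, so do not simply assert that your reduction lands on ``the problem stated in the theorem'' without verifying the scaling.
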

Theorem \ref{thm:tSVD} can be easily proved by \cite[Proposition 1]{ZGQ19}. Thus, it is not repeated here.

After updating $ \X^{t+1} $, we need to compute the weighting $ \alpha_k^{t+1} $ and $ \beta_i^{k,t+1} $ by
\begin{equation}\label{weight}
	\alpha_k^{t+1} =  \rho'\left(\sum_{i=1}^{r} \rho\left(\rho\left(\sigma_i^{k,t+1}\right)\right)\right),\,
	\beta_i^{k,t+1}= \rho'\left(\rho\left(\sigma_i^{k,t+1}\right)\right), 
\end{equation}
where $ \sigma_i^{k,t+1}=\sigma_i\left(\bar X^{(k,t+1)}\right)  $ and $ \bar X^{(k,t+1)}= \left(\bar X^{t+1}\right)^{(k)}$.

The general inertial smoothing proximal gradient method for solving problem \eqref{Q:TNNR} is presented as follows.

\begin{algorithm}
	\caption{Solving problem \eqref{Q:TNNR} by TNNR}
	\label{alg}
	\begin{algorithmic}[]
		\REQUIRE Choosing the parameters $ \theta_1^t $, $ \theta_2^t $, $ \mu^t $. \\
	\!\!\!\!\!\!\!\!\!\! \textbf{Initialize:} $t=0,\,\X^{t}$, and $ \alpha_k^t,\,\beta_i^{k,t} $ for $ k\in[n_3],\,i\in[r] $.
		\WHILE{not converge}
		\STATE \textbf{Step~ 1.} Update $ \X^{t+1} $ by \eqref{X}.\\
		\STATE \textbf{Step~ 2.} Update $ \alpha_k^{t+1},\,\beta_i^{k,t+1} $ by \eqref{weight}.\\
		\STATE Let $ t:=t+1 $ and go to \textbf{Step~ 1}.
		\ENDWHILE
		\ENSURE $\X^{t+1}$.
	\end{algorithmic}
\end{algorithm}

\subsection{Convergence Analysis}
In this subsection, we shall prove the convergence for the proposed TNNR.
The following assumptions are needed to analyze the convergence of the proposed model:
\begin{assumption}\label{ass:smooth}
The loss function $f(\cdot)$ is continuously differentiable with the Lipschitz continuous gradient $\nabla f(\cdot)$, i.e., there exists a Lipschitz constant $L_{f}>0$ for any $\X_{1}, \X_{2} \in \mathbb{R}^{n_1\times n_2\times n_3 }$, such as
$$
\left\|\nabla f\left(\X_{1}\right)-\nabla f\left(\X_{2}\right)\right\| \leq L_{f}\left\|\X_{1}-\X_{2}\right\| .
$$
\end{assumption}
\begin{assumption}\label{ass:bounded}
$F(\cdot)$ is coercive and bounded from below, that is,
$$
\lim _{\|\X\| \rightarrow+\infty} F(\X)=+\infty \text { and } \liminf _{\|\X\| \rightarrow+\infty} F(\X)>-\infty.
$$
\end{assumption}
\begin{assumption}\label{ass:pra}
	The parameters $ \theta_1^t $, $ \theta_2^t $, $ \mu^t $ in the TNNR algorithm satisfy the following conditions: for any $0<\varepsilon \ll 1$, $\theta_1^t \in \left[0,(1-\varepsilon)/2\right)$, $\theta_2^t \in \left[0,1/2\right]$, $ \mu^t $ is nonincreasing and satisfies 
	$$
	\mu^{0} \ge \mu^{t} \ge \max \left\{\frac{\theta_2^tL_f}{\theta_1^t}, \frac{\left(1-\theta_2^t\right) L_f}{1-\theta_1^t-\theta_1^{t+1}-\varepsilon}\right\}.
	$$	
\end{assumption}
\begin{remark}
It is easy to prove for any $t \in \mathbb{N}$, $\mu^{t} \ge L_f$. Otherwise, we have
$$
\left\{\begin{array}{l}
	\frac{\theta_2^tL_f}{\theta_1^t}<L_f, \\
	\frac{\left(1-\theta_2^t\right) L_f}{1-\theta_1^t-\theta_1^{t+1}-\varepsilon}<L_f,
\end{array} \Rightarrow \theta_1^t+\theta_1^{t+1}+\varepsilon<\mu^{t}<\theta_1^t.\right.
$$
There is no $\mu^{t}$ satisfying the above inequalities. Then, we get $\mu^{t} \ge L_f$.	
\end{remark}

Now we establish Theorem \ref{thm:CC} to prove that Algorithm \ref{alg} can converge to a stationary point of our optimization problem.
\begin{theorem}\label{thm:CC}
	Suppose that Assumption \ref{ass:rho}-\ref{ass:pra} holds, and let $ \left\lbrace \X^t \right\rbrace $  be the sequence generated by TNNR. Then, we have that
	\begin{itemize}
		\item[(i)]	
		the sequence $\left\{\X^{t}\right\}$ is bounded, and has at least one accumulation point, i.e., there exists at least a tensor $\X^\star$ and a subsequence $\left\{\X^{t_{l}}\right\} \subseteq\left\{\X^{t}\right\}$ such that $\lim\limits_{l\rightarrow+\infty} \X^{t_{l}}=\X^\star$;
		\item[(ii)] the sequence $ \left\lbrace F\left(\X^{t}\right)\right\rbrace $ is convergent;
		\item[(iii)] any accumulation point $ \X^\star=\lim_{l\rightarrow \infty}\X^{t_{l}} $	of $\left\lbrace\X^{t}\right\rbrace_{t \in \mathbb{N}}$ is a stationary point of $ F\left(\X\right) $ and it further holds that
		$$ F\left(\X^{t_{l}+1}\right) \rightarrow F\left(\X^\star\right), \quad \text {as}\; l \rightarrow +\infty.
		$$
	\end{itemize}
\end{theorem}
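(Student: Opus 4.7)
The plan is the standard three-stage framework for nonconvex inertial proximal methods: construct a Lyapunov function that admits a uniform sufficient decrease, deduce boundedness together with $\|\X^{t+1}-\X^{t}\|\to 0$, then pass to the limit in the proximal optimality condition to identify accumulation points as stationary points.

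\textbf{Step 1 (sufficient decrease).} I would work with the Lyapunov potential $\Phi^{t} := F(\X^{t}) + \gamma^{t}\|\X^{t}-\X^{t-1}\|^{2}$ for a suitably chosen positive sequence $\{\gamma^{t}\}$ and aim at an inequality of the form $\Phi^{t+1} \le \Phi^{t} - c\|\X^{t+1}-\X^{t}\|^{2}$ with $c>0$. Three inequalities feed in. First, the descent lemma under Assumption \ref{ass:smooth} applied at both $(\X^{t+1},\Z^{t})$ and $(\X^{t},\Z^{t})$ controls the change in $f$. Second, because Theorem \ref{thm:tSVD} furnishes $\X^{t+1}$ as a global minimizer of $Q(\cdot,\Y^{t},\Z^{t})$, evaluating at the test point $\X^{t}$ yields
\begin{equation*}
\lambda\bigl(\|\X^{t+1}\|_{\rho_{\alpha^{t}}^{\beta^{t}}}-\|\X^{t}\|_{\rho_{\alpha^{t}}^{\beta^{t}}}\bigr) \le \langle \nabla f(\Z^{t}),\X^{t}-\X^{t+1}\rangle + \tfrac{\mu^{t}}{2}\bigl(\|\X^{t}-\Y^{t}\|^{2}-\|\X^{t+1}-\Y^{t}\|^{2}\bigr).
\end{equation*}
Third, the supergradient linearization from Lemma \ref{lem:gradient}, applied twice with the hypergradient weight choice \eqref{weight} at iteration $t$, produces
\begin{equation*}
F(\X^{t+1})-F(\X^{t}) \le \lambda\bigl(\|\X^{t+1}\|_{\rho_{\alpha^{t}}^{\beta^{t}}}-\|\X^{t}\|_{\rho_{\alpha^{t}}^{\beta^{t}}}\bigr) + f(\X^{t+1})-f(\X^{t}).
\end{equation*}
Substituting $\Y^{t}-\X^{t}=\theta_{1}^{t}(\X^{t}-\X^{t-1})$ and $\Z^{t}-\X^{t}=\theta_{2}^{t}(\X^{t}-\X^{t-1})$, expanding the squared norms, and collapsing the cross terms with Young's inequality, I obtain a bound of the form $F(\X^{t+1})-F(\X^{t})\le -A^{t}\|\X^{t+1}-\X^{t}\|^{2}+B^{t}\|\X^{t}-\X^{t-1}\|^{2}$. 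The two-sided lower bound on $\mu^{t}$ in Assumption \ref{ass:pra} is tuned precisely so that $A^{t}$ stays strictly positive and so that $B^{t}$ can be absorbed into the telescoping $\gamma^{t}\|\X^{t}-\X^{t-1}\|^{2}-\gamma^{t+1}\|\X^{t+1}-\X^{t}\|^{2}$ with a residual negative multiple of $\|\X^{t+1}-\X^{t}\|^{2}$; the appearance of $\theta_{1}^{t+1}$ in the denominator is exactly what is needed to maintain $\gamma^{t+1}\ge B^{t+1}$ throughout.

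\textbf{Step 2 (boundedness, subsequential limits, objective convergence).} Telescoping the sufficient decrease and using that $F$ is bounded from below by Assumption \ref{ass:bounded} gives $\sum_{t}\|\X^{t+1}-\X^{t}\|^{2}<\infty$, so $\|\X^{t+1}-\X^{t}\|\to 0$. Monotonicity of $\Phi^{t}$ implies $F(\X^{t})\le \Phi^{t}\le \Phi^{0}$, and coercivity of $F$ confines $\{\X^{t}\}$ to a bounded sublevel set; Bolzano--Weierstrass then yields (i). Because $\Phi^{t}$ is nonincreasing and bounded below, it converges to some $\Phi^{\star}$, and since $\|\X^{t}-\X^{t-1}\|\to 0$ the momentum correction vanishes, giving $F(\X^{t})\to \Phi^{\star}$, which is (ii).

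\textbf{Step 3 (stationarity).} Let $\X^{t_{l}}\to \X^{\star}$. The first-order optimality condition for $\X^{t_{l}+1}$ in the proximal subproblem reads
\begin{equation*}
0 \in \lambda\,\partial\|\X^{t_{l}+1}\|_{\rho_{\alpha^{t_{l}}}^{\beta^{t_{l}}}} + \nabla f(\Z^{t_{l}}) + \mu^{t_{l}}\bigl(\X^{t_{l}+1}-\Y^{t_{l}}\bigr).
\end{equation*}
Since $\|\X^{t+1}-\X^{t}\|\to 0$, both $\Y^{t_{l}}$ and $\Z^{t_{l}}$ converge to $\X^{\star}$, and by continuity of $\rho'$ together with continuity of the singular values (passing to a further subsequence on which the ordering of the singular values stabilizes, if necessary) the weights $\alpha^{t_{l}},\beta^{t_{l}}$ converge to the hypergradients at $\X^{\star}$. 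Closedness of the graph of the limiting subdifferential combined with the chain rule applied to the triple composition $\rho(\sum_{i}\rho(\rho(\sigma_{i}(\cdot))))$ upgrades this inclusion to $0\in \partial F(\X^{\star})$. Finally, continuity of $F$ on bounded sets, which holds as a composition of $\rho$, the singular values, and $f$, together with $\X^{t_{l}+1}\to\X^{\star}$, gives $F(\X^{t_{l}+1})\to F(\X^{\star})$, completing (iii).

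\textbf{Main obstacle.} The delicate point is Step 1: the presence of \emph{two} independent momentum parameters $\theta_{1}^{t}$ (inside $\Y^{t}$) and $\theta_{2}^{t}$ (inside $\Z^{t}$) forces both a forward momentum coupling through the proximal anchor and a gradient momentum coupling through the linearization point to be controlled simultaneously inside the Lyapunov telescoping. The two-sided $\max$ appearing in the lower bound on $\mu^{t}$ in Assumption \ref{ass:pra} is precisely the minimum needed so that both constraints bite at once, and extracting a uniform positive decrease constant $c$ after all the Young's-inequality relaxations is where the entire convergence argument is really earned.
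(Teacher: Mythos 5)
Your proposal follows essentially the same route as the paper: the Lyapunov function $\Phi^{t}=F(\X^{t})+\gamma^{t}\|\X^{t}-\X^{t-1}\|^{2}$ is exactly the paper's $H_{\delta_{t}}$ with $\delta_{t}=\mu^{t}\theta_{1}^{t}/2$, and the three ingredients of the sufficient-decrease step (supergradient linearization of the triple composition at the current weights, global optimality of the proximal subproblem tested at $\X^{t}$, and the descent estimates for $f$ at $\Z^{t}$ — the paper obtains the lower-bound half via convexity of $f+\tfrac{L_f}{2}\|\cdot\|^{2}$, which is equivalent to the two-sided descent lemma you invoke) match the paper's Lemma on sufficient decrease, after which the boundedness, objective-convergence, and limiting-subdifferential arguments coincide with the paper's parts (i)--(iii).
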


The conclusions in Theorem \ref{thm:CC} just establish the convergence of the subsequence of $\left\{\X^{t}\right\}$. With some slightly additional assumptions, we also can prove the convergence the whole sequence $\left\{\X^{t}\right\}$. Now we assume that after a finite number of steps the sequence $\left\{\delta_{t}=\mu^{t}\theta_1^{t}/2\right\}$ is constant. 
Below we present the convergence results for the whole sequence $ \left\lbrace \X^t \right\rbrace  $ generated by TNNR.
\begin{theorem}\label{thm:whole}
	Suppose that Assumption \ref{ass:rho}-\ref{ass:pra} holds, $ \delta_{t}\equiv \delta $ for all $ t\in \N $ and the function
		\begin{equation*}
		 H(\X, \Y):=F(\X)+\delta\|\X-\Y\|^{2}
	\end{equation*}
	is a KL function, and let $\left\lbrace \X^t \right\rbrace$ be the sequence generated by TNNR. Then, the sequence $\left\lbrace \X^t \right\rbrace$ has finite length, i.e., $\sum_{t=0}^{\infty}\left\|\X^{t+1}-\X^{t}\right\|<+\infty$, and $\left\lbrace \X^t \right\rbrace$ globally converges to a critical point $ \X^\star $ of the minimization problem \eqref{Q:TNNR}.
\end{theorem}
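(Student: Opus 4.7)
The plan is to follow the standard KL-based global convergence template (à la Attouch--Bolte--Svaiter and the PALM framework), applied to the Lyapunov-type auxiliary function $H(\X,\Y)=F(\X)+\delta\|\X-\Y\|^{2}$ that already appears in the statement. The argument has four pieces that I would prove in order: (i) sufficient decrease of $H$ on the pair sequence $\{(\X^{t+1},\X^{t})\}$; (ii) a relative error (subgradient) bound on $H$; (iii) continuity of $H$ along the sequence plus identification of limit points as critical points; (iv) a KL-iteration finite-length argument that upgrades square-summability to summability.

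\textbf{Step 1 (Sufficient decrease).} From Theorem \ref{thm:CC} and the choice of parameters in Assumption \ref{ass:pra}, the proof of local convergence already supplies an inequality of the form $F(\X^{t+1})+\delta_{t}\|\X^{t+1}-\X^{t}\|^{2}\le F(\X^{t})+\delta_{t-1}\|\X^{t}-\X^{t-1}\|^{2}-c_{0}\|\X^{t+1}-\X^{t}\|^{2}$ for some $c_{0}>0$ (this is where the bound $\mu^{t}\ge\max\{\theta_{2}^{t}L_{f}/\theta_{1}^{t},(1-\theta_{2}^{t})L_{f}/(1-\theta_{1}^{t}-\theta_{1}^{t+1}-\varepsilon)\}$ is used). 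Because we now assume $\delta_{t}\equiv\delta$ after a finite index, this rewrites exactly as $H(\X^{t+1},\X^{t})\le H(\X^{t},\X^{t-1})-c_{0}\|\X^{t+1}-\X^{t}\|^{2}$, so $\{H(\X^{t+1},\X^{t})\}$ is monotone nonincreasing, bounded below by Assumption \ref{ass:bounded}, hence convergent, and $\sum_{t}\|\X^{t+1}-\X^{t}\|^{2}<\infty$.

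\textbf{Step 2 (Subgradient bound).} Writing the first-order optimality of the subproblem defining $\X^{t+1}$ in \eqref{X}, there exists $G^{t+1}\in\partial(\lambda\|\cdot\|_{\rho_{\alpha^{t}}^{\beta^{t}}})(\X^{t+1})$ with $G^{t+1}+\nabla f(\Z^{t})+\mu^{t}(\X^{t+1}-\Y^{t})=0$. I would construct a partial subgradient of $H$ at $(\X^{t+1},\X^{t})$ by taking $W_{\X}^{t+1}=G^{t+1}+\nabla f(\X^{t+1})+2\delta(\X^{t+1}-\X^{t})$ and $W_{\Y}^{t+1}=-2\delta(\X^{t+1}-\X^{t})$. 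Using the optimality condition to replace $G^{t+1}$, together with Lipschitz continuity of $\nabla f$ and the definitions of $\Y^{t}$ and $\Z^{t}$, each piece is controlled by $\|\X^{t+1}-\X^{t}\|+\|\X^{t}-\X^{t-1}\|$, yielding $\mathrm{dist}(0,\partial H(\X^{t+1},\X^{t}))\le c_{1}(\|\X^{t+1}-\X^{t}\|+\|\X^{t}-\X^{t-1}\|)$. A subtlety here, which I would highlight, is that $G^{t+1}$ is a subgradient of the \emph{weighted} relaxation with weights $(\alpha^{t},\beta^{t})$, not with the ``correct'' weights $(\alpha^{t+1},\beta^{t+1})$; the mismatch must be absorbed into the bound by invoking Lemma \ref{lem:gradient} and the $L_{g}$-Lipschitz continuity of $\rho'$ from Assumption \ref{ass:rho}, which gives $\|\alpha^{t+1}-\alpha^{t}\|+\|\beta^{t+1}-\beta^{t}\|=O(\|\X^{t+1}-\X^{t}\|)$ by Weyl's inequality on the block-diagonal DFT transform.

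\textbf{Step 3 (Continuity on the limit set).} Let $\omega$ be the set of accumulation points of $\{(\X^{t+1},\X^{t})\}$. By Theorem \ref{thm:CC}(i), $\omega\neq\emptyset$, is compact, and every $(\X^{\star},\X^{\star})\in\omega$ has diagonal form because $\|\X^{t+1}-\X^{t}\|\to 0$. From Theorem \ref{thm:CC}(iii) we have $F(\X^{t_{l}+1})\to F(\X^{\star})$, and $\delta\|\X^{t_{l}+1}-\X^{t_{l}}\|^{2}\to 0$, so $H$ is constant on $\omega$ with value $H^{\star}=\lim_{t}H(\X^{t+1},\X^{t})$.

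\textbf{Step 4 (KL argument and conclusion).} With (a)--(c) in hand and $H$ assumed to be a KL function, one applies the now-standard uniformization of the KL inequality on $\omega$ (Bolte--Sabach--Teboulle style): on a neighborhood of $\omega$ intersected with the level set where $H^{\star}<H<H^{\star}+\eta$, there is a concave desingularizing $\varphi$ with $\varphi'(H-H^{\star})\cdot\mathrm{dist}(0,\partial H)\ge 1$. Combined with the decrease and error bounds, this yields by the usual telescoping/concavity trick the inequality $\sum_{t\ge t_{0}}\|\X^{t+1}-\X^{t}\|\le\mathrm{const}\cdot(\|\X^{t_{0}}-\X^{t_{0}-1}\|+\varphi(H(\X^{t_{0}},\X^{t_{0}-1})-H^{\star}))<+\infty$. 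Finite length gives Cauchyness of $\{\X^{t}\}$, hence convergence to some $\X^{\star}$; and Theorem \ref{thm:CC}(iii) already identifies any such $\X^{\star}$ as a critical point of \eqref{Q:TNNR}.

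The main obstacle, as noted in Step 2, is that the TNNR weights $(\alpha^{t},\beta^{t})$ are themselves state-dependent, so the subgradient provided by the subproblem belongs to the \emph{wrong} weighted penalty; making the error bound tight requires quantifying how the weights depend Lipschitz-continuously on the singular values of $\bar{X}$, and this is where Assumption \ref{ass:rho} (bounded, $L_{g}$-Lipschitz, strictly positive $\rho'$) is essential.
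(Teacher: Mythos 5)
Your proposal is correct and follows essentially the same route as the paper: the paper's proof of Theorem \ref{thm:whole} consists of verifying exactly your three ingredients (sufficient decrease of $H$, a subgradient bound of the form $\|\omega^{t+1}\|\le C(\|\Delta_{t+1}\|+\|\Delta_t\|)$, and continuity along a convergent subsequence) in Lemma \ref{lem:whole}, and then invoking the abstract KL convergence theorem of \cite{WL19} in place of your explicit Step 4 telescoping. You also correctly identified the one real technical point — the mismatch between the weights $(\alpha^t,\beta^t)$ used in the subproblem and the weights $(\alpha^{t+1},\beta^{t+1})$ appearing in $\partial H(\X^{t+1},\X^t)$ — which the paper handles the same way, via the Lipschitz continuity of $\rho'$ and a singular-value perturbation bound.
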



\section{Numerical Experiments}

In this section, we conduct some experiments on both synthetic and real world data to compare the performance of TNNR to show their validity. In particular, we apply it to solve the problem \eqref{Q:TNNR} with $ f\left(\X\right)=\left\|P_\Omega\left(\X-\M\right) \right\|^2   $, that is,
\begin{equation}
	\min\limits_{\X} \lambda\sum_{k=1}^{n_3}\rho\left(\sum_{i=1}^{r} \rho\left(\rho\left(\sigma_i^k\right)\right)\right)+\left\|P_\Omega\left(\X-\M\right) \right\|^2,
\end{equation}
where $ \rho(x)=x^{2/3} $, $ \M $ is a known tensor, $ \Omega $ is an index set which locates the observed data, $ P_\Omega $ is a linear operator that extracts the entries in $ \Omega $ and fills the entries not in $ \Omega $ with zeros.

We employ the peak signal-to-noise rate (PSNR) \cite{WBSS04}, the structural similarity (SSIM) \cite{WBSS04}, the feature similarity (FSIM) \cite{ZZMZ11} and the recovery computation time to measure the quality of the recovered results. 
We compare TNNR for the tensor completion problem with four existing methods, including TNN \cite{ZEA14}, PSTNN \cite{JHZD20}, T-TNN \cite{XQLJ18} and HaLRTC \cite{LMWY13}. 
All methods are implemented on the platform of Windows 11 and Matlab (R2020b) with an Intel(R) Core(TM) i5-12500H CPU at 2.50GHz and 16 GB RAM. The parameter choice of the compared methods rely on the authors' suggestions of the published papers or the default parameters of the released codes to obtain the best performance.

\subsection{Synthetic Data}
In this subsection, we aim to recover a random tensor $ \M\in\R^{n_1\times n_2 \times n_3} $ of tubal rank $ r $ from known entries $\left\{\M_{ijk}\right\}_{(i,j,k) \in \Omega}$. In detail, we first use Matlab command $ randn(n_1,r,n_3) $ and $ randn(r,n_2, n_3) $ to produce two tensors $ \A\in\R^{n_1\times r \times n_3} $ and $ \B\in\R^{r\times n_2 \times n_3} $. Then, let $\mathcal{M}=\mathcal{A} * \mathcal{B}$. Finally, we sample a subset with sampling ratio $ SR $ uniformly at random, where $ SR=\left|\Omega\right|/\left(n_1n_2n_3\right)  $. In our experiment, we set $ r= 10,\,n_1=n_2=n_3=100 $, $ SR=0.8 $. When $ \left\|\X^\star-\M\right\|/\left\|\M\right\|<1e-3  $, the iteration process terminates.
For each simulation, the result is obtained via 100 Monte Carlo runs with different realizations of $ \M $ and $ \Omega $.

To demonstrate the effectiveness of the TNNR with various extrapolations, we test the performance of the method with various choices of $ \theta_1 $ and $ \theta_2 $.
 For each instance, let $ \lambda=5 $, $ \mu $ be the minimal stepsize, i.e., $ \mu=\max\left\lbrace \theta_2/\theta_1, \left(1-\theta_2\right)/\left(0.99-2\theta_1\right)\right\rbrace  $  by the condition showed in Algorithm \ref{alg}. Accordingly, we test six scenarios with $\theta_1,\,\theta_2\in\{0,0.1,0.2,0.3,0.4,0.49\}$. We list the results of the CPU computing time in Table \ref{tab:extrapolation}.
 From the results, we find that almost all instances with inertia are more effective than the original method.

\begin{table}[htbp]
	\centering
	\caption{NUMERICAL RESULTS FOR SYNTHETIC DATA WITH TWO DIFFERENT EXTRAPOLATIONS.}\setlength{\tabcolsep}{1mm}{
	\begin{tabular}{ccccccc}
		\hline
		Time (s) & $\theta_1=0$ & $\theta_1=0.1$ & $\theta_1=0.2$ & $\theta_1=0.3$ & $\theta_1=0.4$ & $\theta_1=0.49$ \\
		\hline
		$\theta_2=0$ & 20.21  & 21.26  & 21.48  & 21.87  & 22.30  & 22.39  \\
		\hline
		$\theta_2=0.1$ & \textbf{19.95 } & \textbf{20.15 } & 20.34  & 20.35  & 20.55  & 20.58  \\
		\hline
		$\theta_2=0.2$ & \textbf{18.15 } & \textbf{18.64 } & \textbf{18.98 } & \textbf{18.82 } & \textbf{18.63 } & \textbf{18.86 } \\
		\hline
		$\theta_2=0.3$ & \textbf{16.37 } & \textbf{16.42 } & \textbf{16.43 } & \textbf{16.31 } & \textbf{16.53 } & \textbf{16.66 } \\
		\hline
		$\theta_2=0.4$ & \textbf{14.35 } & \textbf{14.43 } & \textbf{14.13 } & \textbf{14.14 } & \textbf{14.04 } & \textbf{14.00 } \\
		\hline
		$\theta_2=0.49$ & \textbf{12.83 } & \textbf{12.82 } & \textbf{12.46 } & \textbf{12.38 } & \textbf{12.35 } & \textbf{12.12 } \\
		\hline
	\end{tabular}}%
	\label{tab:extrapolation}%
\end{table}%

Therefore, in the following experiments we set $ \theta_1=0.49 $, $ \theta_2=0.49 $ and $ \mu=\max\left\lbrace \theta_2/\theta_1, \left(1-\theta_2\right)/\left(0.99-2\theta_1\right)\right\rbrace  $ in TNNR.

\subsection{Color Image Inpainting}
In this subsection, we use the Berkeley Segmentation database\footnote{https://www2.eecs.berkeley.edu/Research/Projects/CS/vision/bsds/.} of size $ 321\times 481 \times 3 $ to evaluate our proposed method TNNR for color image inpainting. In our test, four images are randomly selected from this database, including ``Airplane", ``Tiger", ``Flower" and ``Fish". 
The data of images are normalized in the range $ \left[0,1\right] $. 

Table \ref{tab:ColorImage} reports the results of quantitative metrics for four experiments using different methods when $ SR =[40\%, 50\%, 60\%] $. Several visual examples for missing rates $ SR =30\% $ are presented in Figure \ref{fig:colorimage}.
From these results, we can find that representative approaches based on the Tucker, i.e., HaLRTC, perform relatively poor in terms of recovery quality. Another finding is that the performance of TNN based convex is inferior to the PSTNN and T-TNN based nonconvex. More significantly, the proposed TNNR, which is induced by triple nonconvex nonsmooth, performs best among all the compared approaches whether in the PSNR, SSIM, FSIM values or visual quality. From time consumption, our method uses similar running time as HaLRTC and is the second fastest method. It is about two times faster than the third fastest method PSTNN and at least fifteen times faster than the slowest method T-TNN.

\begin{table*}[htbp]
	\caption{COLOR IMAGE INPAINTING PERFORMANCE COMPARISON: PSNR, SSIM, FSIM AND RUNNING TIME. THE BEST AND THE SECOND BEST PERFORMING METHODS IN EACH IMAGE ARE HIGHLIGHTED IN RED AND BOLD, RESPECTIVELY}\label{tab:ColorImage}
	\begin{tabular}{c|c|cccc|cccc|cccc}
		\hline
		\multirow{2}{*}{Color Image} & \multirow{2}{*}{Methods} & \multicolumn{4}{c|}{$ SR=40\% $}    & \multicolumn{4}{c|}{$ SR=50\% $}    & \multicolumn{4}{c}{$ SR=60\% $} \\
		\cline{3-14}          &       & PSNR  & SSIM  & FSIM  & Time  & PSNR  & SSIM  & FSIM  & Time  & PSNR  & SSIM  & FSIM  & Time \\
			\hline
		\multirow{5}{*}{Airplane} & TNNR  & \textcolor[rgb]{ 1,  0,  0}{31.921 } & \textcolor[rgb]{ 1,  0,  0}{0.842 } & \textcolor[rgb]{ 1,  0,  0}{0.935 } & \textbf{5.095 } & \textcolor[rgb]{ 1,  0,  0}{35.616 } & \textcolor[rgb]{ 1,  0,  0}{0.925 } & \textcolor[rgb]{ 1,  0,  0}{0.969 } & \textcolor[rgb]{ 1,  0,  0}{4.621 } & \textcolor[rgb]{ 1,  0,  0}{39.905 } & \textcolor[rgb]{ 1,  0,  0}{0.971 } & \textcolor[rgb]{ 1,  0,  0}{0.988 } & \textbf{4.179 }\\
		& TNN   & 30.424  & 0.833  & 0.924  & 42.838  & 33.588  & 0.912  & 0.960  & 82.985  & 37.389  & 0.962  & 0.982  & 92.441  \\
		& PSTNN & 30.563  & 0.839  & 0.926  & 6.630  & 33.696  & 0.915  & 0.961  & 14.197  & 37.492  & 0.964  & 0.983  & 14.372  \\
		& T-TNN & \textbf{30.905 } & \textbf{0.841 } & \textbf{0.931 } & 94.604  & \textbf{34.147 } & \textbf{0.921 } & \textbf{0.965 } & 145.284  & \textbf{37.913 } & \textbf{0.967 } & \textbf{0.985 } & 142.214  \\
		& HaLRTC & 28.950  & 0.814  & 0.907  & \textcolor[rgb]{ 1,  0,  0}{2.296 } & 30.898  & 0.874  & 0.940  & \textbf{5.293 } & 33.199  & 0.922  & 0.964  & \textcolor[rgb]{ 1,  0,  0}{3.999 } \\
		\hline
        \multirow{5}{*}{Tiger} & TNNR  & \textcolor[rgb]{ 1,  0,  0}{29.676 } & \textcolor[rgb]{ 1,  0,  0}{0.860 } & \textcolor[rgb]{ 1,  0,  0}{0.939 } & \textbf{5.038 } & \textcolor[rgb]{ 1,  0,  0}{32.831 } & \textcolor[rgb]{ 1,  0,  0}{0.926 } & \textcolor[rgb]{ 1,  0,  0}{0.968 } & \textbf{4.921 } & \textcolor[rgb]{ 1,  0,  0}{36.670 } & \textcolor[rgb]{ 1,  0,  0}{0.966 } & \textcolor[rgb]{ 1,  0,  0}{0.986 } & \textbf{4.579 } \\
        & TNN   & 28.375  & 0.851  & 0.930  & 46.804  & 31.222  & 0.916  & 0.959  & 81.420  & 34.345  & 0.958  & 0.979  & 75.853  \\
        & PSTNN & 28.623  & 0.854  & 0.933  & 7.221  & 31.434  & 0.917  & 0.961  & 14.312  & 34.526  & 0.958  & 0.980  & 14.815  \\
        & T-TNN & \textbf{28.722 } & \textbf{0.857 } & \textbf{0.934 } & 200.596  & \textbf{31.695 } & \textbf{0.920 } & \textbf{0.963 } & 188.537  & \textbf{34.830 } & \textbf{0.960 } & \textbf{0.981 } & 115.059  \\
        & HaLRTC & 27.118  & 0.830  & 0.914  & \textcolor[rgb]{ 1,  0,  0}{2.898 } & 29.298  & 0.889  & 0.944  & \textcolor[rgb]{ 1,  0,  0}{4.418 } & 31.447  & 0.930  & 0.965  & \textcolor[rgb]{ 1,  0,  0}{2.270 } \\
        \hline
		\multirow{5}{*}{Flower} & TNNR  & \textcolor[rgb]{ 1,  0,  0}{31.630 } & \textcolor[rgb]{ 1,  0,  0}{0.860 } & \textcolor[rgb]{ 1,  0,  0}{0.956 } & \textbf{4.813 } & \textcolor[rgb]{ 1,  0,  0}{35.215 } & \textcolor[rgb]{ 1,  0,  0}{0.934 } & \textcolor[rgb]{ 1,  0,  0}{0.981 } & \textbf{4.526 } & \textcolor[rgb]{ 1,  0,  0}{38.832 } & \textcolor[rgb]{ 1,  0,  0}{0.971 } & \textcolor[rgb]{ 1,  0,  0}{0.992 } & \textcolor[rgb]{ 1,  0,  0}{4.141 } \\
		& TNN   & 29.599  & 0.832  & 0.947  & 42.082  & 32.728  & 0.909  & 0.972  & 42.080  & 35.995  & 0.955  & 0.986  & 85.822  \\
		& PSTNN & 30.054  & 0.840  & 0.950  & 6.520  & 33.084  & 0.914  & 0.973  & 6.691  & 36.279  & 0.957  & 0.987  & 14.754  \\
		& T-TNN & \textbf{30.436 } & \textbf{0.857 } & \textbf{0.954 } & 77.385  & \textbf{33.468 } & \textbf{0.923 } & \textbf{0.975 } & 62.725  & \textbf{36.675 } & \textbf{0.962 } & \textbf{0.988 } & 100.470  \\
		& HaLRTC & 29.228  & 0.851  & 0.948  & \textcolor[rgb]{ 1,  0,  0}{2.836 } & 31.691  & 0.909  & 0.969  & \textcolor[rgb]{ 1,  0,  0}{2.513 } & 34.107  & 0.946  & 0.982  & \textbf{5.343 } \\
		\hline
		\multirow{5}{*}{Fish} & TNNR  & \textcolor[rgb]{ 1,  0,  0}{33.563 } & \textcolor[rgb]{ 1,  0,  0}{0.919 } & \textcolor[rgb]{ 1,  0,  0}{0.969 } & \textbf{5.402 } & \textcolor[rgb]{ 1,  0,  0}{37.556 } & \textcolor[rgb]{ 1,  0,  0}{0.965 } & \textcolor[rgb]{ 1,  0,  0}{0.987 } & \textbf{4.932 } & \textcolor[rgb]{ 1,  0,  0}{41.498 } & \textcolor[rgb]{ 1,  0,  0}{0.985 } & \textcolor[rgb]{ 1,  0,  0}{0.995 } & \textbf{4.320 } \\
		& TNN   & 31.417  & 0.898  & 0.958  & 94.509  & 34.747  & 0.947  & 0.978  & 42.238  & 38.291  & 0.975  & 0.990  & 69.916  \\
		& PSTNN & 31.629  & 0.902  & 0.960  & 14.403  & 34.865  & 0.948  & 0.979  & 6.622  & 38.373  & 0.976  & 0.990  & 6.490  \\
		& T-TNN & \textbf{32.025 } & \textbf{0.908 } & \textbf{0.963 } & 92.739  & \textbf{35.288 } & \textbf{0.952 } & \textbf{0.982 } & 59.725  & \textbf{38.834 } & \textbf{0.978 } & \textbf{0.992 } & 59.962  \\
		& HaLRTC & 30.747  & 0.900  & 0.955  & \textcolor[rgb]{ 1,  0,  0}{2.523 } & 33.430  & 0.942  & 0.974  & \textcolor[rgb]{ 1,  0,  0}{2.405 } & 36.216  & 0.968  & 0.987  & \textcolor[rgb]{ 1,  0,  0}{2.180 }\\
		\hline
	\end{tabular}%
\end{table*}%

\begin{figure*}[htbp]
	\centering
	\begin{subfigure}[b]{1\linewidth}
		\begin{subfigure}[b]{0.138\linewidth}
			\centering
			\includegraphics[width=\linewidth]{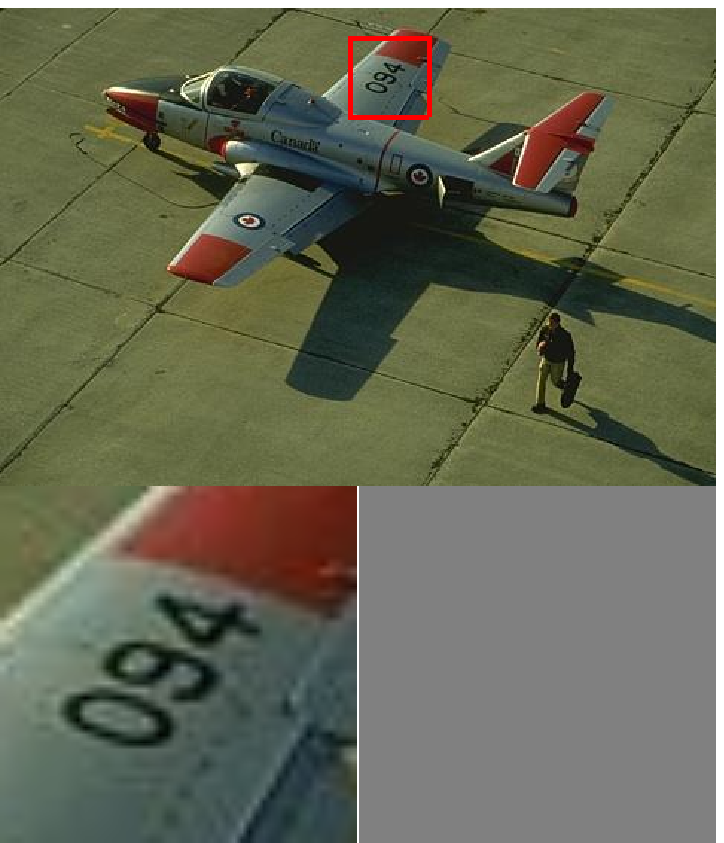}\vspace{0pt}
			\includegraphics[width=\linewidth]{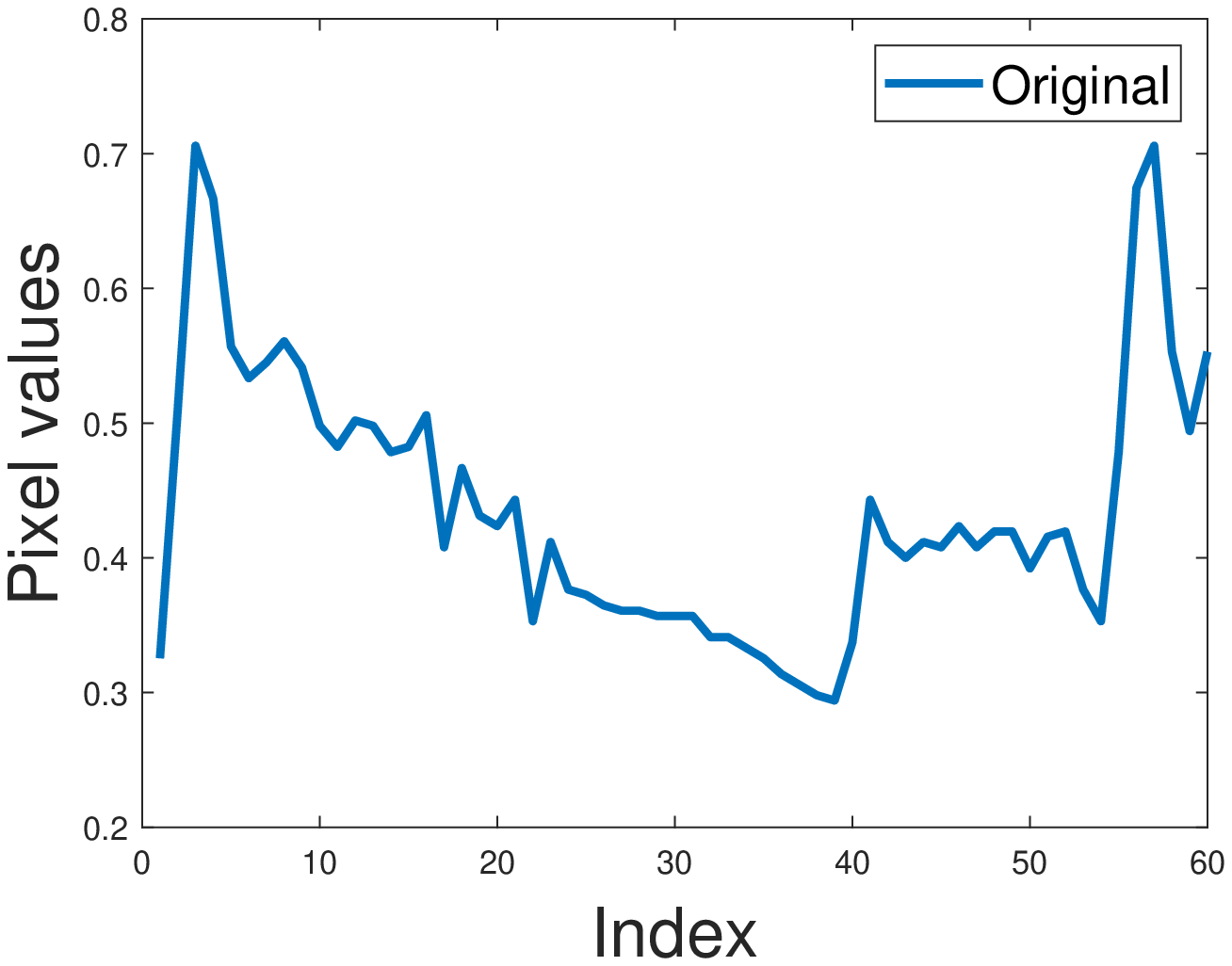}\vspace{0pt}
			\includegraphics[width=\linewidth]{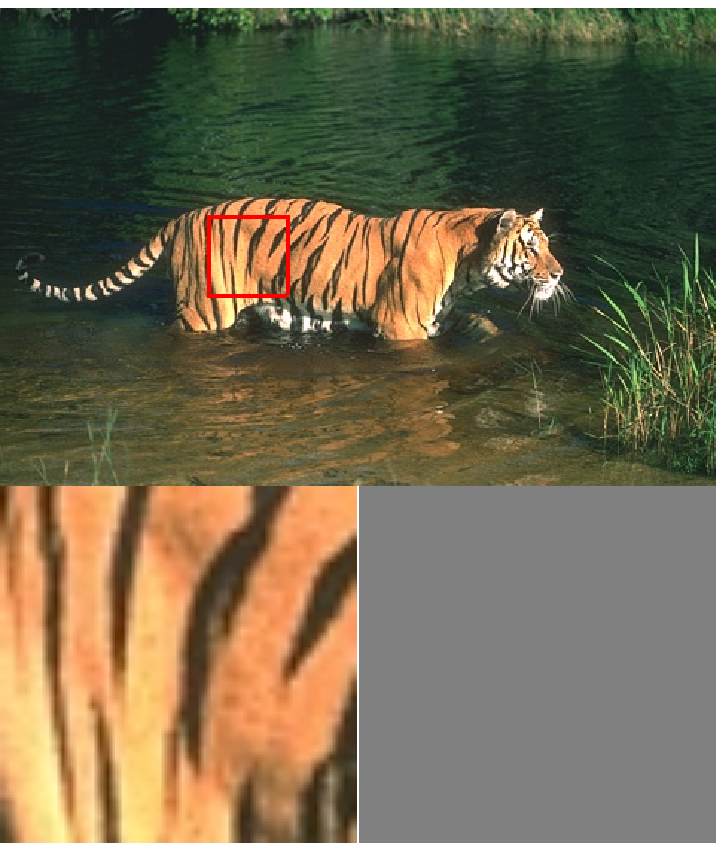}\vspace{0pt}
			\includegraphics[width=\linewidth]{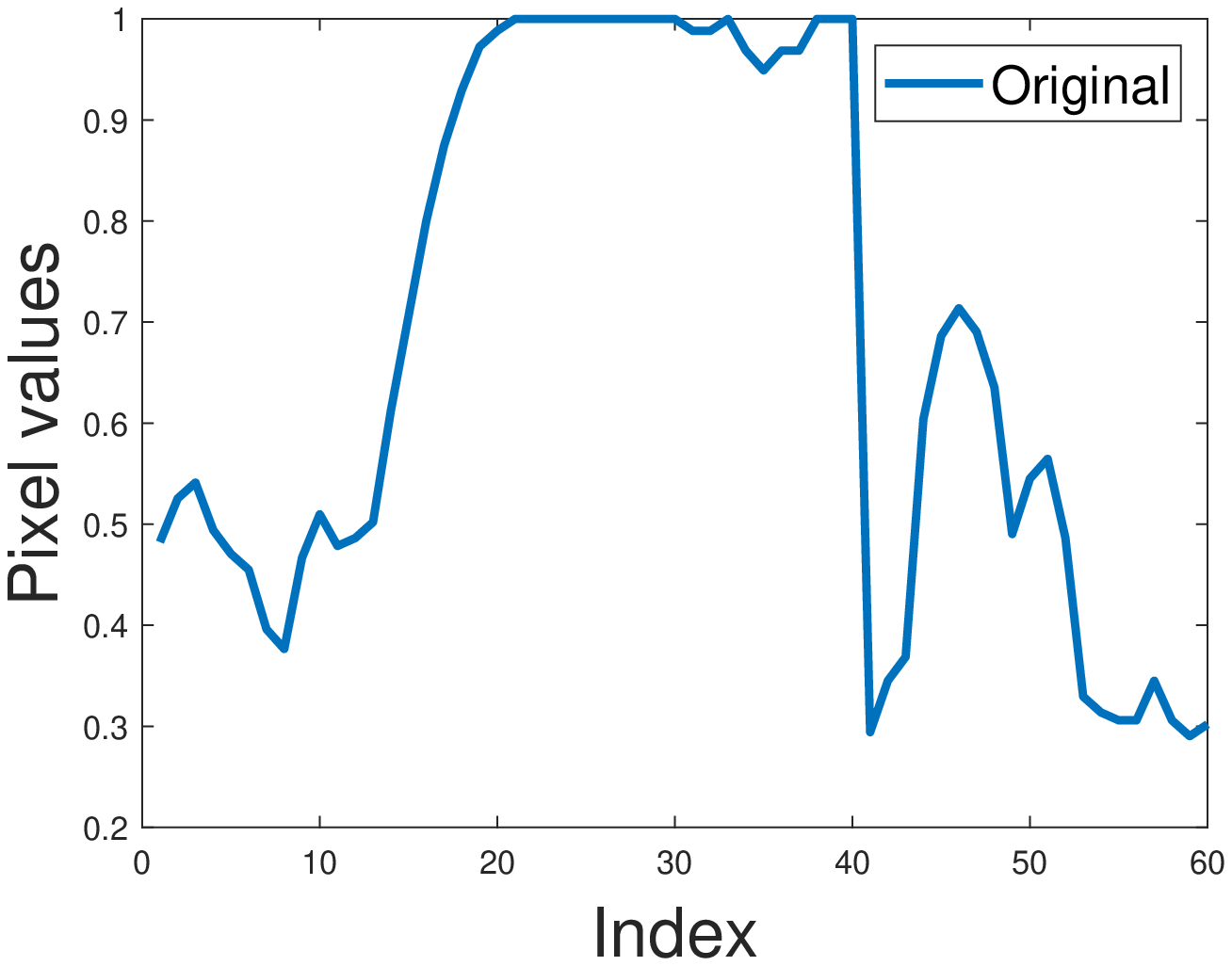}\vspace{0pt}
			\includegraphics[width=\linewidth]{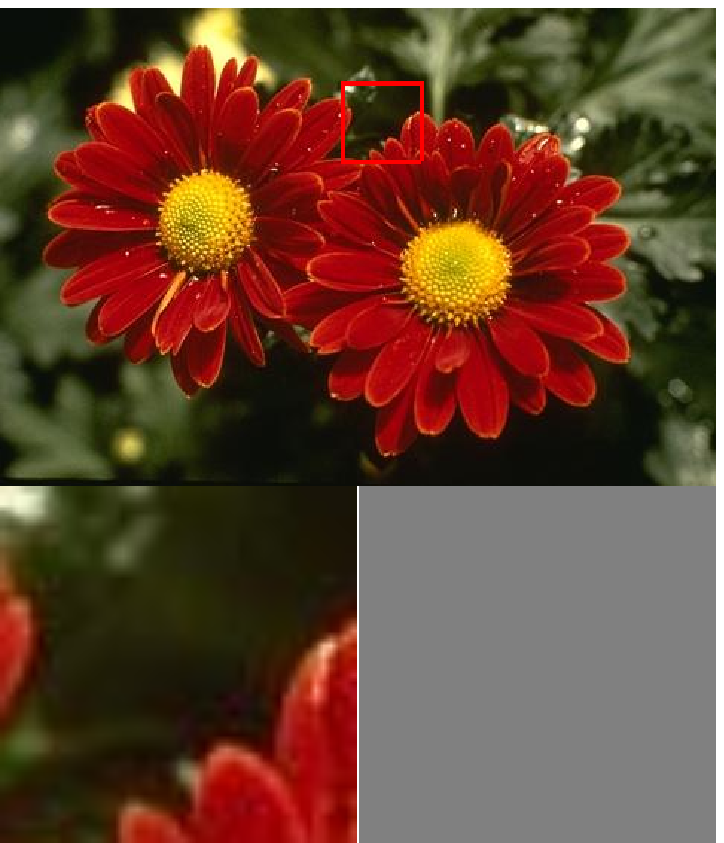}\vspace{0pt}
			\includegraphics[width=\linewidth]{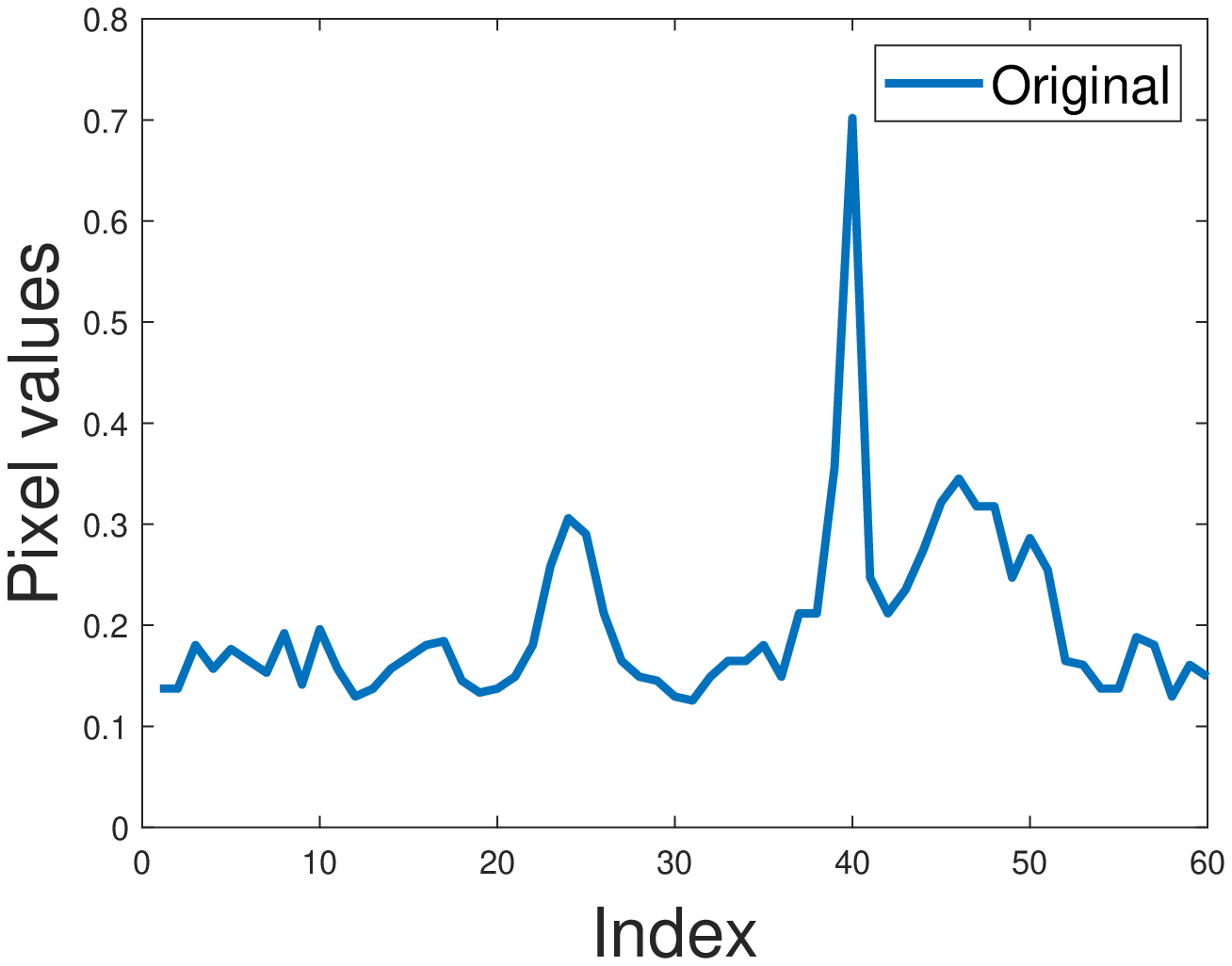}\vspace{0pt}
			\includegraphics[width=\linewidth]{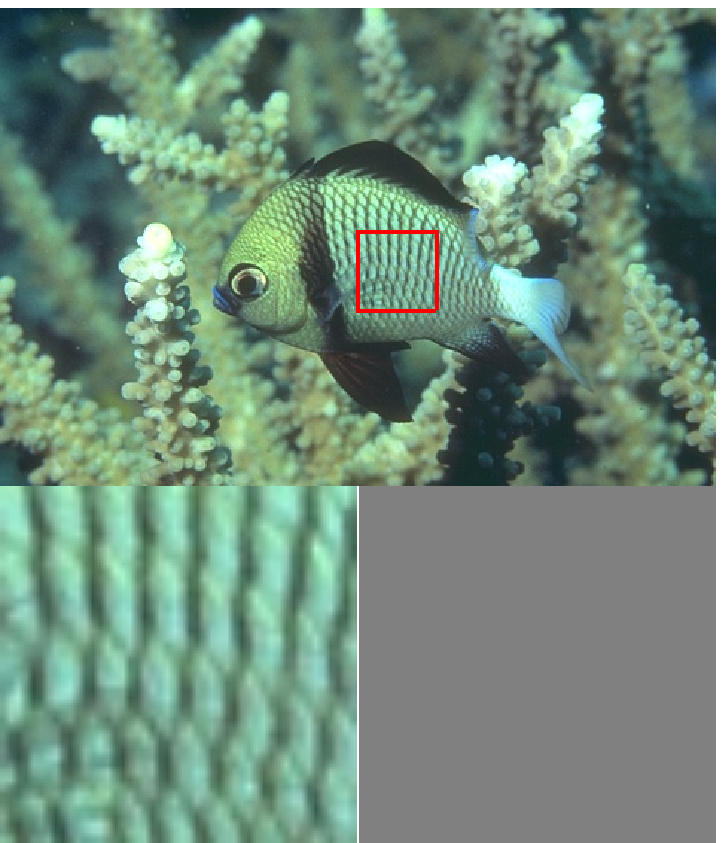}\vspace{0pt}
			\includegraphics[width=\linewidth]{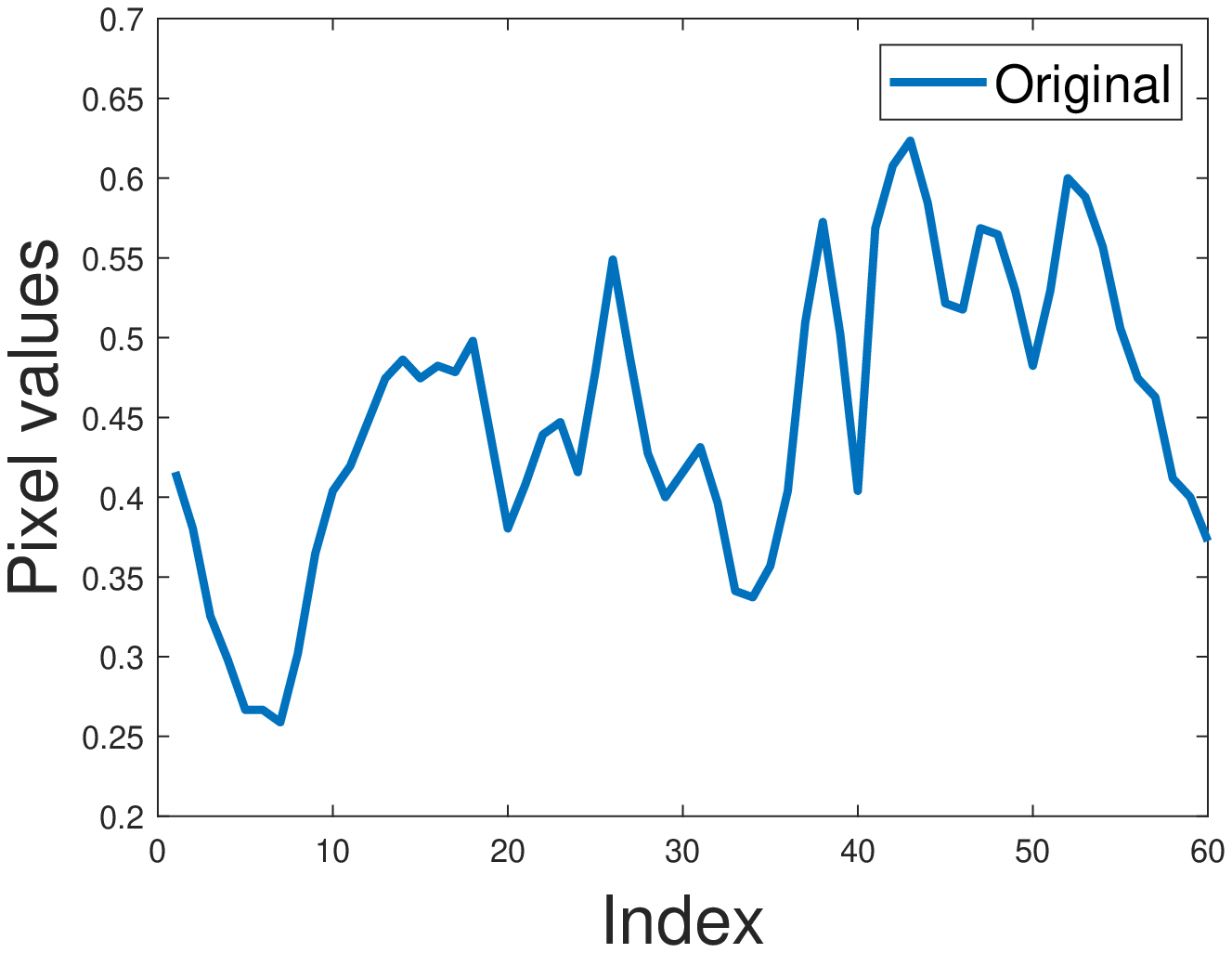}
			\caption{Original}
		\end{subfigure}   	
		\begin{subfigure}[b]{0.138\linewidth}
			\centering
			\includegraphics[width=\linewidth]{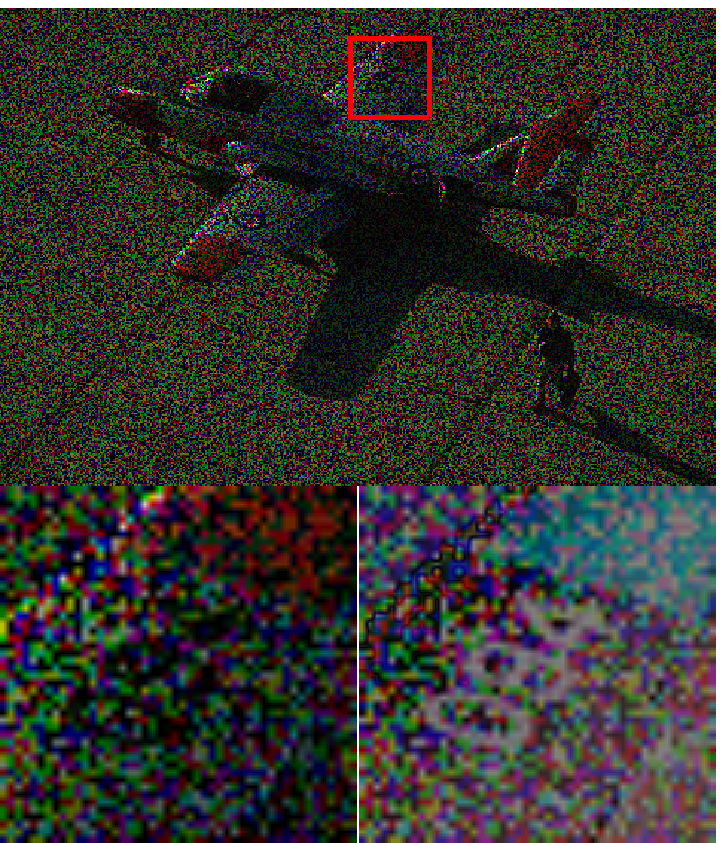}\vspace{0pt}
			\includegraphics[width=\linewidth]{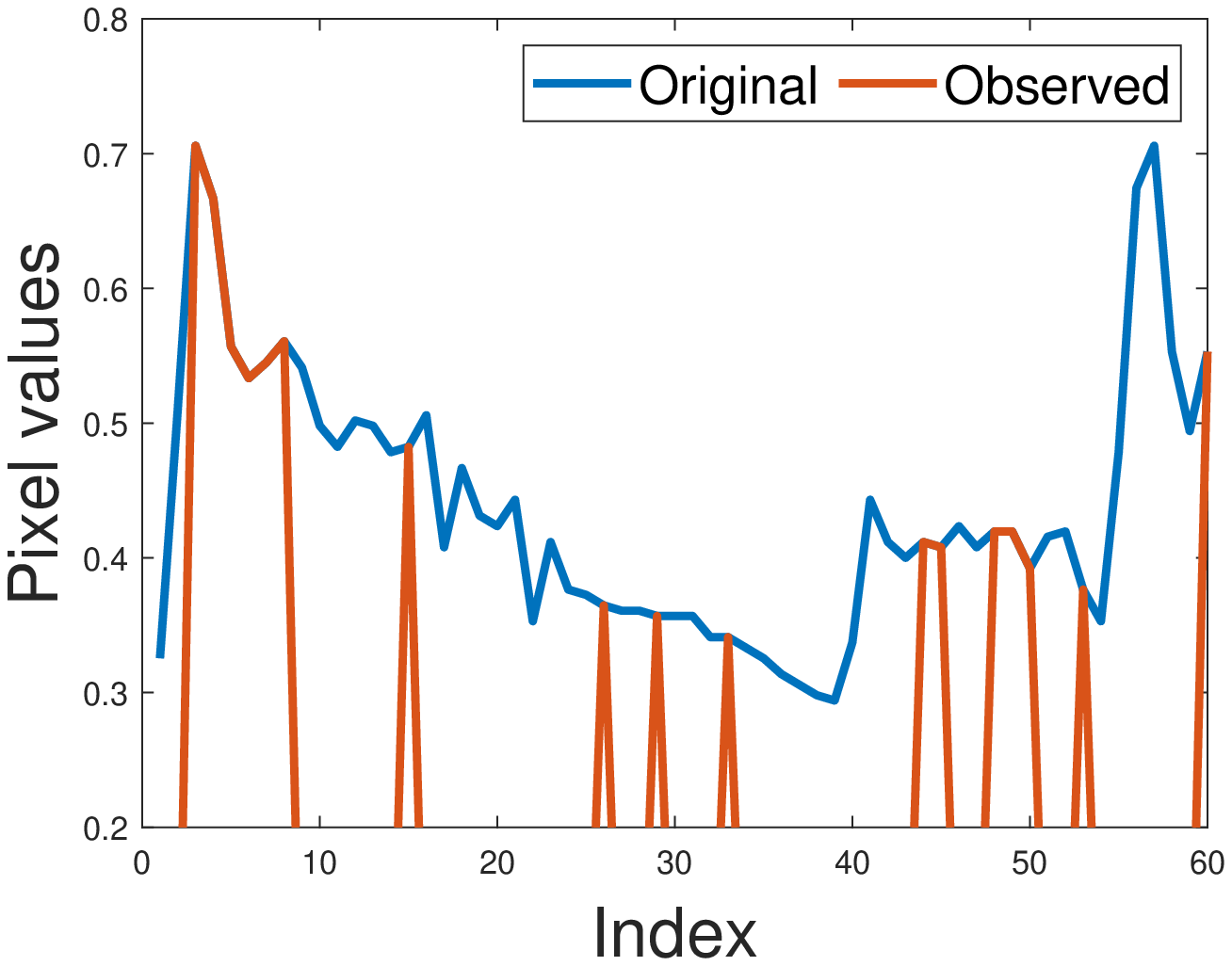}\vspace{0pt}
			\includegraphics[width=\linewidth]{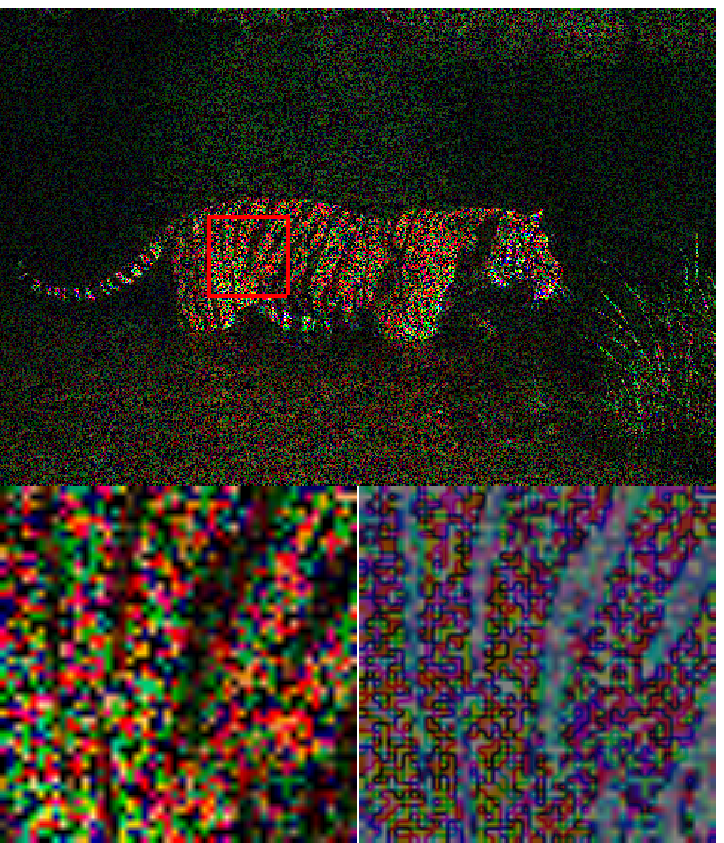}\vspace{0pt}
			\includegraphics[width=\linewidth]{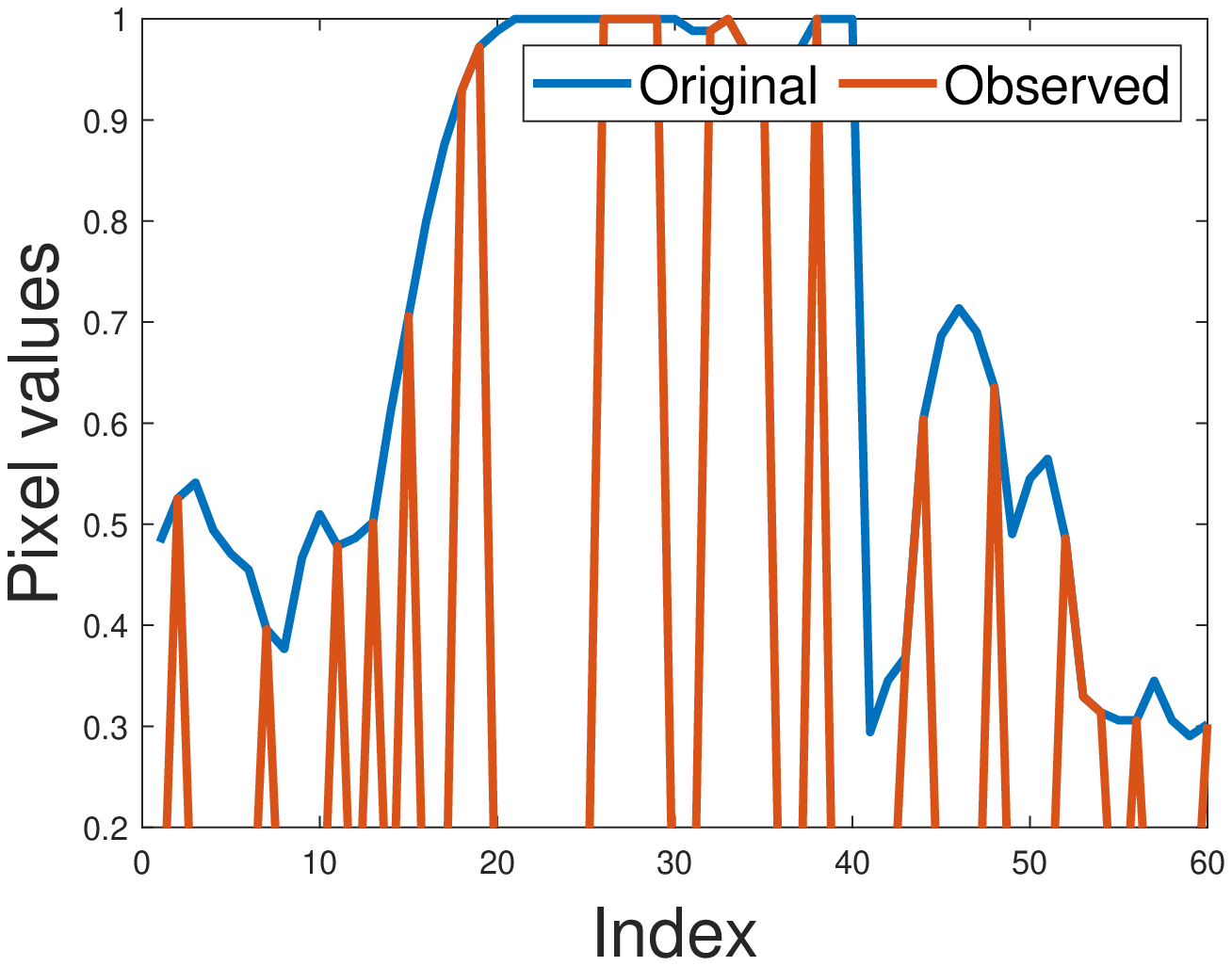}\vspace{0pt}
			\includegraphics[width=\linewidth]{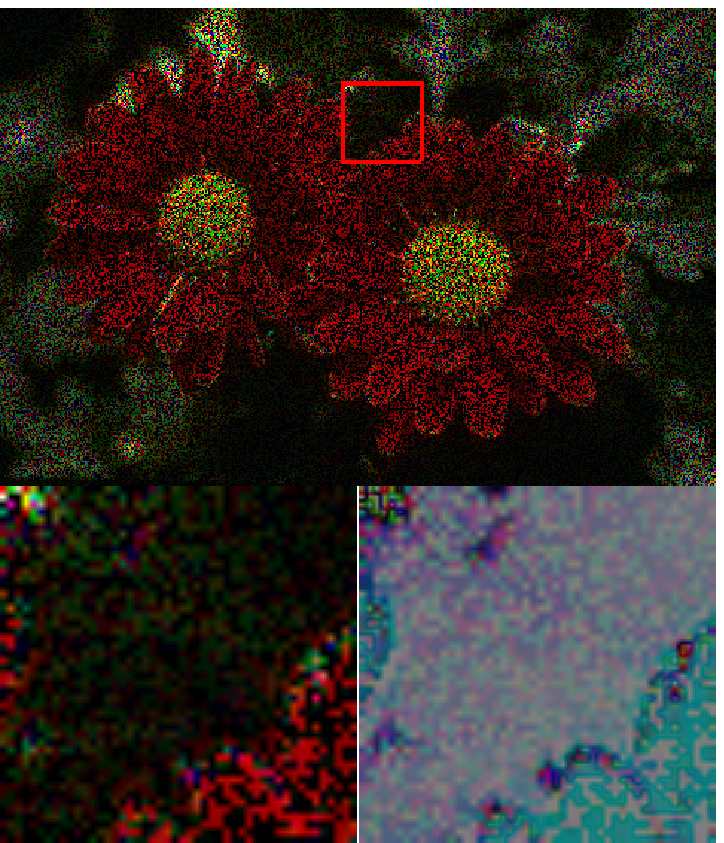}\vspace{0pt}
			\includegraphics[width=\linewidth]{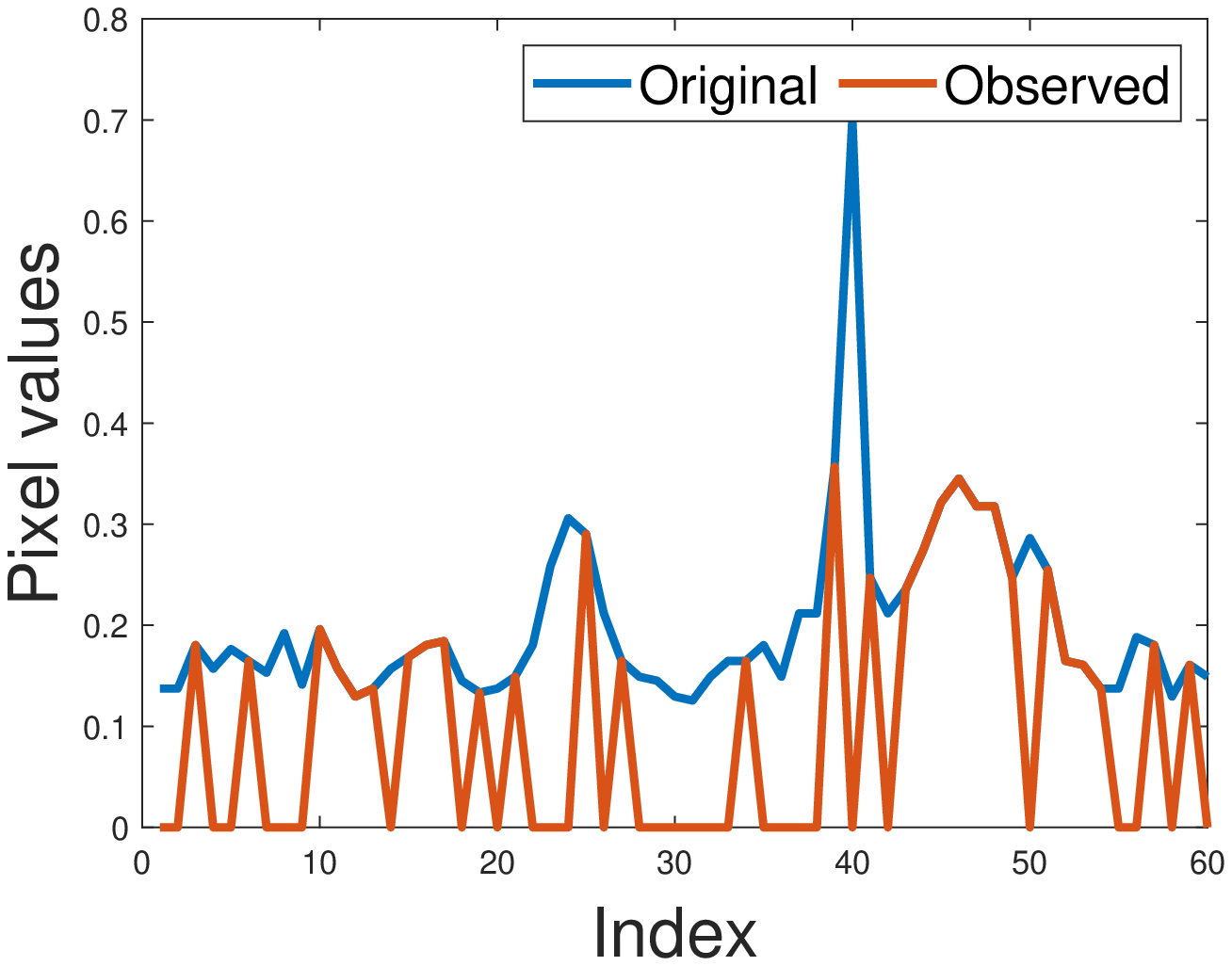}\vspace{0pt}
			\includegraphics[width=\linewidth]{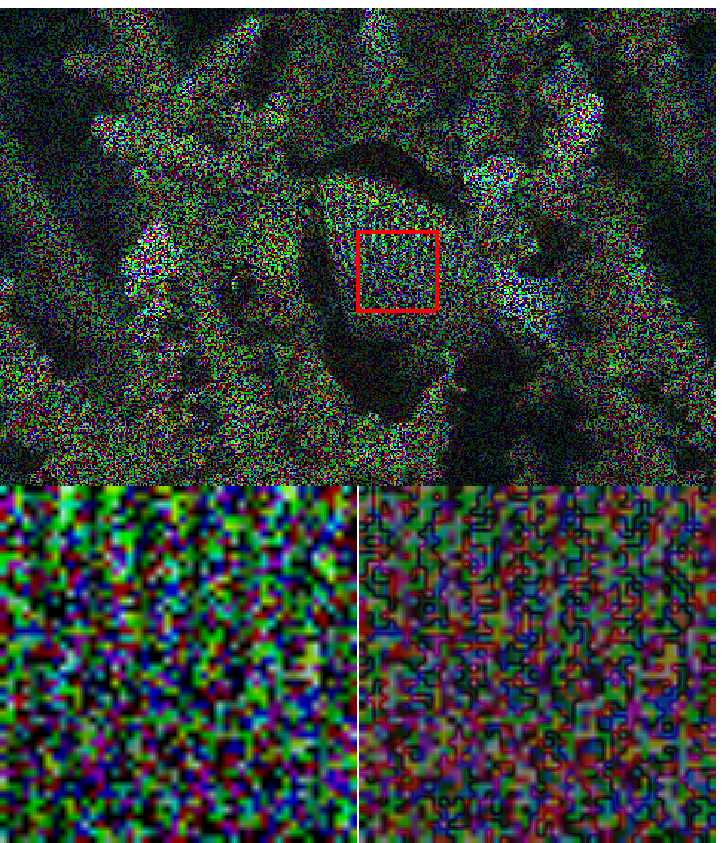}\vspace{0pt}
			\includegraphics[width=\linewidth]{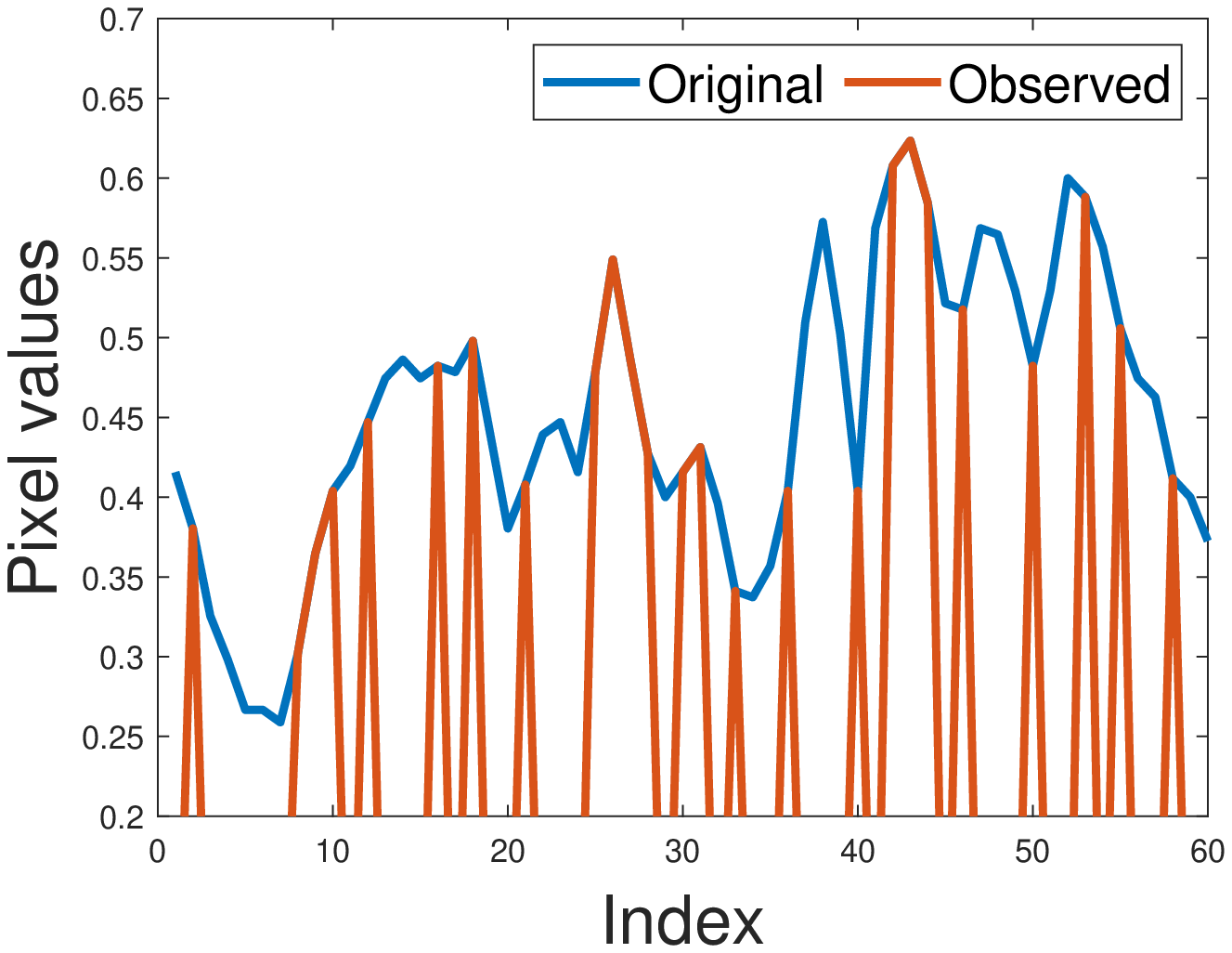}
			\caption{Observed}
		\end{subfigure}
		\begin{subfigure}[b]{0.138\linewidth}
			\centering
			\includegraphics[width=\linewidth]{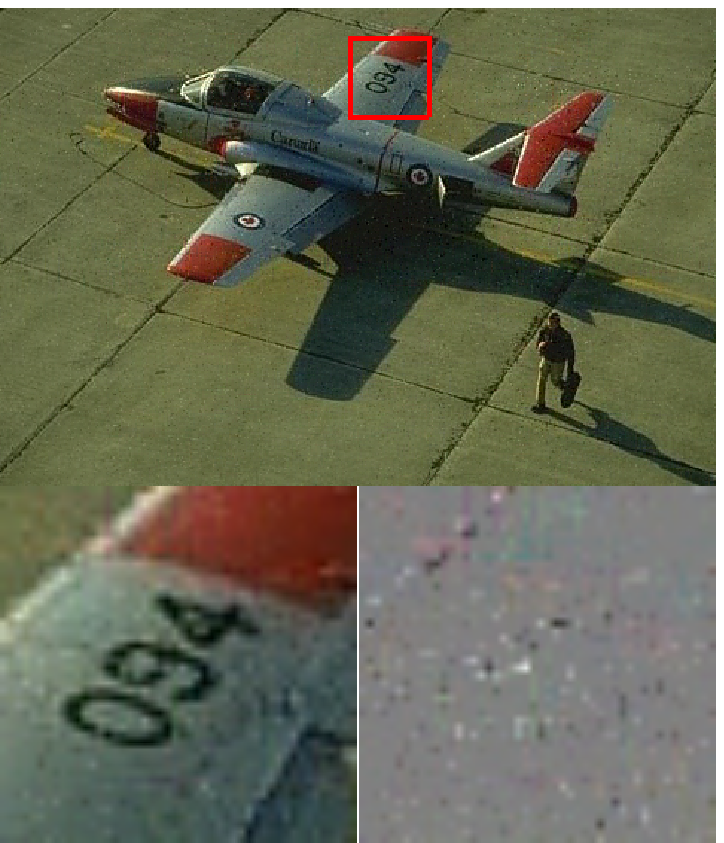}\vspace{0pt}
			\includegraphics[width=\linewidth]{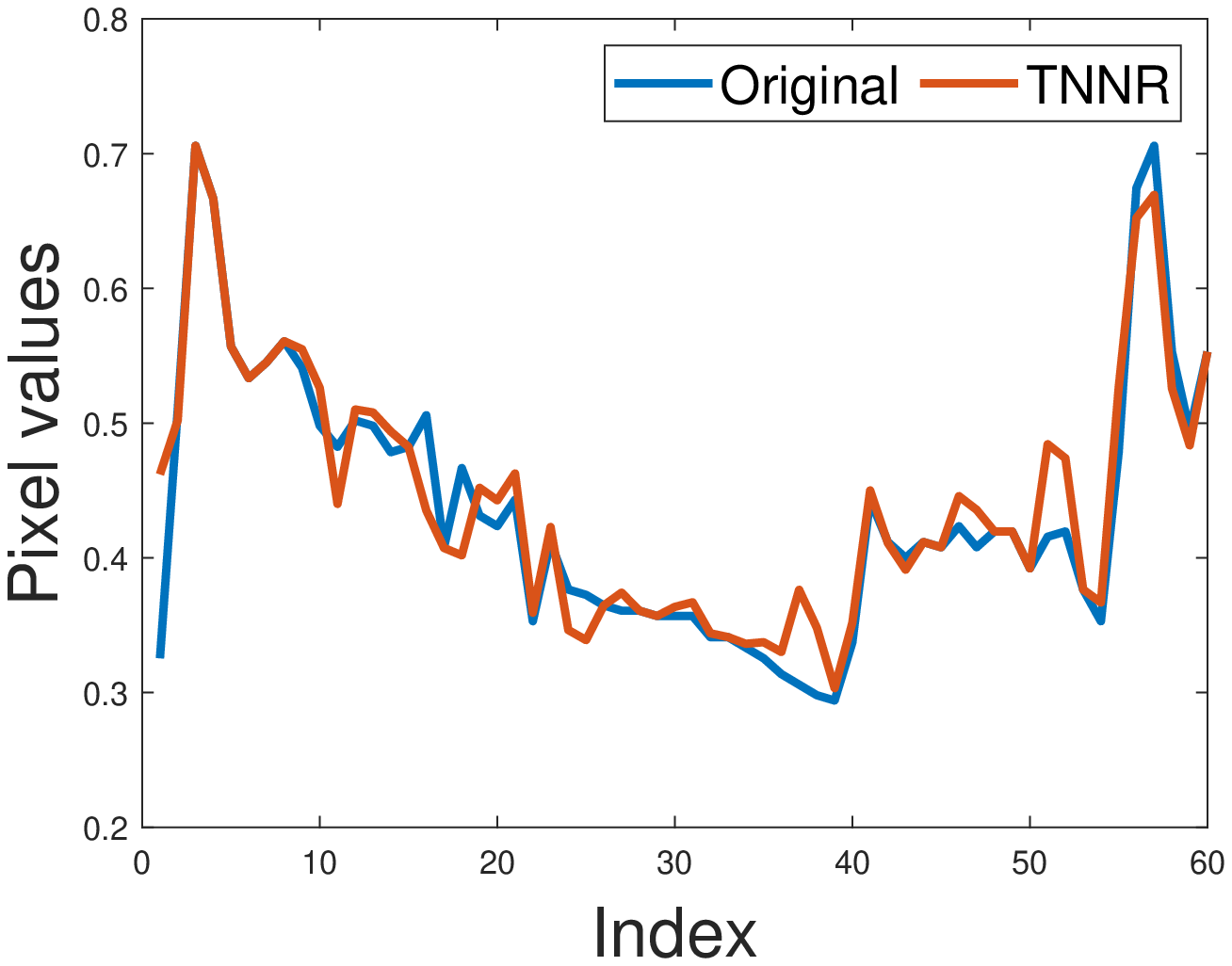}\vspace{0pt}
			\includegraphics[width=\linewidth]{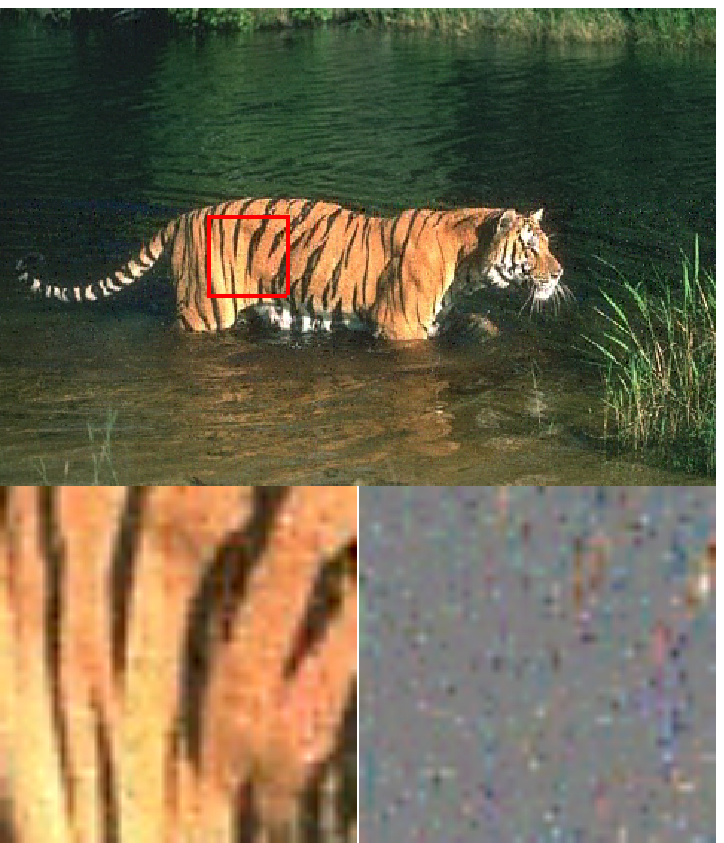}\vspace{0pt}
			\includegraphics[width=\linewidth]{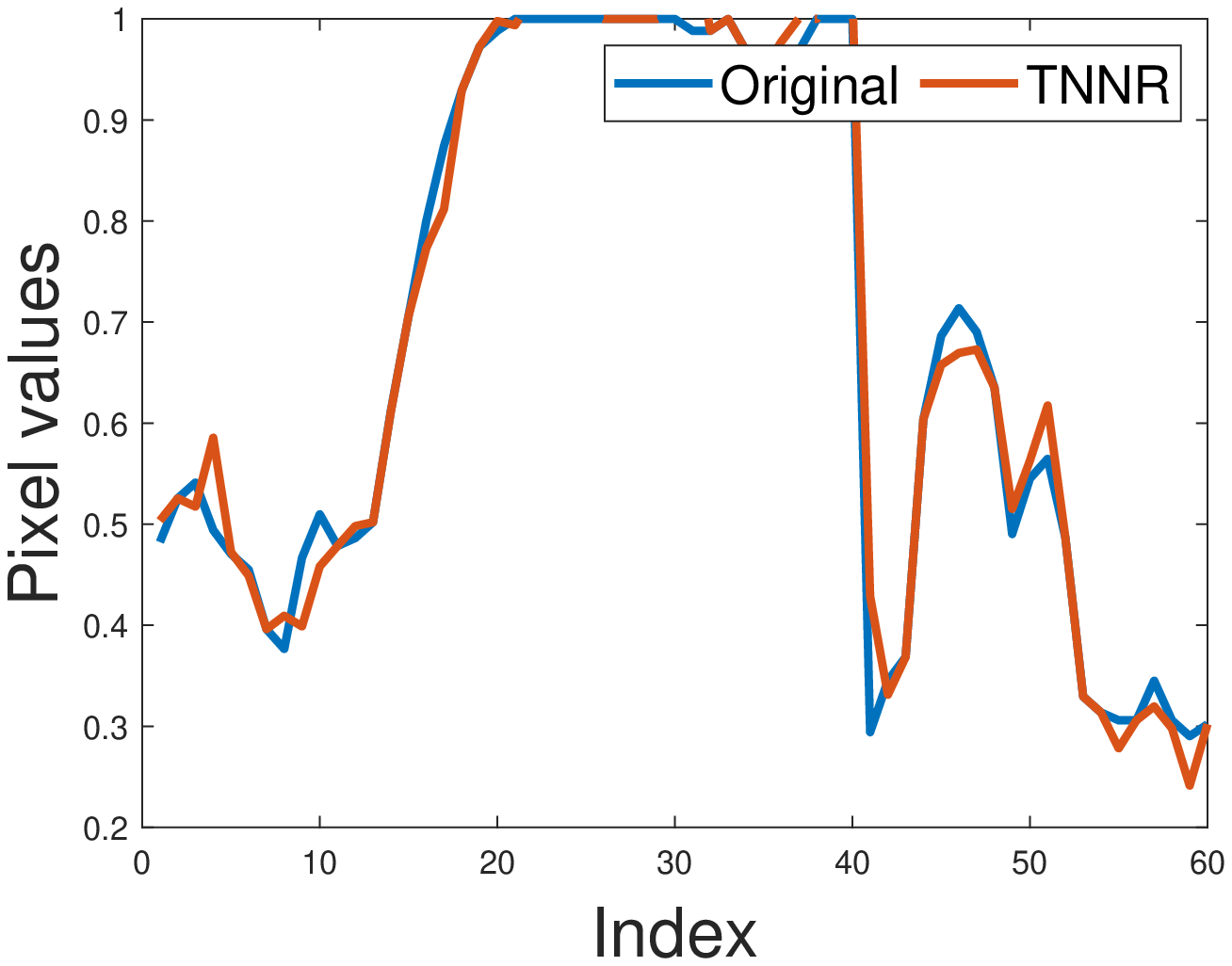}\vspace{0pt}
			\includegraphics[width=\linewidth]{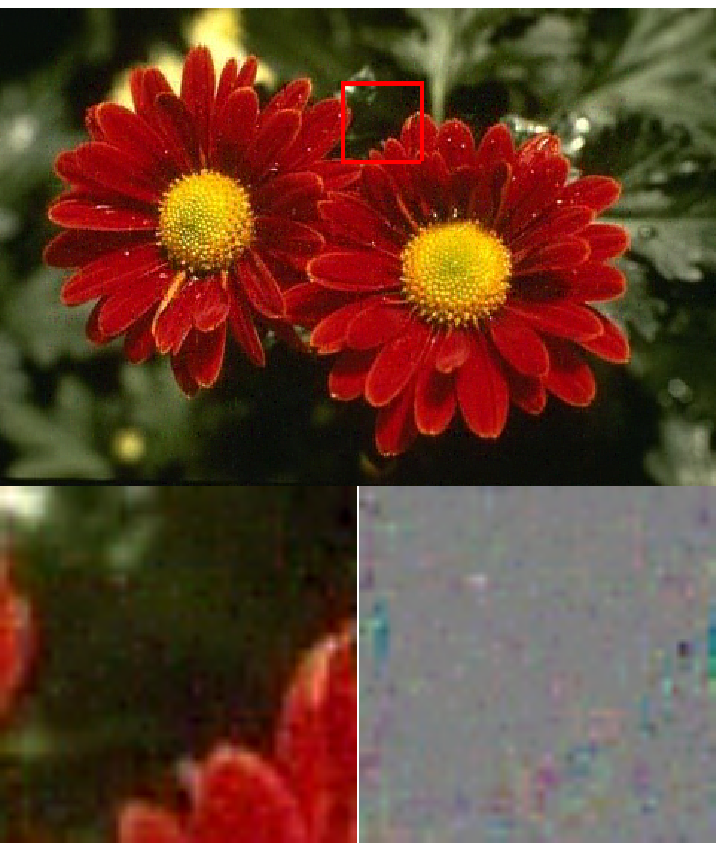}\vspace{0pt}
			\includegraphics[width=\linewidth]{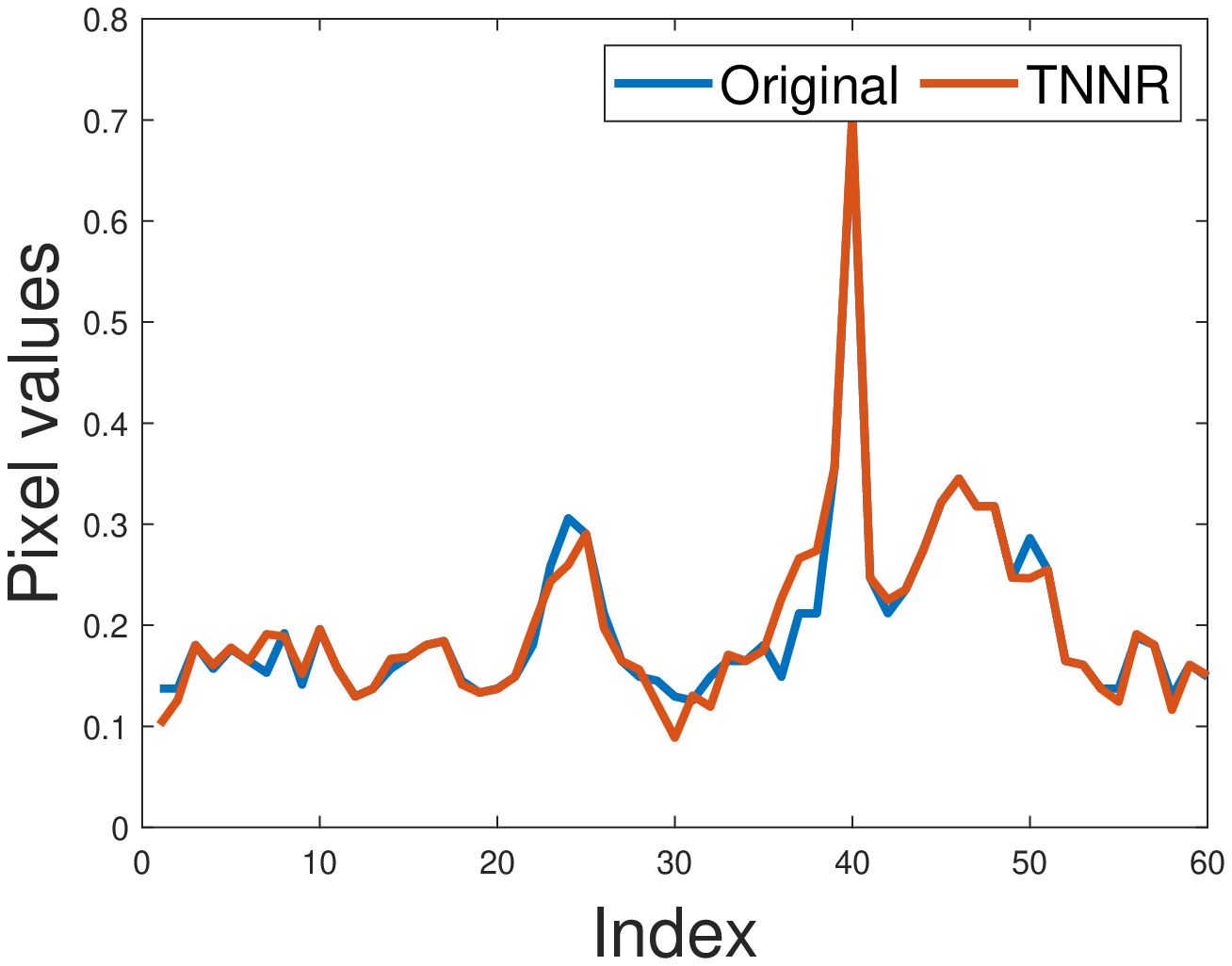}\vspace{0pt}
			\includegraphics[width=\linewidth]{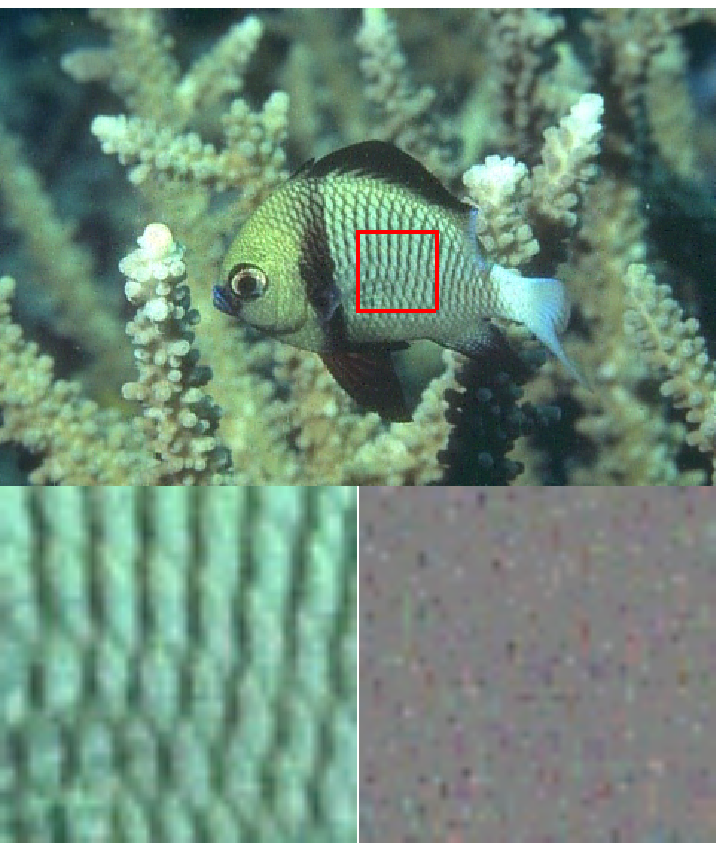}\vspace{0pt}
			\includegraphics[width=\linewidth]{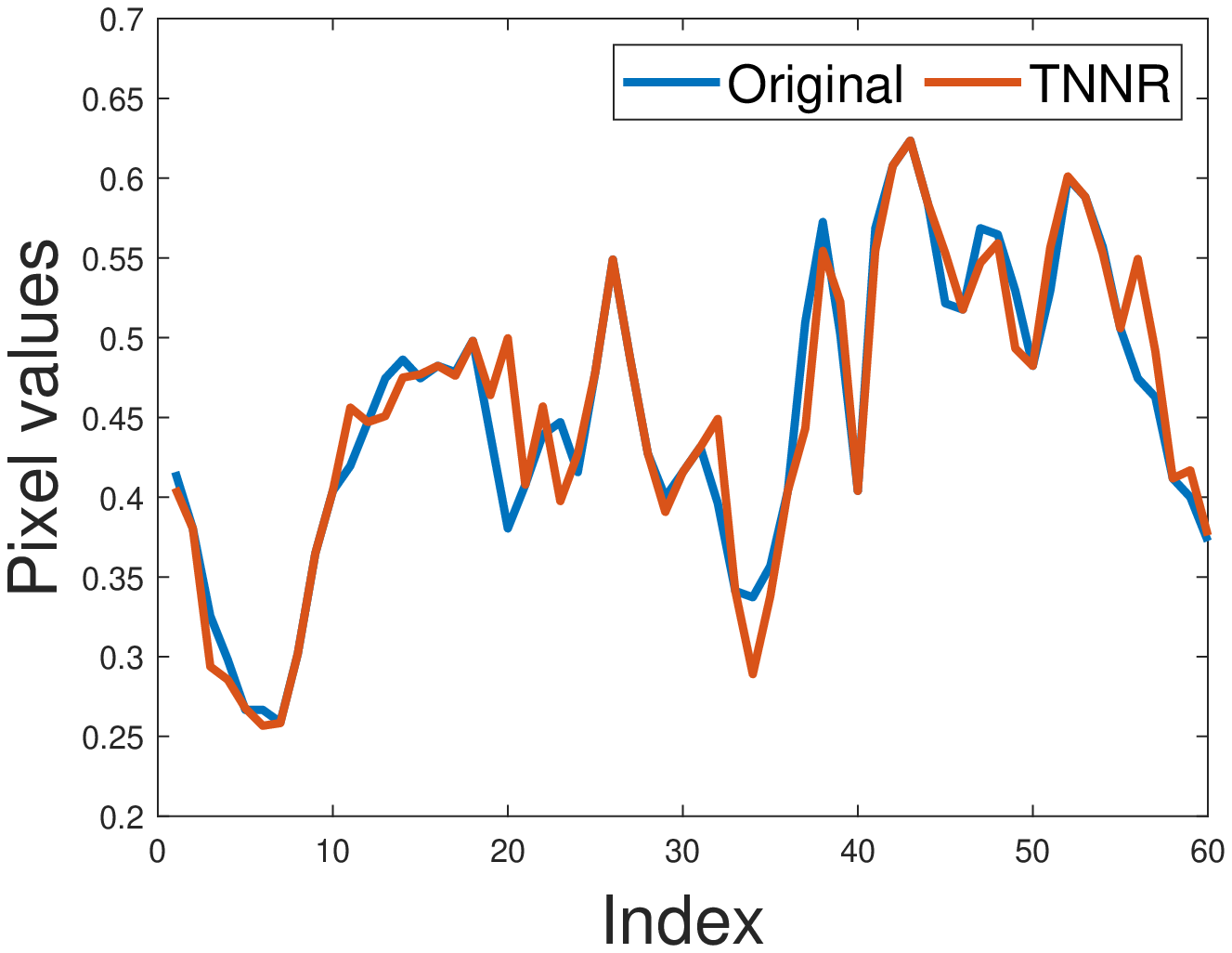}
			\caption{TNNR}
		\end{subfigure}
		\begin{subfigure}[b]{0.138\linewidth}
			\centering		
			\includegraphics[width=\linewidth]{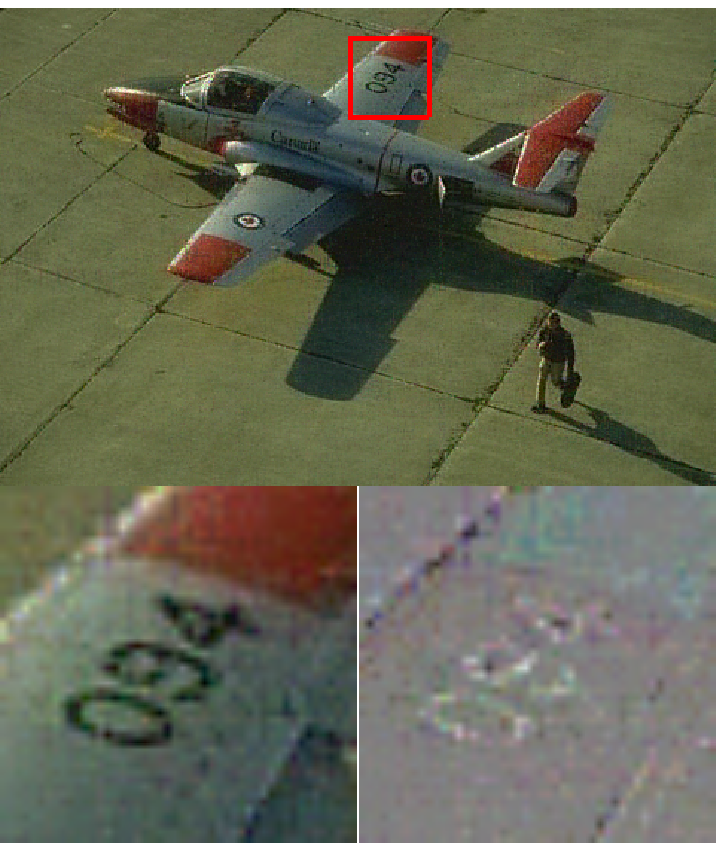}\vspace{0pt}
			\includegraphics[width=\linewidth]{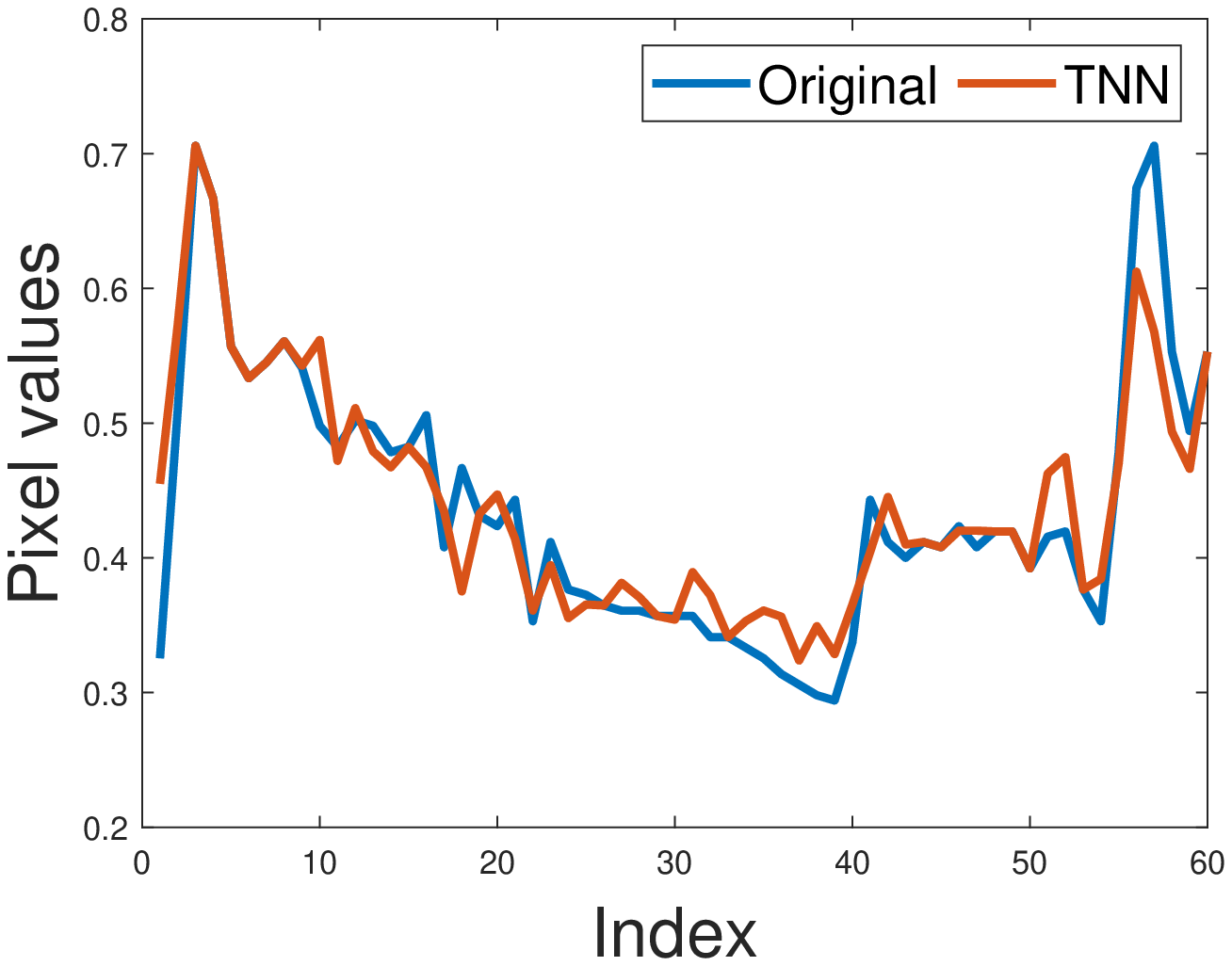}\vspace{0pt}
			\includegraphics[width=\linewidth]{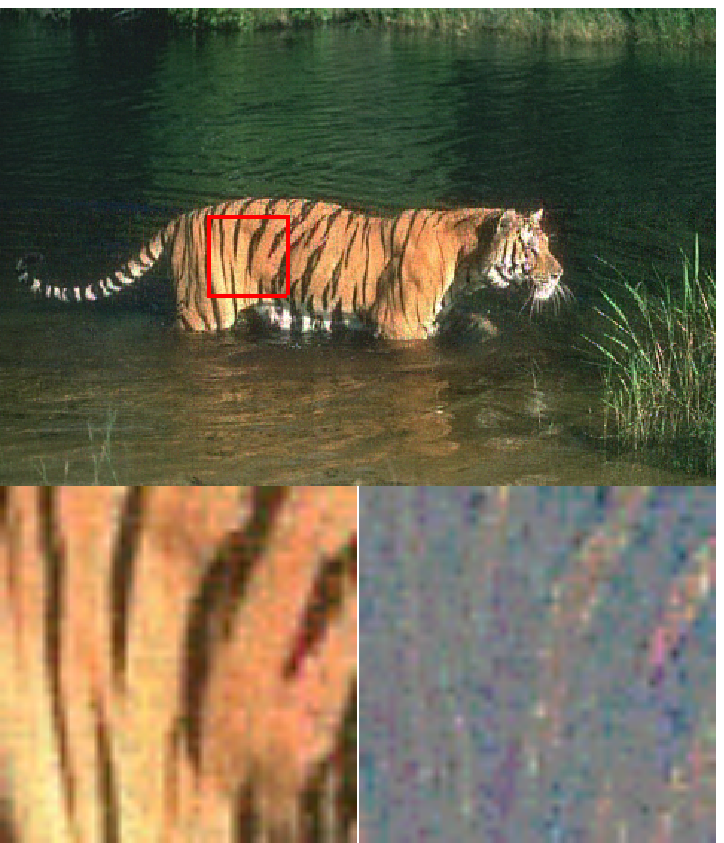}\vspace{0pt}
			\includegraphics[width=\linewidth]{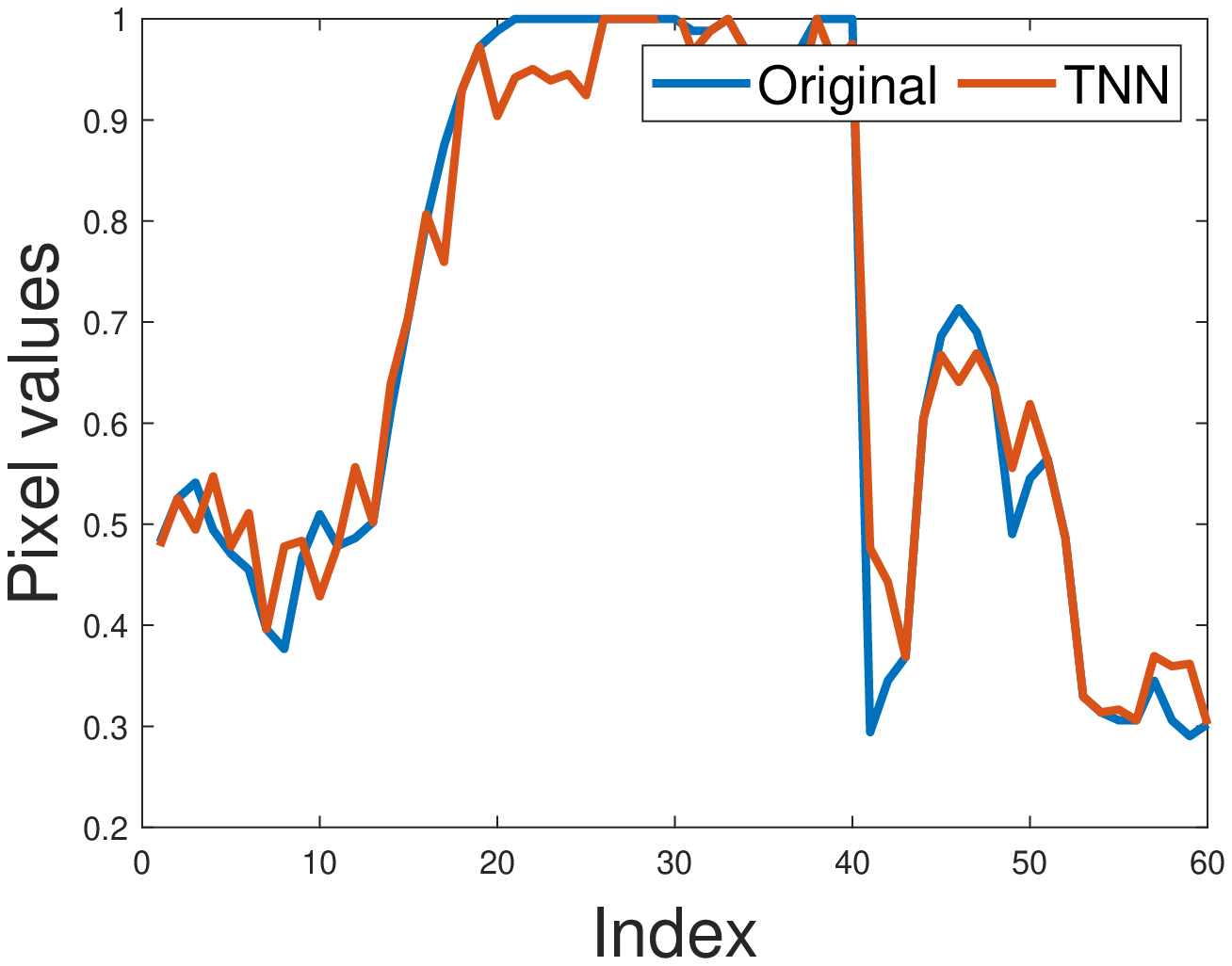}\vspace{0pt}
			\includegraphics[width=\linewidth]{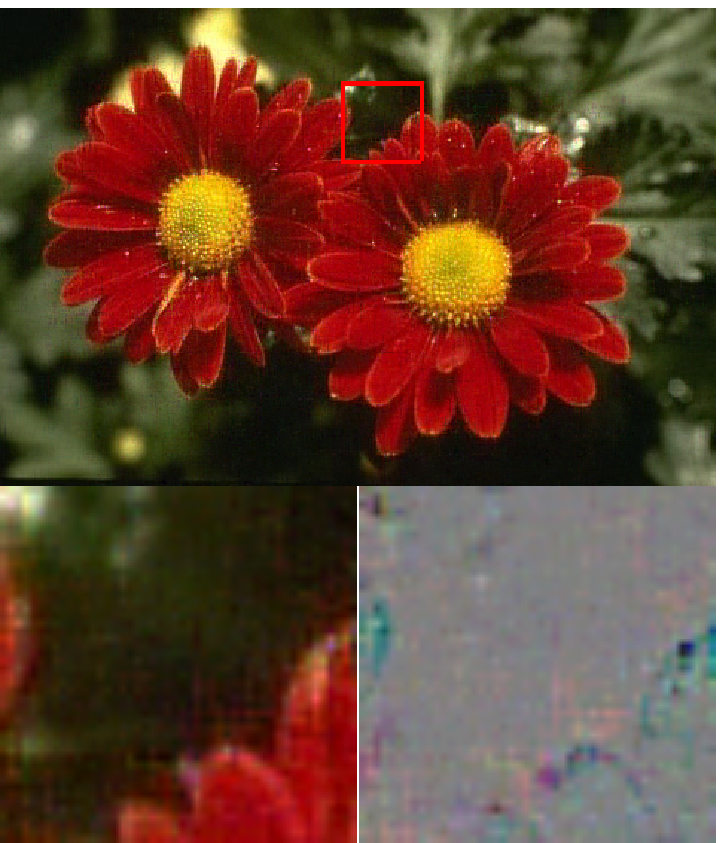}\vspace{0pt}
			\includegraphics[width=\linewidth]{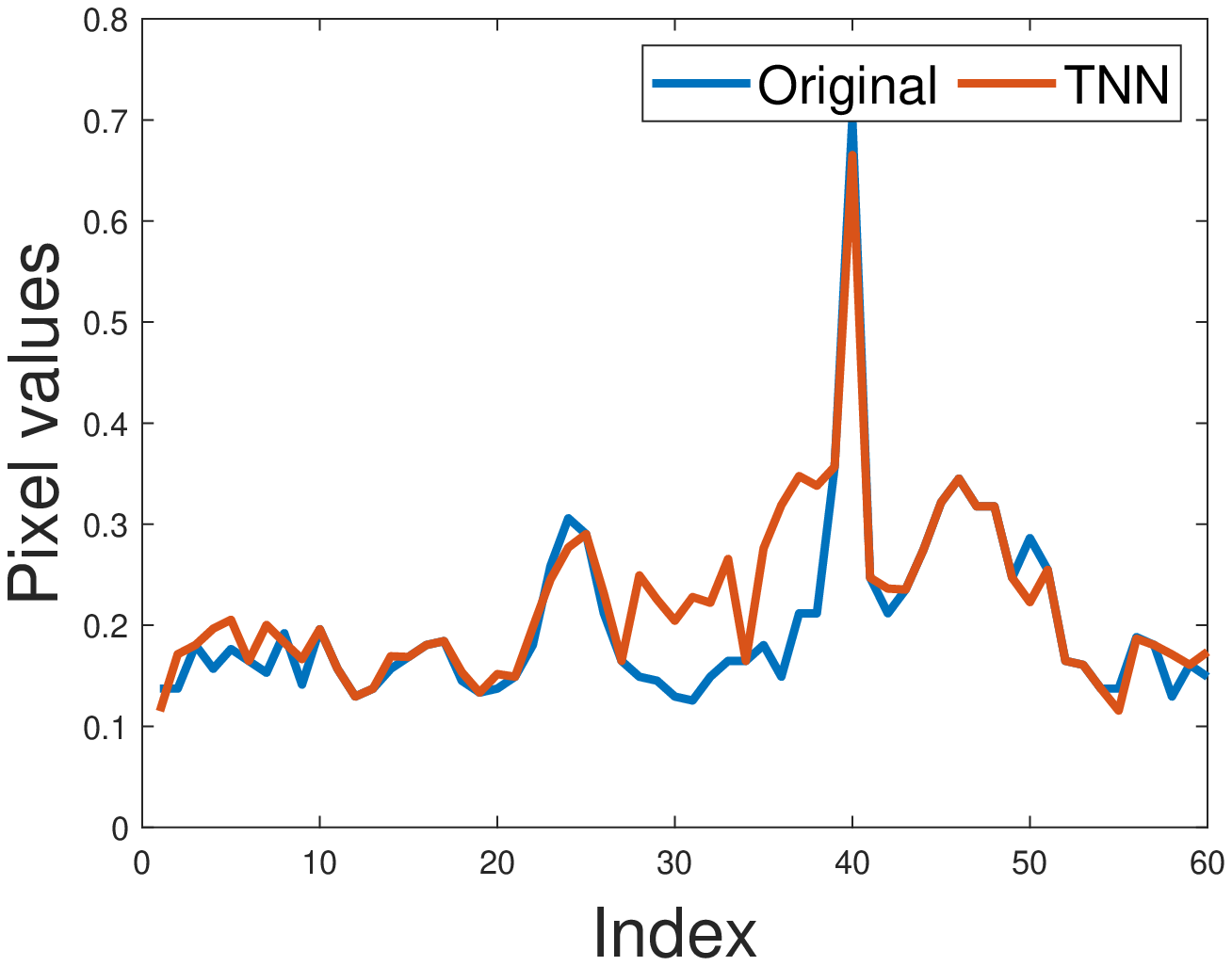}\vspace{0pt}
			\includegraphics[width=\linewidth]{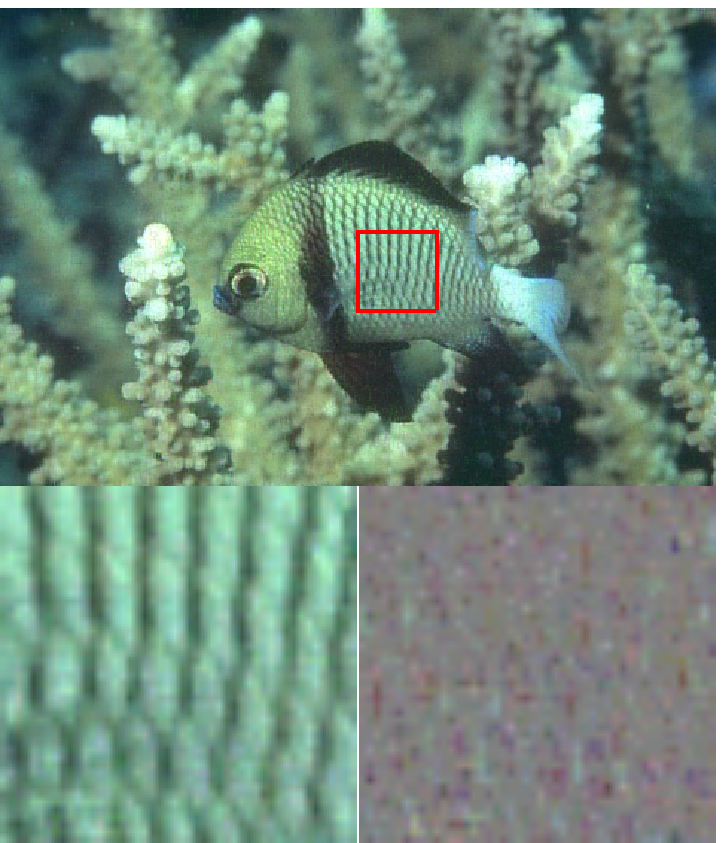}\vspace{0pt}
			\includegraphics[width=\linewidth]{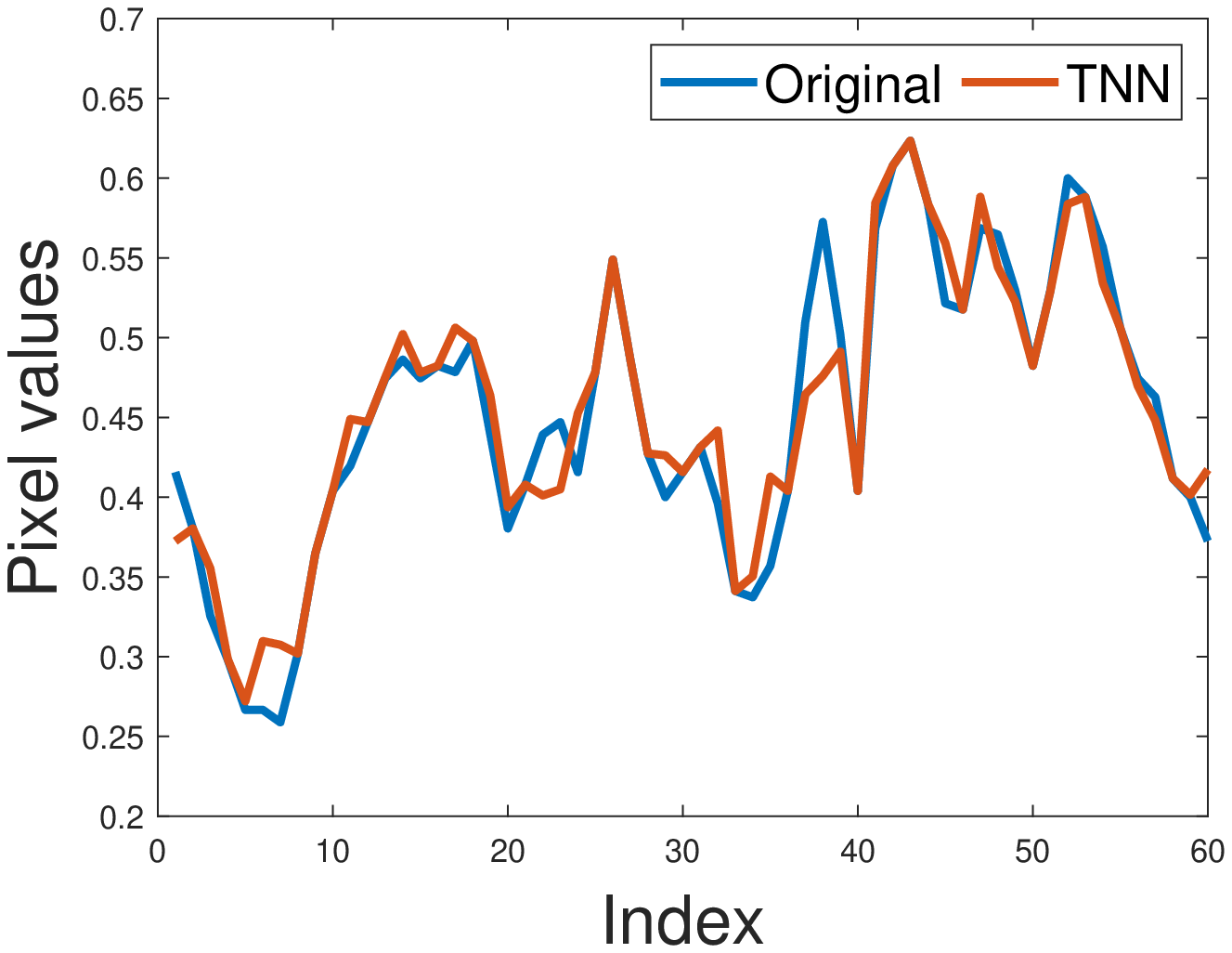}
			\caption{TNN}
		\end{subfigure}
		\begin{subfigure}[b]{0.138\linewidth}
			\centering			
			\includegraphics[width=\linewidth]{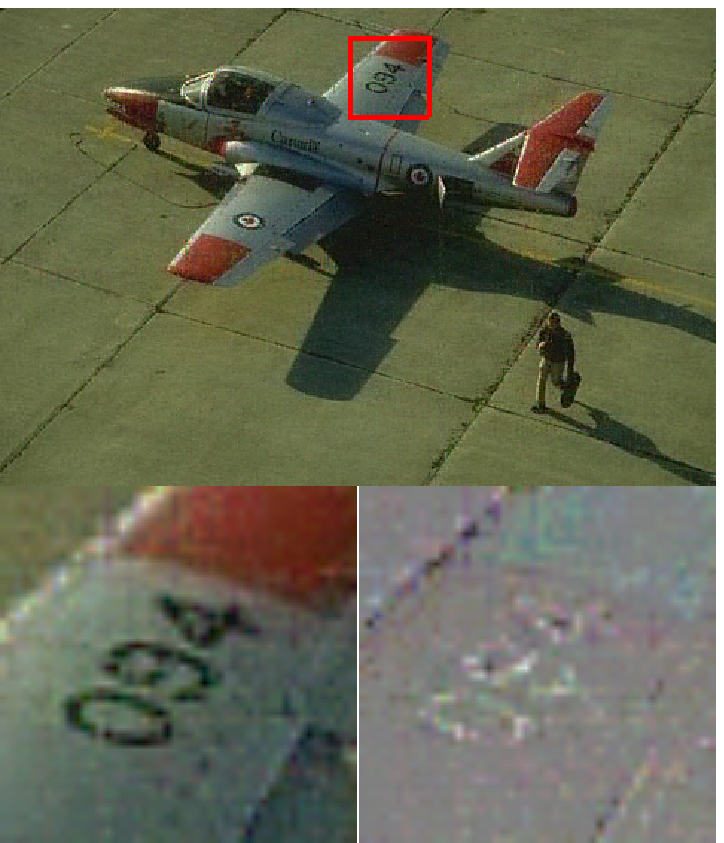}\vspace{0pt}
			\includegraphics[width=\linewidth]{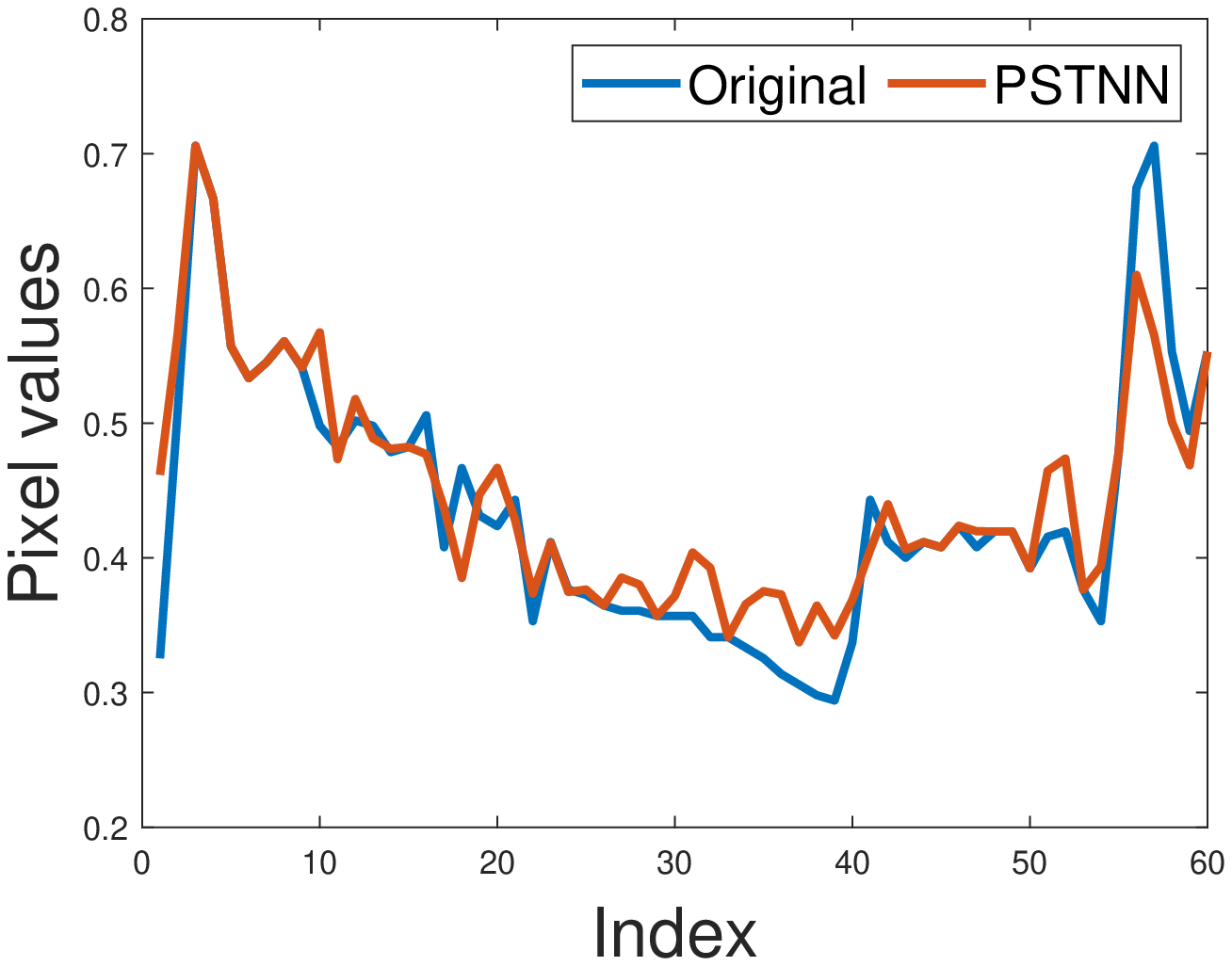}\vspace{0pt}
			\includegraphics[width=\linewidth]{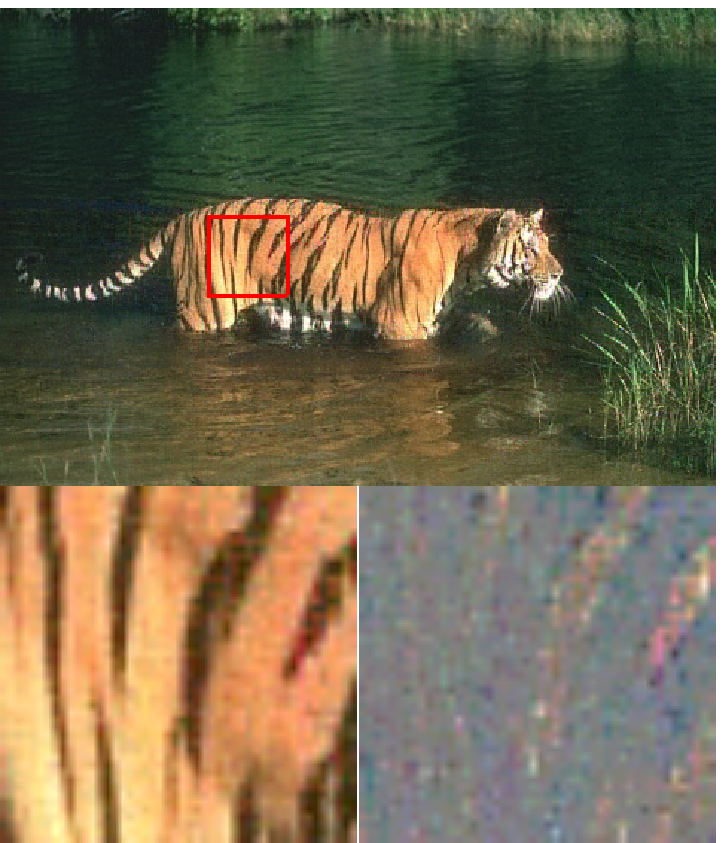}\vspace{0pt}
			\includegraphics[width=\linewidth]{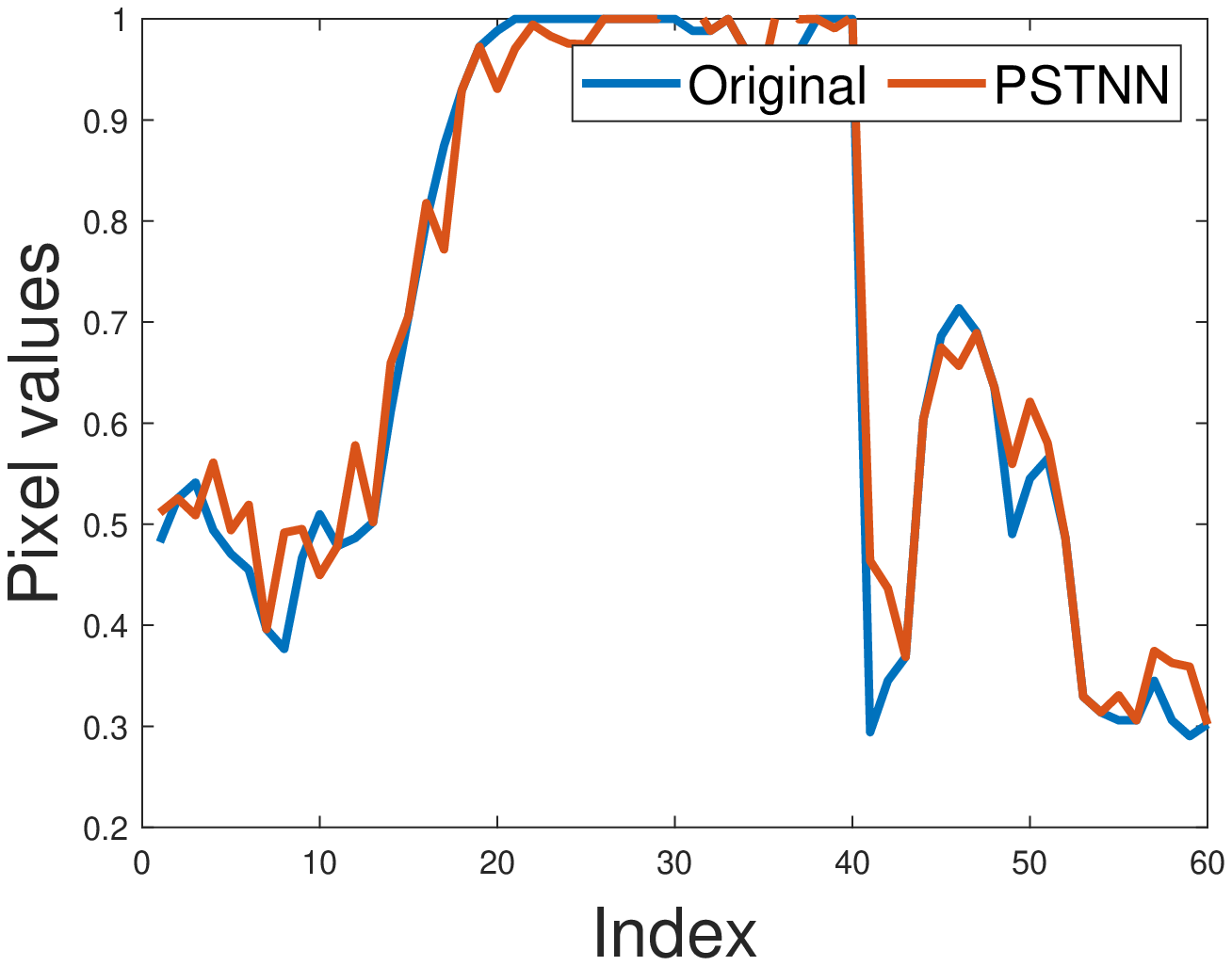}\vspace{0pt}
			\includegraphics[width=\linewidth]{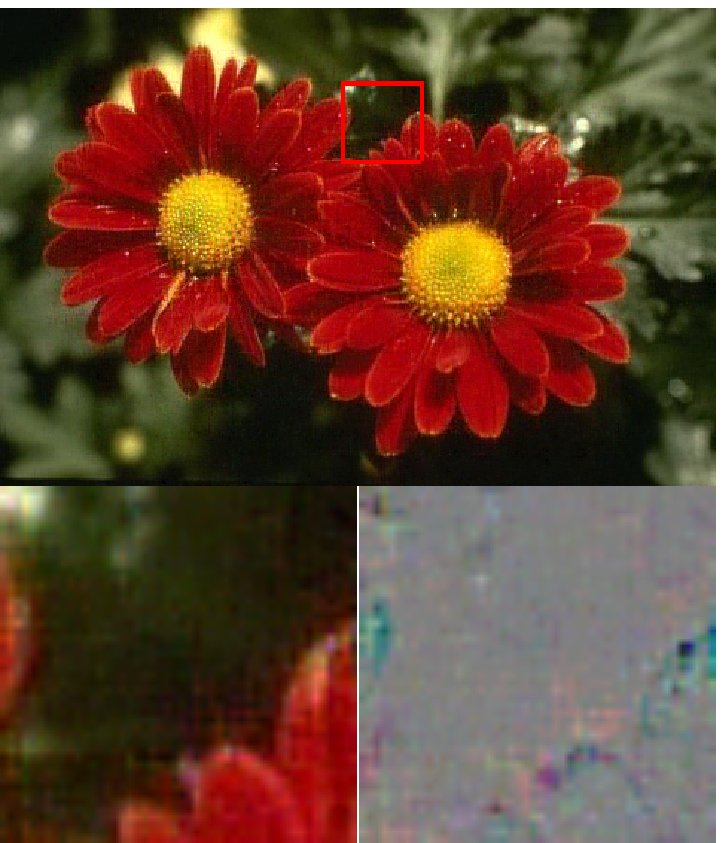}\vspace{0pt}
			\includegraphics[width=\linewidth]{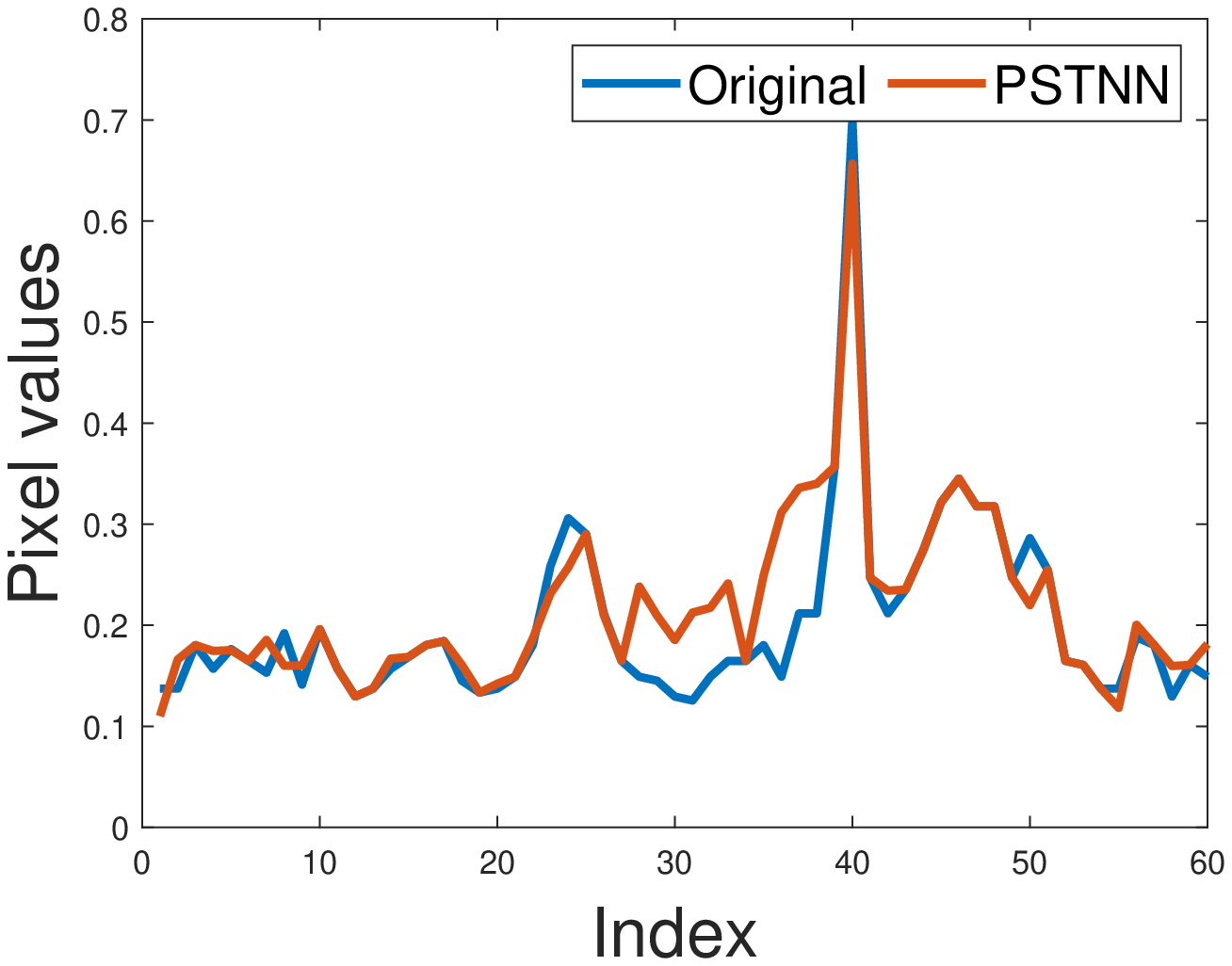}\vspace{0pt}
			\includegraphics[width=\linewidth]{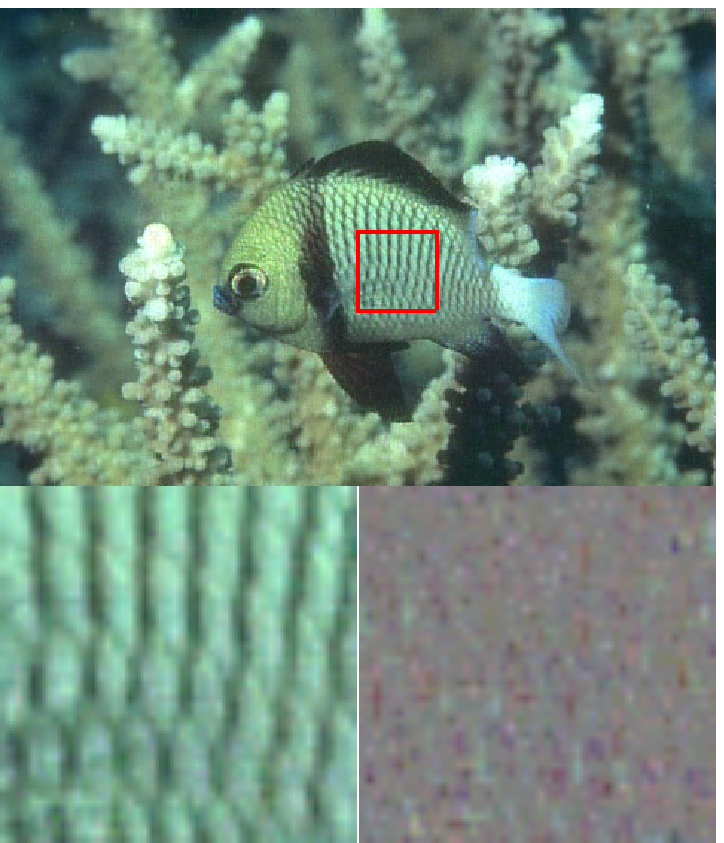}\vspace{0pt}
			\includegraphics[width=\linewidth]{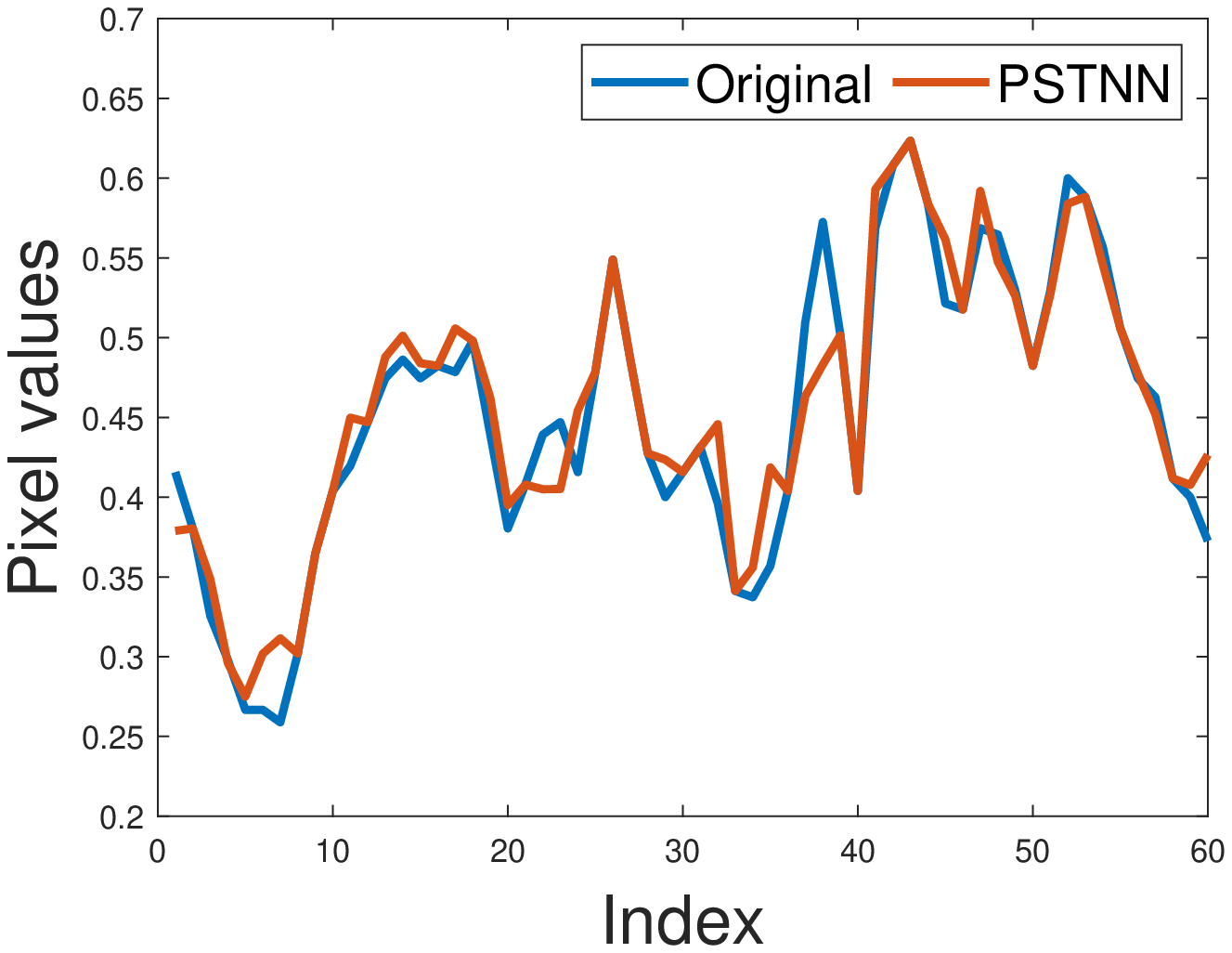}
			\caption{PSTNN}
		\end{subfigure}
		\begin{subfigure}[b]{0.138\linewidth}
			\centering			
			\includegraphics[width=\linewidth]{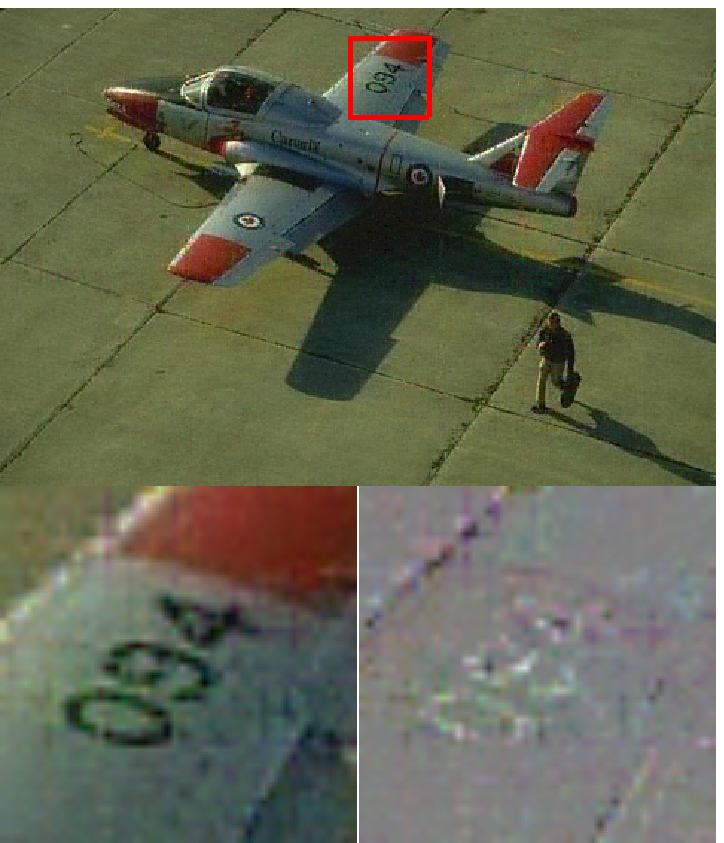}\vspace{0pt}
			\includegraphics[width=\linewidth]{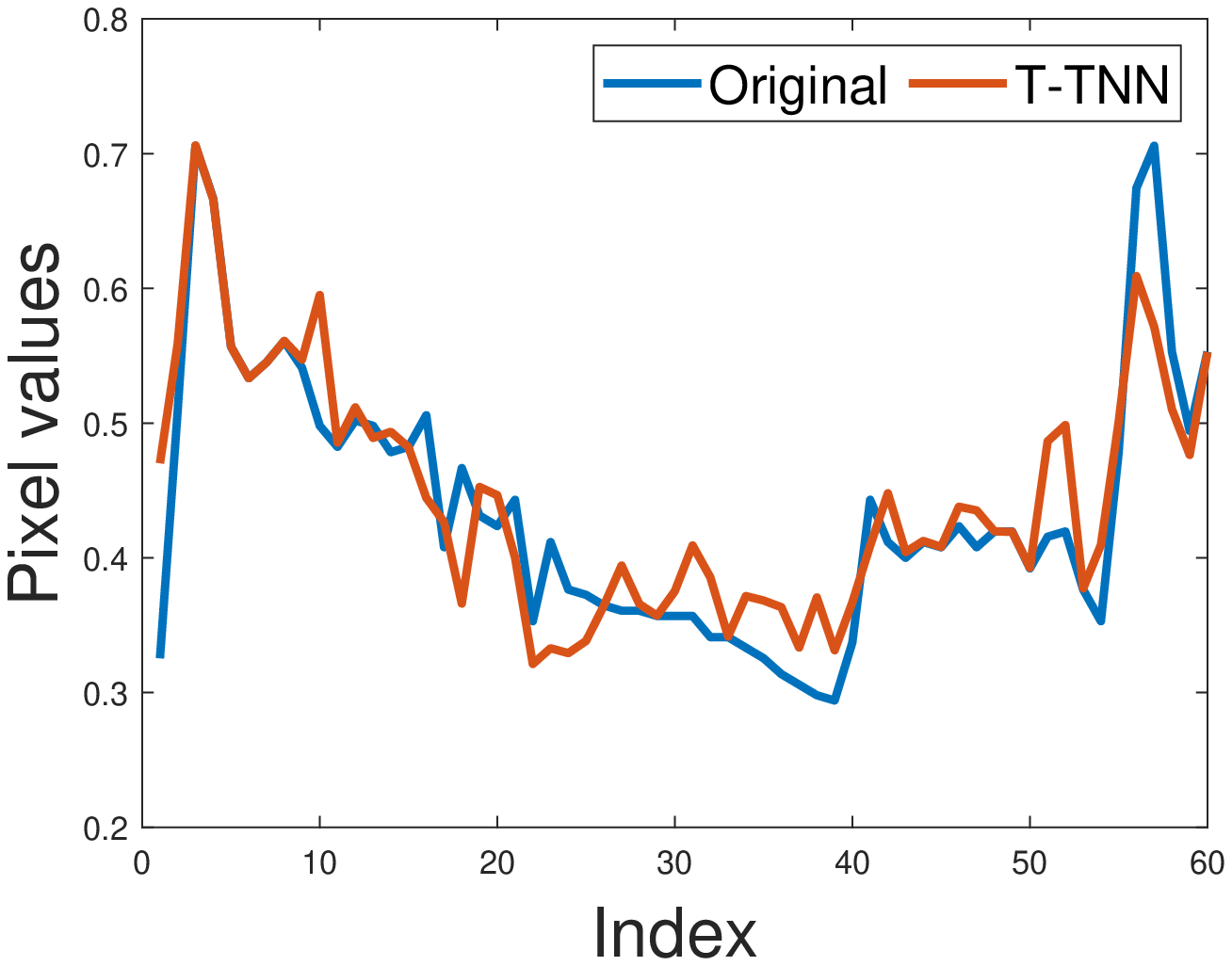}\vspace{0pt}
			\includegraphics[width=\linewidth]{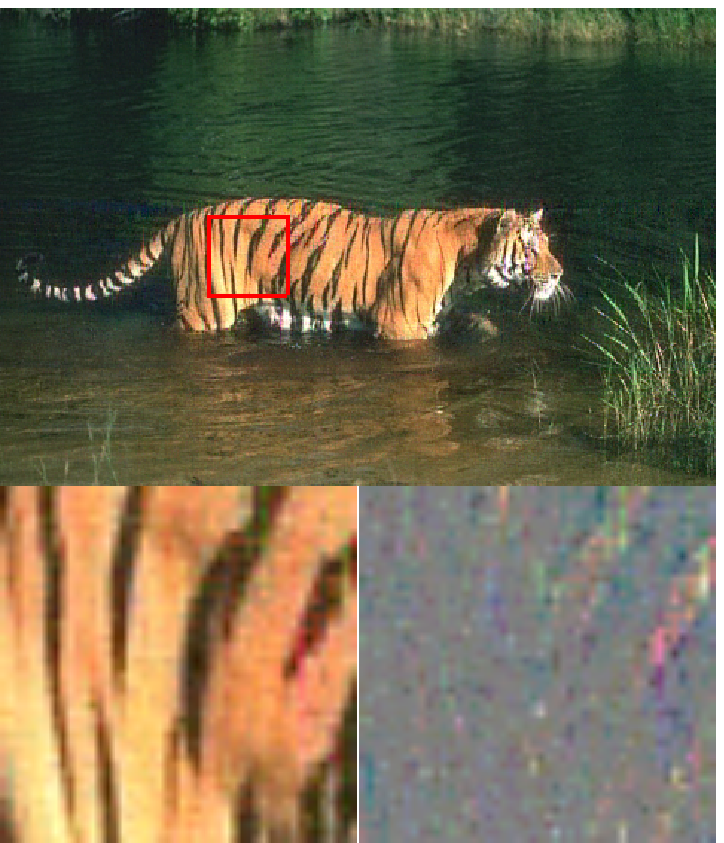}\vspace{0pt}
			\includegraphics[width=\linewidth]{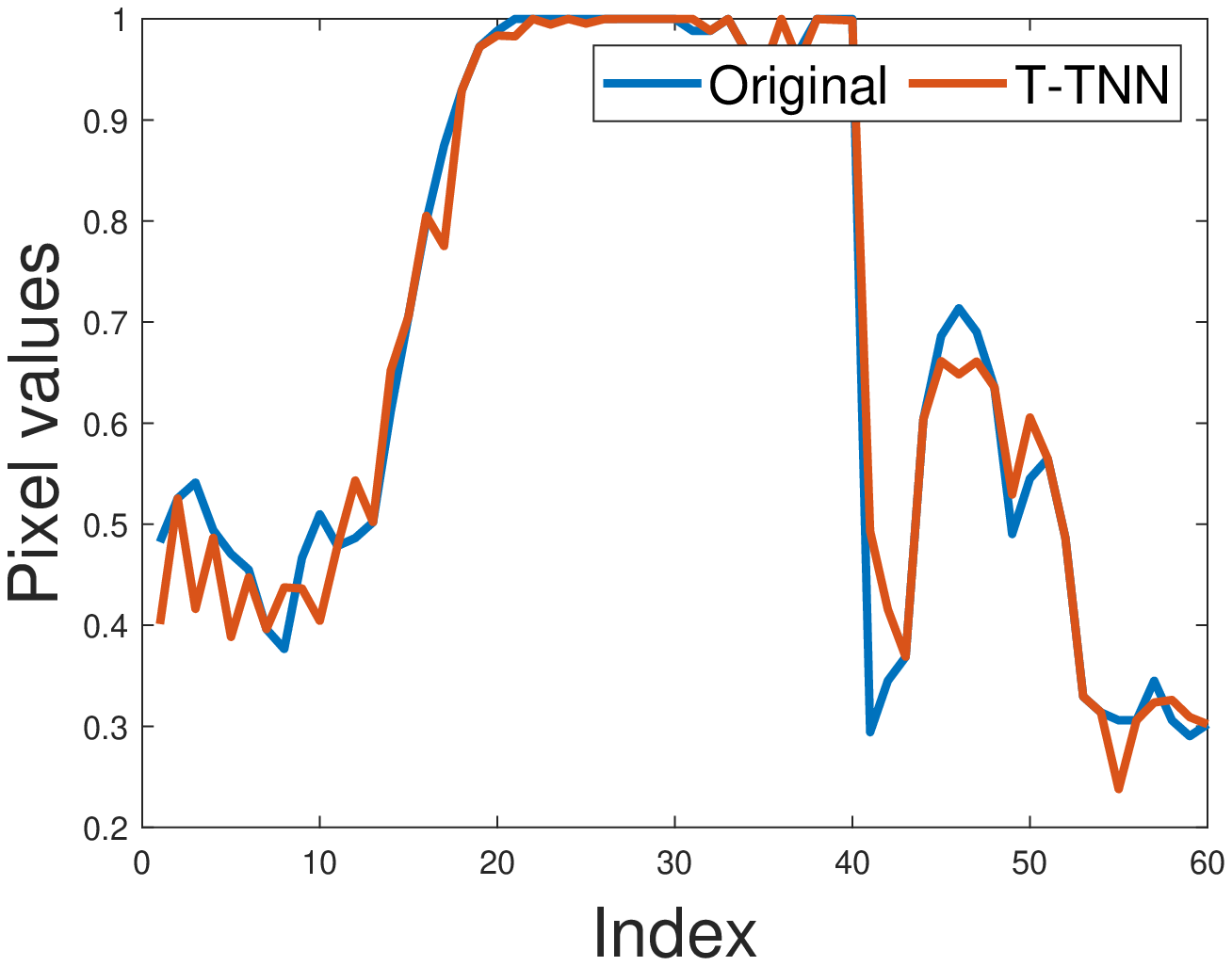}\vspace{0pt}
			\includegraphics[width=\linewidth]{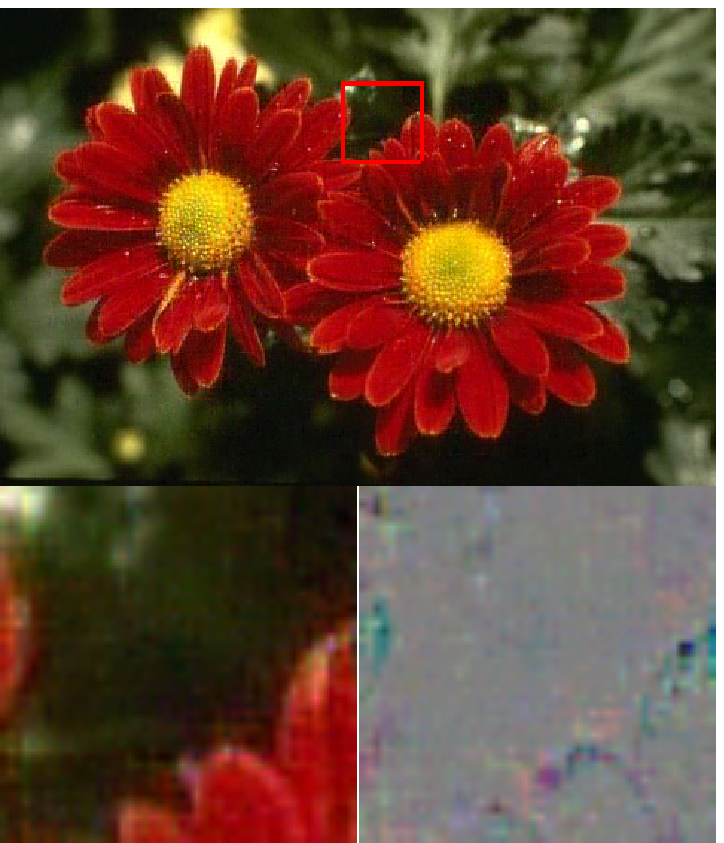}\vspace{0pt}
			\includegraphics[width=\linewidth]{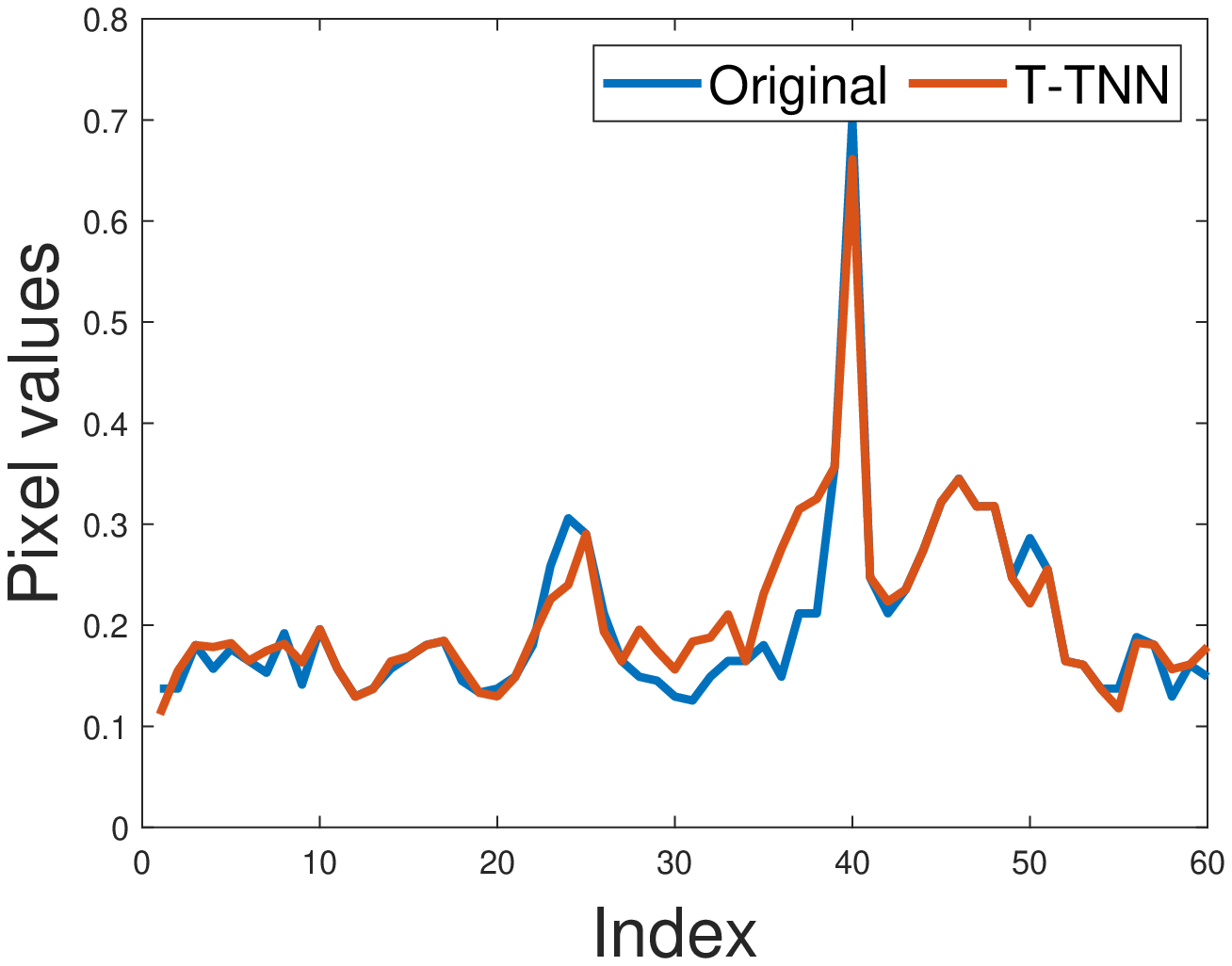}\vspace{0pt}
			\includegraphics[width=\linewidth]{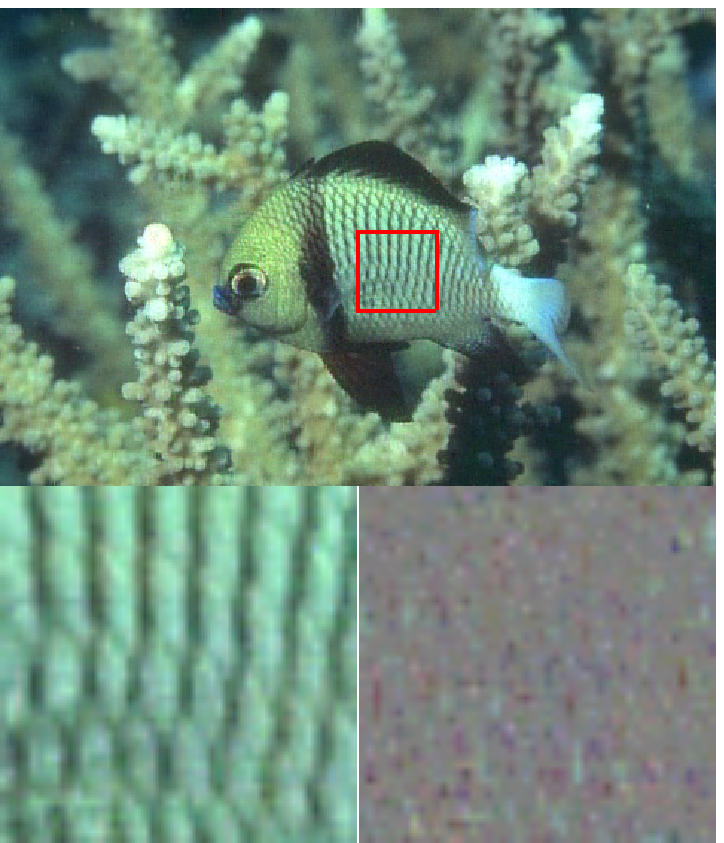}\vspace{0pt}
			\includegraphics[width=\linewidth]{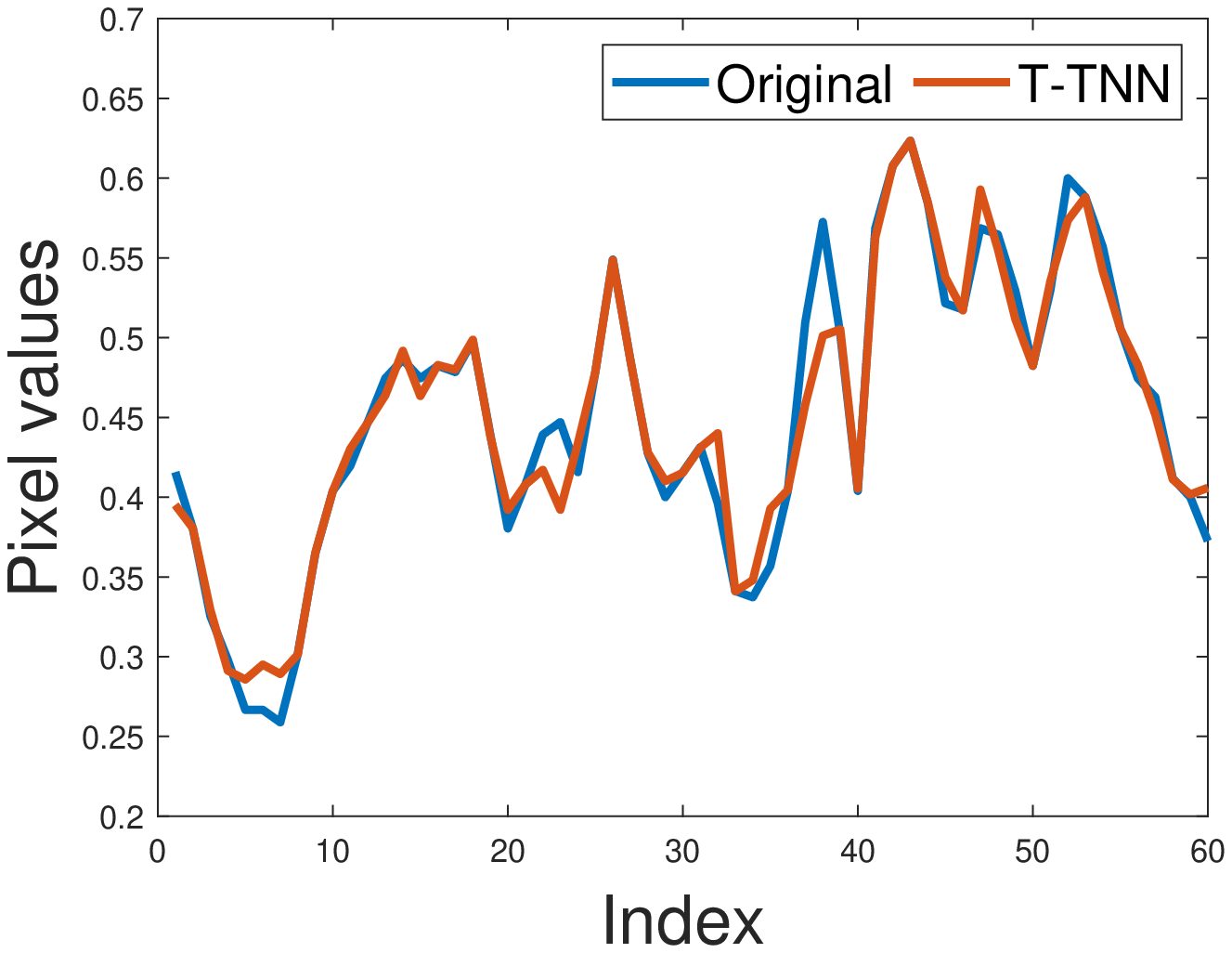}
			\caption{T-TNN}
		\end{subfigure}	
		\begin{subfigure}[b]{0.138\linewidth}
			\centering
			\includegraphics[width=\linewidth]{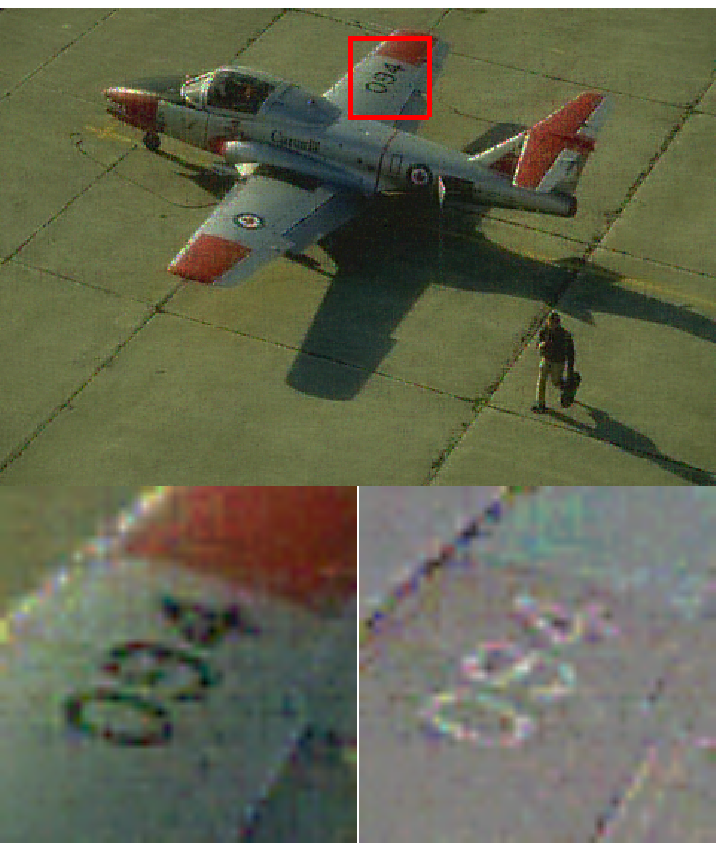}\vspace{0pt}
			\includegraphics[width=\linewidth]{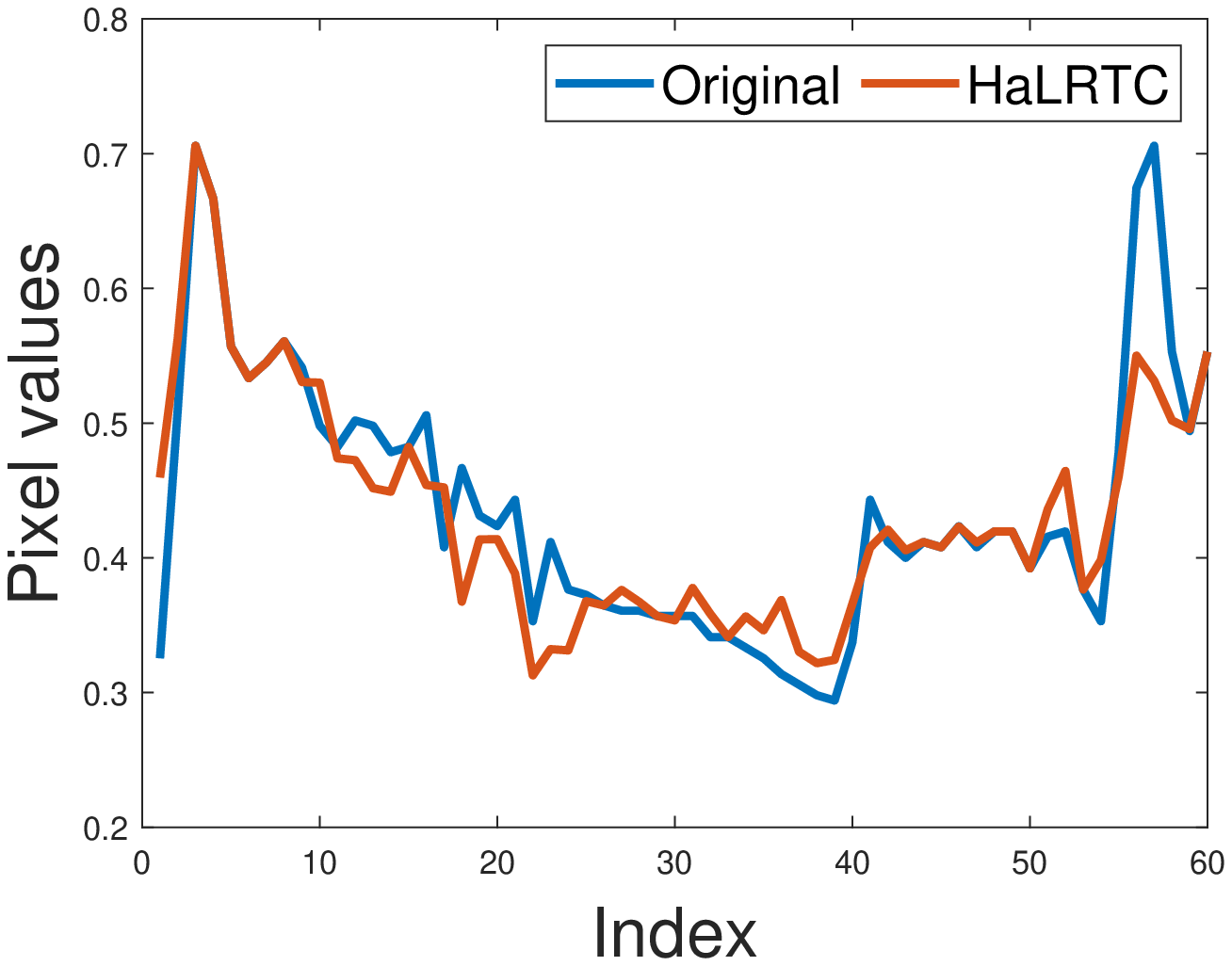}\vspace{0pt}
			\includegraphics[width=\linewidth]{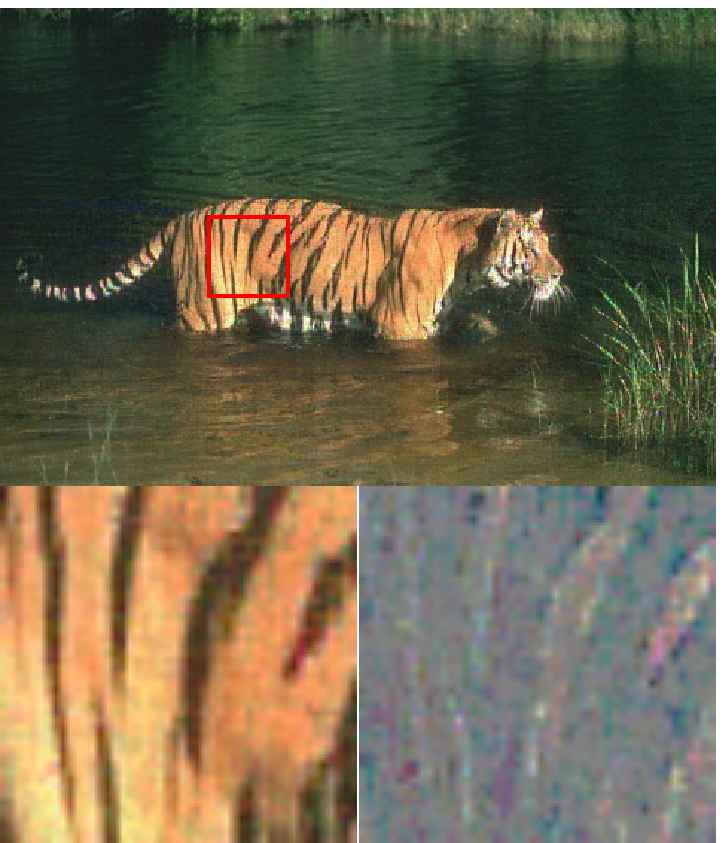}\vspace{0pt}
			\includegraphics[width=\linewidth]{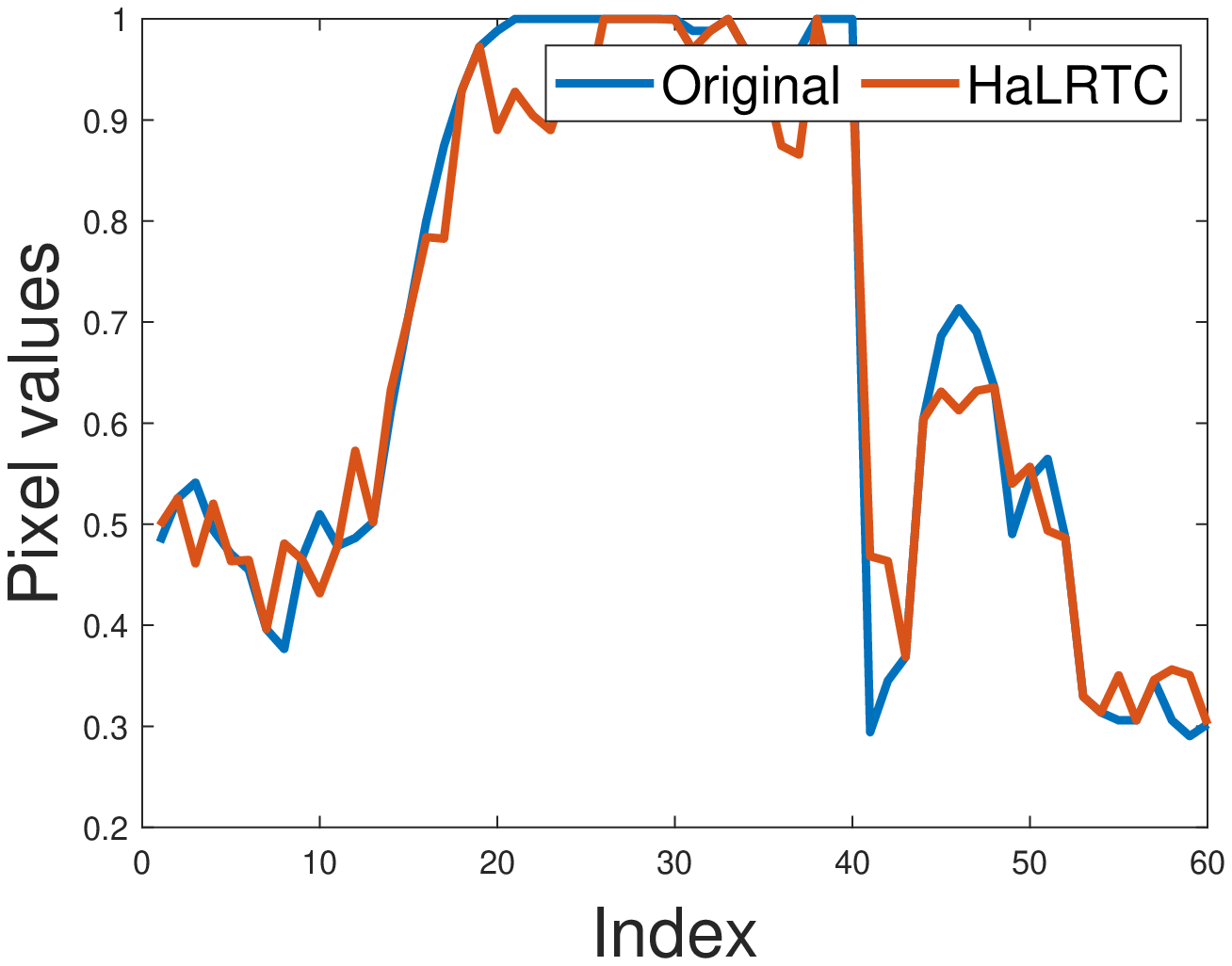}\vspace{0pt}
			\includegraphics[width=\linewidth]{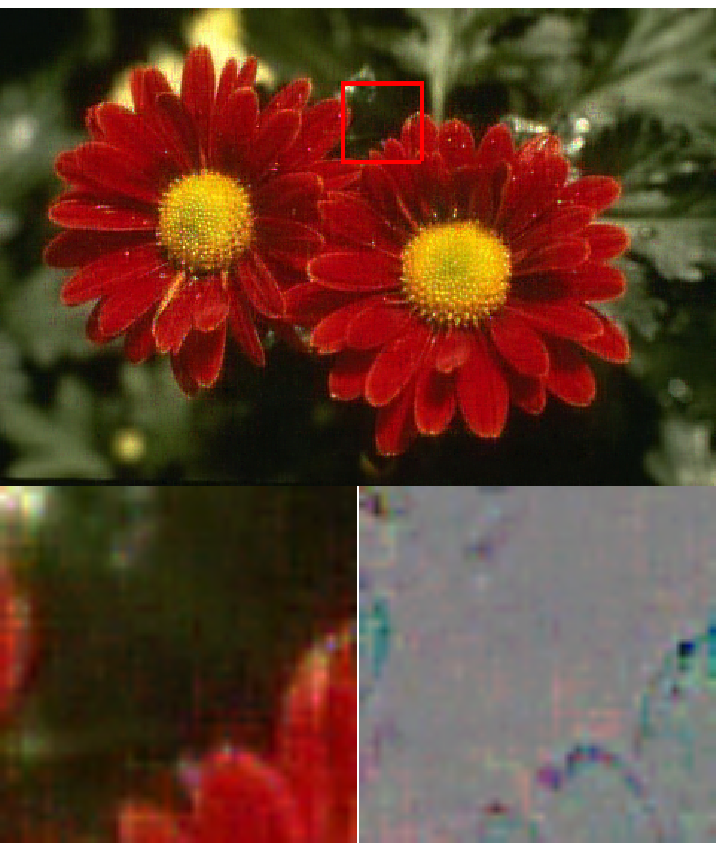}\vspace{0pt}
			\includegraphics[width=\linewidth]{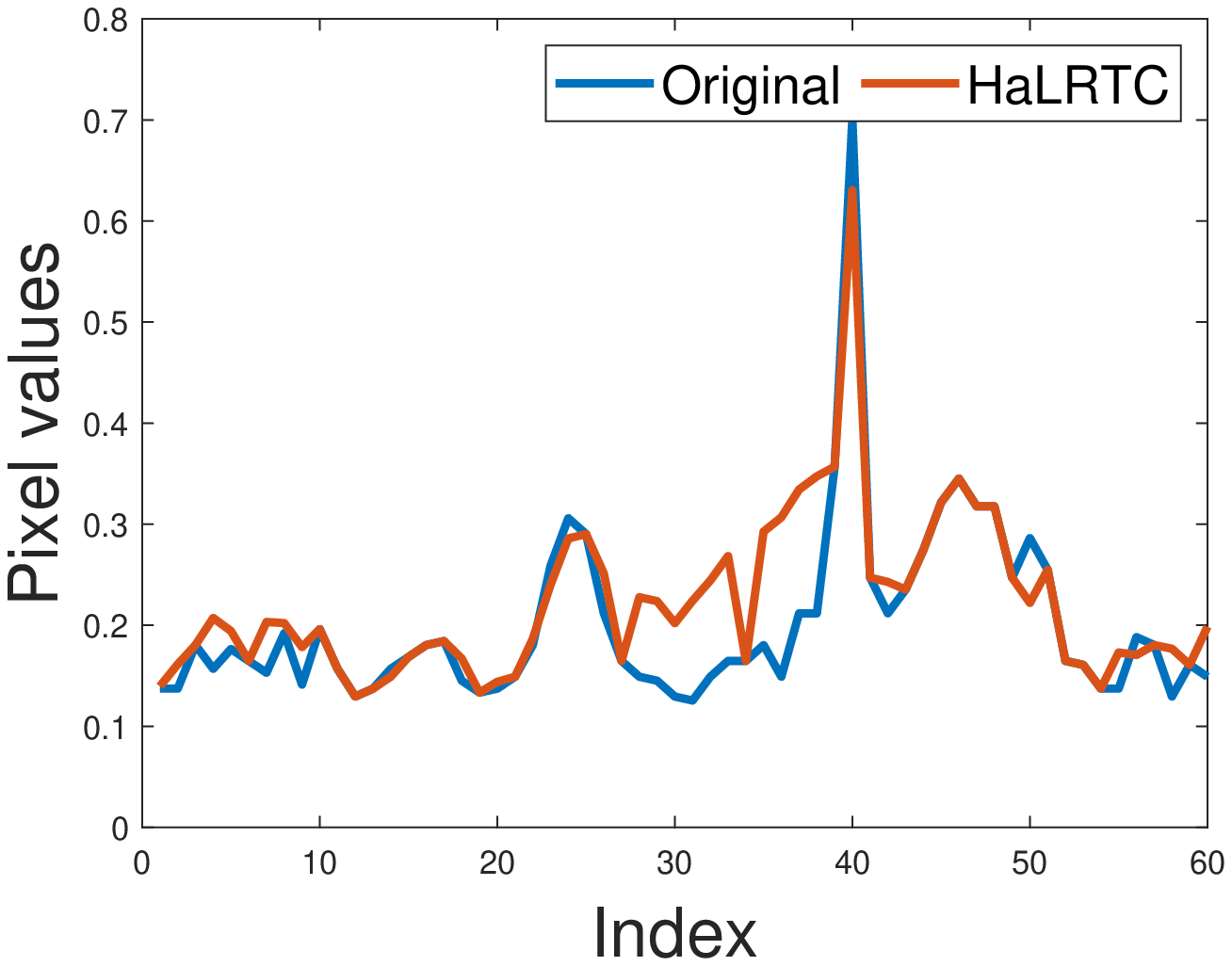}\vspace{0pt}
			\includegraphics[width=\linewidth]{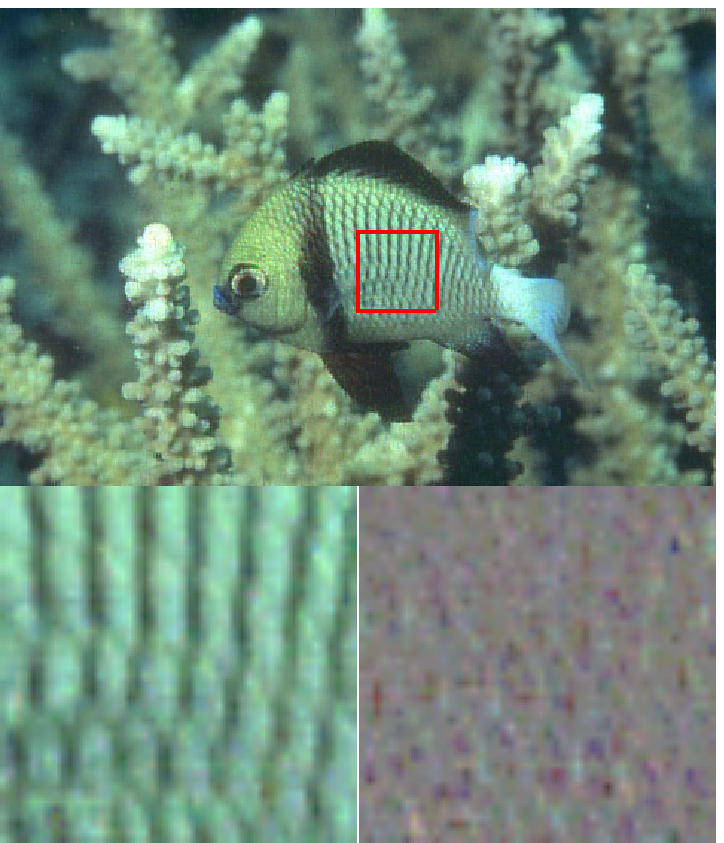}\vspace{0pt}
			\includegraphics[width=\linewidth]{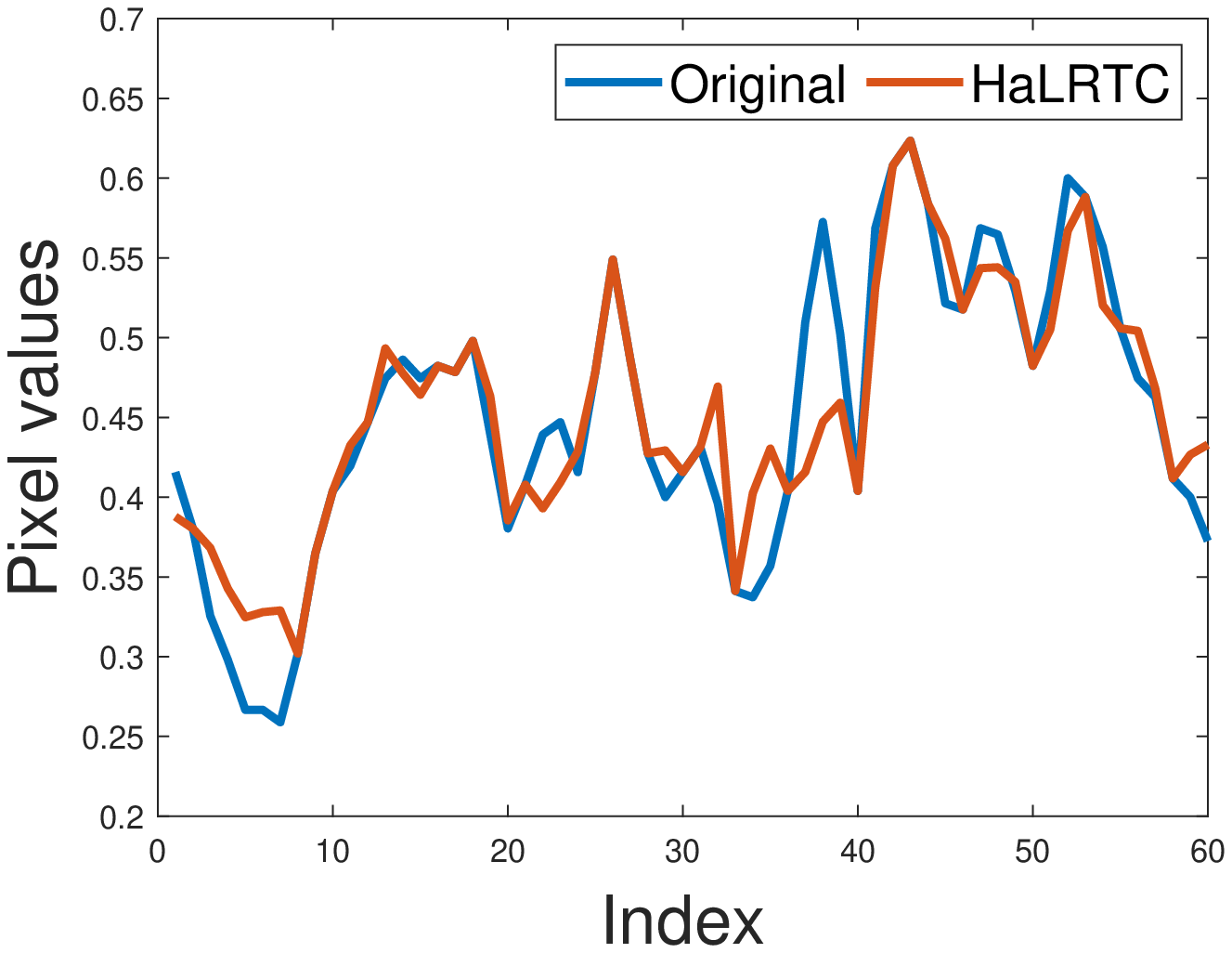}
			\caption{HaLRTC}
		\end{subfigure}					
	\end{subfigure}
	\vfill
	\caption{Examples of color image inpainting with $ SR=40\% $. From top to bottom are  ``Airplane", ``Tiger", ``Flower" and ``Fish". For better visualization, we show the zoom-in region, the corresponding error map (difference from the original) and the corresponding partial residuals of the region.}
	\label{fig:colorimage}
\end{figure*}

\subsection{Texture Image Inpainting}
In this subsection, we use the PSU near-regular texture database\footnote{http://vivid.cse.psu.edu/.} to evaluate our proposed method TNNR for texture image inpainting. Each texture image is resized to $ 300 \times 400 \times 3 $ in order to improve the computation efficiency.
In our test, four texture images are randomly selected from this database, including ``Stone", ``Pattern", ``Leaves" and ``Barbed Wire". 
The data of images are normalized in the range $ \left[0,1\right] $. The sampling rates (SR) are set as $ 30\% $, $ 35\% $ and $ 40\% $. 

Table \ref{tab:Texture} lists PSNR, SSIM, FSIM and the corresponding running times under different methods. For different sampling rates, our TNNR obtains the results with the best quantitative metrics. In particular, TNNR consumes the least running time among methods except the HaLRTC method. Additionally, HaLRTC consumes the least running time, but has poor quantitative metrics. To further demonstrate the recovery performance, samples of texture images recovered by different algorithms are shown in Figure \ref{fig:Texture}. From the recovery results, our method performs better in filling the missing values of the four testing texture images. It can deal with the the edges of patterns better.

\begin{figure*}[htbp]
	\centering
	\begin{subfigure}[b]{1\linewidth}
		\begin{subfigure}[b]{0.138\linewidth}
			\centering
			\includegraphics[width=\linewidth]{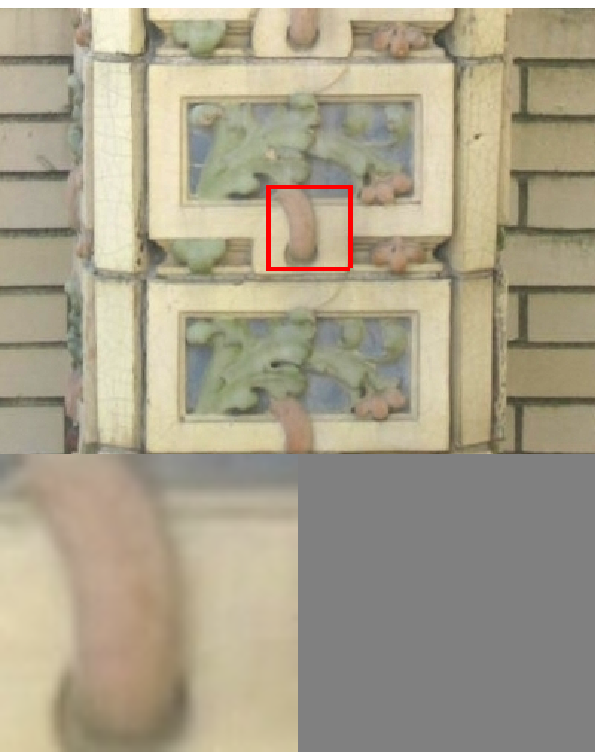}\vspace{0pt}
			\includegraphics[width=\linewidth]{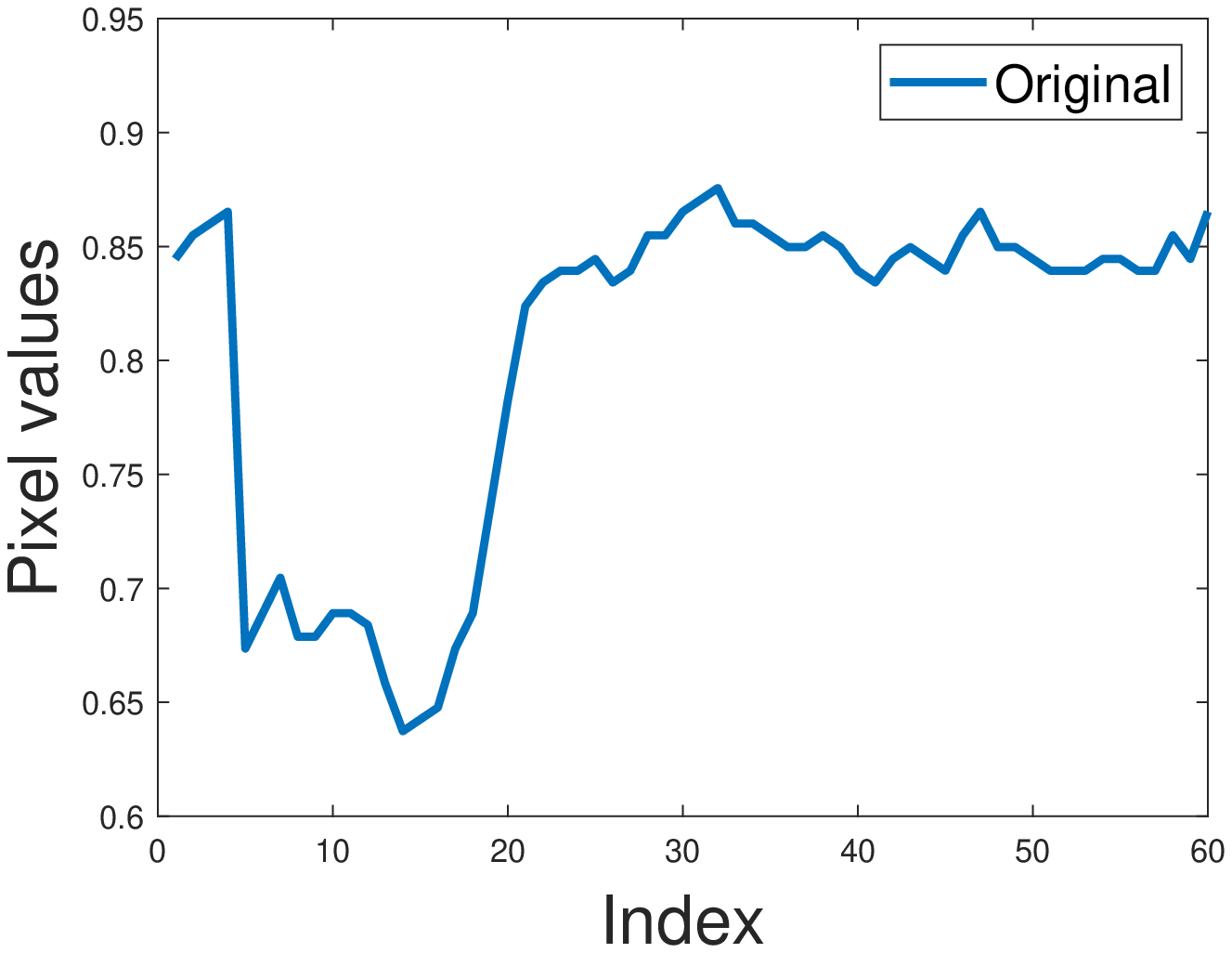}\vspace{0pt}
			\includegraphics[width=\linewidth]{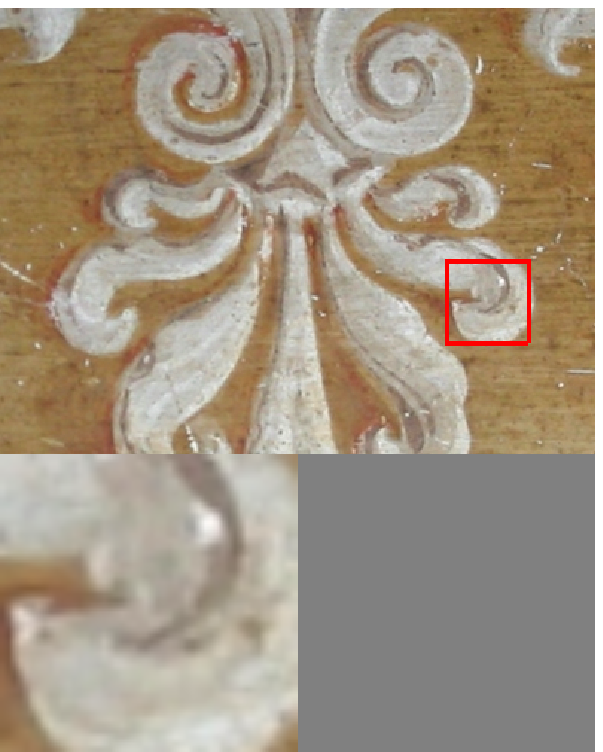}\vspace{0pt}
			\includegraphics[width=\linewidth]{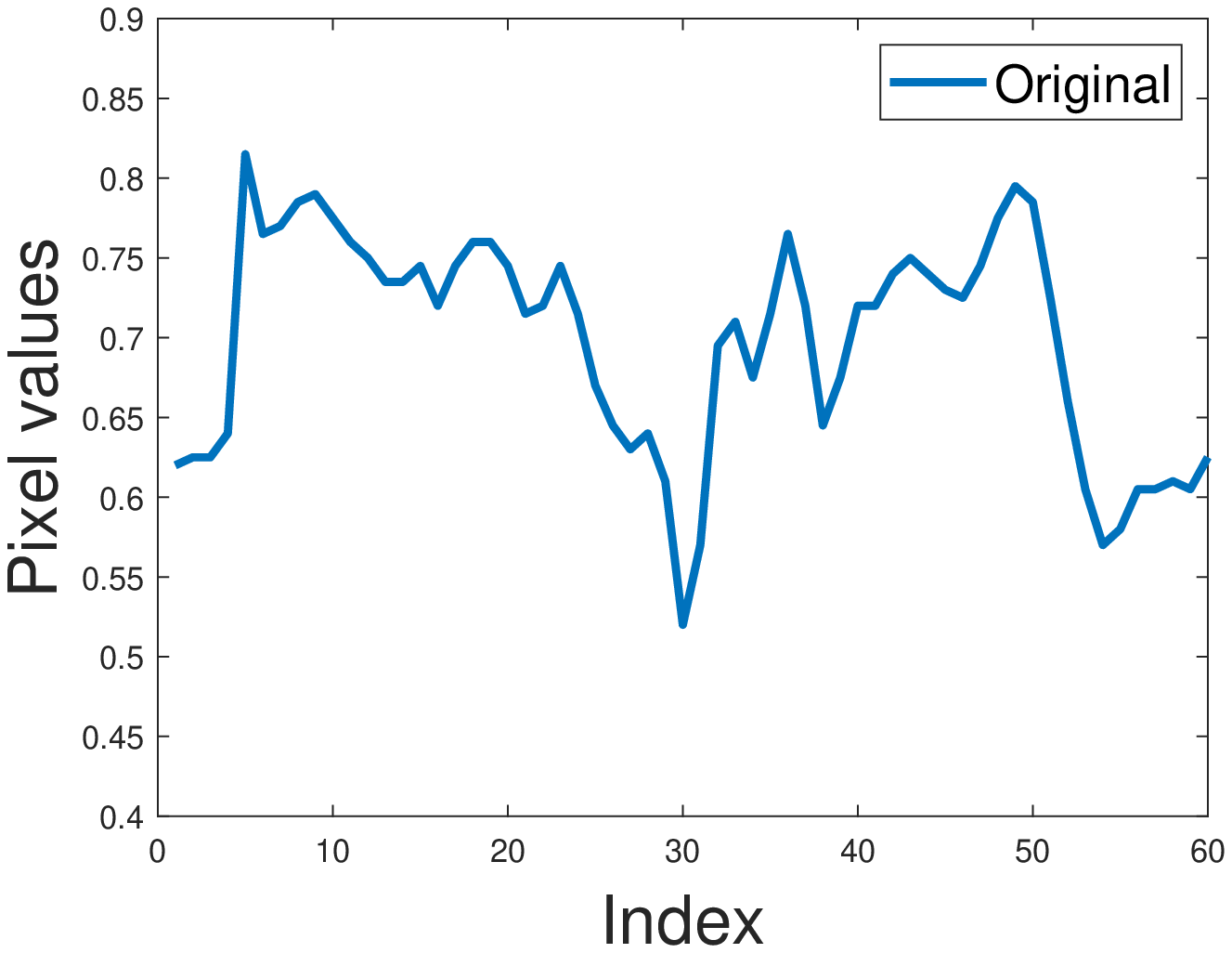}\vspace{0pt}
			\includegraphics[width=\linewidth]{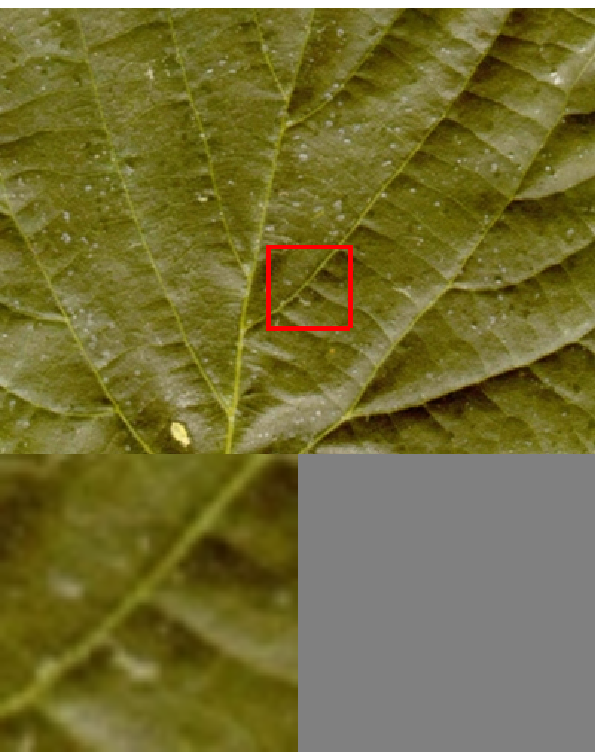}\vspace{0pt}
			\includegraphics[width=\linewidth]{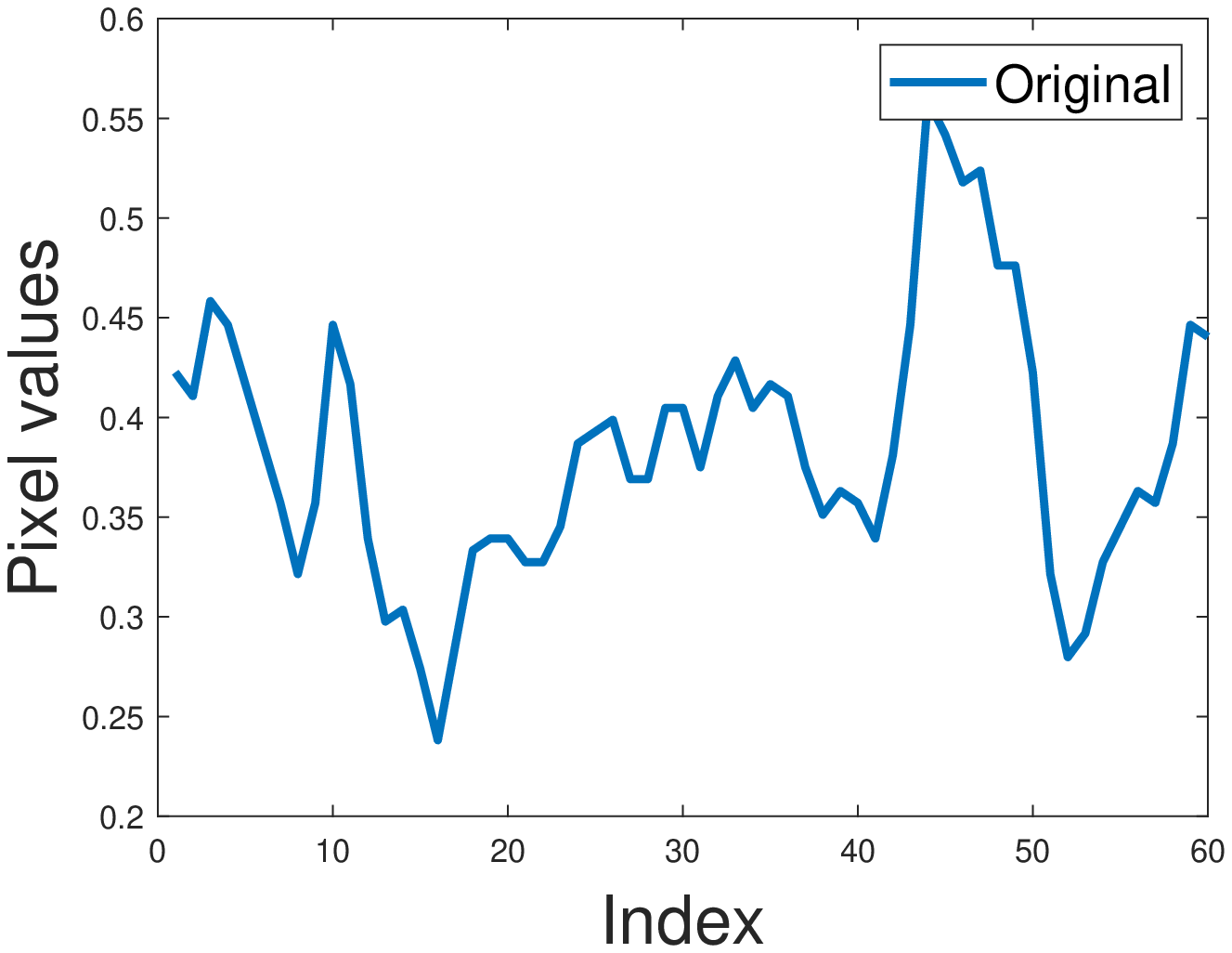}\vspace{0pt}
			\includegraphics[width=\linewidth]{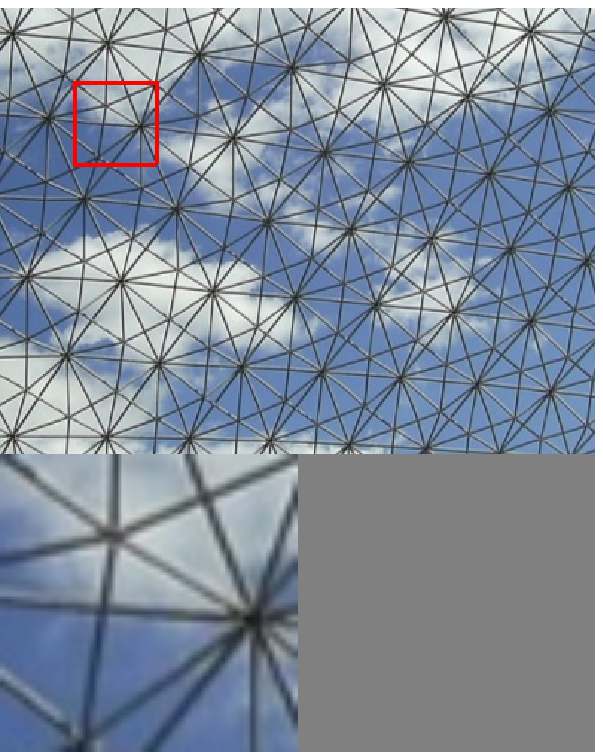}\vspace{0pt}
			\includegraphics[width=\linewidth]{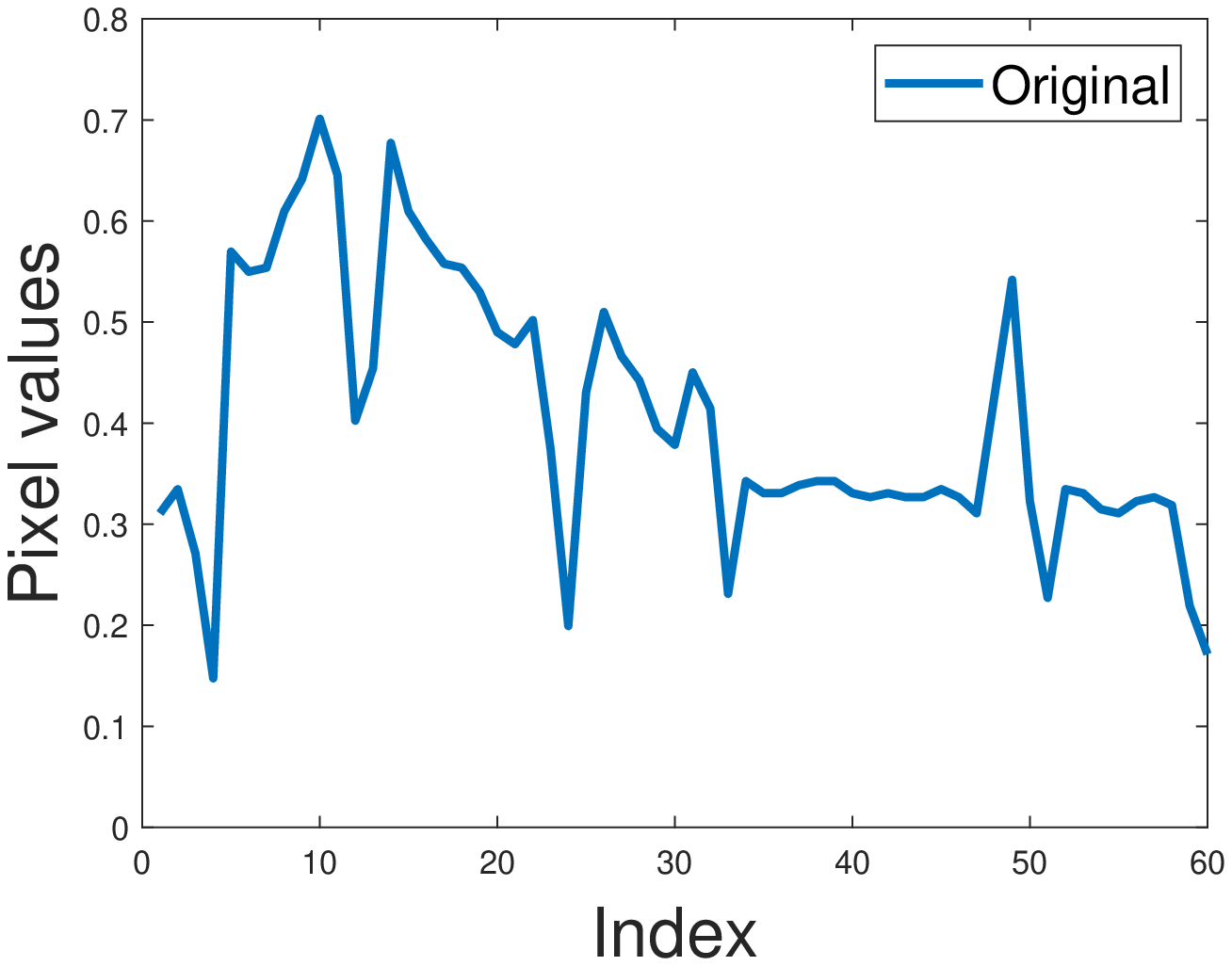}
			\caption{Original}
		\end{subfigure}   	
		\begin{subfigure}[b]{0.138\linewidth}
			\centering
			\includegraphics[width=\linewidth]{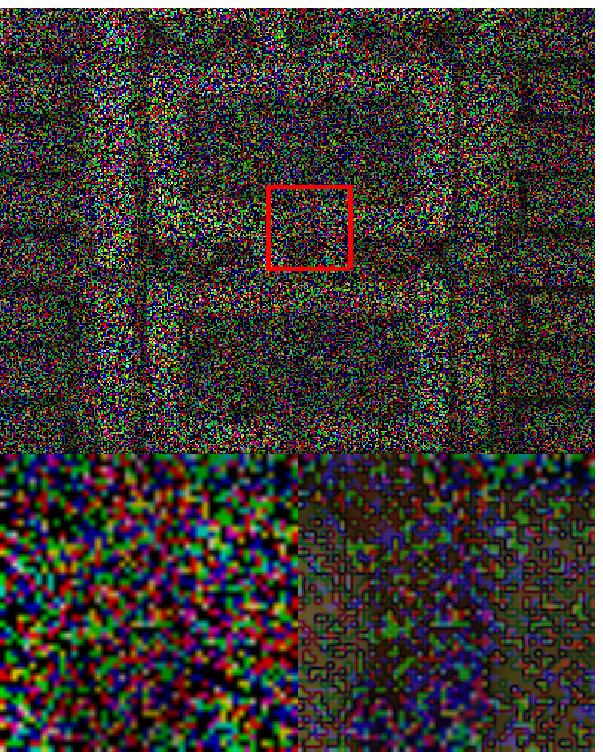}\vspace{0pt}
			\includegraphics[width=\linewidth]{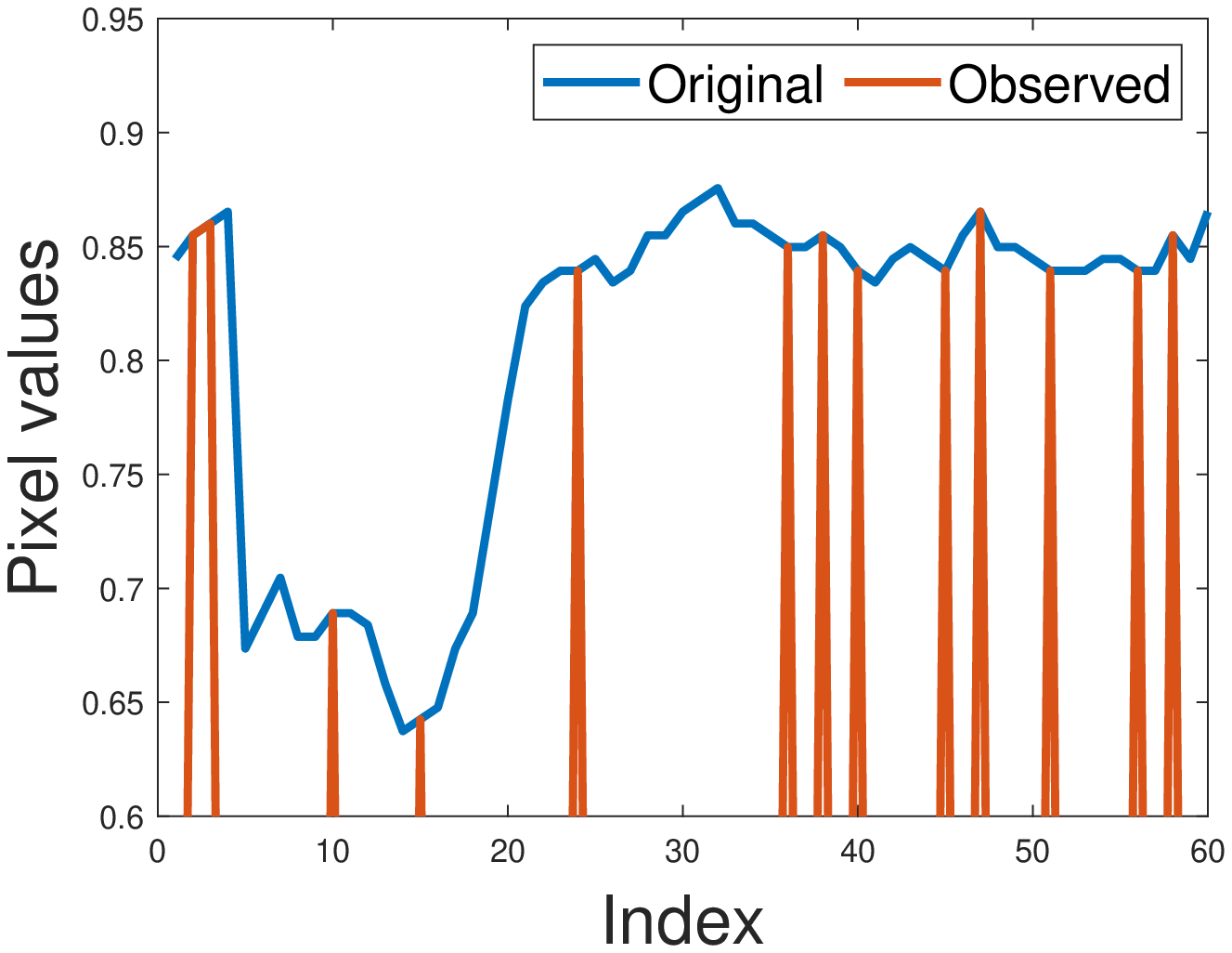}\vspace{0pt}
			\includegraphics[width=\linewidth]{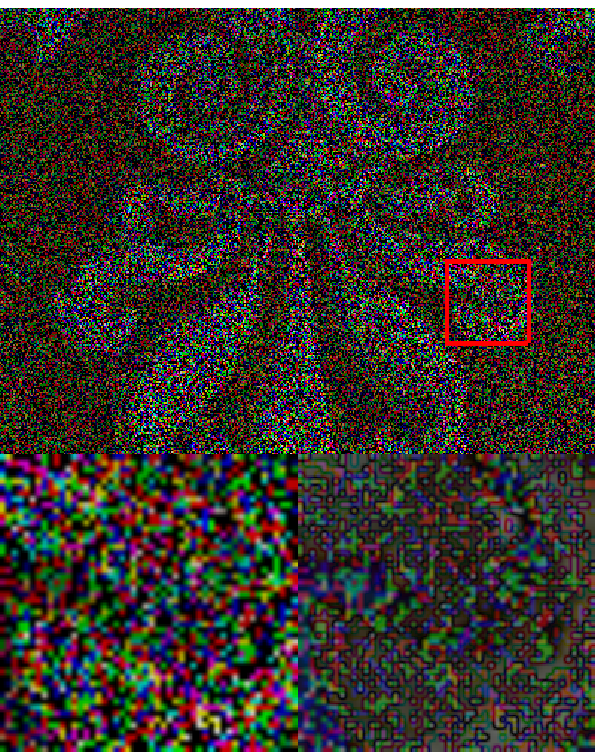}\vspace{0pt}
			\includegraphics[width=\linewidth]{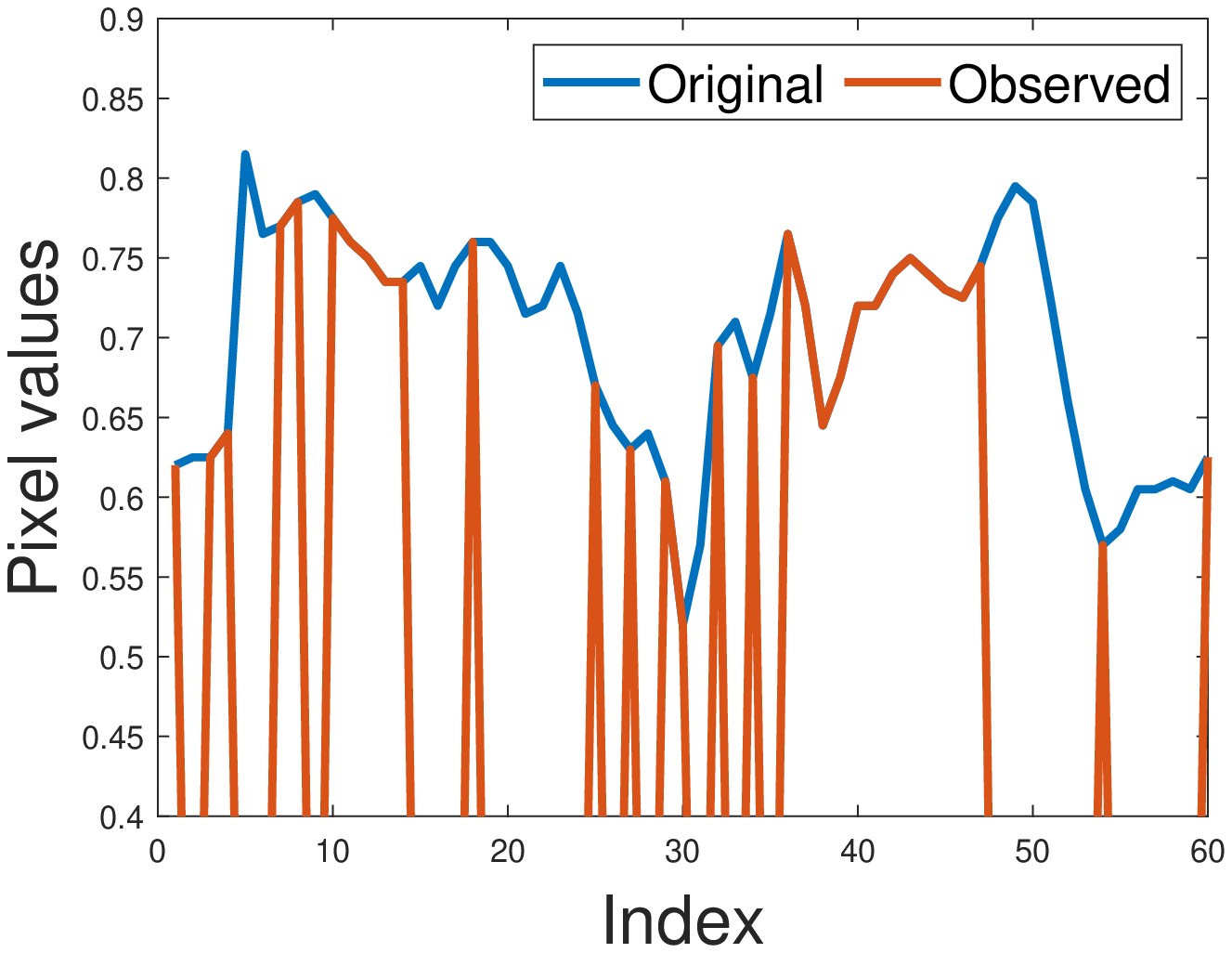}\vspace{0pt}
			\includegraphics[width=\linewidth]{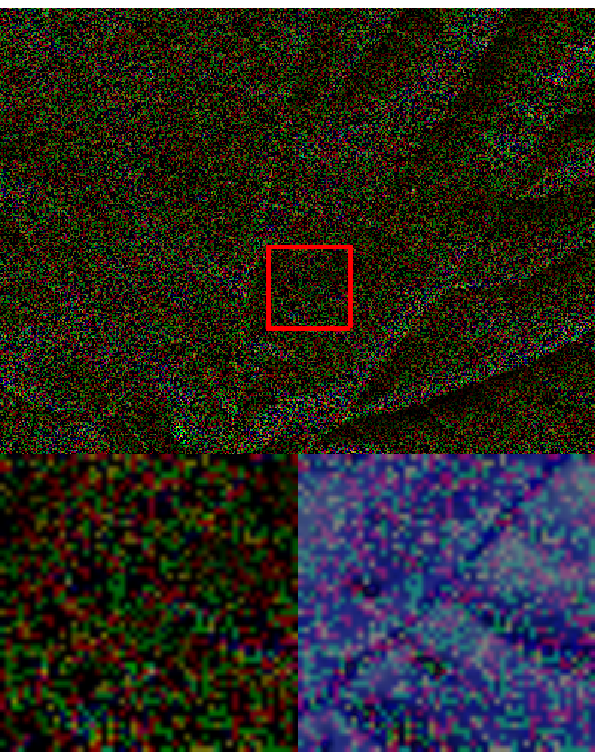}\vspace{0pt}
			\includegraphics[width=\linewidth]{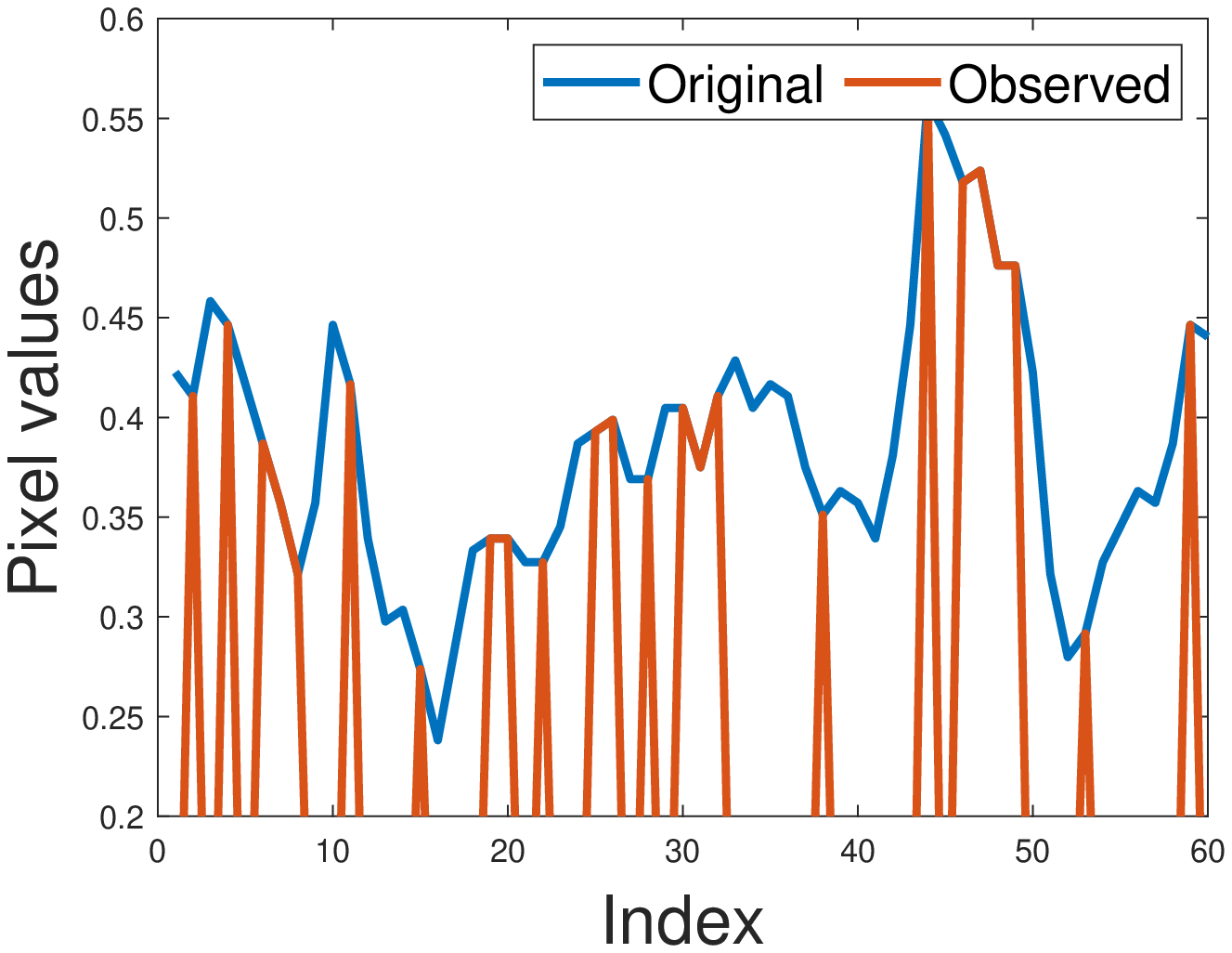}\vspace{0pt}
			\includegraphics[width=\linewidth]{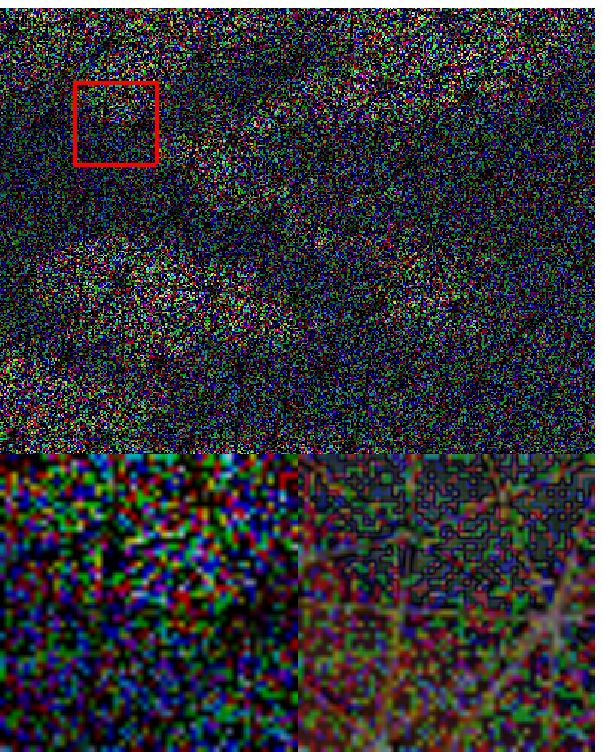}\vspace{0pt}
			\includegraphics[width=\linewidth]{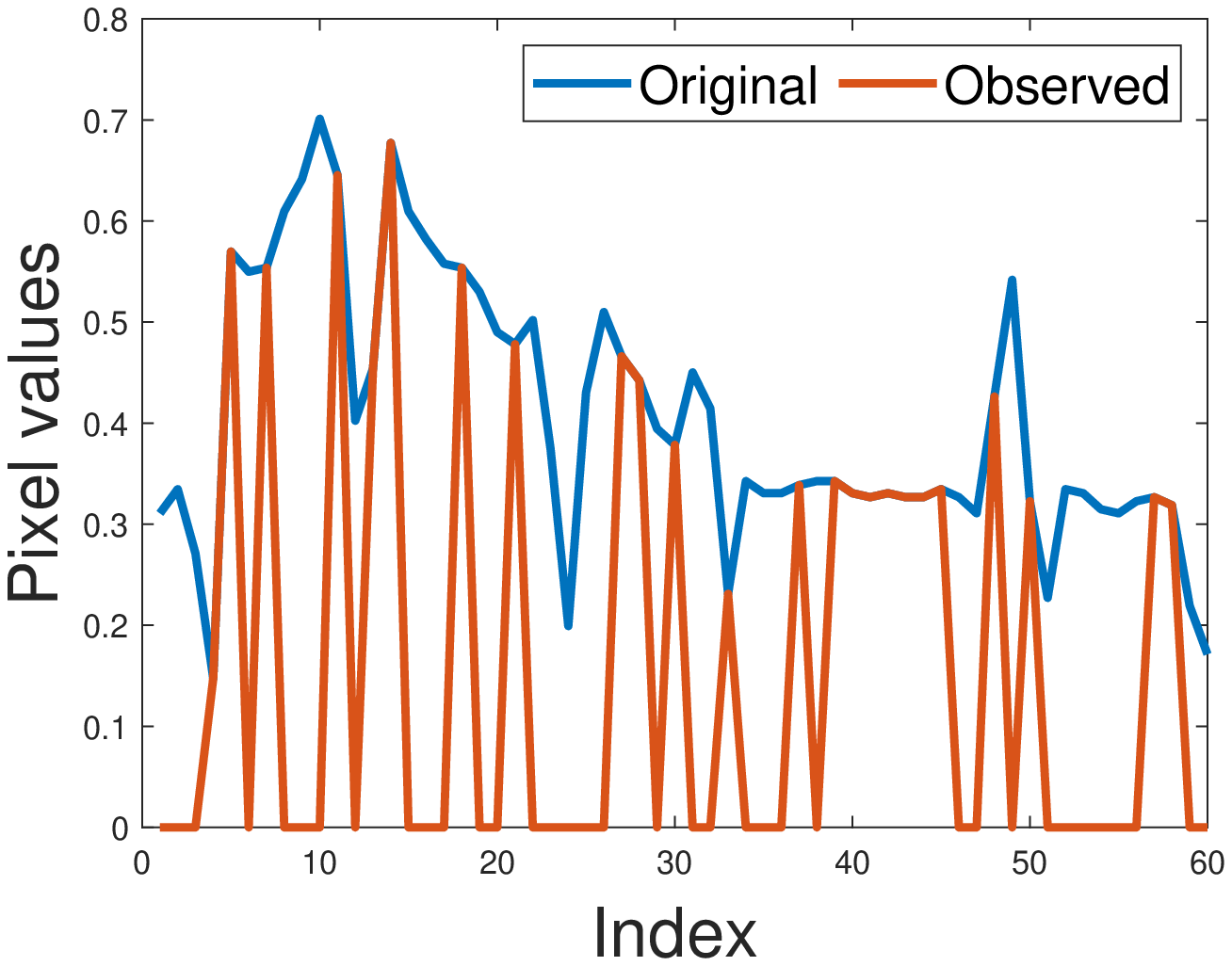}
			\caption{Observed}
		\end{subfigure}
		\begin{subfigure}[b]{0.138\linewidth}
			\centering
			\includegraphics[width=\linewidth]{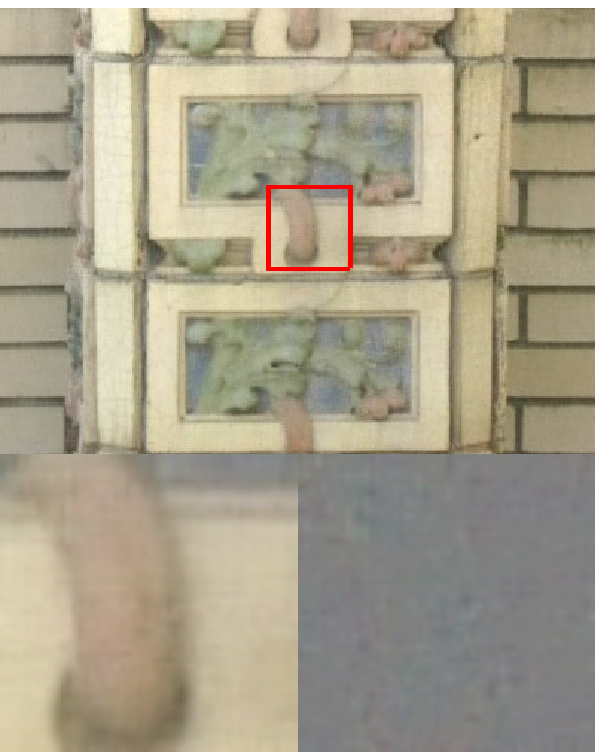}\vspace{0pt}
			\includegraphics[width=\linewidth]{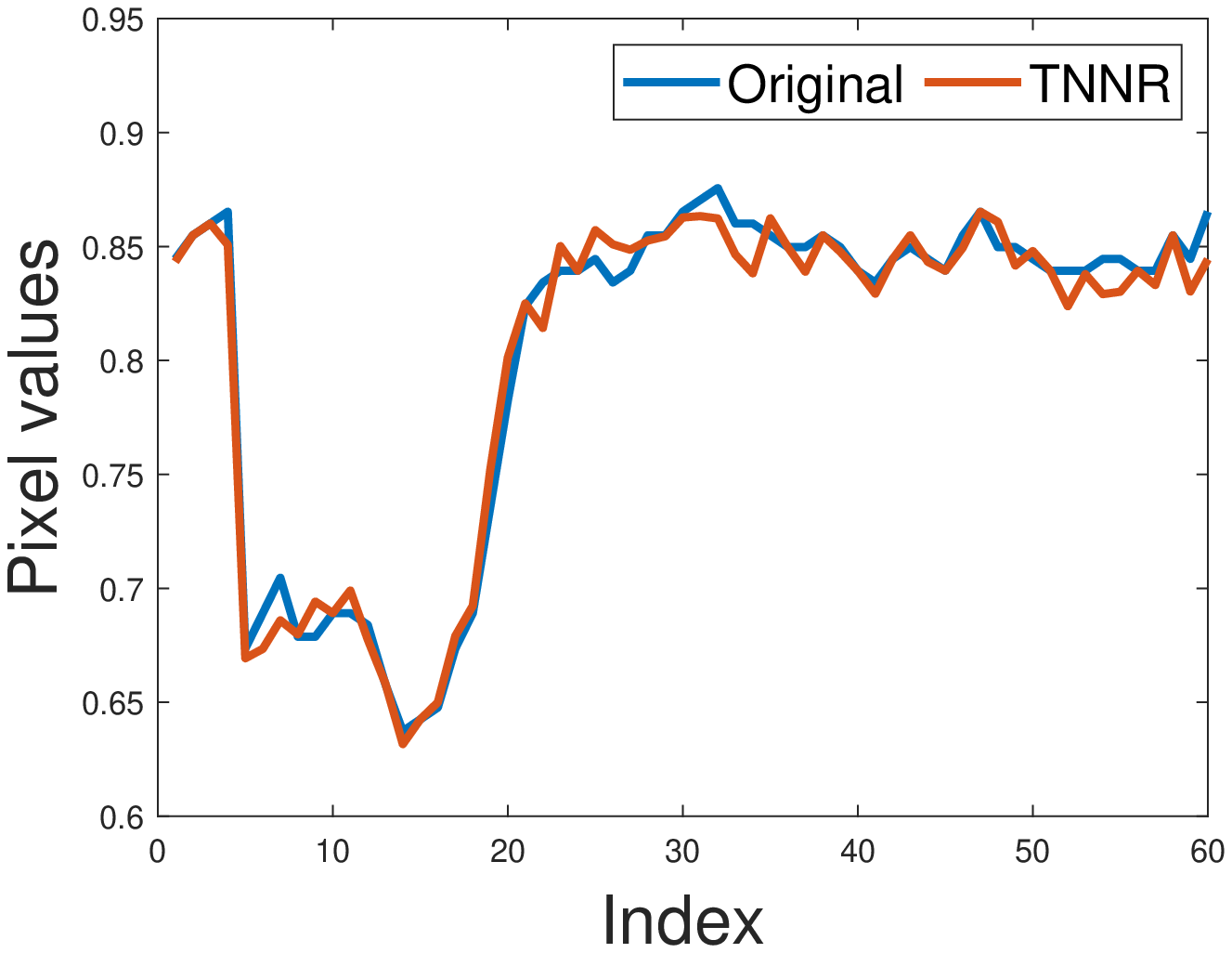}\vspace{0pt}
			\includegraphics[width=\linewidth]{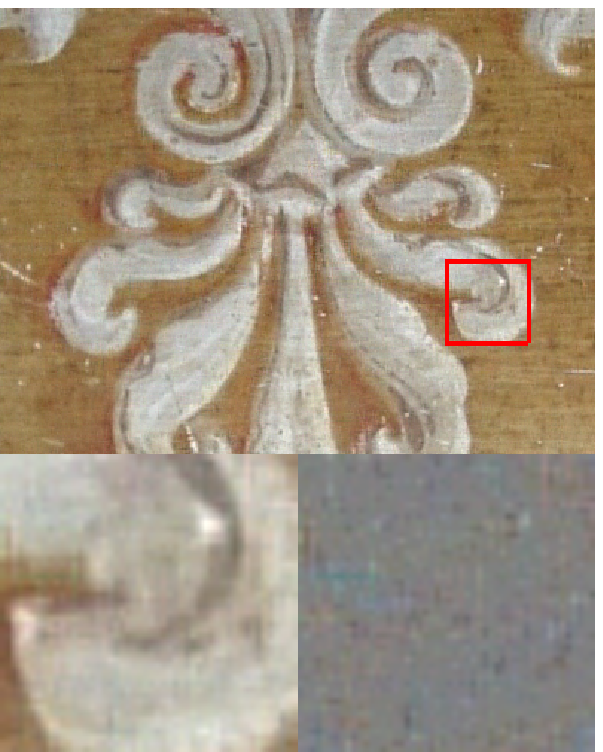}\vspace{0pt}
			\includegraphics[width=\linewidth]{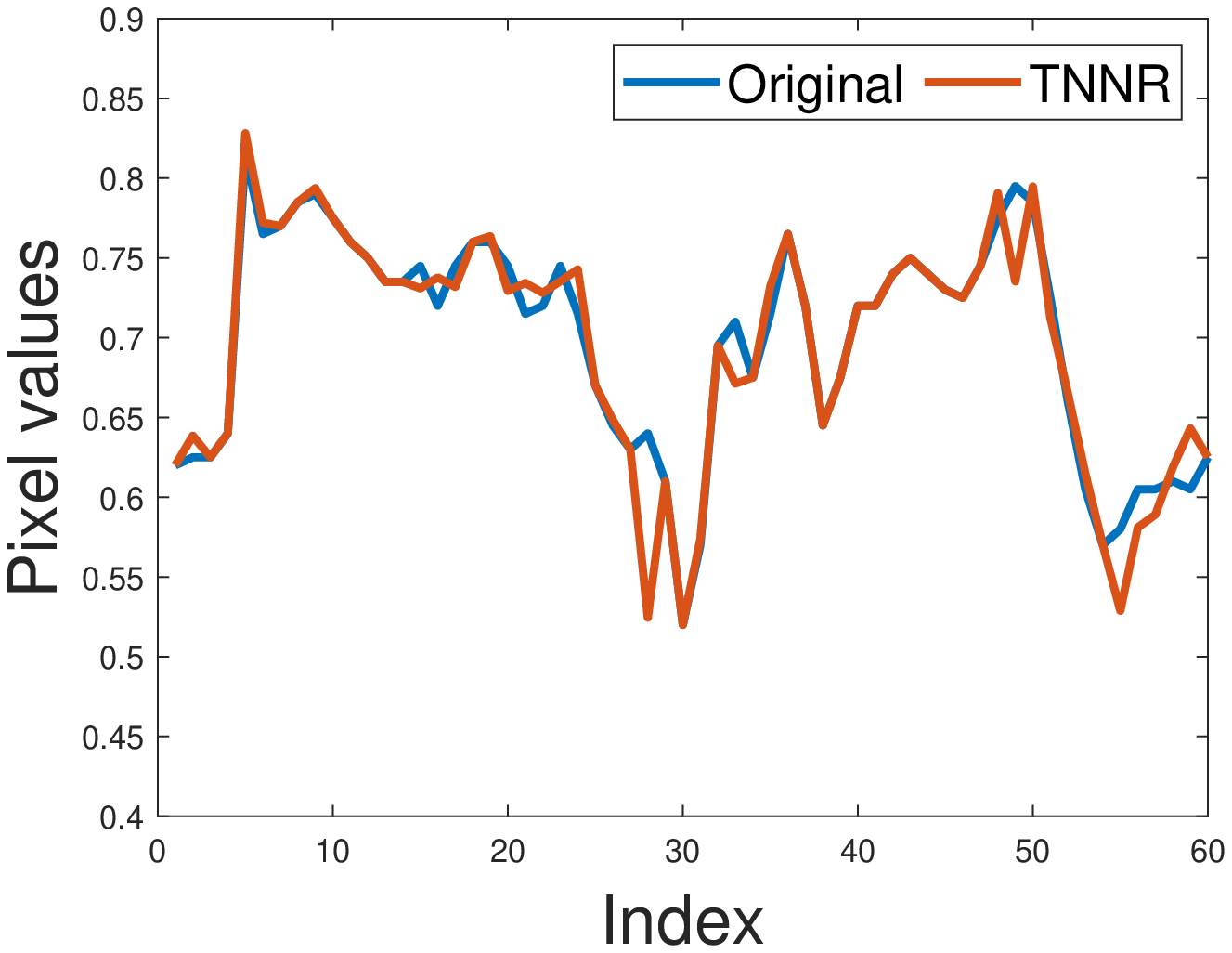}\vspace{0pt}
			\includegraphics[width=\linewidth]{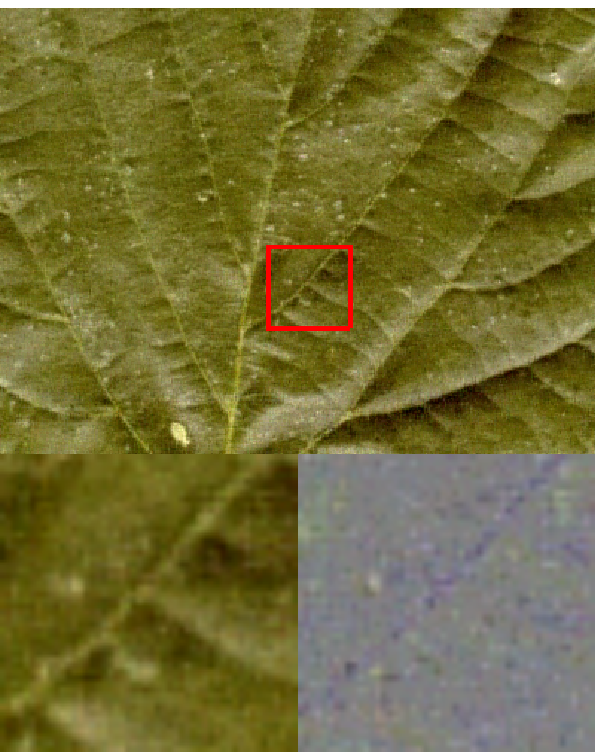}\vspace{0pt}
			\includegraphics[width=\linewidth]{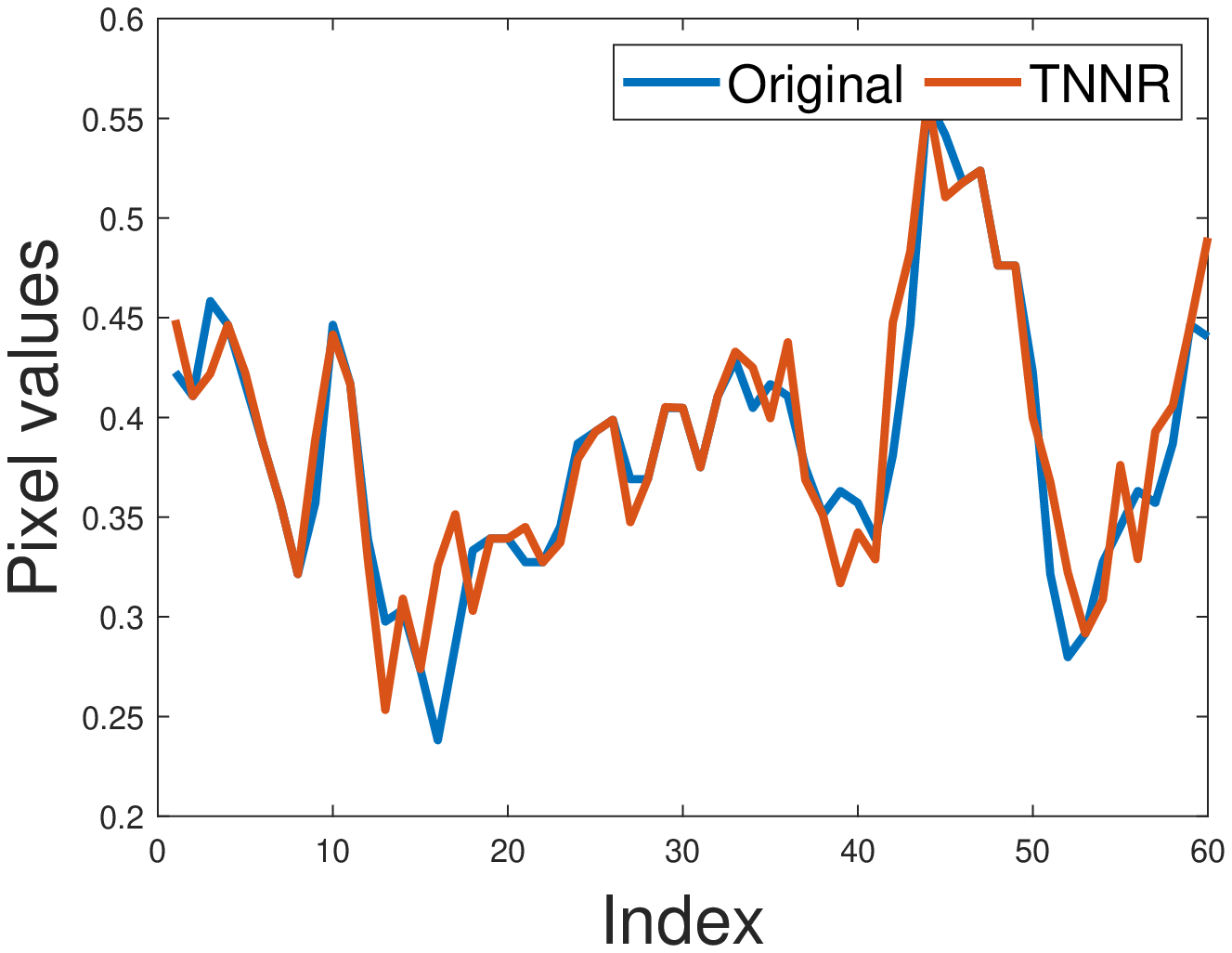}\vspace{0pt}
			\includegraphics[width=\linewidth]{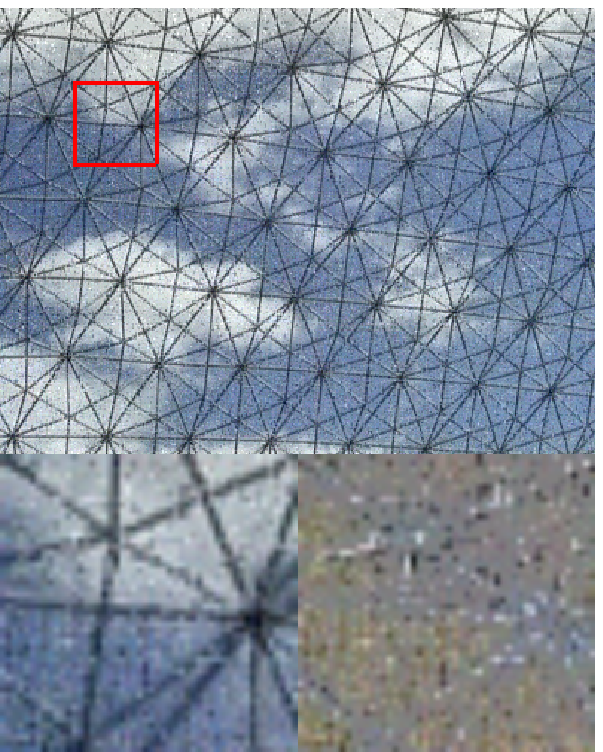}\vspace{0pt}
			\includegraphics[width=\linewidth]{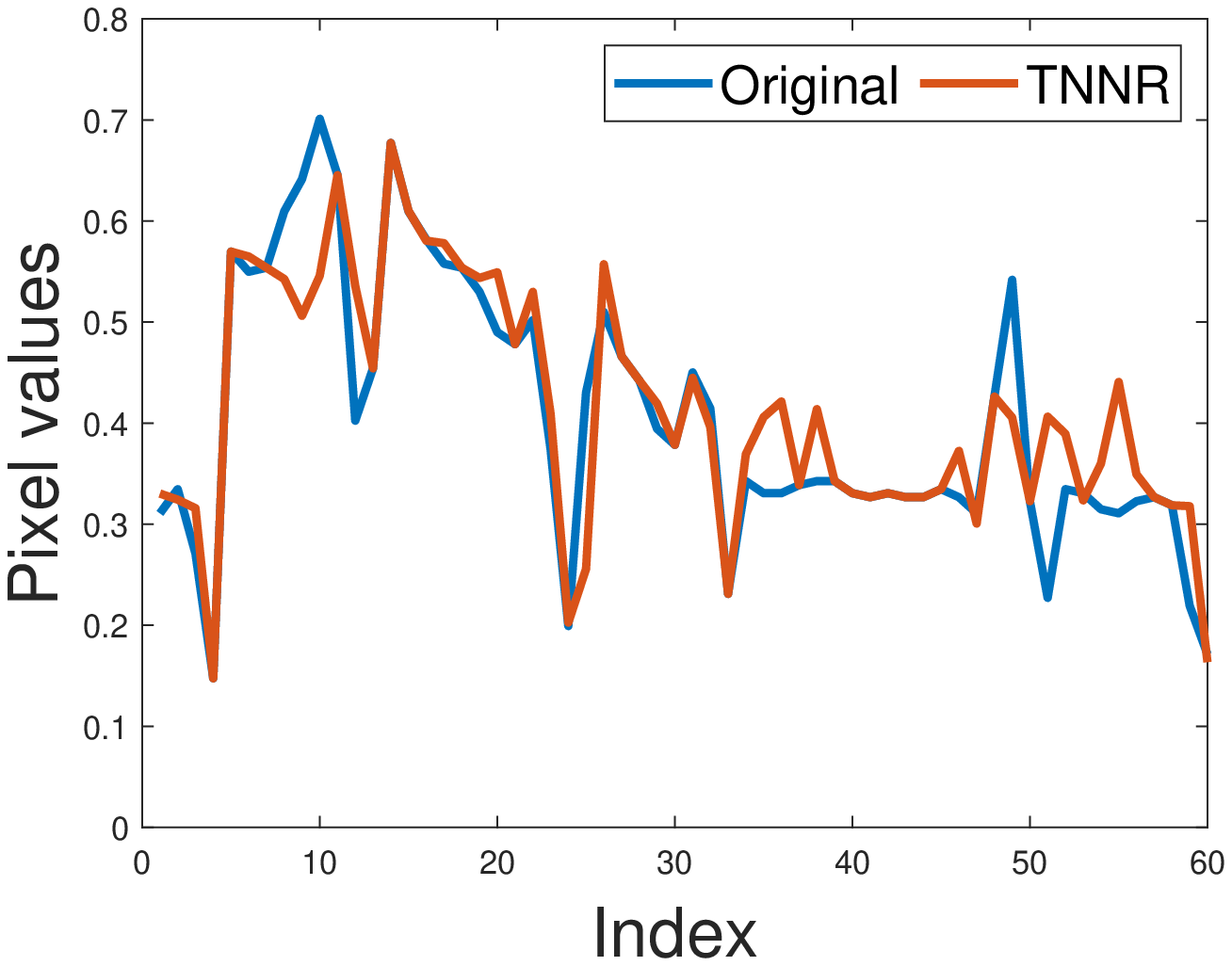}
			\caption{TNNR}
		\end{subfigure}
		\begin{subfigure}[b]{0.138\linewidth}
			\centering		
			\includegraphics[width=\linewidth]{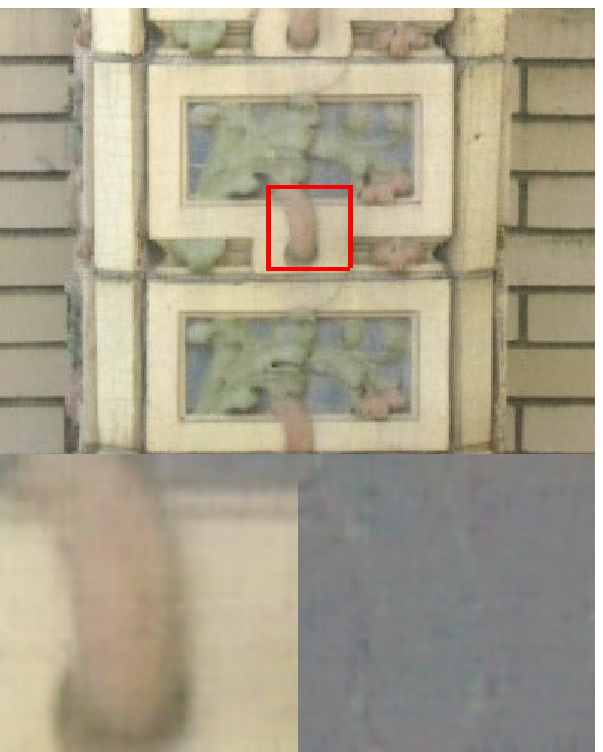}\vspace{0pt}
			\includegraphics[width=\linewidth]{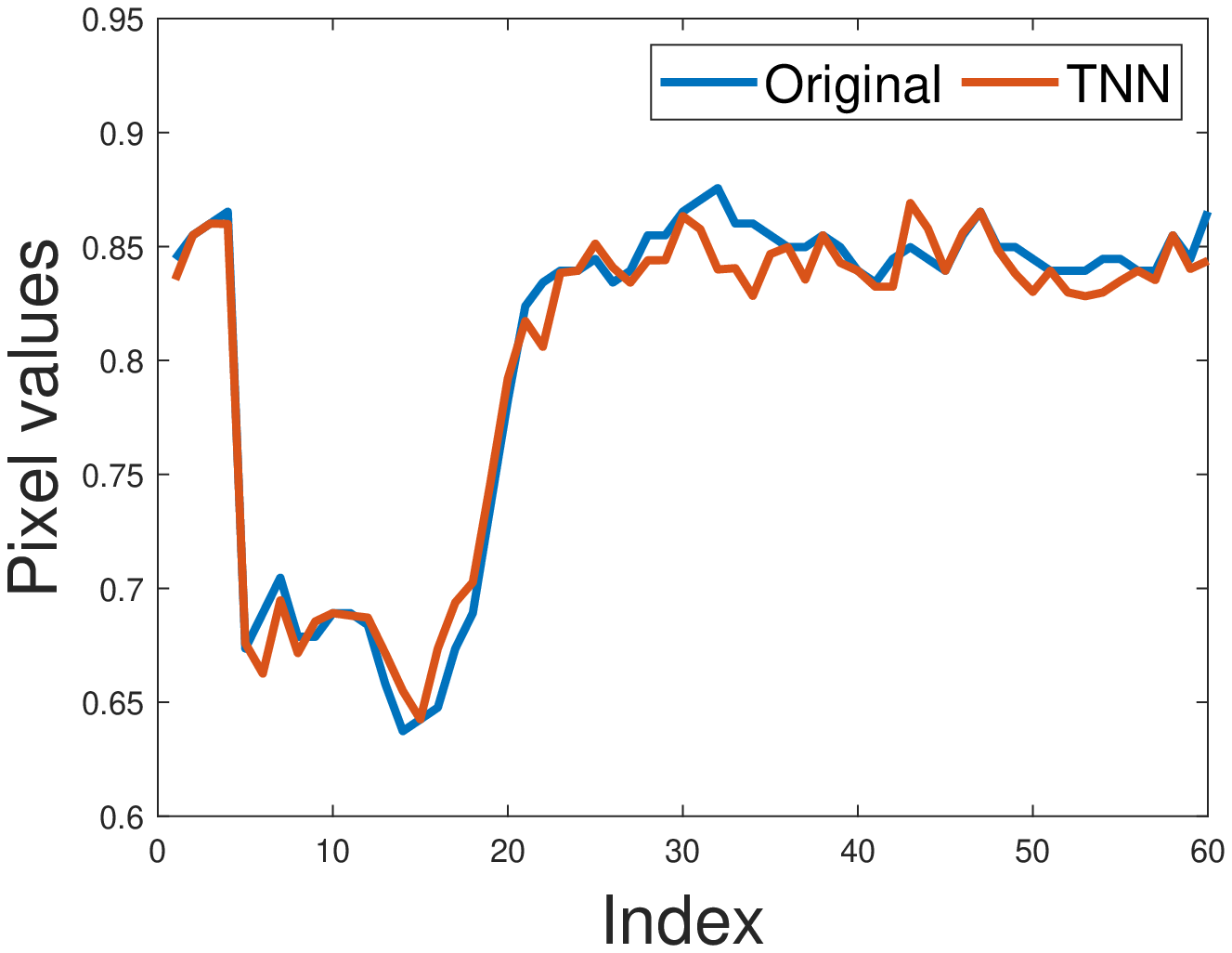}\vspace{0pt}
			\includegraphics[width=\linewidth]{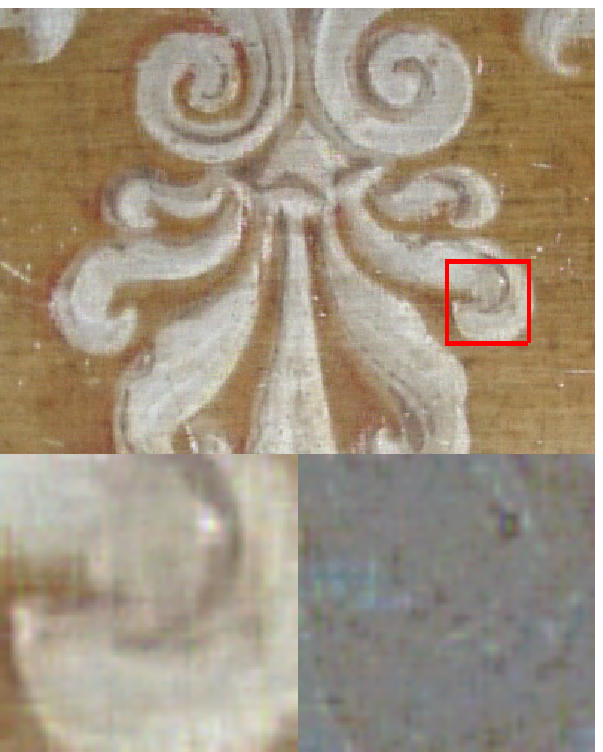}\vspace{0pt}
			\includegraphics[width=\linewidth]{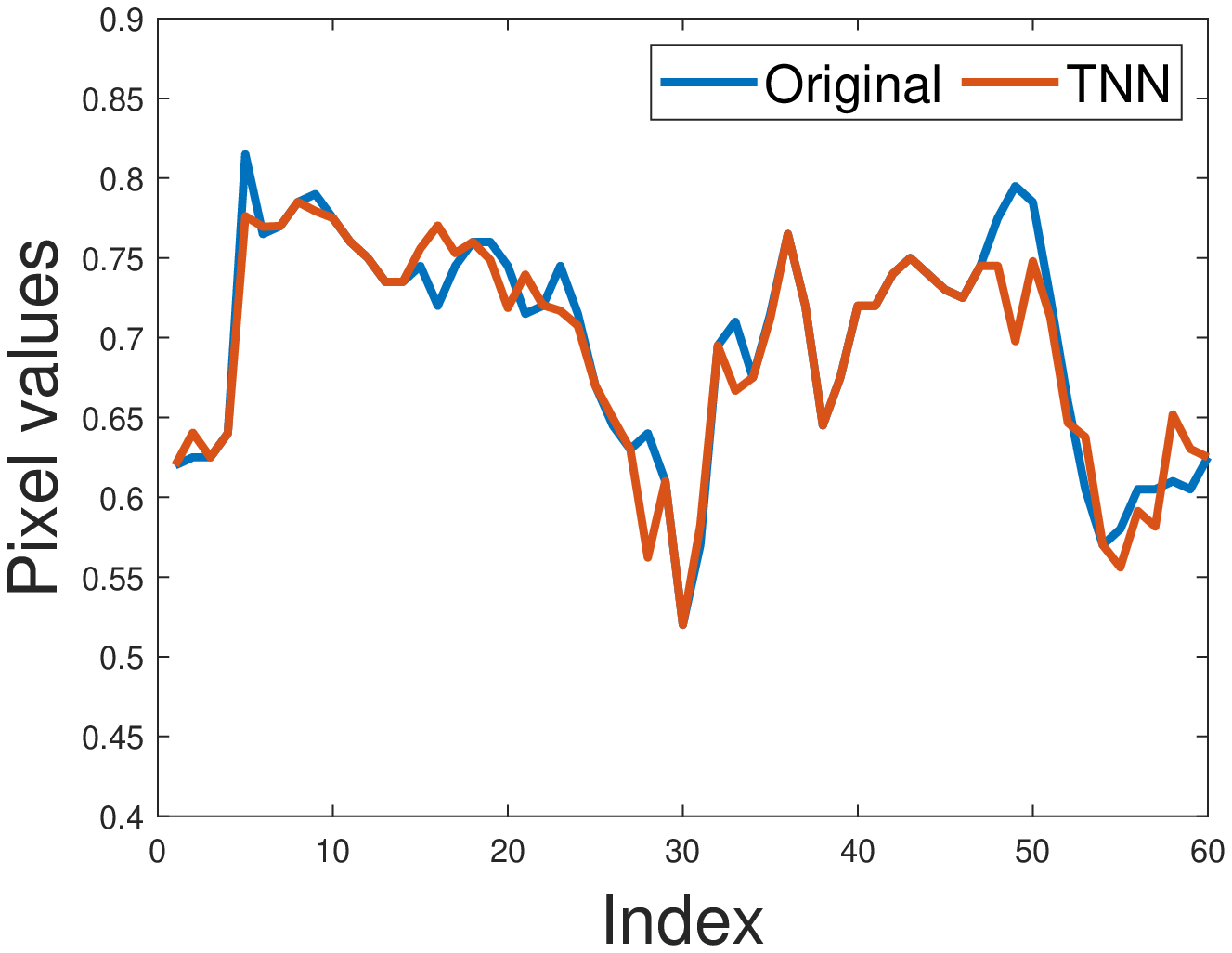}\vspace{0pt}
			\includegraphics[width=\linewidth]{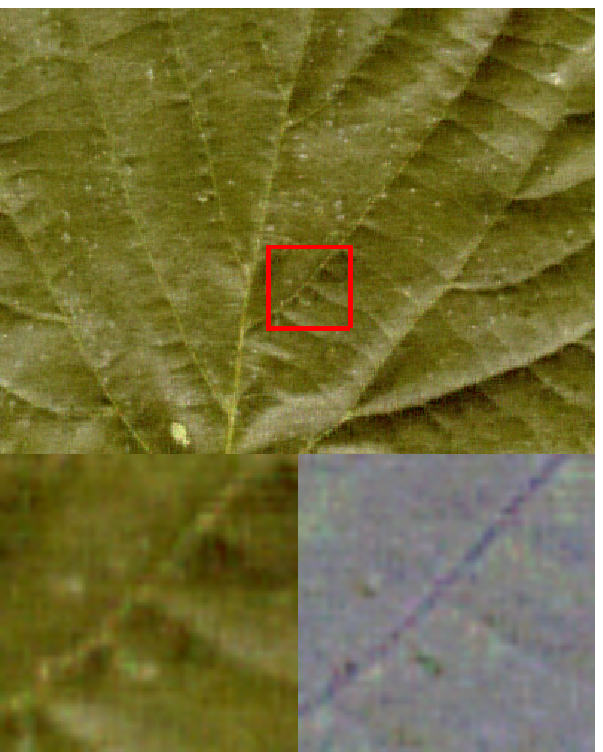}\vspace{0pt}
			\includegraphics[width=\linewidth]{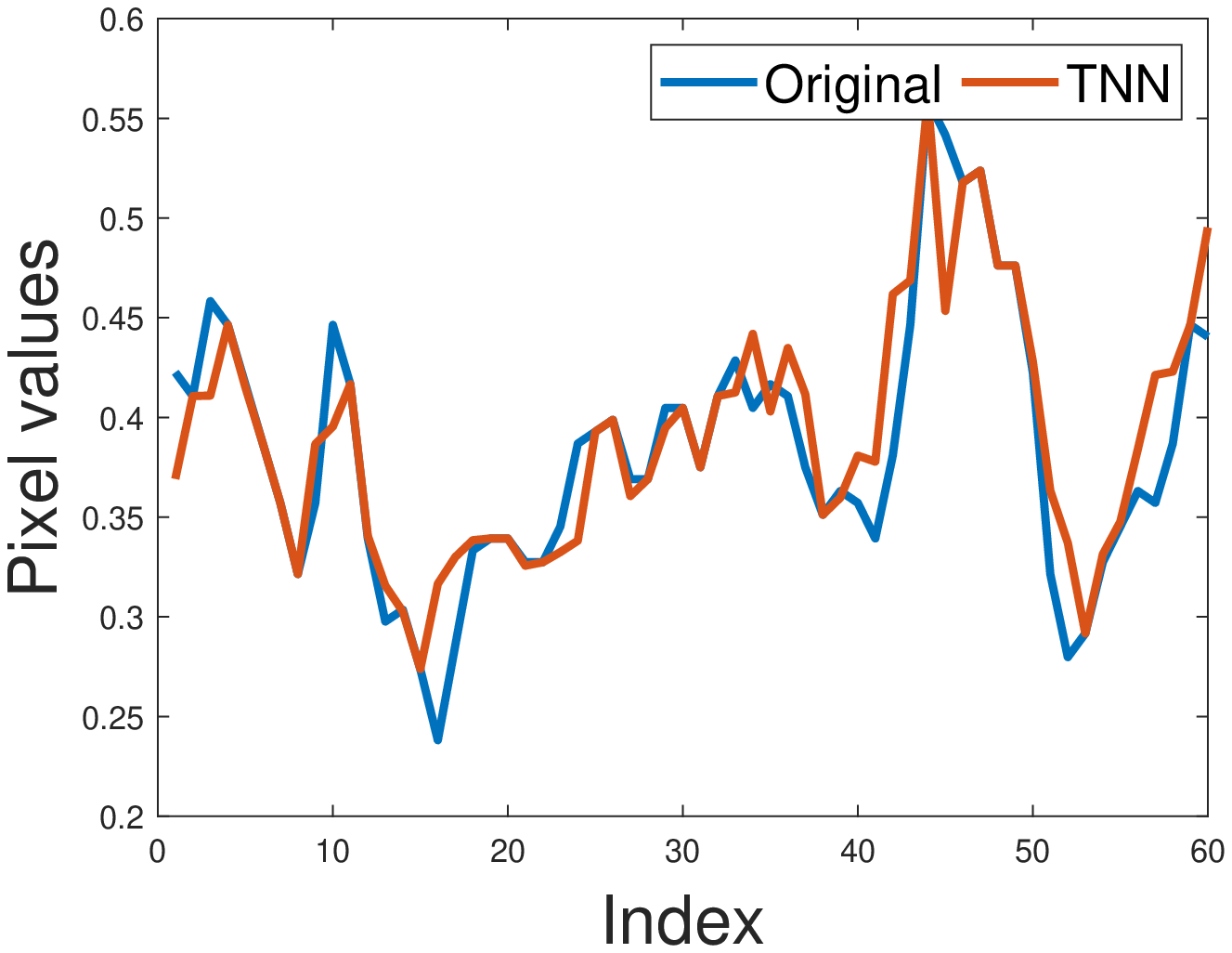}\vspace{0pt}
			\includegraphics[width=\linewidth]{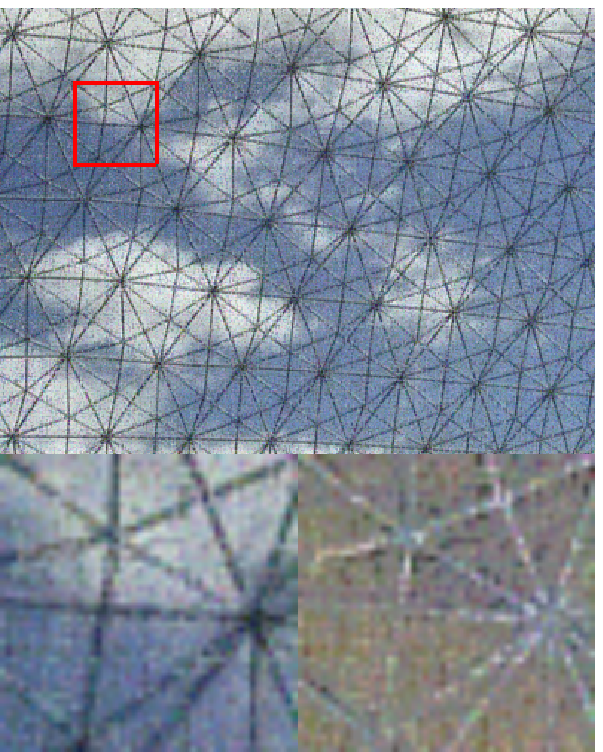}\vspace{0pt}
			\includegraphics[width=\linewidth]{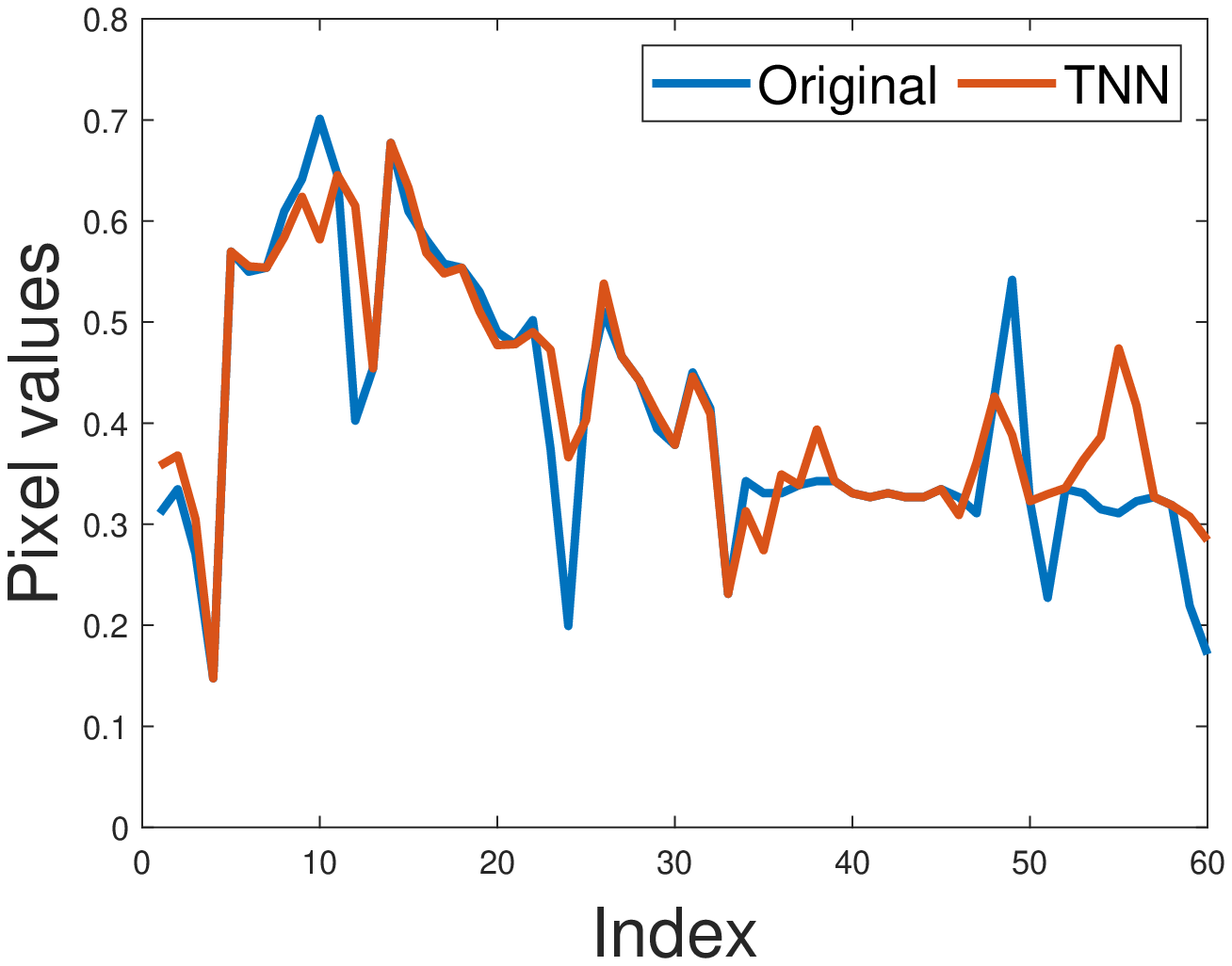}
			\caption{TNN}
		\end{subfigure}
		\begin{subfigure}[b]{0.138\linewidth}
			\centering			
			\includegraphics[width=\linewidth]{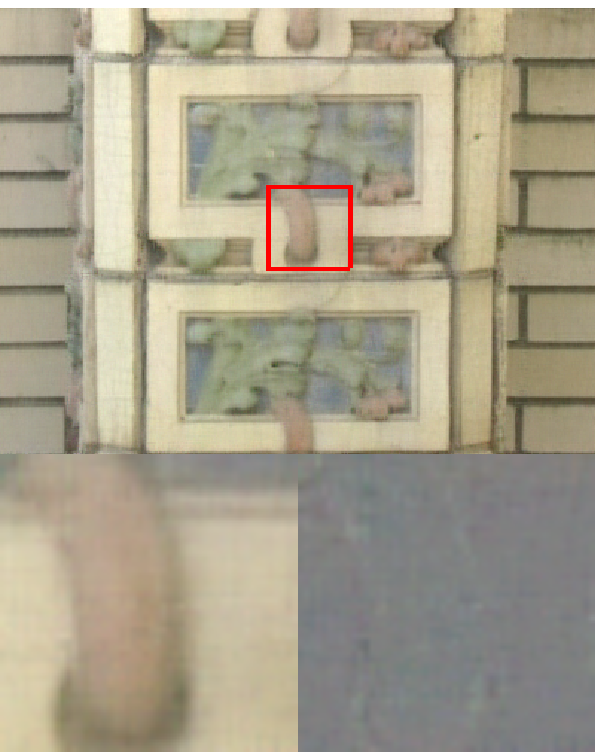}\vspace{0pt}
			\includegraphics[width=\linewidth]{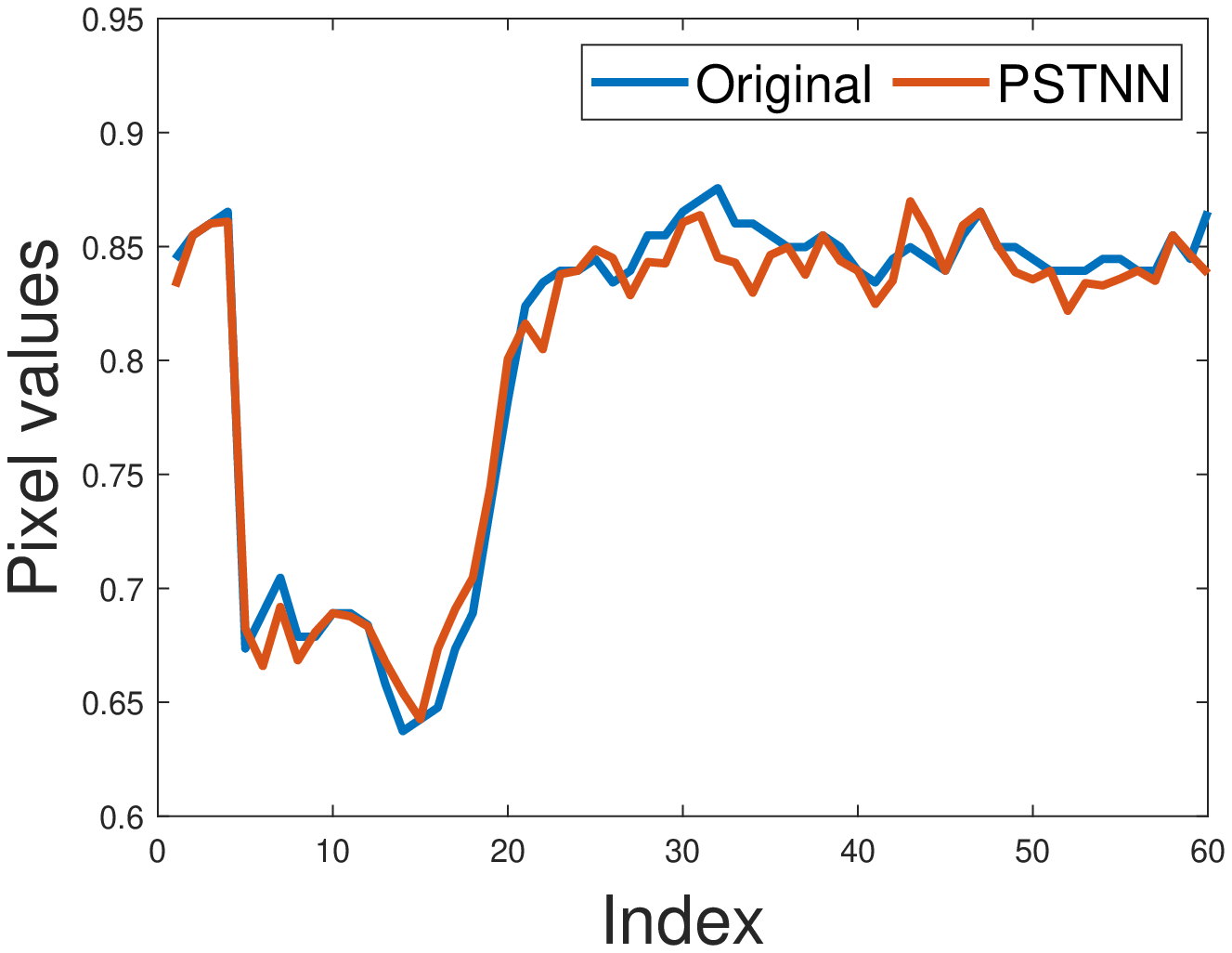}\vspace{0pt}
			\includegraphics[width=\linewidth]{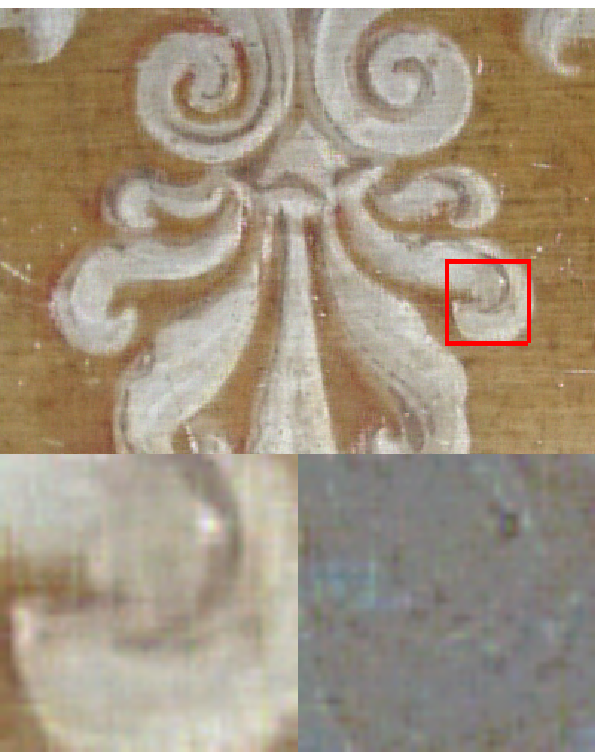}\vspace{0pt}
			\includegraphics[width=\linewidth]{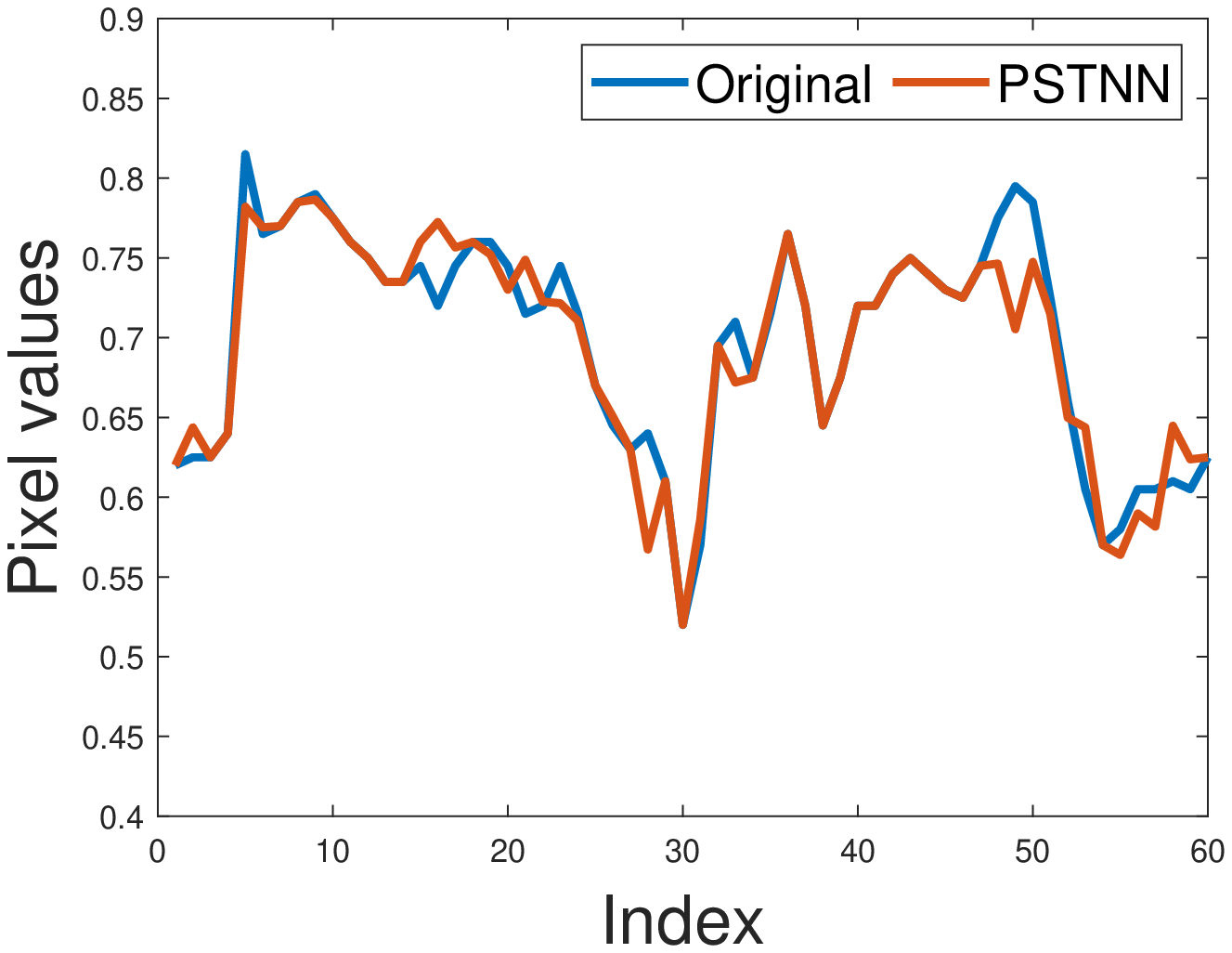}\vspace{0pt}
			\includegraphics[width=\linewidth]{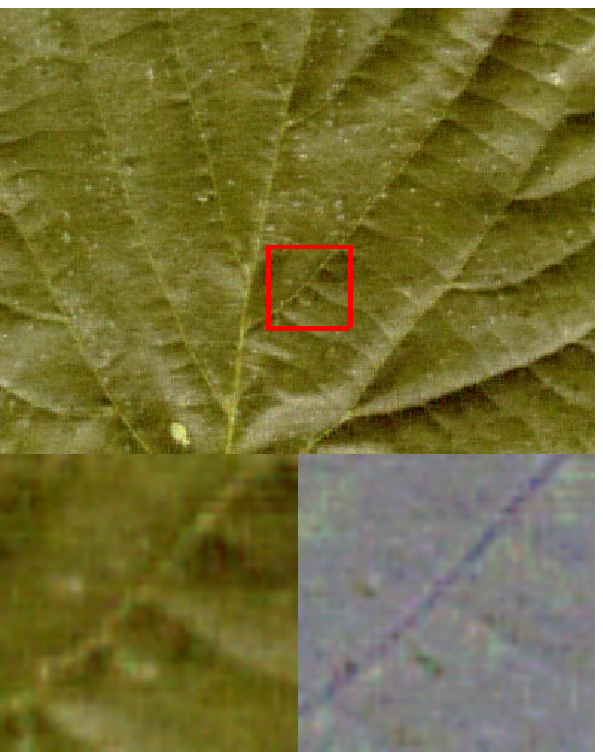}\vspace{0pt}
			\includegraphics[width=\linewidth]{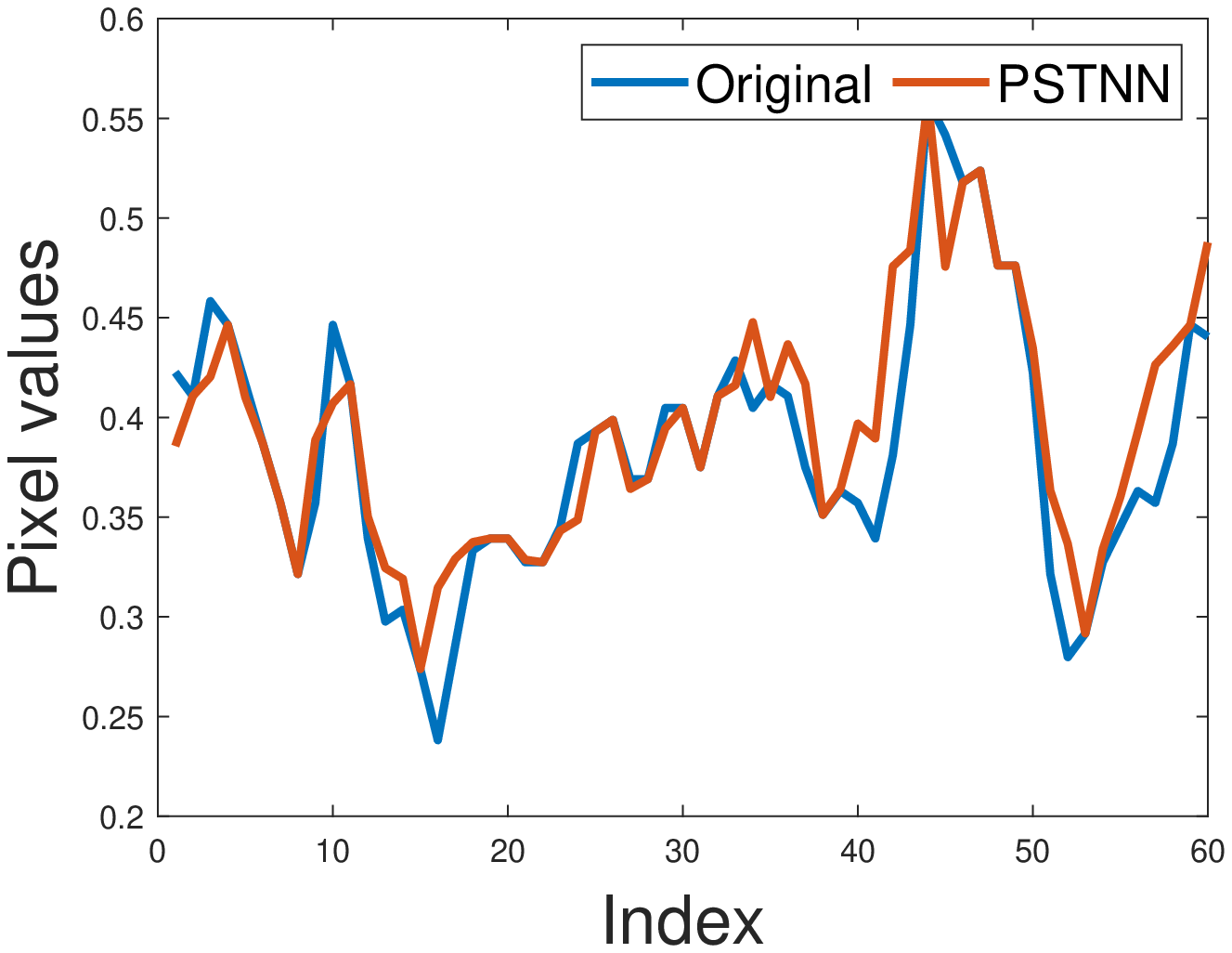}\vspace{0pt}
			\includegraphics[width=\linewidth]{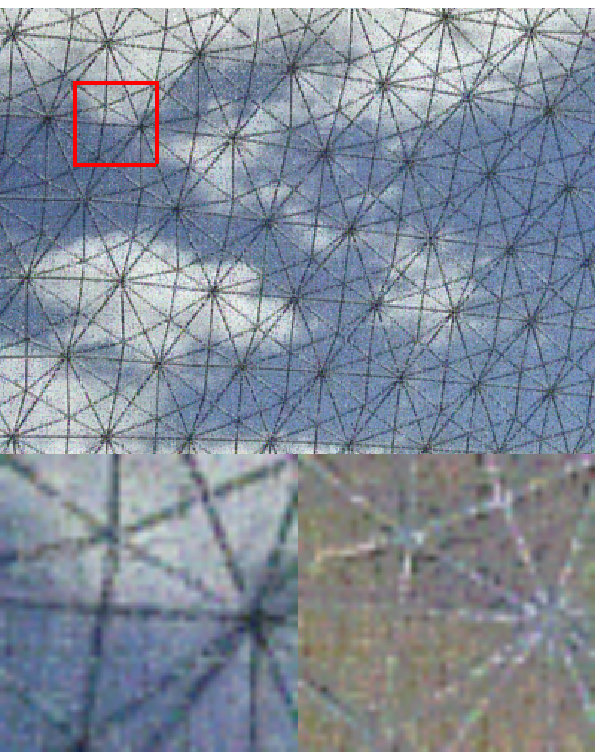}\vspace{0pt}
			\includegraphics[width=\linewidth]{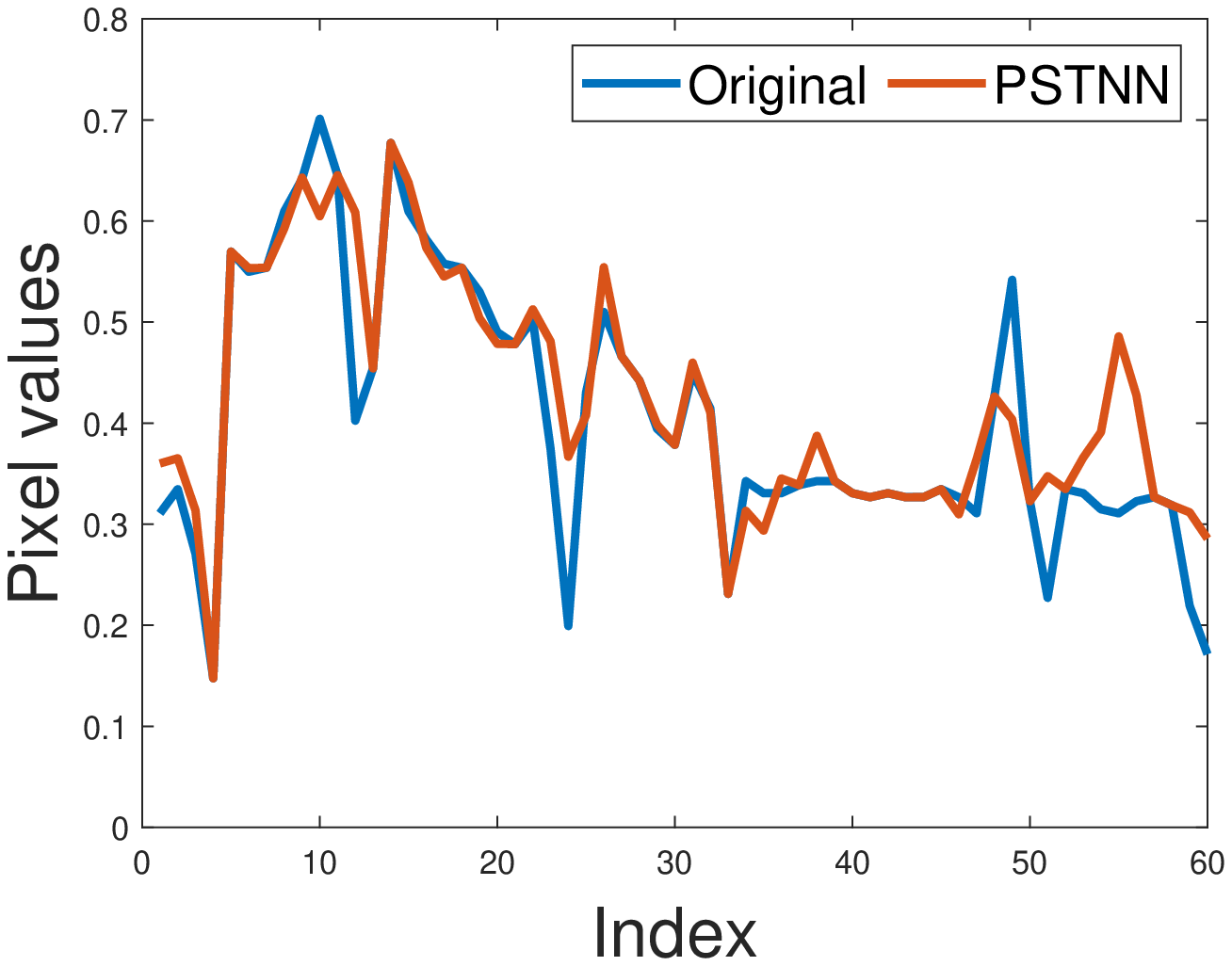}
			\caption{PSTNN}
		\end{subfigure}
		\begin{subfigure}[b]{0.138\linewidth}
			\centering			
			\includegraphics[width=\linewidth]{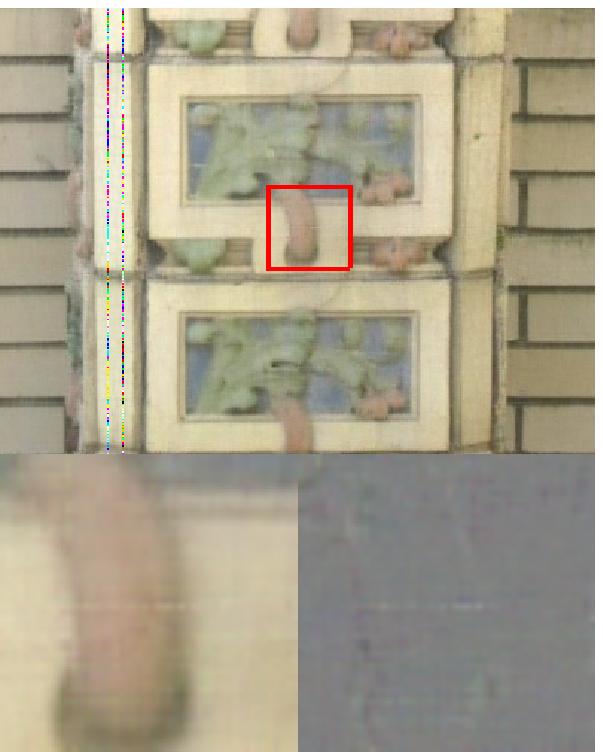}\vspace{0pt}
			\includegraphics[width=\linewidth]{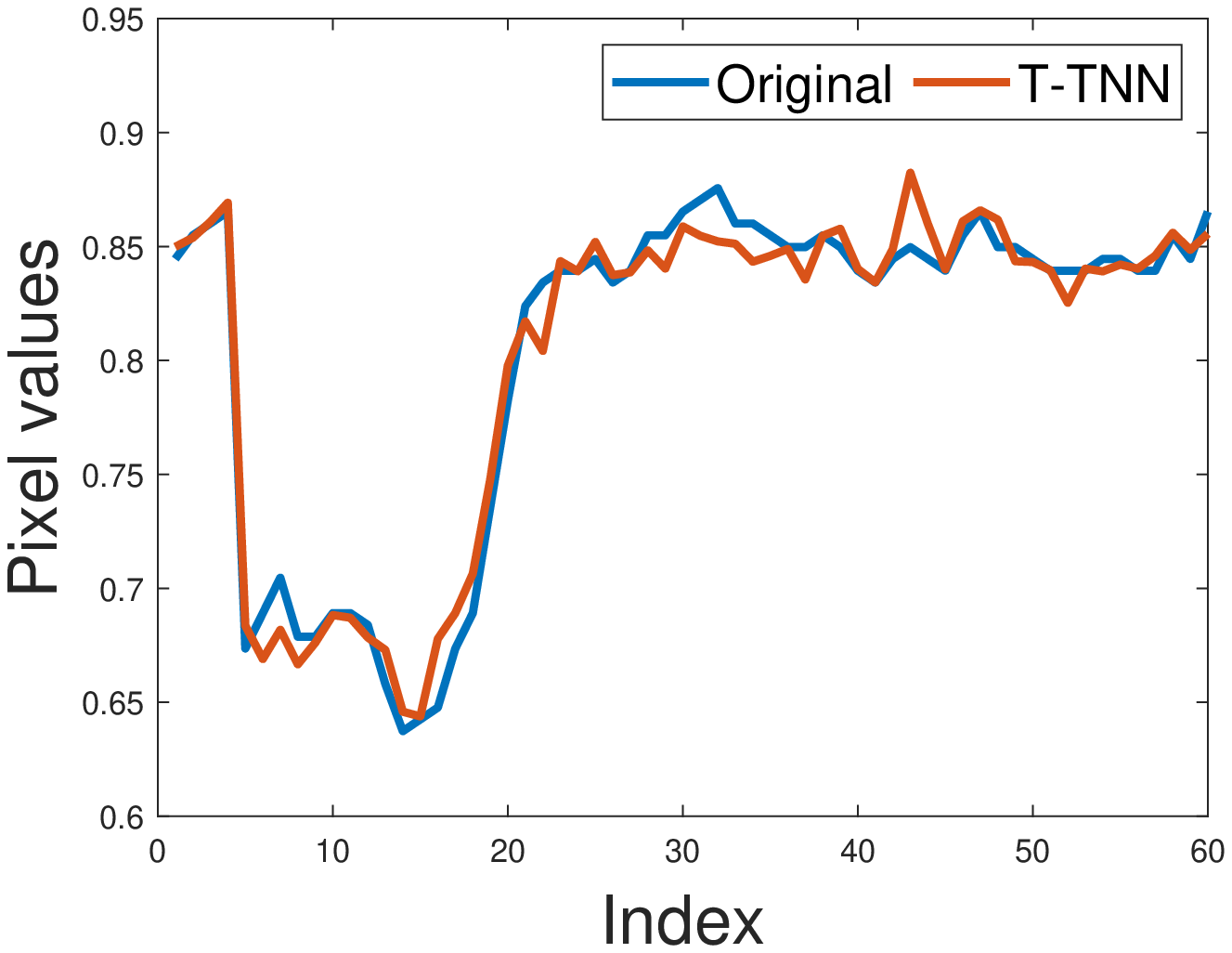}\vspace{0pt}
			\includegraphics[width=\linewidth]{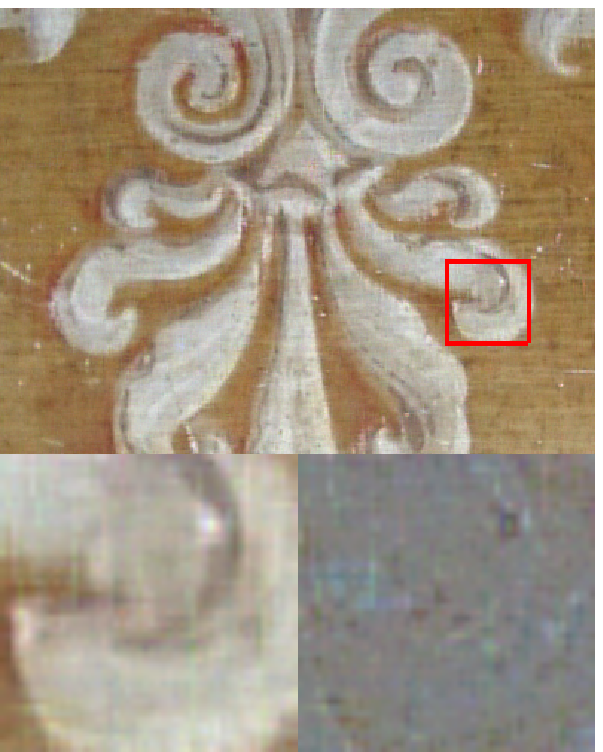}\vspace{0pt}
			\includegraphics[width=\linewidth]{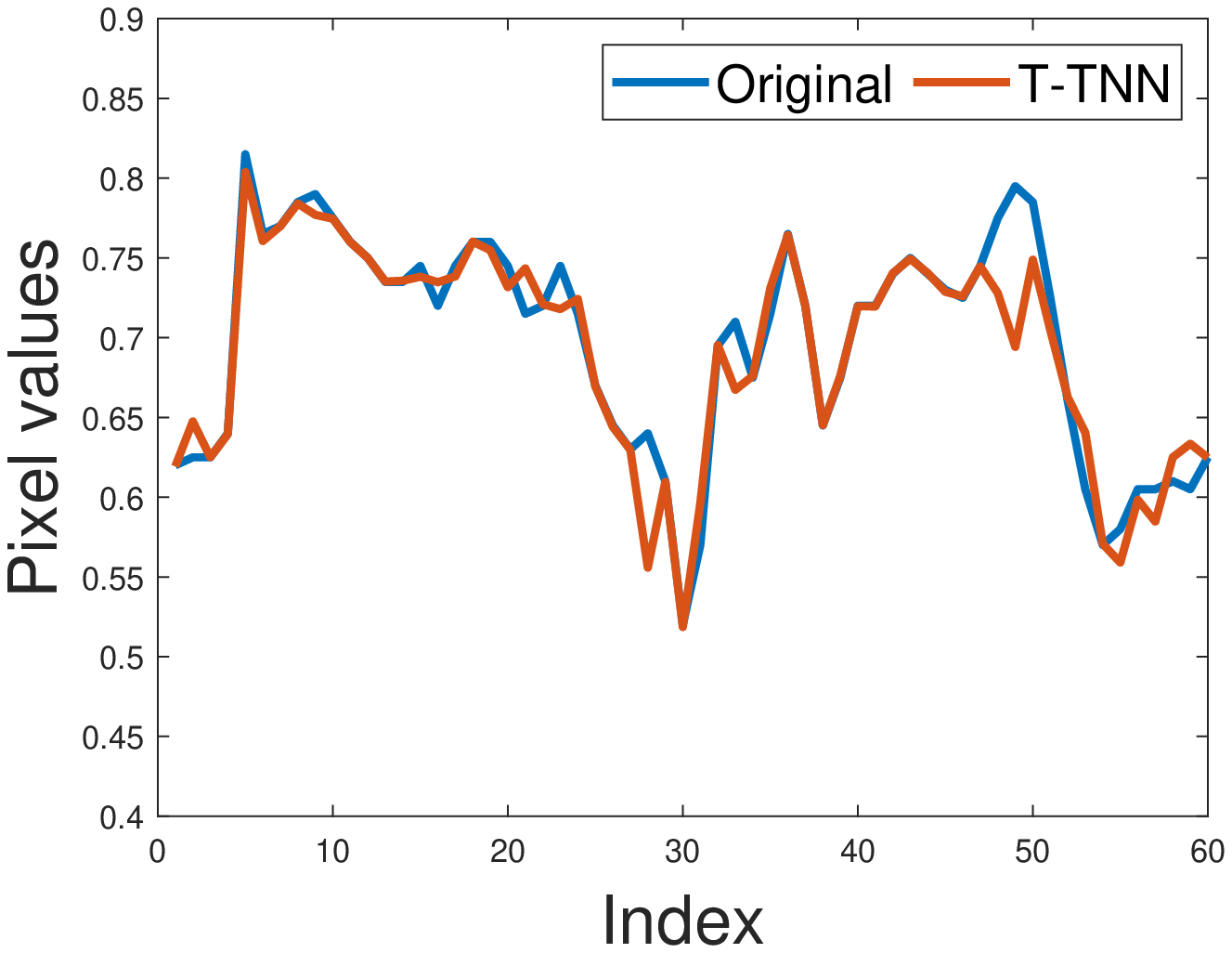}\vspace{0pt}
			\includegraphics[width=\linewidth]{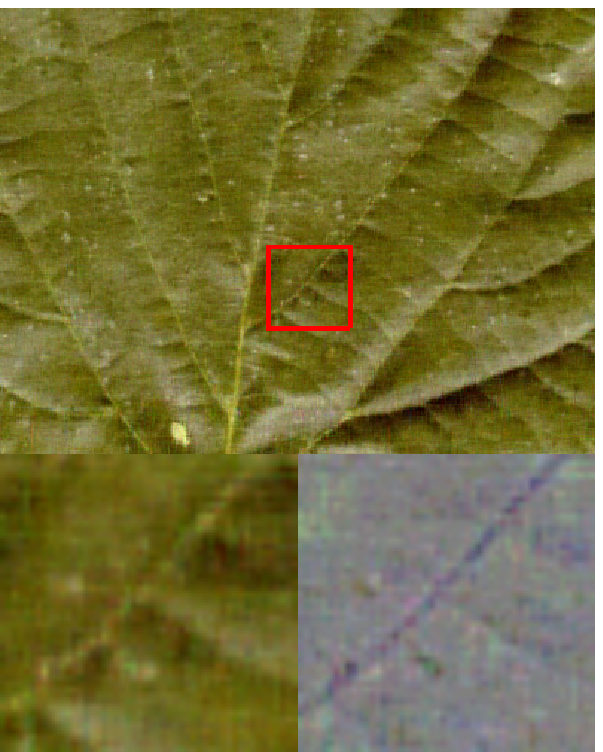}\vspace{0pt}
			\includegraphics[width=\linewidth]{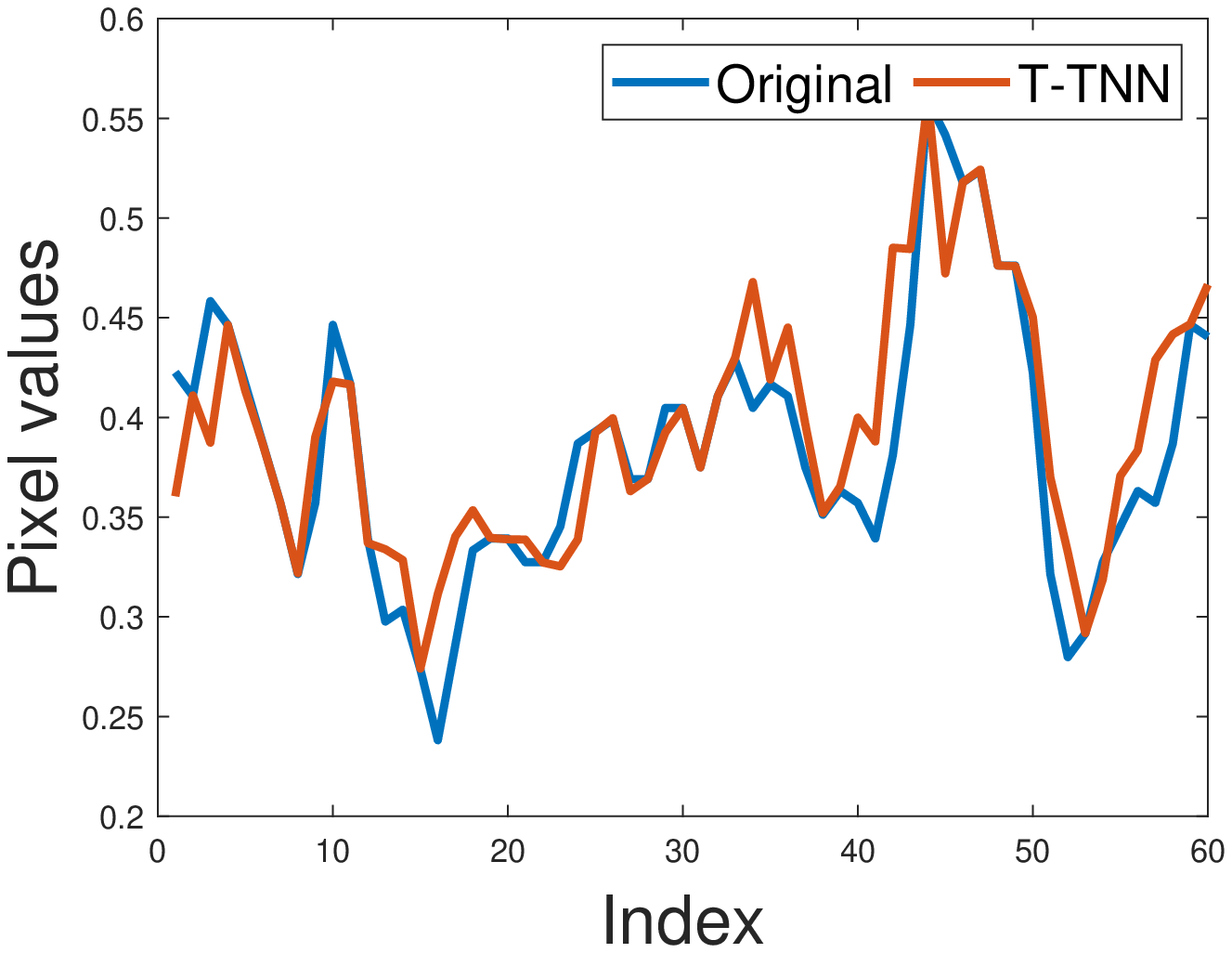}\vspace{0pt}
			\includegraphics[width=\linewidth]{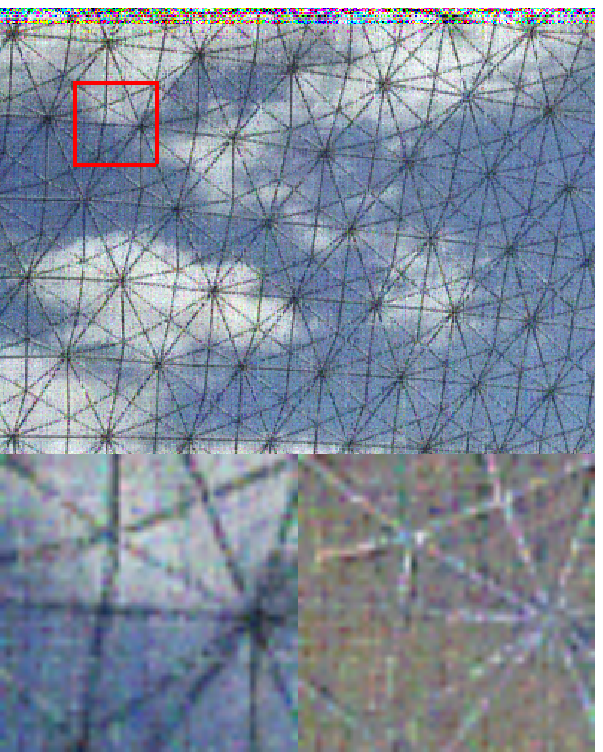}\vspace{0pt}
			\includegraphics[width=\linewidth]{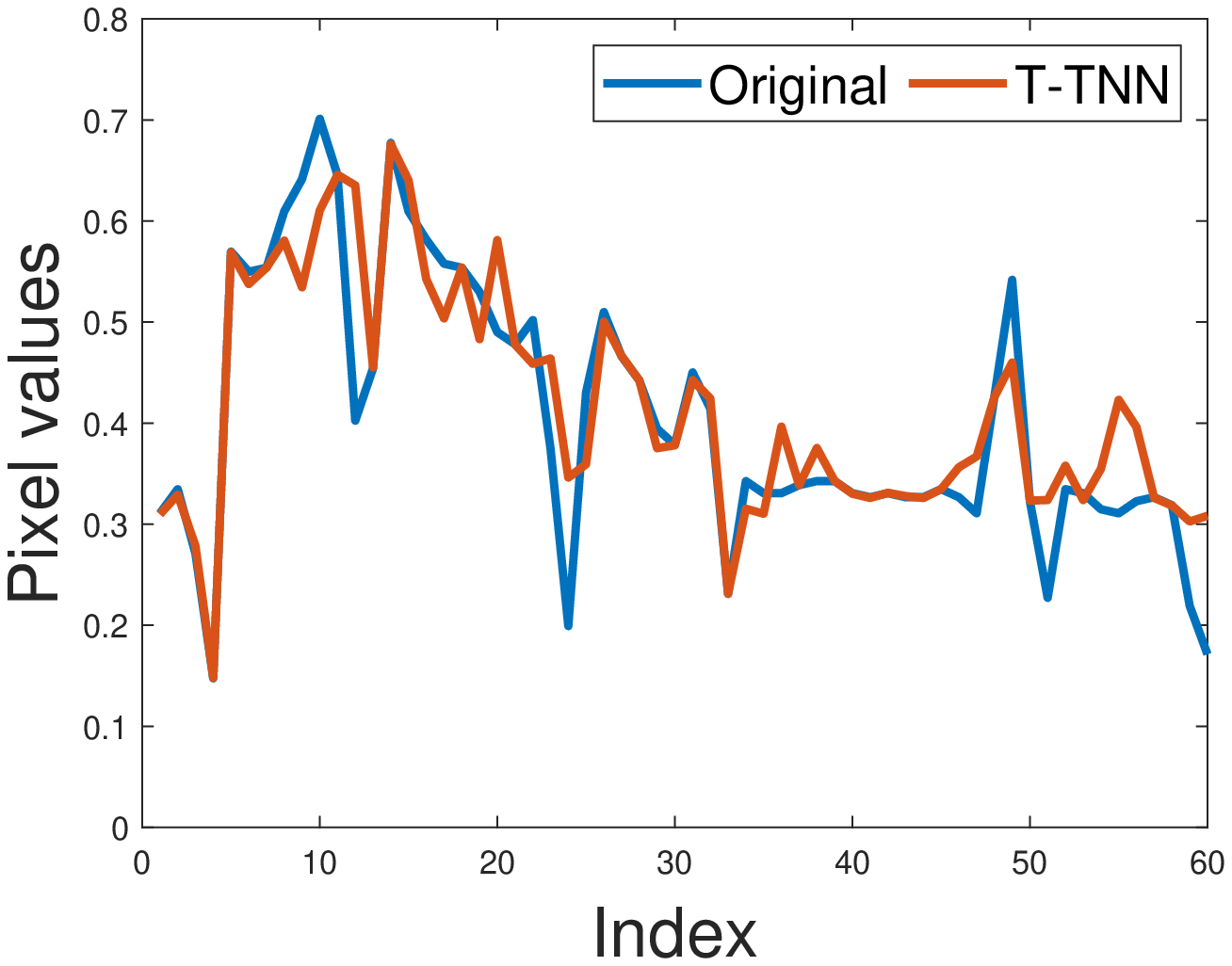}
			\caption{T-TNN}
		\end{subfigure}	
		\begin{subfigure}[b]{0.138\linewidth}
			\centering
			\includegraphics[width=\linewidth]{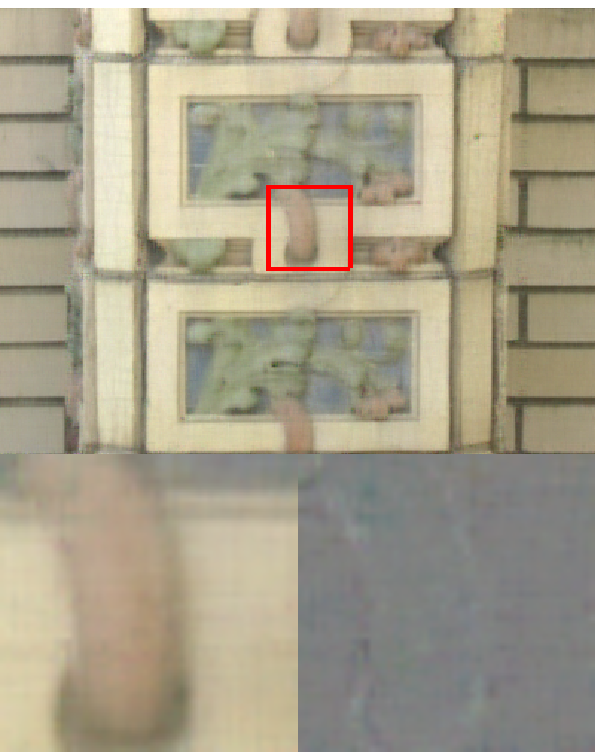}\vspace{0pt}
			\includegraphics[width=\linewidth]{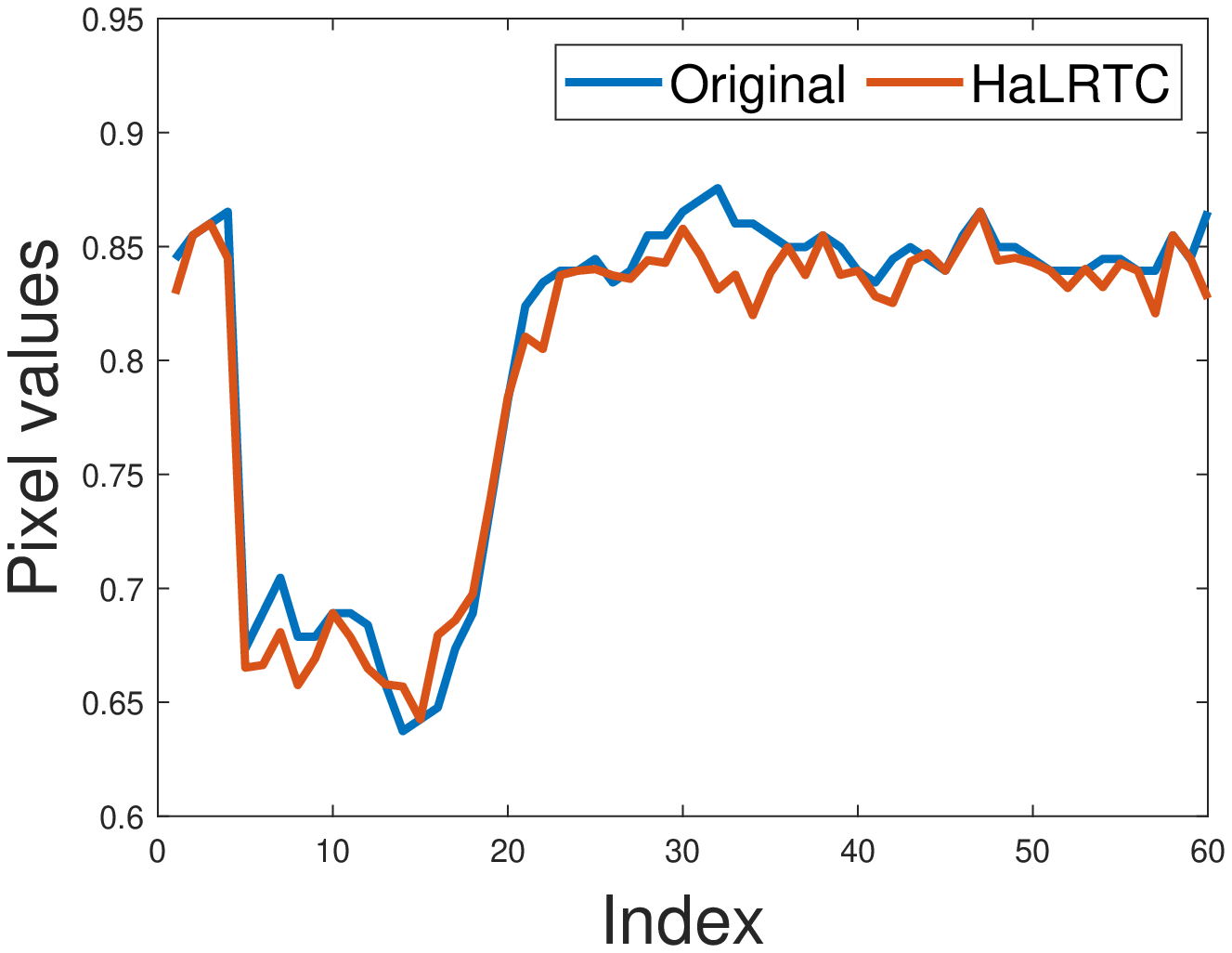}\vspace{0pt}
			\includegraphics[width=\linewidth]{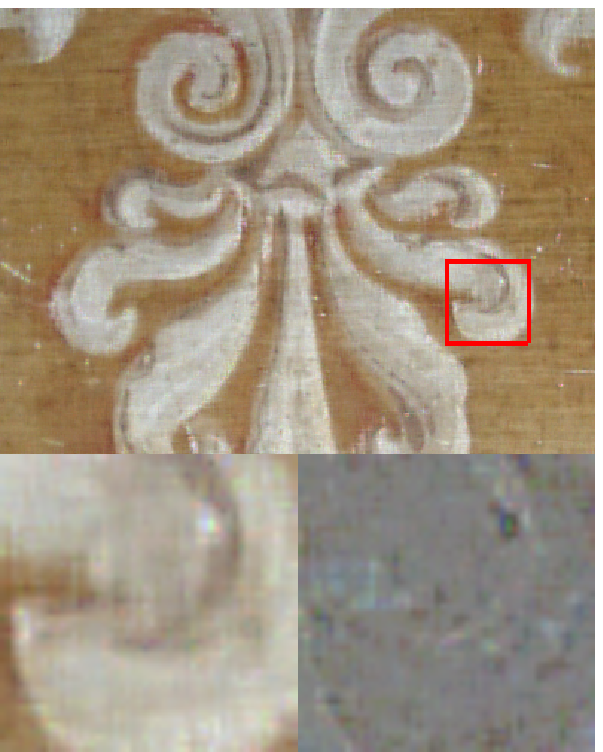}\vspace{0pt}
			\includegraphics[width=\linewidth]{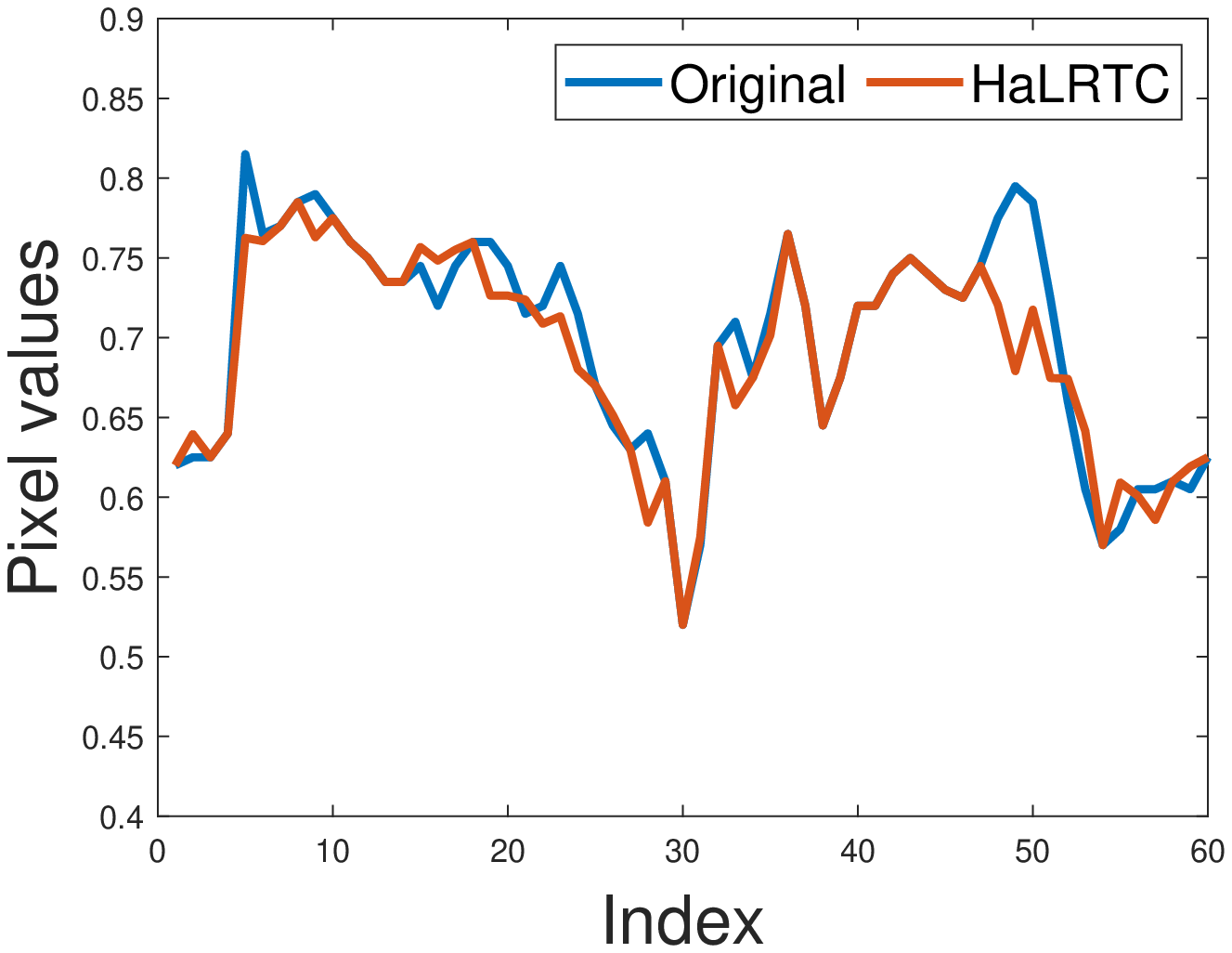}\vspace{0pt}
			\includegraphics[width=\linewidth]{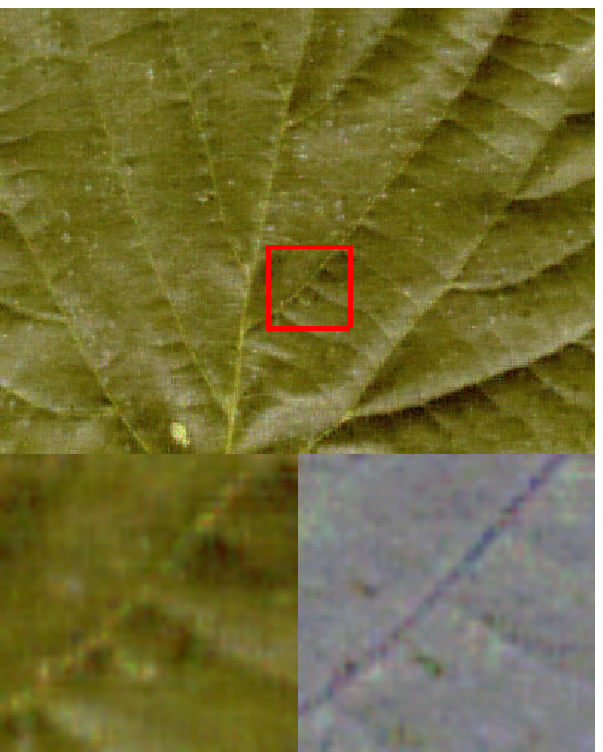}\vspace{0pt}
			\includegraphics[width=\linewidth]{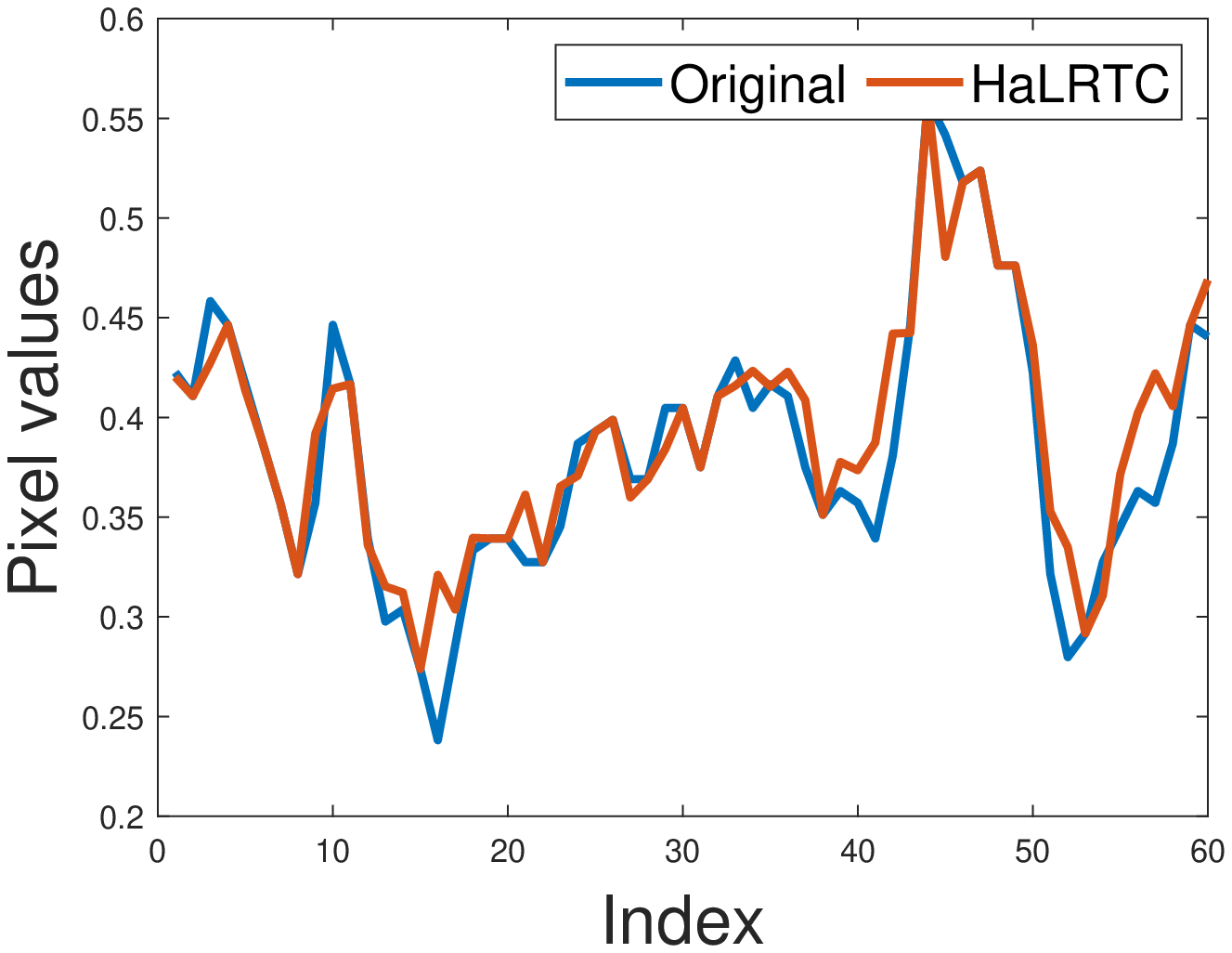}\vspace{0pt}
			\includegraphics[width=\linewidth]{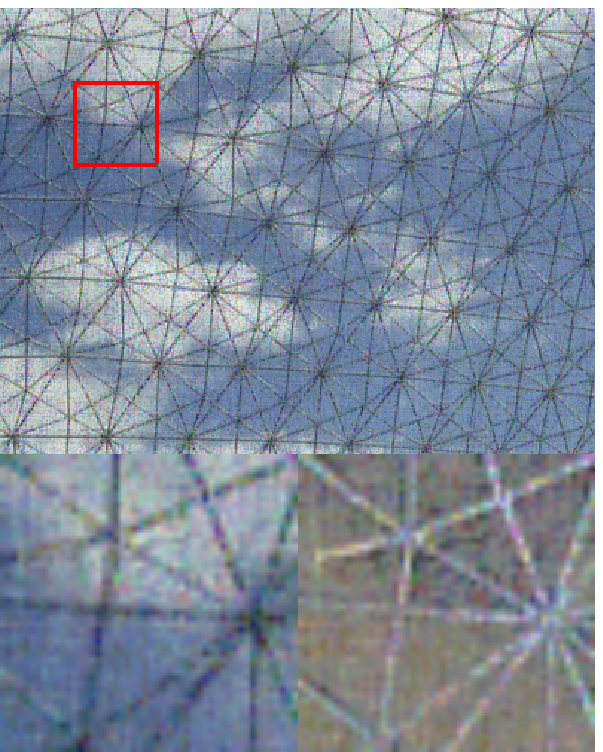}\vspace{0pt}
			\includegraphics[width=\linewidth]{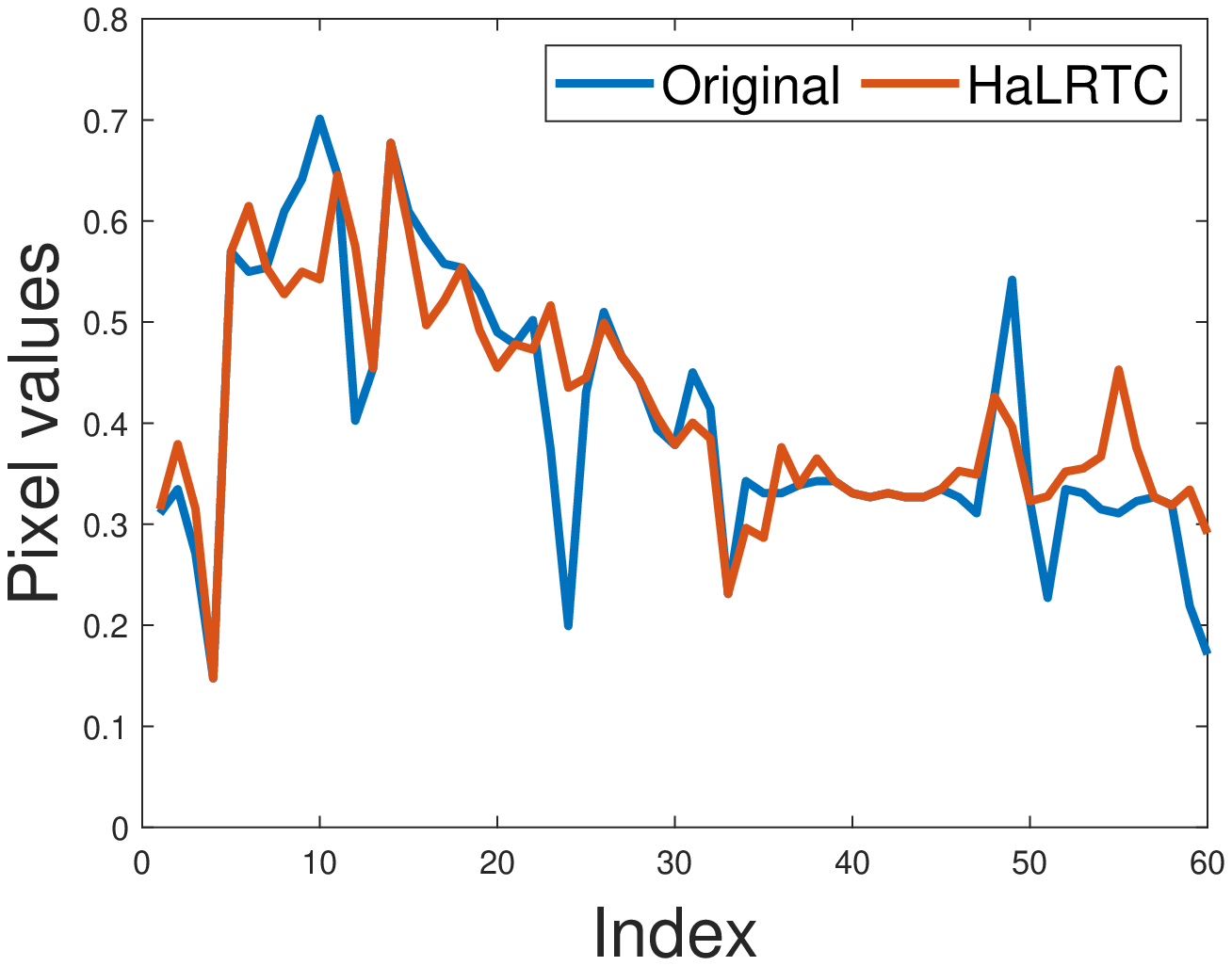}
			\caption{HaLRTC}
		\end{subfigure}					
	\end{subfigure}
	\vfill
	\caption{Examples of texture image inpainting with $ SR=30\% $. From top to bottom are ``Stone", ``Pattern", ``Leaves" and ``Barbed Wire". For better visualization, we show the zoom-in region, the corresponding error map (difference from the original) and the corresponding partial residuals of the region.}
	\label{fig:Texture}
\end{figure*}

\begin{table*}[htbp]
	\centering
	\caption{TEXTURE IMAGE INPAINTING PERFORMANCE COMPARISON: PSNR, SSIM, FSIM AND RUNNING TIME. THE BEST AND THE SECOND BEST PERFORMING METHODS IN EACH IMAGE ARE HIGHLIGHTED IN RED AND BOLD, RESPECTIVELY}
	\begin{tabular}{c|c|cccc|cccc|cccc}
		\hline
		\multirow{2}{*}{Texture Image} & \multirow{2}{*}{Methods} & \multicolumn{4}{c|}{$ SR=30\% $}    & \multicolumn{4}{c|}{$ SR=35\% $}   & \multicolumn{4}{c}{$ SR=40\% $} \\
\cline{3-14}          &       & PSNR  & SSIM  & FSIM  & Time  & PSNR  & SSIM  & FSIM  & Time  & PSNR  & SSIM  & FSIM  & Time \\
		\hline
\multirow{5}{*}{Stone} & TNNR  & \textcolor[rgb]{ 1,  0,  0}{37.535 } & \textcolor[rgb]{ 1,  0,  0}{0.949 } & \textcolor[rgb]{ 1,  0,  0}{0.980 } & \textbf{4.423 } & \textcolor[rgb]{ 1,  0,  0}{39.218 } & \textcolor[rgb]{ 1,  0,  0}{0.965 } & \textcolor[rgb]{ 1,  0,  0}{0.986 } & \textbf{3.881 } & \textcolor[rgb]{ 1,  0,  0}{40.605 } & \textcolor[rgb]{ 1,  0,  0}{0.974 } & \textcolor[rgb]{ 1,  0,  0}{0.990 } & \textcolor[rgb]{ 1,  0,  0}{3.608 } \\
& TNN   & 35.728  & 0.937  & 0.974  & 60.565  & 37.419  & 0.954  & 0.982  & 61.704  & 38.939  & 0.966  & 0.987  & 36.808  \\
& PSTNN & \textbf{35.825 } & \textbf{0.939 } & \textbf{0.975 } & 11.579  & 37.408  & 0.955  & 0.982  & 11.677  & 38.854  & 0.966  & 0.987  & 11.739  \\
& T-TNN & 28.026  & 0.905  & 0.966  & 367.973  & \textbf{38.098 } & \textbf{0.959 } & \textbf{0.984 } & 252.761  & \textbf{39.683 } & \textbf{0.970 } & \textbf{0.988 } & 70.191  \\
& HaLRTC & 34.703  & 0.931  & 0.969  & \textcolor[rgb]{ 1,  0,  0}{3.929 } & 36.119  & 0.947  & 0.976  & \textcolor[rgb]{ 1,  0,  0}{3.764 } & 37.440  & 0.959  & 0.982  & \textbf{3.785 } \\
\hline
\multirow{5}{*}{Pattern} & TNNR  & \textcolor[rgb]{ 1,  0,  0}{32.876 } & \textcolor[rgb]{ 1,  0,  0}{0.872 } & \textcolor[rgb]{ 1,  0,  0}{0.952 } & \textcolor[rgb]{ 1,  0,  0}{4.310 } & \textcolor[rgb]{ 1,  0,  0}{35.198 } & \textcolor[rgb]{ 1,  0,  0}{0.917 } & \textcolor[rgb]{ 1,  0,  0}{0.969 } & \textcolor[rgb]{ 1,  0,  0}{4.568 } & \textcolor[rgb]{ 1,  0,  0}{37.373 } & \textcolor[rgb]{ 1,  0,  0}{0.948 } & \textcolor[rgb]{ 1,  0,  0}{0.981 } & \textbf{4.384 } \\
& TNN   & 30.781  & 0.836  & 0.939  & 31.306  & 32.514  & 0.882  & 0.955  & 65.534  & 34.424  & 0.917  & 0.969  & 64.443  \\
& PSTNN & 31.044  & 0.844  & 0.941  & 10.874  & 32.747  & 0.887  & 0.956  & 11.840  & 34.571  & 0.920  & 0.970  & 11.843  \\
& T-TNN & \textbf{31.842 } & \textbf{0.856 } & \textbf{0.946 } & 99.708  & \textbf{33.592 } & \textbf{0.898 } & \textbf{0.961 } & 80.806  & \textbf{35.452 } & \textbf{0.930 } & \textbf{0.974 } & 67.539  \\
& HaLRTC & 30.872  & 0.851  & 0.936  & \textbf{5.226 } & 32.358  & 0.887  & 0.951  & \textbf{5.555 } & 33.956  & 0.916  & 0.965  & \textcolor[rgb]{ 1,  0,  0}{3.460 } \\
\hline
\multirow{5}{*}{Leaves} & TNNR  & \textcolor[rgb]{ 1,  0,  0}{29.517 } & \textcolor[rgb]{ 1,  0,  0}{0.777 } & \textcolor[rgb]{ 1,  0,  0}{0.924 } & \textbf{4.254 } & \textcolor[rgb]{ 1,  0,  0}{31.170 } & \textcolor[rgb]{ 1,  0,  0}{0.836 } & \textcolor[rgb]{ 1,  0,  0}{0.944 } & \textbf{4.432 } & \textcolor[rgb]{ 1,  0,  0}{32.790 } & \textcolor[rgb]{ 1,  0,  0}{0.879 } & \textcolor[rgb]{ 1,  0,  0}{0.960 } & \textbf{4.511 } \\
& TNN   & 28.998  & 0.767  & 0.918  & 30.869  & 30.294  & 0.819  & 0.936  & 61.087  & 31.645  & 0.862  & 0.950  & 54.025  \\
& PSTNN & 29.228  & 0.776  & 0.919  & 5.087  & 30.468  & 0.825  & \textbf{0.937 } & 11.361  & 31.765  & 0.865  & 0.951  & 11.115  \\
& T-TNN & \textbf{29.296 } & \textbf{0.771 } & \textbf{0.918 } & 172.234  & \textbf{30.698 } & \textbf{0.828 } & \textbf{0.937 } & 87.762  & \textbf{32.078 } & \textbf{0.871 } & \textbf{0.952 } & 73.918  \\
& HaLRTC & 28.670  & 0.766  & 0.905  & \textcolor[rgb]{ 1,  0,  0}{3.758 } & 29.821  & 0.812  & 0.923  & \textcolor[rgb]{ 1,  0,  0}{1.980 } & 31.052  & 0.853  & 0.941  & \textcolor[rgb]{ 1,  0,  0}{3.580 } \\
\hline
\multirow{5}{*}{Barbed Wire} & TNNR  & \textcolor[rgb]{ 1,  0,  0}{21.977 } & \textcolor[rgb]{ 1,  0,  0}{0.703 } & \textcolor[rgb]{ 1,  0,  0}{0.862 } & \textbf{4.127 } & \textcolor[rgb]{ 1,  0,  0}{23.622 } & \textcolor[rgb]{ 1,  0,  0}{0.771 } & \textcolor[rgb]{ 1,  0,  0}{0.895 } & \textcolor[rgb]{ 1,  0,  0}{3.927 } & \textcolor[rgb]{ 1,  0,  0}{25.384 } & \textcolor[rgb]{ 1,  0,  0}{0.832 } & \textcolor[rgb]{ 1,  0,  0}{0.922 } & \textbf{4.008 } \\
& TNN   & 21.665  & 0.675  & 0.839  & 48.753  & 23.073  & 0.750  & 0.876  & 62.956  & 24.414  & 0.806  & 0.905  & 63.915  \\
& PSTNN & \textbf{21.815 } & \textbf{0.684 } & \textbf{0.845 } & 4.966  & \textbf{23.212 } & \textbf{0.756 } & \textbf{0.881 } & 11.410  & \textbf{24.538 } & \textbf{0.811 } & \textbf{0.909 } & 11.611  \\
& T-TNN & 18.385  & 0.609  & 0.805  & 306.874  & 20.054  & 0.704  & 0.850  & 369.429  & 22.445  & 0.793  & 0.895  & 263.812  \\
& HaLRTC & 20.328  & 0.600  & 0.785  & \textcolor[rgb]{ 1,  0,  0}{3.488 } & 21.327  & 0.671  & 0.825  & \textbf{4.591 } & 22.304  & 0.728  & 0.857  & \textcolor[rgb]{ 1,  0,  0}{1.988 } \\
\hline
	\end{tabular}%
	\label{tab:Texture}%
\end{table*}%

For further comparison, we recover  images of the deterministically masked images by ``Rectangle" and ``Grid". Clearly, the masked images are no-mean-sampling. 
As shown in Figure \ref{fig:Mask} and Table \ref{tab:Mask}, we display the recovered results by different methods. Clearly, TNNR obtains the most visually satisfying results among the compared methods, and the PSNR, SSIM, FSIM results are higher than the corresponding compared methods in all cases. From time consumption, TNNR is the fastest method. The mask image inpainting results are also consistent with the color and texture image inpainting results and all these demonstrate that TNNR can perform tensor completion better and runs more efficiently.

\begin{table}[htbp]
	\centering
	\caption{MASK IMAGE INPAINTING PERFORMANCE COMPARISON: PSNR, SSIM, FSIM AND RUNNING TIME. THE BEST AND THE SECOND BEST PERFORMING METHODS IN EACH IMAGE ARE HIGHLIGHTED IN RED AND BOLD, RESPECTIVELY}
	\begin{tabular}{c|c|cccc}
		\hline
Mask  & Methods & PSNR  & SSIM  & FSIM  & Time \\
\hline
\multirow{5}{*}{Rectangle} & TNNR  & \textcolor[rgb]{ 1,  0,  0}{37.764 } & \textcolor[rgb]{ 1,  0,  0}{0.985 } & \textcolor[rgb]{ 1,  0,  0}{0.992 } & \textcolor[rgb]{ 1,  0,  0}{1.978 } \\
& TNN   & 36.479  & \textbf{0.984 } & \textbf{0.991 } & 34.425  \\
& PSTNN & \textbf{36.756 } & \textbf{0.984 } & \textcolor[rgb]{ 1,  0,  0}{0.992 } & 6.515  \\
& T-TNN & 16.306  & 0.943  & 0.915  & 328.535  \\
& HaLRTC & 27.030  & 0.972  & 0.964  & \textbf{4.999 }\\
\hline
\multirow{5}{*}{Grid} & TNNR  & \textcolor[rgb]{ 1,  0,  0}{28.844 } & \textcolor[rgb]{ 1,  0,  0}{0.945 } & \textcolor[rgb]{ 1,  0,  0}{0.961 } & \textcolor[rgb]{ 1,  0,  0}{1.327 }\\
& TNN   & 27.734  & 0.935  & 0.955  & 42.225  \\
& PSTNN & 27.953  & \textbf{0.938 } & 0.957  & 4.404  \\
& T-TNN & \textbf{28.237 } & \textcolor[rgb]{ 1,  0,  0}{0.945 } & \textbf{0.958 } & 223.260  \\
& HaLRTC & 27.417  & 0.933  & 0.953  & \textbf{1.278 } \\
\hline
	\end{tabular}%
	\label{tab:Mask}%
\end{table}%

\begin{figure*}[htbp]
	\centering
	\begin{subfigure}[b]{1\linewidth}
		\begin{subfigure}[b]{0.138\linewidth}
			\centering
			\includegraphics[width=\linewidth]{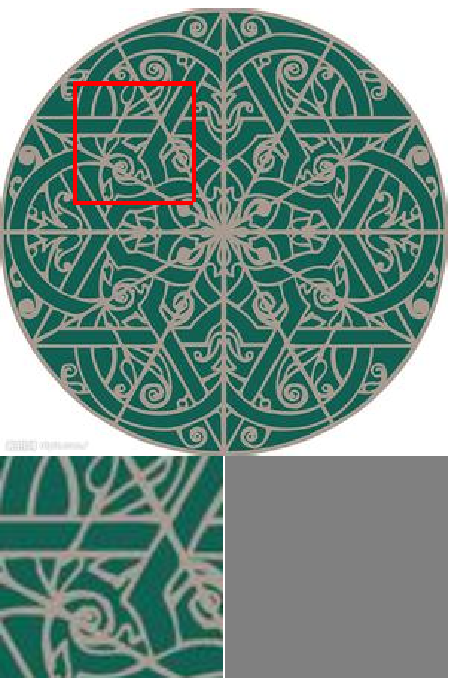}\vspace{0pt}
			\includegraphics[width=\linewidth]{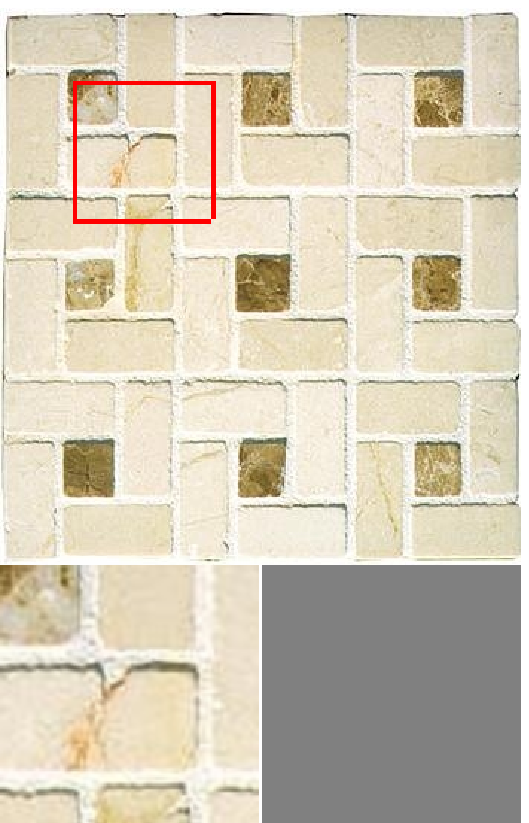}
			\caption{Original}
		\end{subfigure}   	
		\begin{subfigure}[b]{0.138\linewidth}
			\centering
			\includegraphics[width=\linewidth]{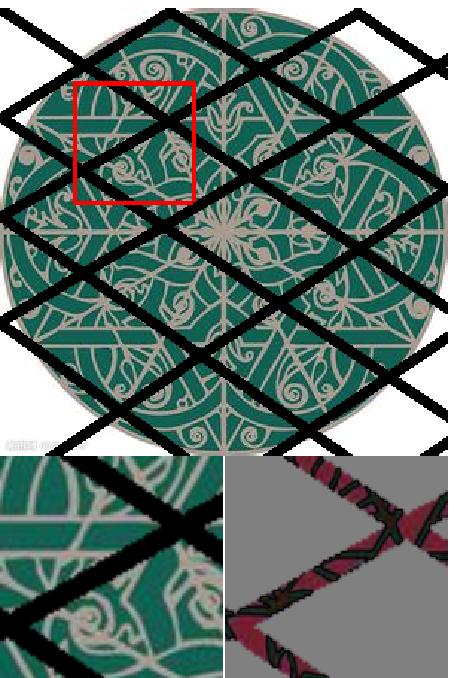}\vspace{0pt}
			\includegraphics[width=\linewidth]{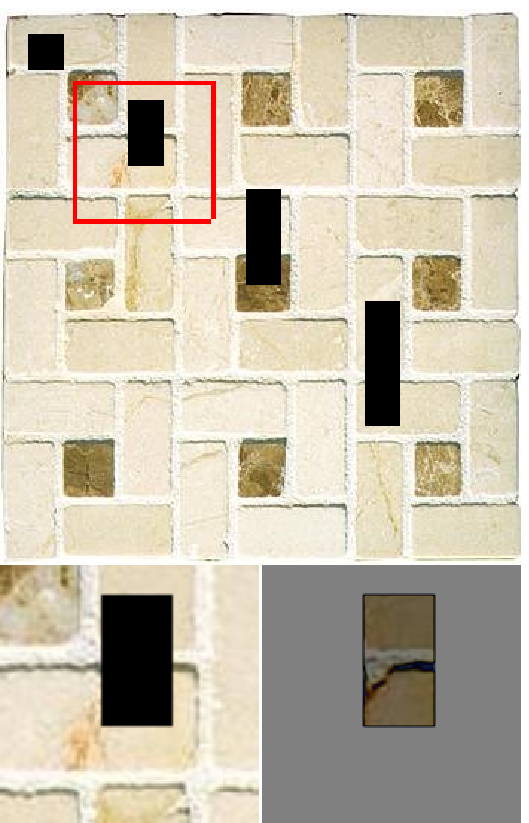}
			\caption{Observed}
		\end{subfigure}
		\begin{subfigure}[b]{0.138\linewidth}
			\centering
			\includegraphics[width=\linewidth]{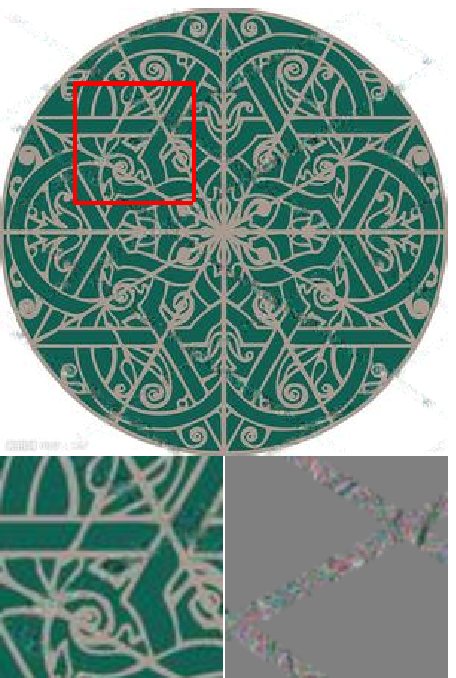}\vspace{0pt}
			\includegraphics[width=\linewidth]{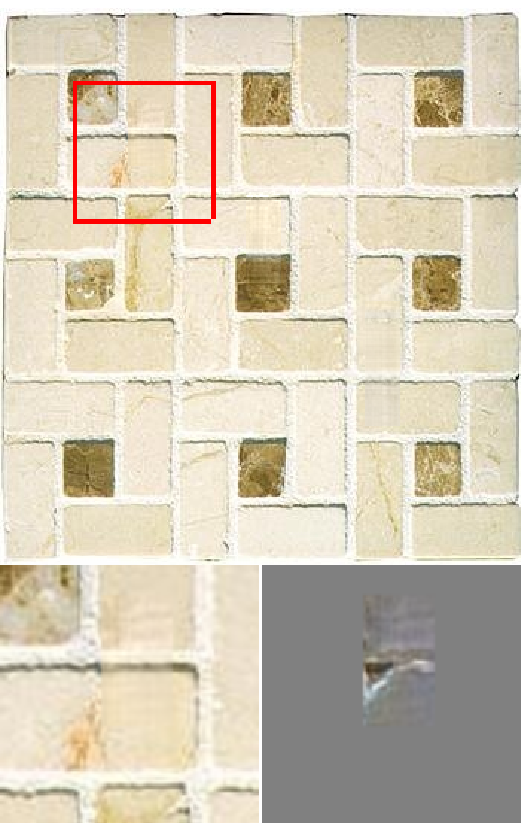}
			\caption{TNNR}
		\end{subfigure}
		\begin{subfigure}[b]{0.138\linewidth}
			\centering		
			\includegraphics[width=\linewidth]{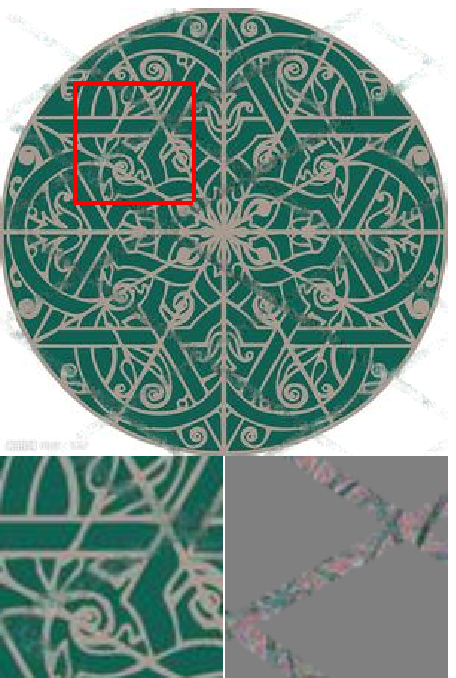}\vspace{0pt}
			\includegraphics[width=\linewidth]{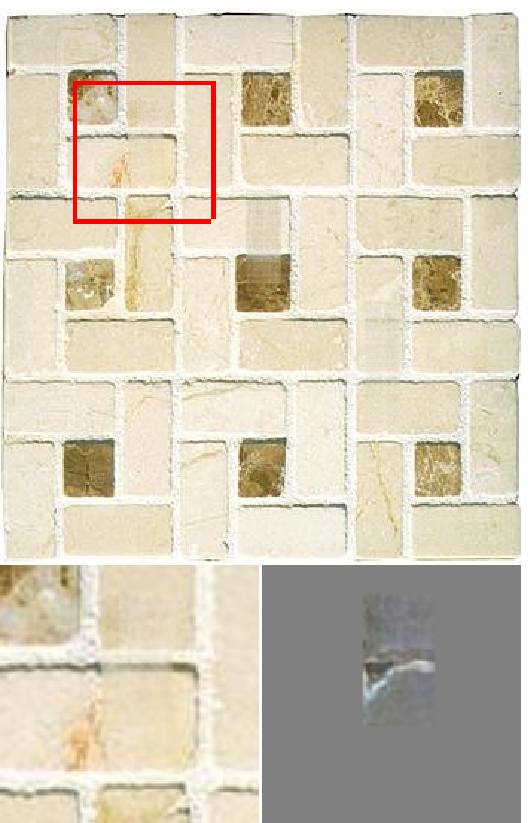}
			\caption{TNN}
		\end{subfigure}
		\begin{subfigure}[b]{0.138\linewidth}
			\centering			
			\includegraphics[width=\linewidth]{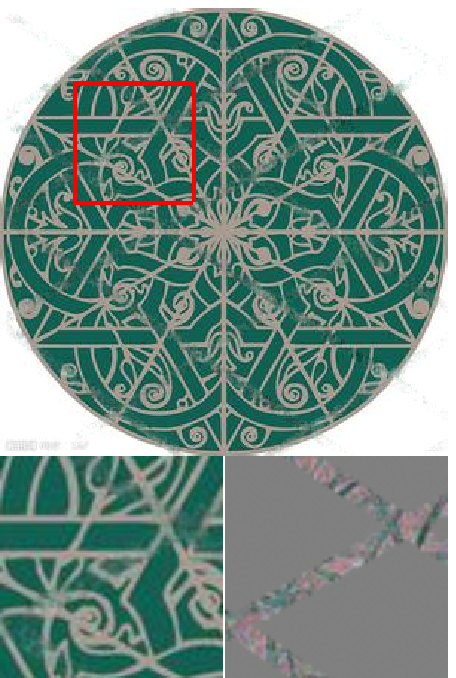}\vspace{0pt}
			\includegraphics[width=\linewidth]{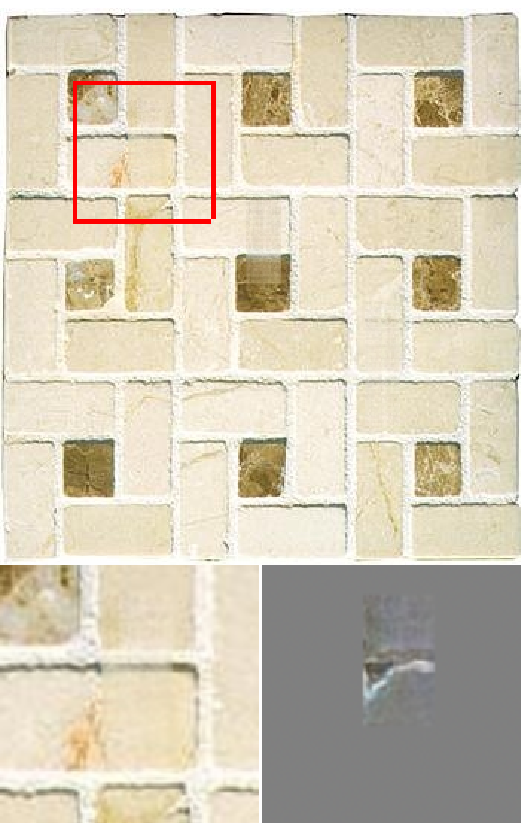}
			\caption{PSTNN}
		\end{subfigure}
		\begin{subfigure}[b]{0.138\linewidth}
			\centering			
			\includegraphics[width=\linewidth]{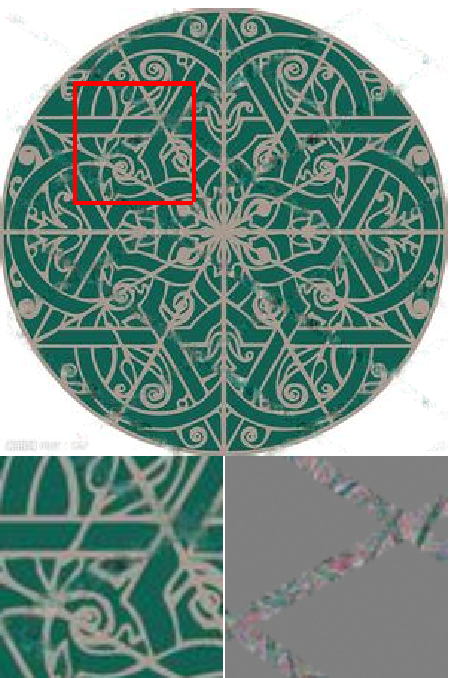}\vspace{0pt}
			\includegraphics[width=\linewidth]{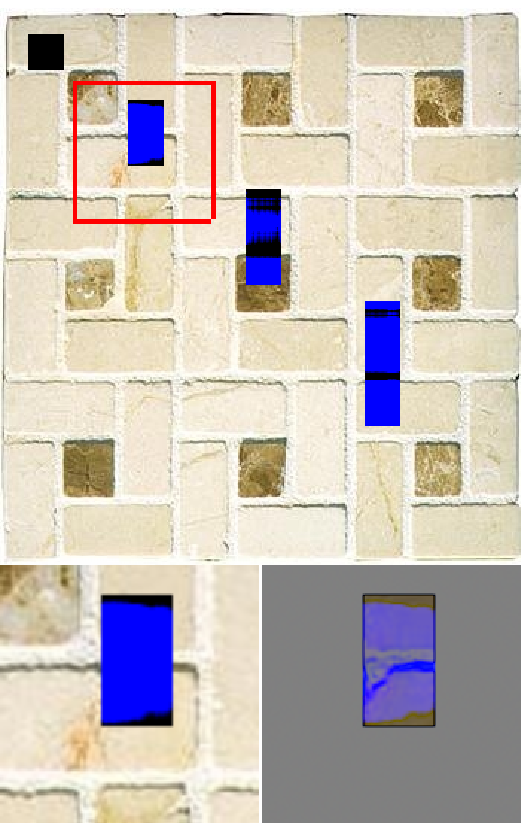}
			\caption{T-TNN}
		\end{subfigure}	
		\begin{subfigure}[b]{0.138\linewidth}
			\centering
			\includegraphics[width=\linewidth]{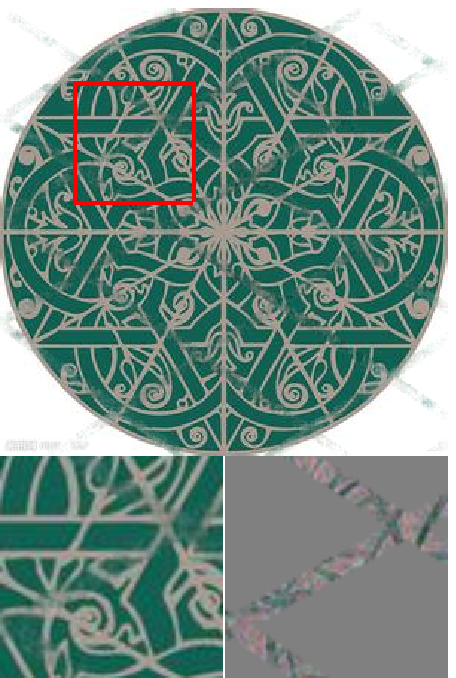}\vspace{0pt}
			\includegraphics[width=\linewidth]{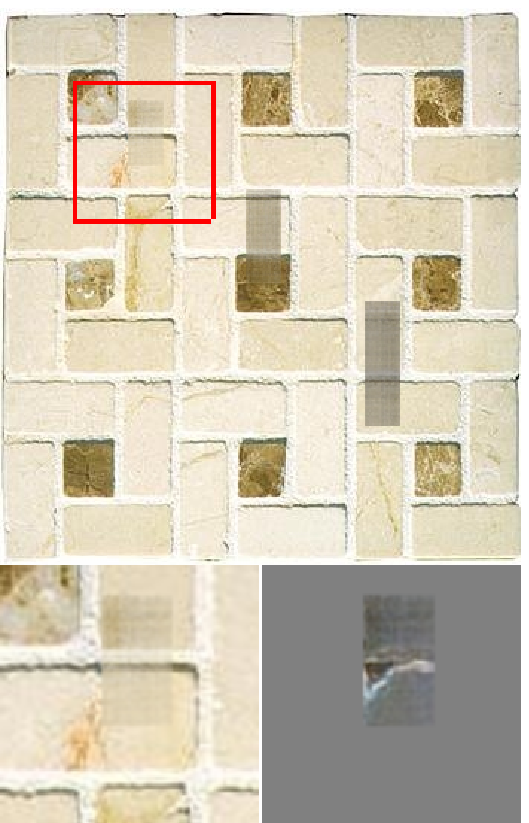}
			\caption{HaLRTC}
		\end{subfigure}					
	\end{subfigure}
	\vfill
	\caption{Examples of mask image inpainting. From top to bottom are ``Grid" and ``Rectangle". For better visualization, we show the zoom-in region and the corresponding error map (difference from the original).}
	\label{fig:Mask}
\end{figure*}

\subsection{Video Inpainting}
We evaluate our method on the widely used YUV Video Sequences\footnote{http://trace.eas.asu.edu/yuv/.}. Each sequence contains at least 150 frames and we use the first 30 frames of each. In the experiments, we test our method and other methods on two videos. The frame sizes of all videos are $ 144 \times 176 $ pixels. The sampling rates (SR) are set as $ 30\% $, $ 35\% $ and $ 40\% $.

As shown in Figure \ref{fig:Video}, each test video is shown at the 8th frame. Based on the results of the two tests, TNNR performs better at filling in the missing values. It is better to deal with the details of the frames. The PSNR, SSIM and FSIM metrics also shows the best results with TNNR, which are consistent with Table \ref{tab:Video}.

\begin{figure*}[htbp]
	\centering
	\begin{subfigure}[b]{1\linewidth}
		\begin{subfigure}[b]{0.138\linewidth}
			\centering
			\includegraphics[width=\linewidth]{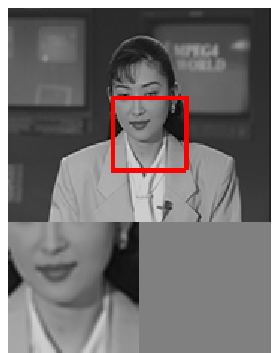}\vspace{0pt}
			\includegraphics[width=\linewidth]{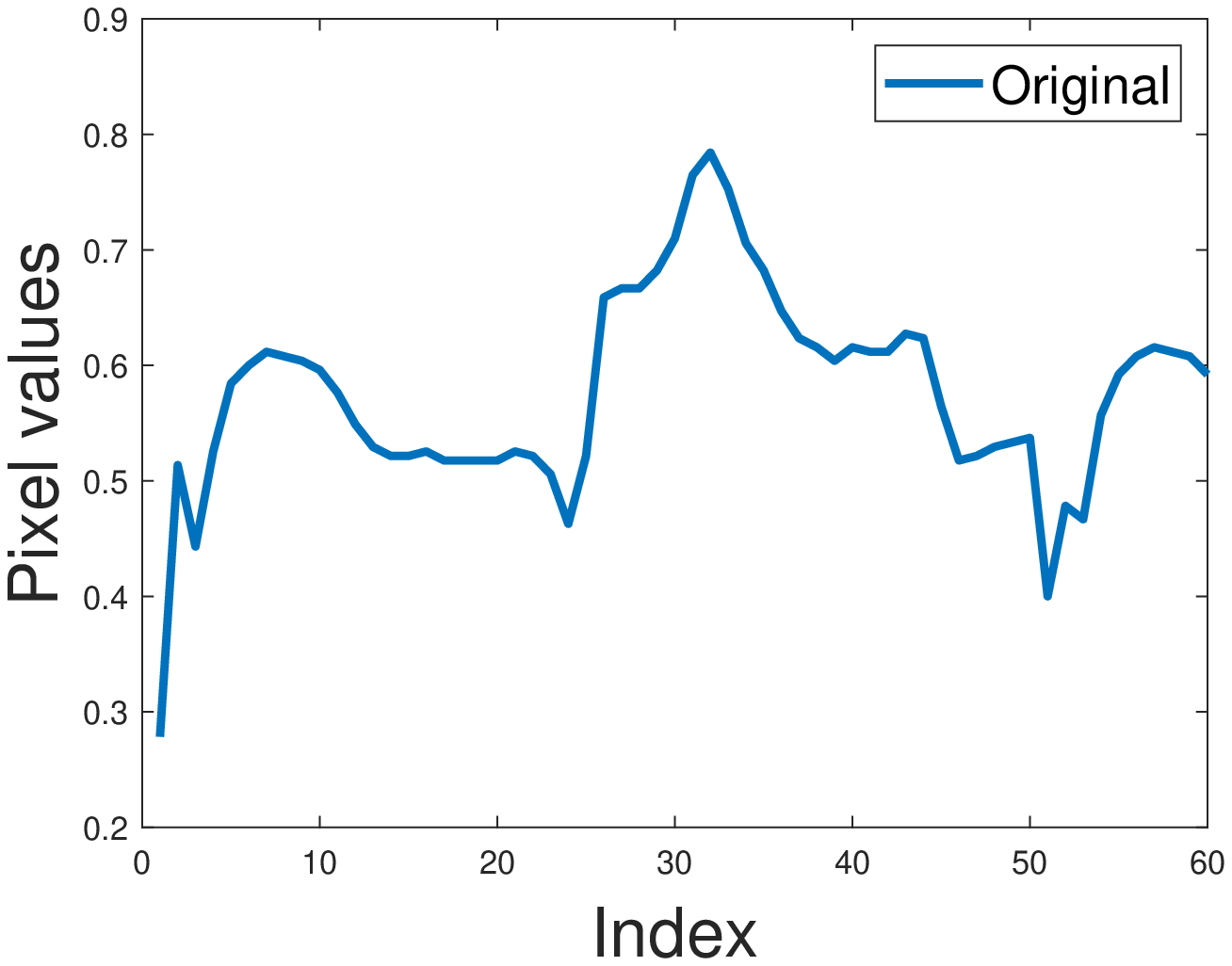}\vspace{0pt}
			\includegraphics[width=\linewidth]{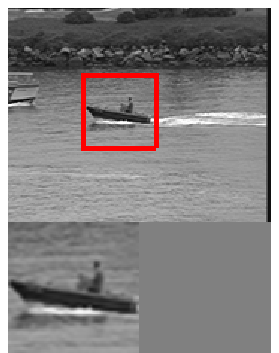}\vspace{0pt}
			\includegraphics[width=\linewidth]{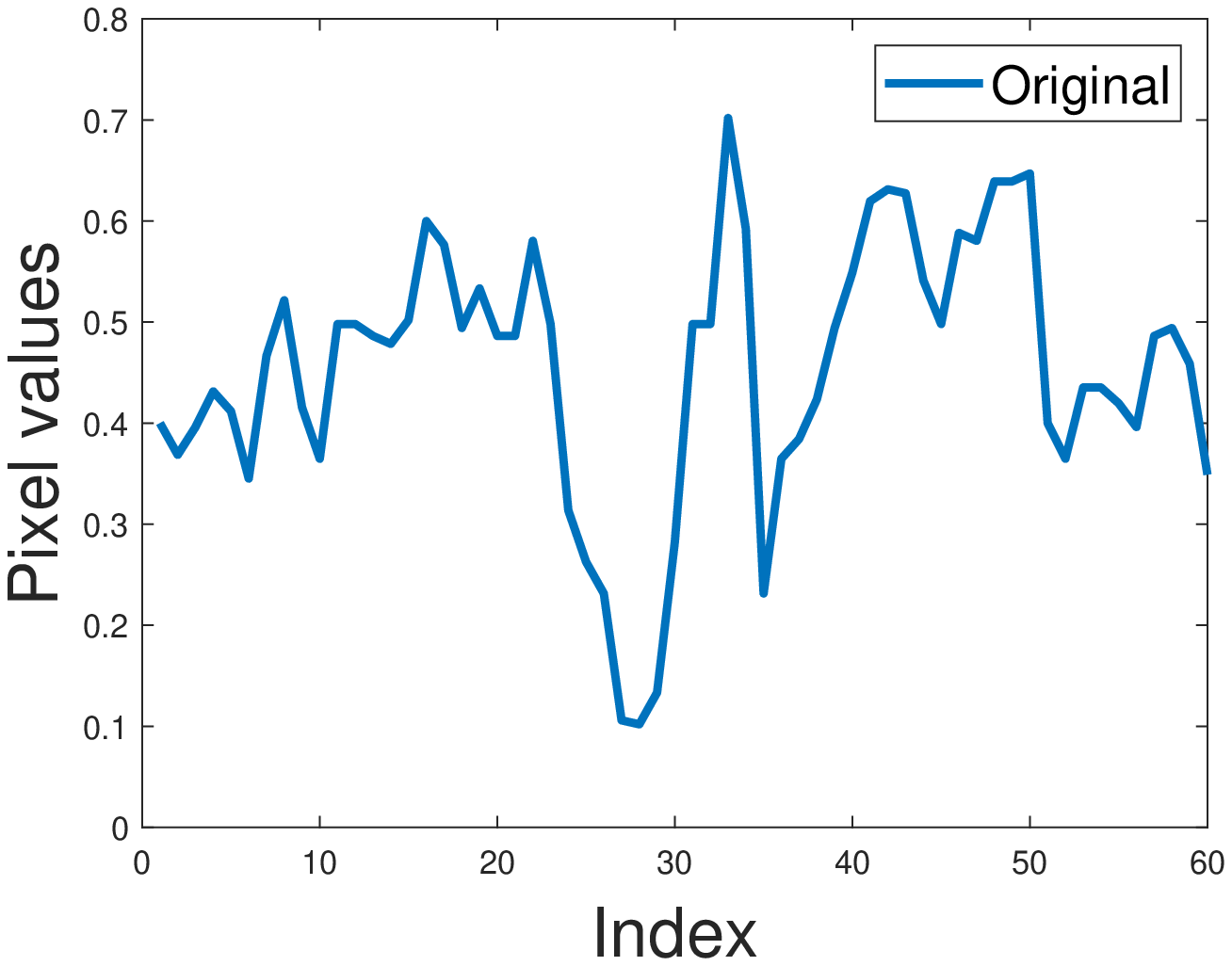}
			\caption{Original}
		\end{subfigure}   	
		\begin{subfigure}[b]{0.138\linewidth}
			\centering
			\includegraphics[width=\linewidth]{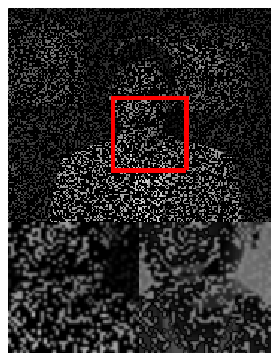}\vspace{0pt}
			\includegraphics[width=\linewidth]{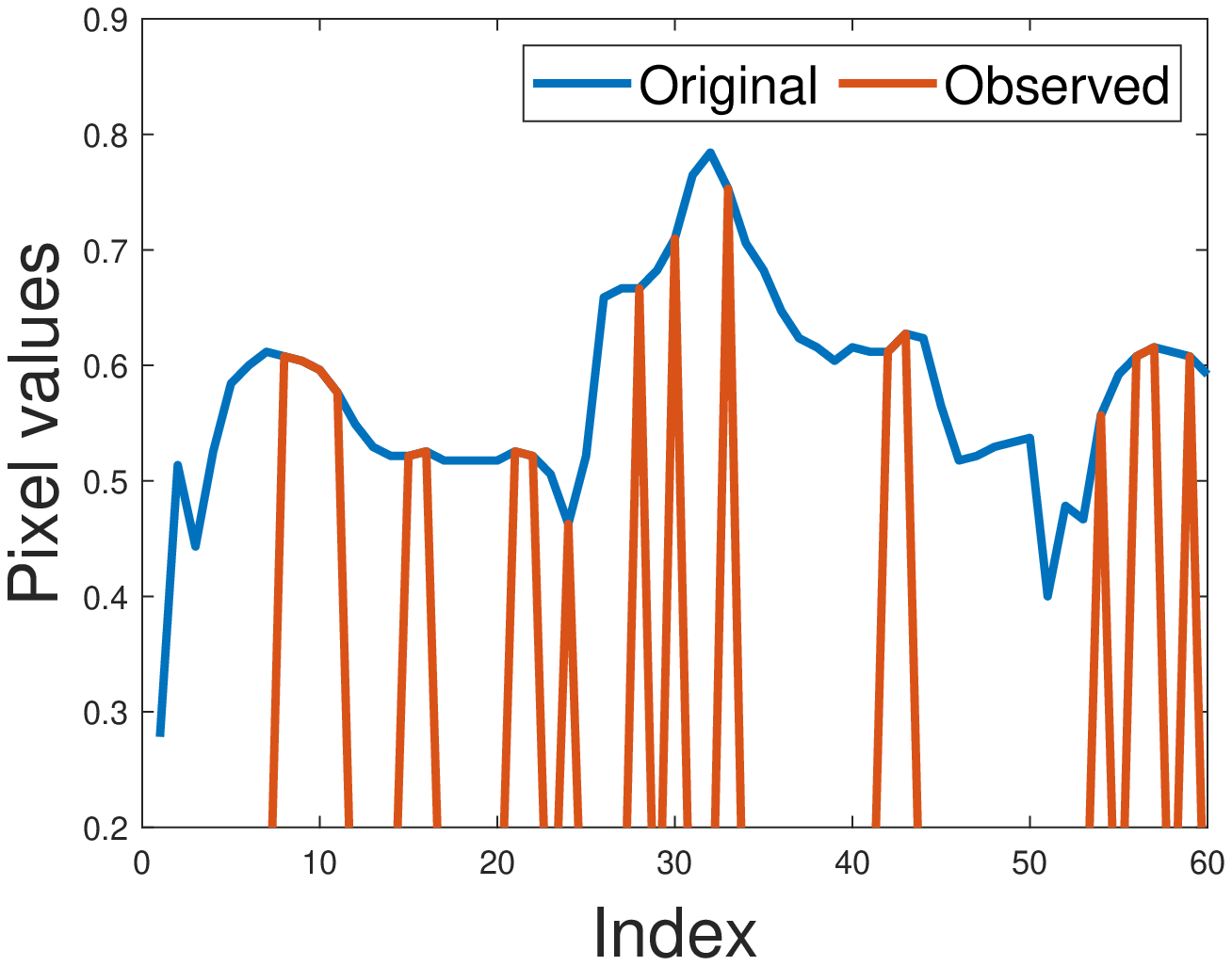}\vspace{0pt}
			\includegraphics[width=\linewidth]{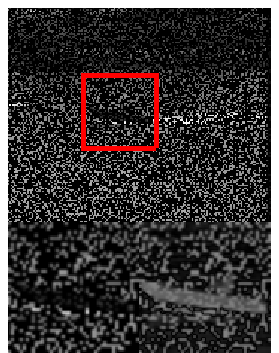}\vspace{0pt}
			\includegraphics[width=\linewidth]{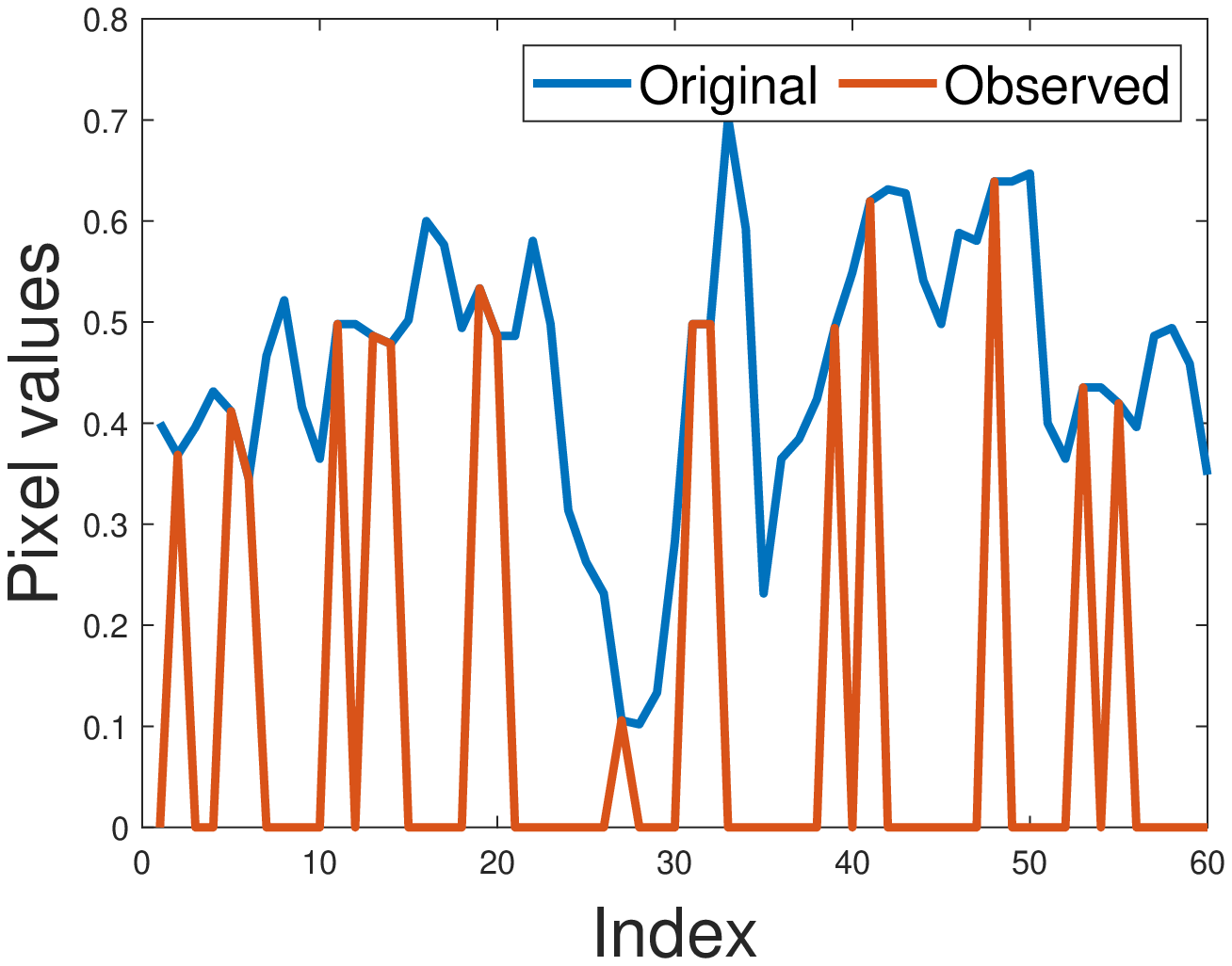}
			\caption{Observed}
		\end{subfigure}
		\begin{subfigure}[b]{0.138\linewidth}
			\centering
			\includegraphics[width=\linewidth]{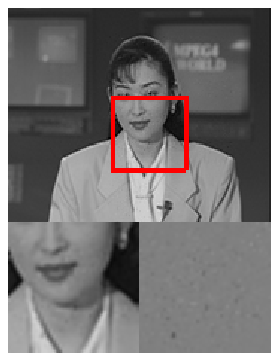}\vspace{0pt}
			\includegraphics[width=\linewidth]{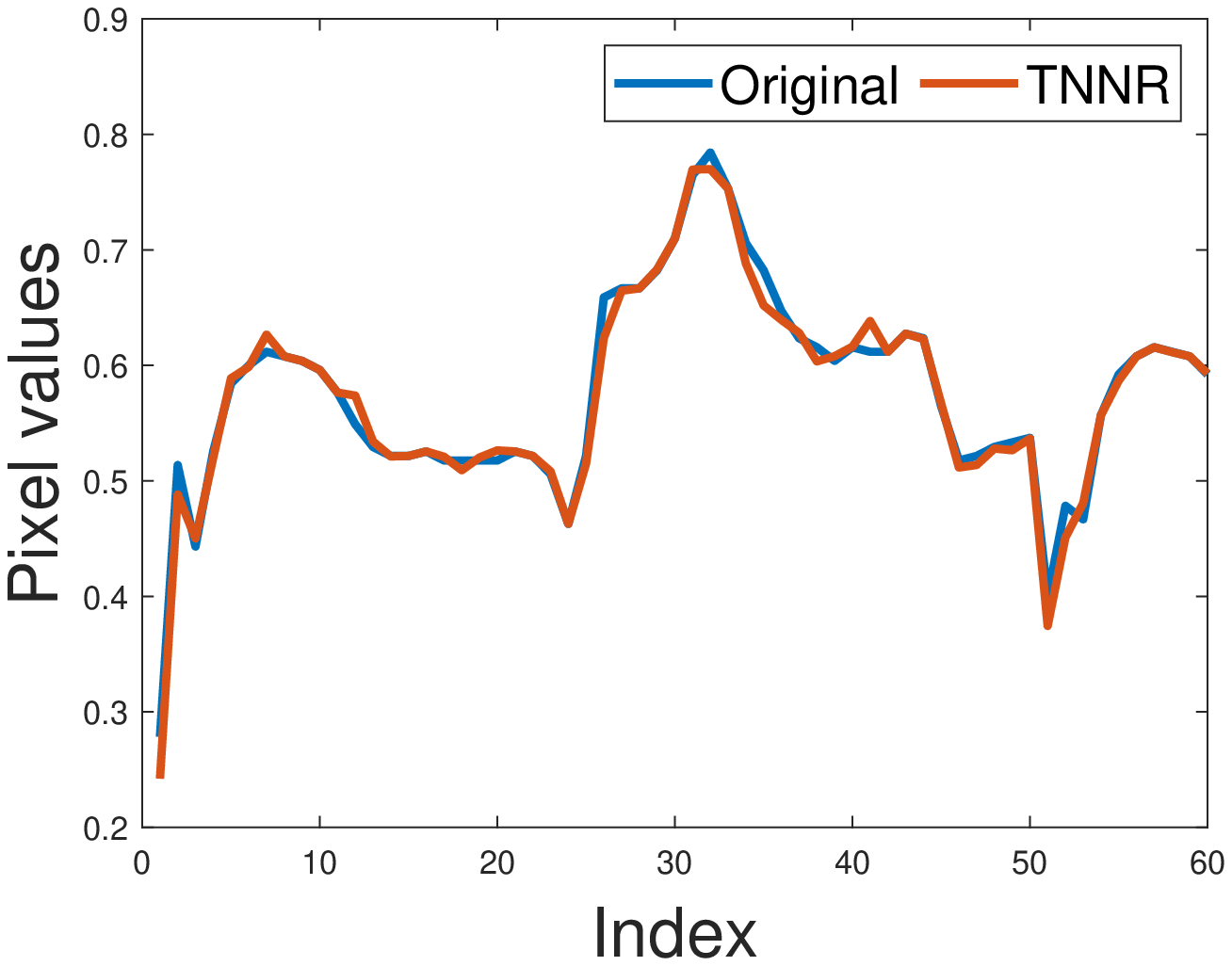}\vspace{0pt}
			\includegraphics[width=\linewidth]{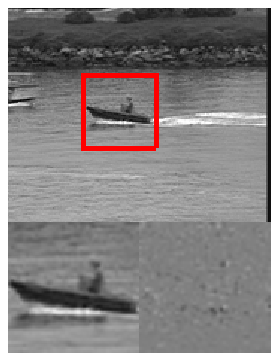}\vspace{0pt}
			\includegraphics[width=\linewidth]{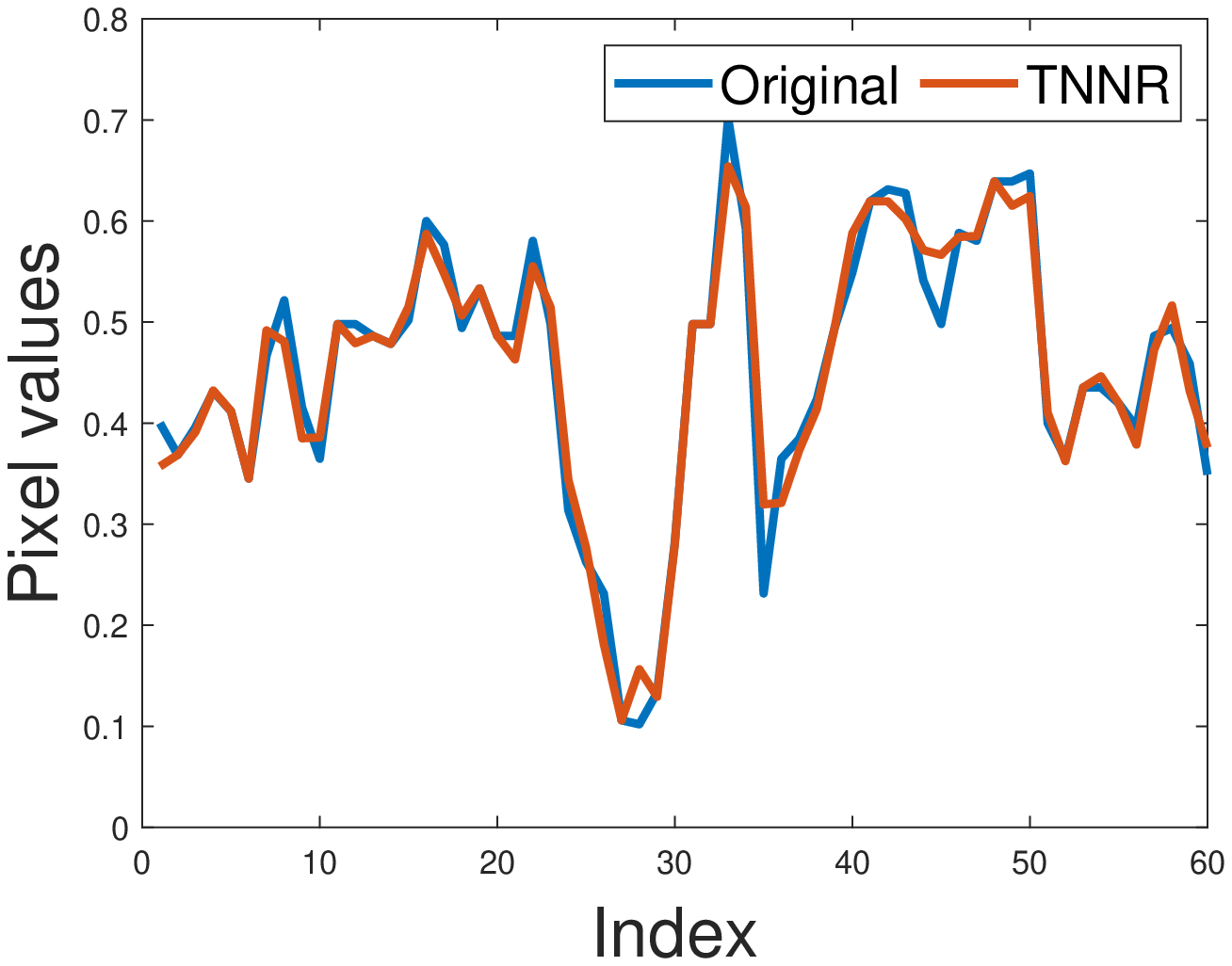}
			\caption{TNNR}
		\end{subfigure}
		\begin{subfigure}[b]{0.138\linewidth}
			\centering		
			\includegraphics[width=\linewidth]{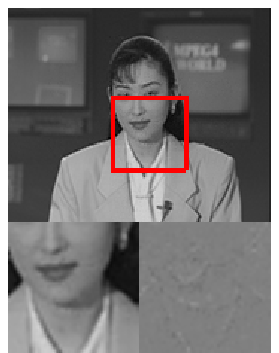}\vspace{0pt}
			\includegraphics[width=\linewidth]{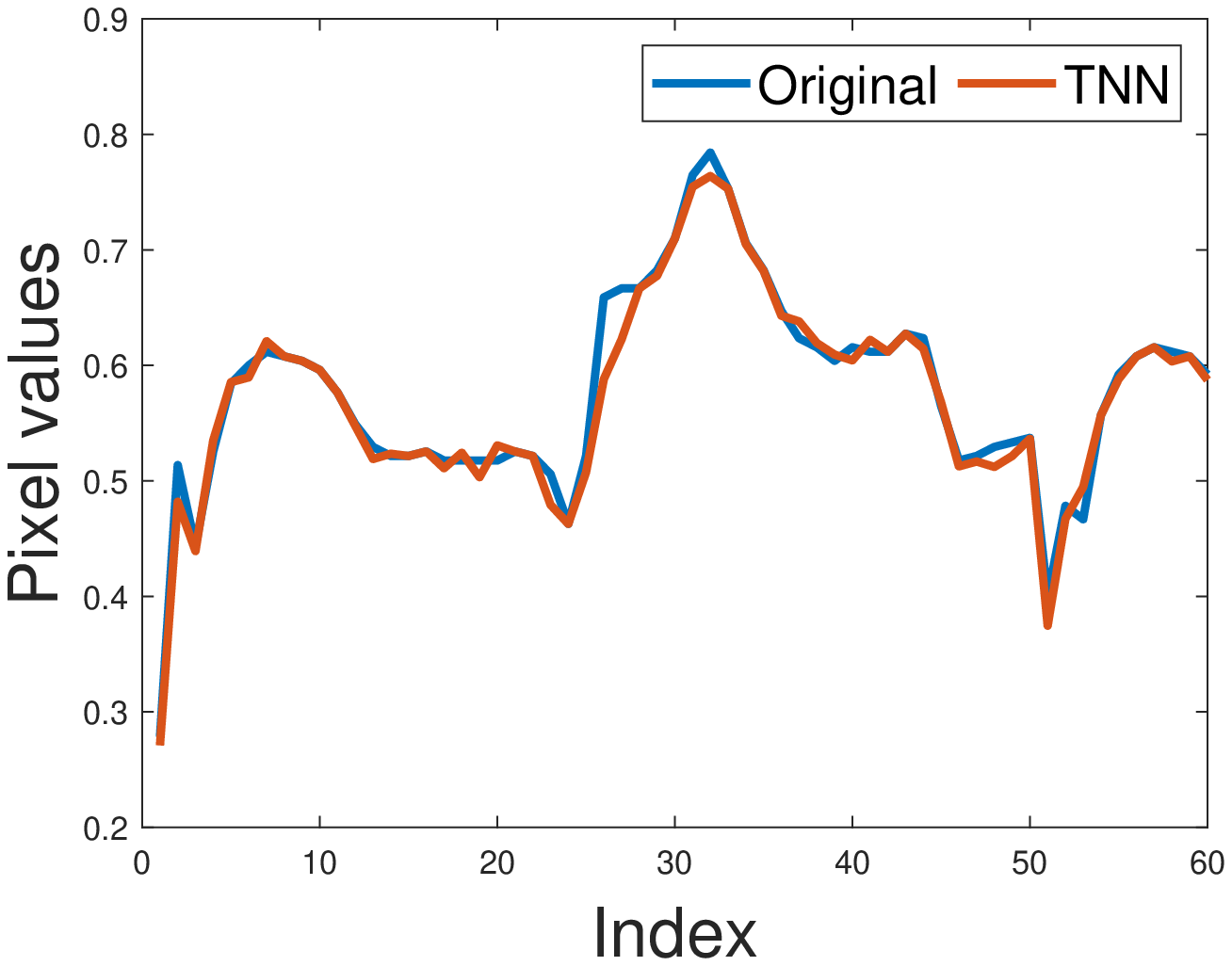}\vspace{0pt}
			\includegraphics[width=\linewidth]{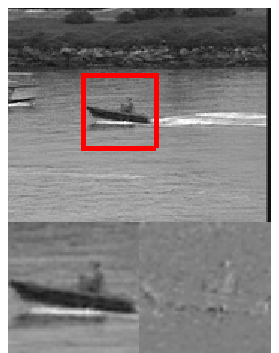}\vspace{0pt}
			\includegraphics[width=\linewidth]{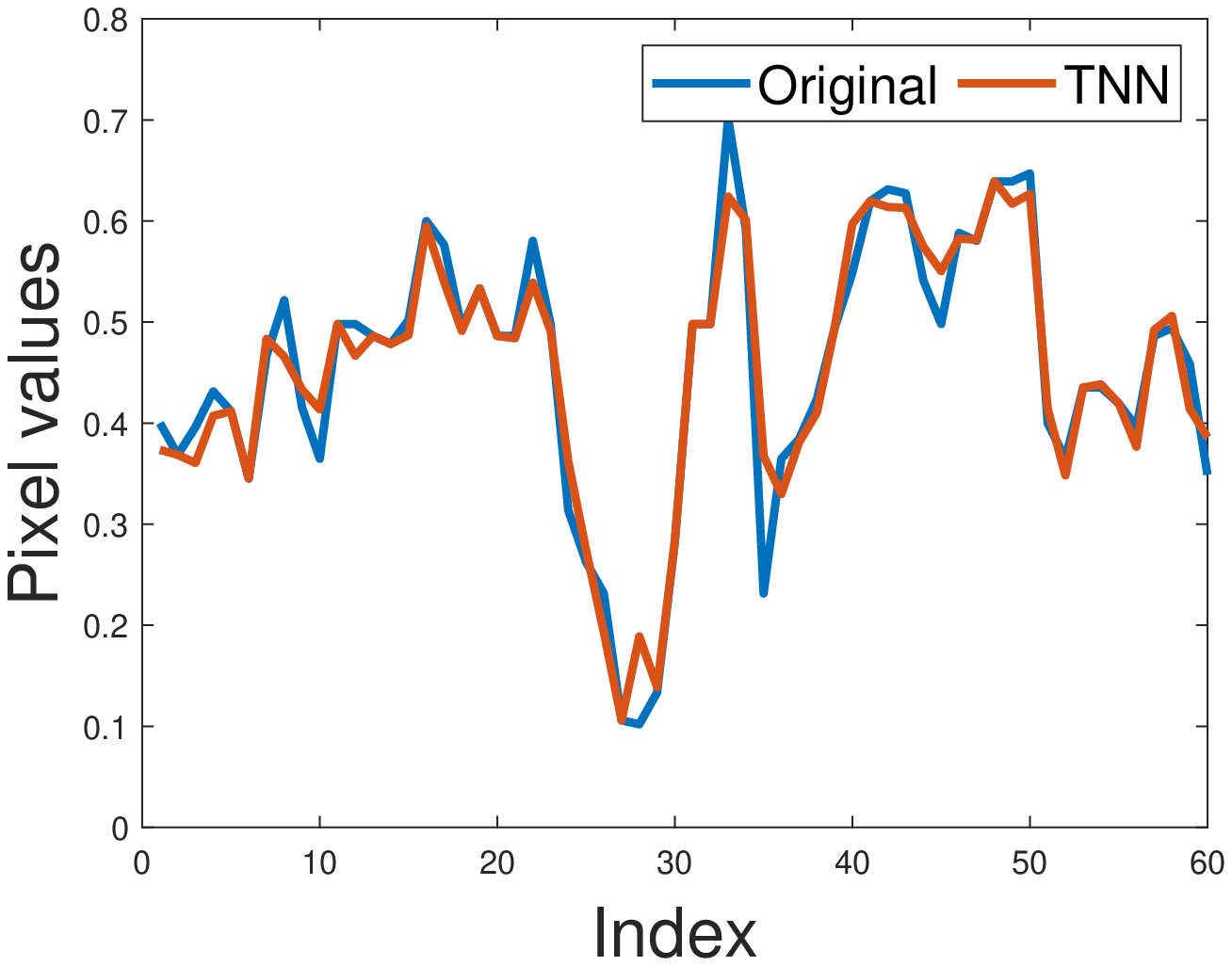}
			\caption{TNN}
		\end{subfigure}
		\begin{subfigure}[b]{0.138\linewidth}
			\centering			
			\includegraphics[width=\linewidth]{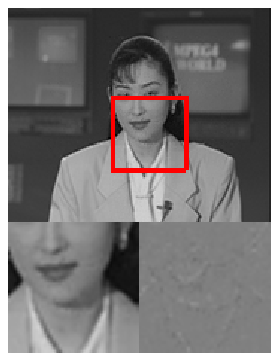}\vspace{0pt}
			\includegraphics[width=\linewidth]{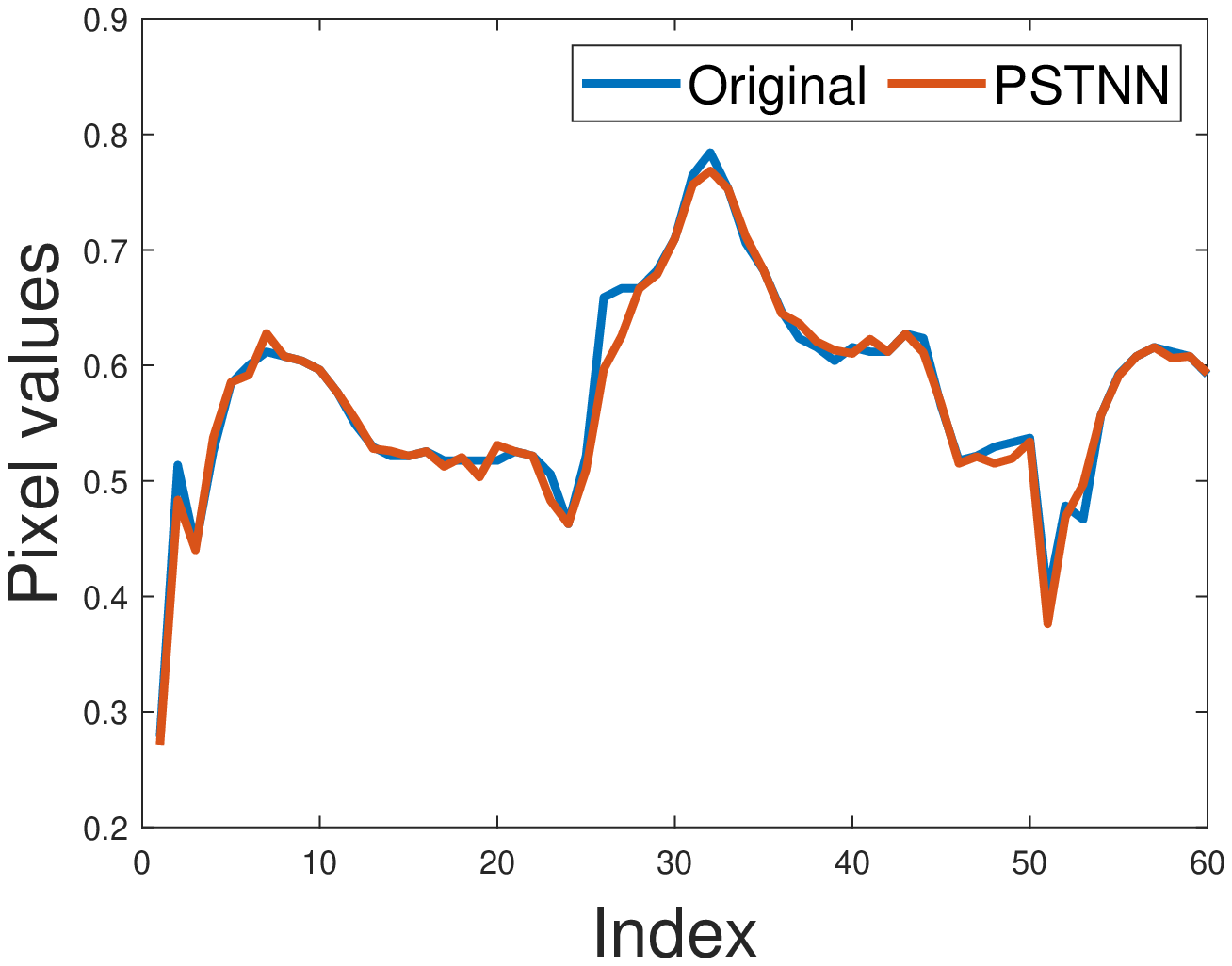}\vspace{0pt}
			\includegraphics[width=\linewidth]{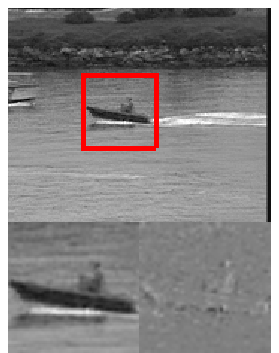}\vspace{0pt}
			\includegraphics[width=\linewidth]{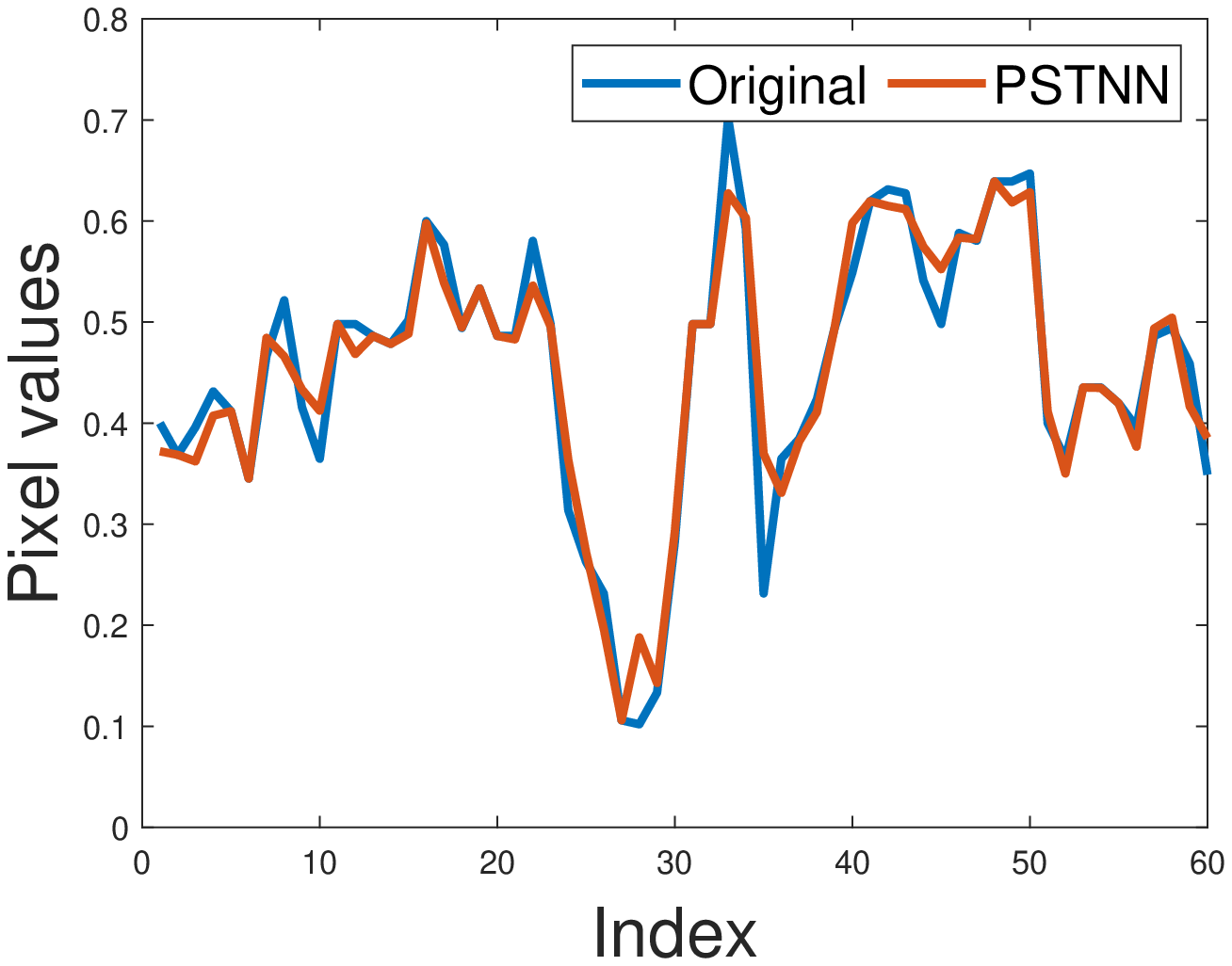}
			\caption{PSTNN}
		\end{subfigure}
		\begin{subfigure}[b]{0.138\linewidth}
			\centering			
			\includegraphics[width=\linewidth]{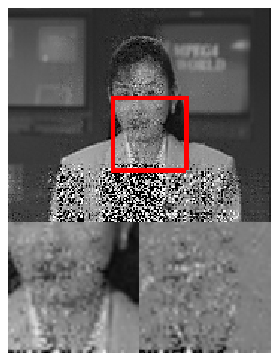}\vspace{0pt}
			\includegraphics[width=\linewidth]{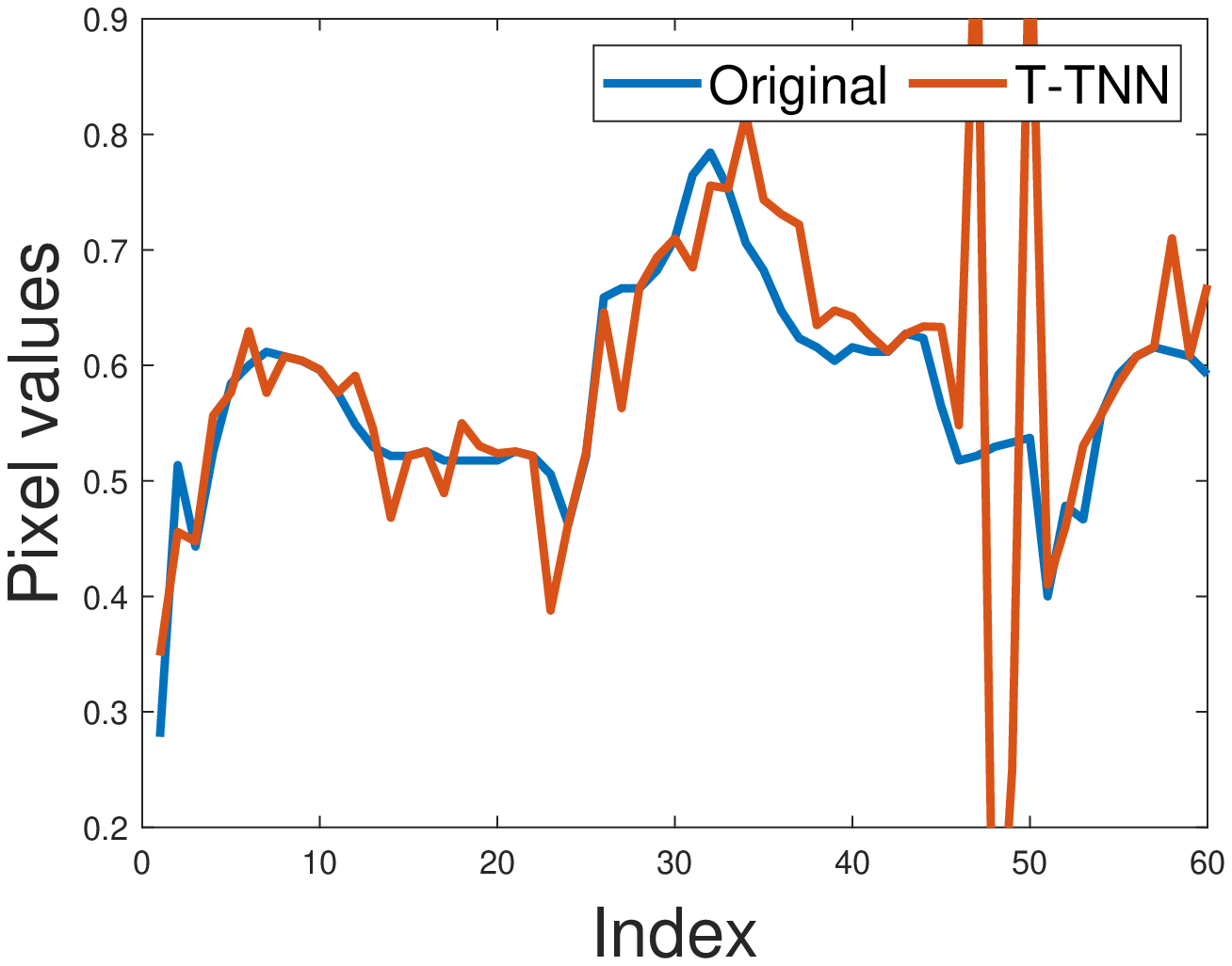}\vspace{0pt}
			\includegraphics[width=\linewidth]{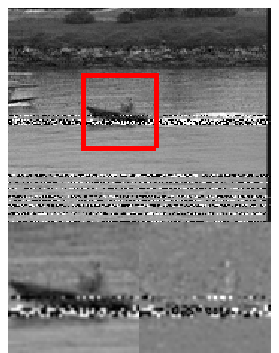}\vspace{0pt}
			\includegraphics[width=\linewidth]{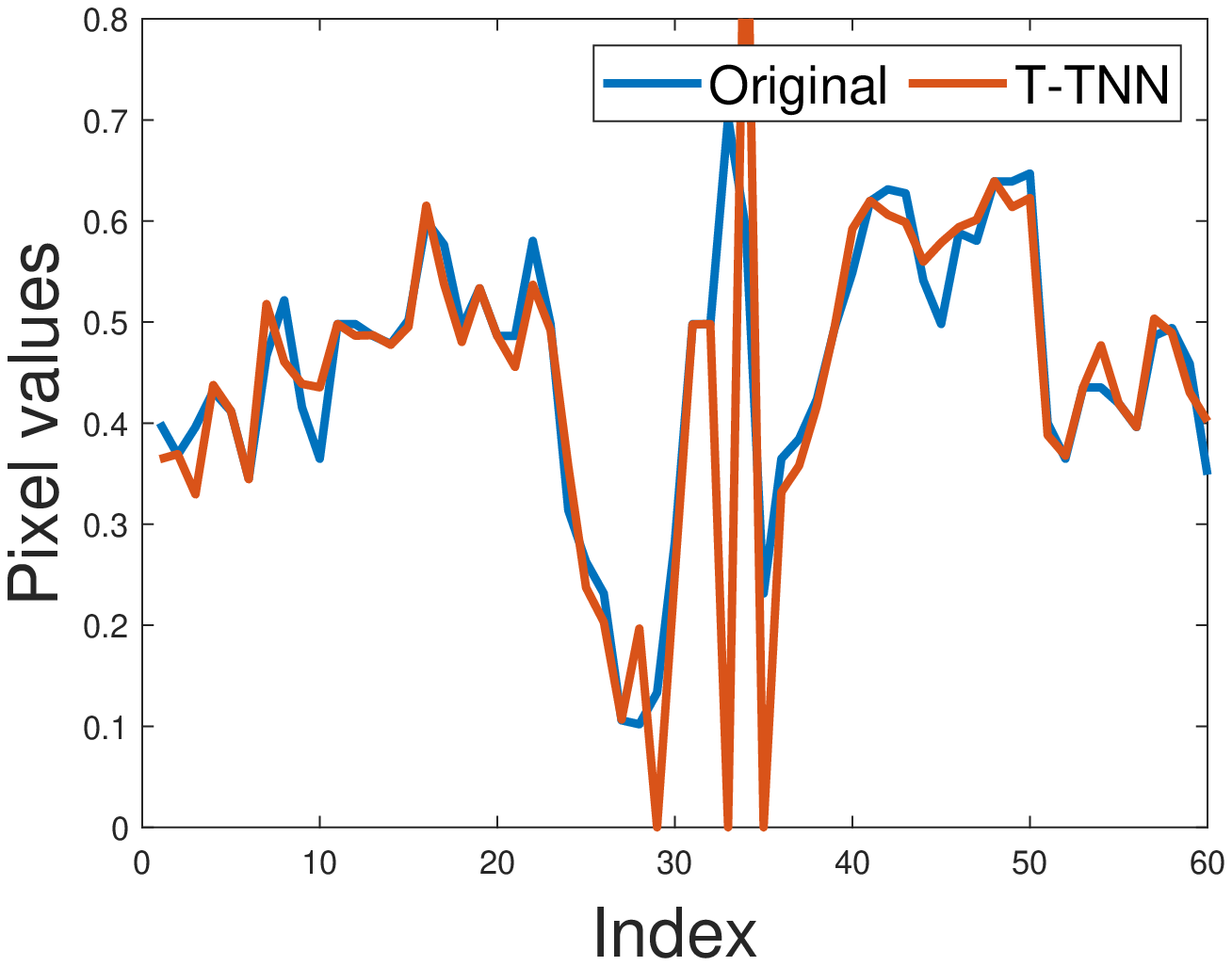}
			\caption{T-TNN}
		\end{subfigure}	
		\begin{subfigure}[b]{0.138\linewidth}
			\centering
			\includegraphics[width=\linewidth]{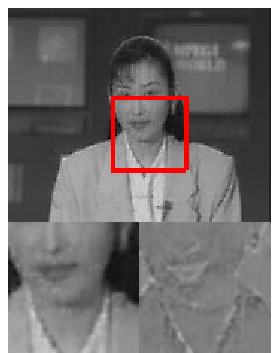}\vspace{0pt}
			\includegraphics[width=\linewidth]{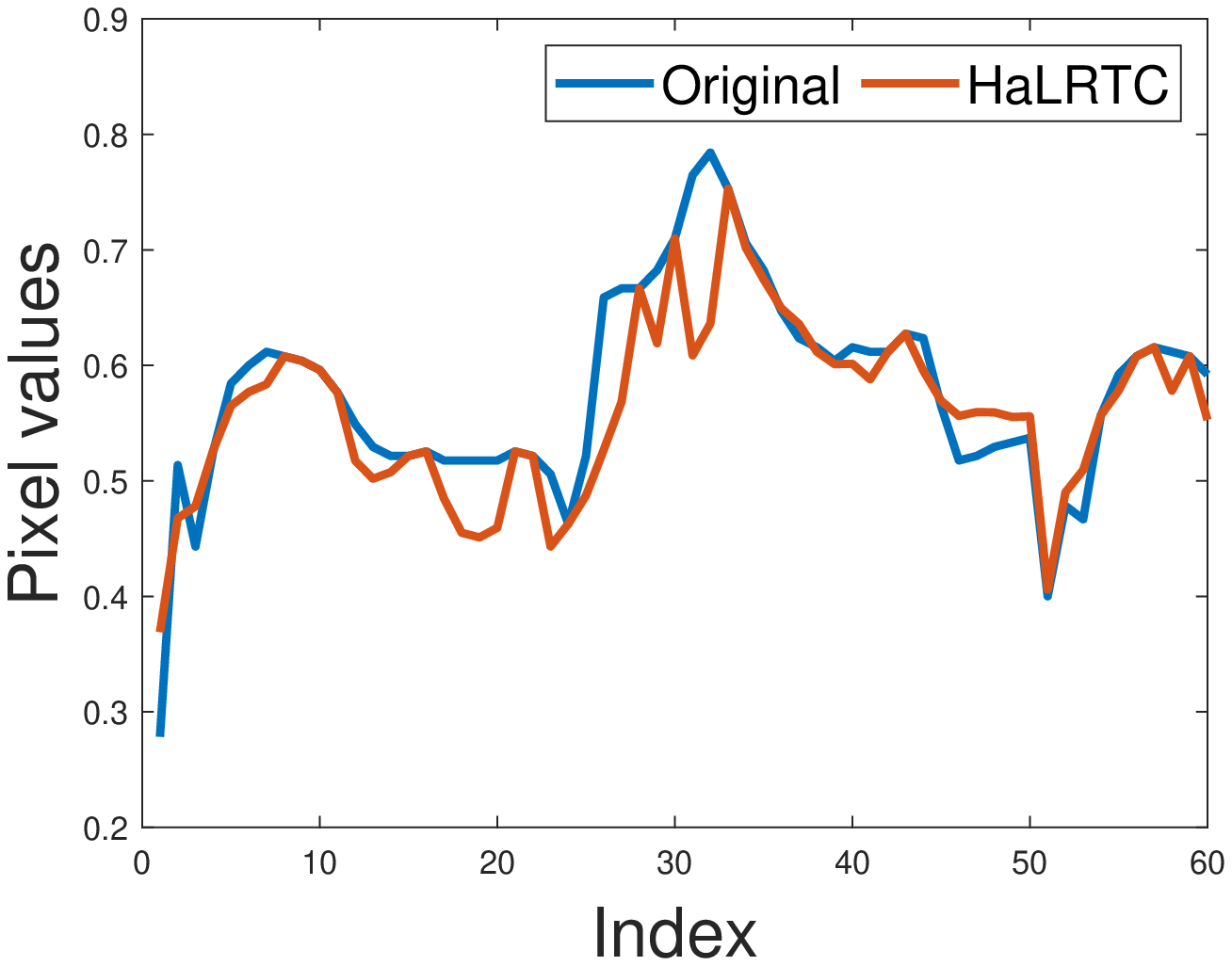}\vspace{0pt}
			\includegraphics[width=\linewidth]{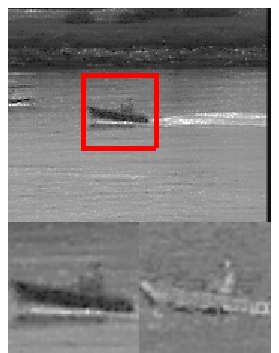}\vspace{0pt}
			\includegraphics[width=\linewidth]{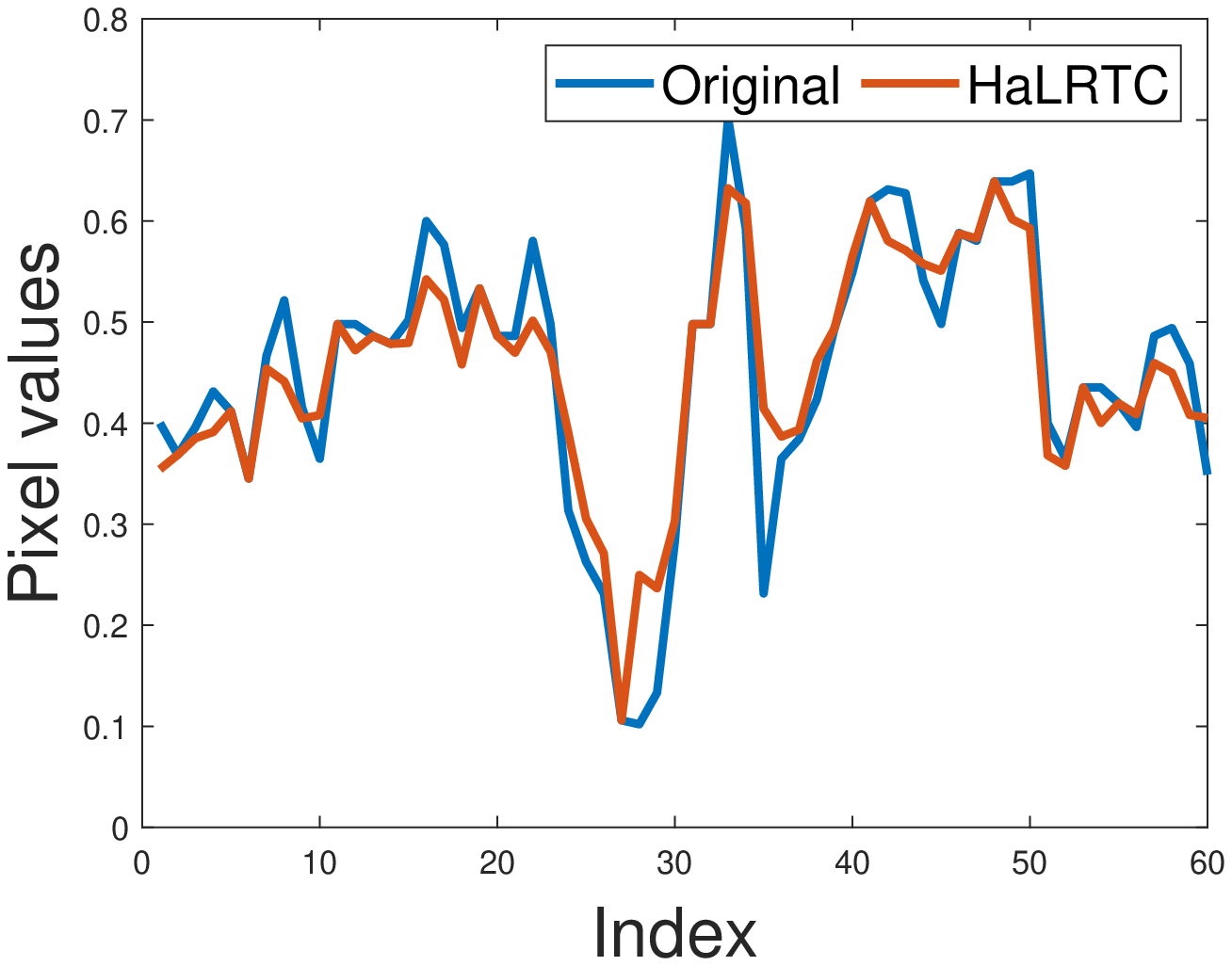}
			\caption{HaLRTC}
		\end{subfigure}					
	\end{subfigure}
	\vfill
	\caption{Examples of video inpainting with $ SR=30\% $. From top to bottom are ``Akiyo" and ``Coastguard". For better visualization, we show the zoom-in region, the corresponding error map (difference from the original) and the corresponding partial residuals of the region.}
	\label{fig:Video}
\end{figure*}

\begin{table*}[htbp]
	\centering
	\caption{VIDEO INPAINTING PERFORMANCE COMPARISON: PSNR, SSIM, FSIM AND RUNNING TIME. THE BEST AND THE SECOND BEST PERFORMING METHODS IN EACH IMAGE ARE HIGHLIGHTED IN RED AND BOLD, RESPECTIVELY}
	\begin{tabular}{c|c|cccc|cccc|cccc}
		\hline
		\multirow{2}{*}{Video} & \multirow{2}{*}{Methods} & \multicolumn{4}{c|}{$SR=30\%$} & \multicolumn{4}{c|}{$SR=35\%$} & \multicolumn{4}{c}{$SR=40\%$} \\
		\cline{3-14}          &       & PSNR  & SSIM  & FSIM  & Time  & PSNR  & SSIM  & FSIM  & Time  & PSNR  & SSIM  & FSIM  & Time \\
		\hline
		\multirow{5}{*}{Akiyo} & TNNR  & \textcolor[rgb]{ 1,  0,  0}{40.255 } & \textcolor[rgb]{ 1,  0,  0}{0.987 } & \textcolor[rgb]{ 1,  0,  0}{0.989 } & \textbf{53.612 } & \textcolor[rgb]{ 1,  0,  0}{41.423 } & \textcolor[rgb]{ 1,  0,  0}{0.991 } & \textcolor[rgb]{ 1,  0,  0}{0.991 } & 51.388  & \textcolor[rgb]{ 1,  0,  0}{42.747 } & \textcolor[rgb]{ 1,  0,  0}{0.993 } & \textcolor[rgb]{ 1,  0,  0}{0.993 } & 24.766  \\
		& TNN   & 39.129  & 0.985  & \textcolor[rgb]{ 1,  0,  0}{0.989 } & 115.522  & 40.494  & 0.989  & \textcolor[rgb]{ 1,  0,  0}{0.991 } & 122.066  & 41.512  & 0.991  & \textcolor[rgb]{ 1,  0,  0}{0.993 } & 61.382  \\
		& PSTNN & \textbf{39.262 } & \textbf{0.986 } & \textcolor[rgb]{ 1,  0,  0}{0.989 } & 41.657  & \textbf{40.583 } & \textbf{0.990 } & \textcolor[rgb]{ 1,  0,  0}{0.991 } & \textbf{41.010 } & \textbf{41.562 } & \textbf{0.992 } & \textcolor[rgb]{ 1,  0,  0}{0.993 } & \textbf{17.285 } \\
		& T-TNN & 15.797  & 0.696  & 0.839  & 1243.143  & 17.909  & 0.739  & 0.874  & 766.470  & 19.351  & 0.797  & 0.904  & 513.139  \\
		& HaLRTC & 31.214  & 0.924  & \textbf{0.963 } & \textcolor[rgb]{ 1,  0,  0}{1.897 } & 33.076  & 0.949  & \textbf{0.975 } & \textcolor[rgb]{ 1,  0,  0}{1.125 } & 34.811  & 0.964  & \textbf{0.983 } & \textcolor[rgb]{ 1,  0,  0}{1.194 } \\
		\hline
		\multirow{5}{*}{Coastguard} & TNNR  & \textcolor[rgb]{ 1,  0,  0}{31.452 } & \textcolor[rgb]{ 1,  0,  0}{0.900 } & \textcolor[rgb]{ 1,  0,  0}{0.931 } & \textbf{11.747 } & \textcolor[rgb]{ 1,  0,  0}{32.189 } & \textcolor[rgb]{ 1,  0,  0}{0.914 } & \textcolor[rgb]{ 1,  0,  0}{0.944 } & \textbf{18.292 } & \textcolor[rgb]{ 1,  0,  0}{33.315 } & \textcolor[rgb]{ 1,  0,  0}{0.934 } & \textcolor[rgb]{ 1,  0,  0}{0.952 } & \textbf{14.643 } \\
		& TNN   & 30.785  & 0.883  & 0.922  & 83.394  & 31.702  & 0.904  & 0.937  & 120.633  & 32.637  & 0.921  & \textbf{0.946 } & 56.524  \\
		& PSTNN & \textbf{30.851 } & \textbf{0.884 } & \textbf{0.923 } & 36.425  & \textbf{31.744 } & \textbf{0.905 } & \textbf{0.938 } & 34.795  & \textbf{32.663 } & \textbf{0.922 } & \textbf{0.946 } & 17.883  \\
		& T-TNN & 15.870  & 0.602  & 0.750  & 1111.489  & 18.316  & 0.657  & 0.834  & 938.164  & 20.803  & 0.792  & 0.879  & 985.819  \\
		& HaLRTC & 26.734  & 0.779  & 0.888  & \textcolor[rgb]{ 1,  0,  0}{1.467 } & 28.025  & 0.825  & 0.913  & \textcolor[rgb]{ 1,  0,  0}{1.264 } & 29.045  & 0.858  & 0.927  & \textcolor[rgb]{ 1,  0,  0}{1.223 } \\
		\hline
	\end{tabular}%
	\label{tab:Video}%
\end{table*}%

In addition, Figure \ref{fig:Frame} displays the PSNR, SSIM, FSIM values of each frontal slice of video ``Akiyo" and ``Bridge". As observed, in almost all frontal slices, the PSNR, SSIM and FSIM metrics of the proposed TNNR are much higher than those of the other compared methods. 

\begin{figure*}
	\centering
	\includegraphics[width=1\linewidth]{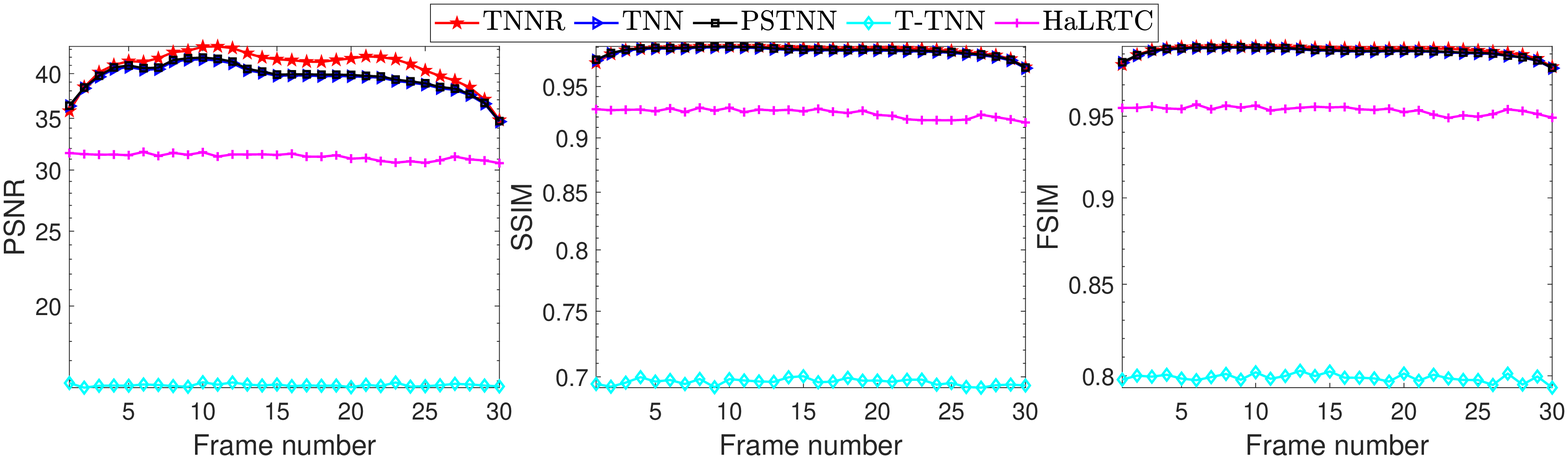}\vspace{0pt}
	\includegraphics[width=1\linewidth]{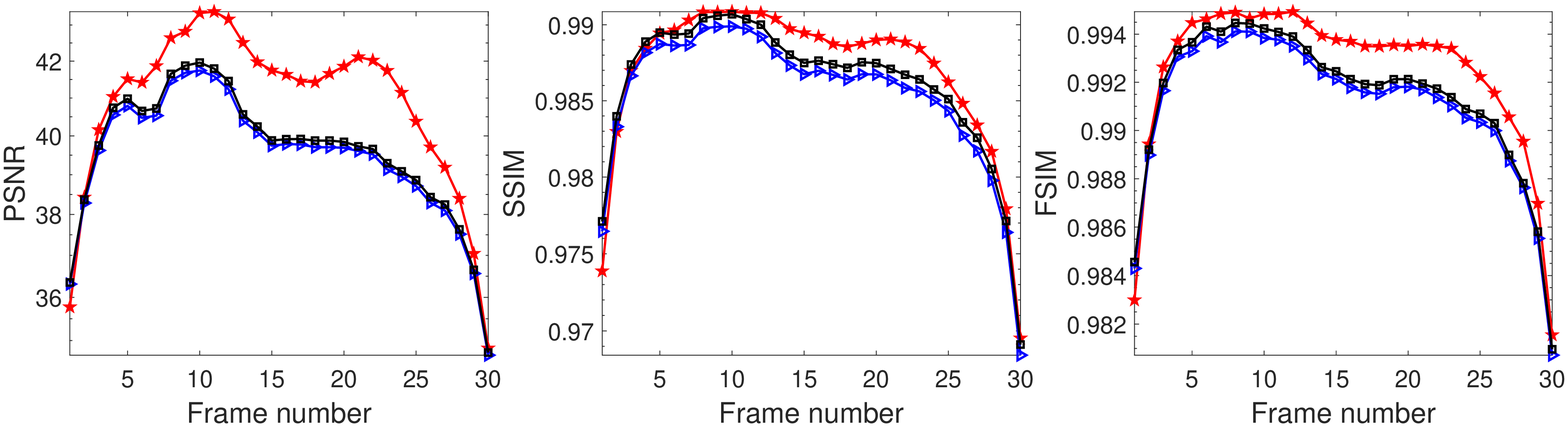}\vspace{0pt}
	\includegraphics[width=1\linewidth]{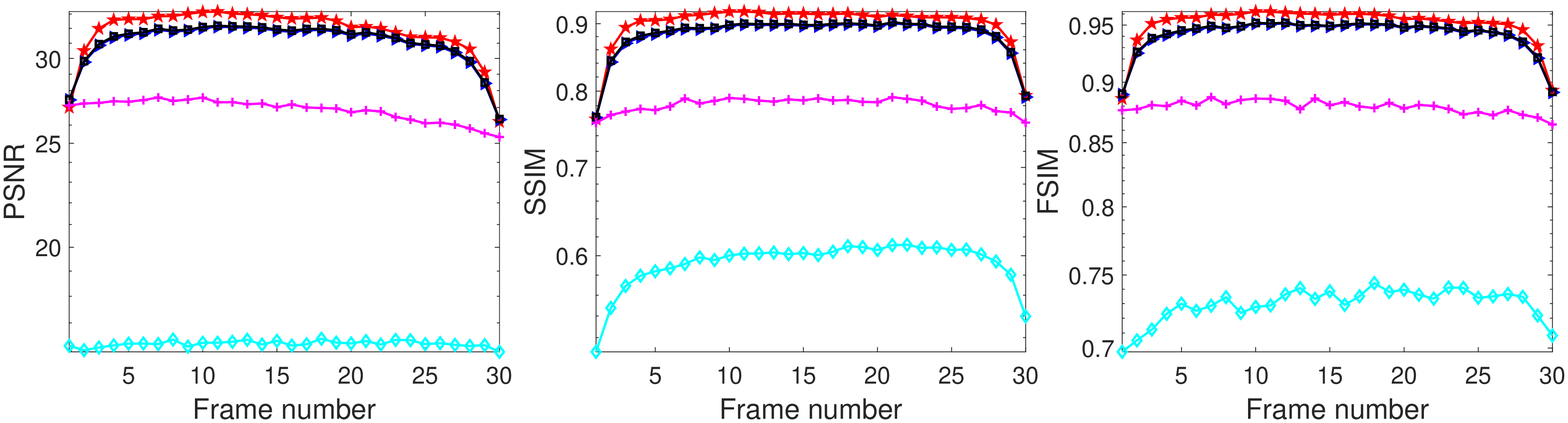}\vspace{0pt}
	\includegraphics[width=1\linewidth]{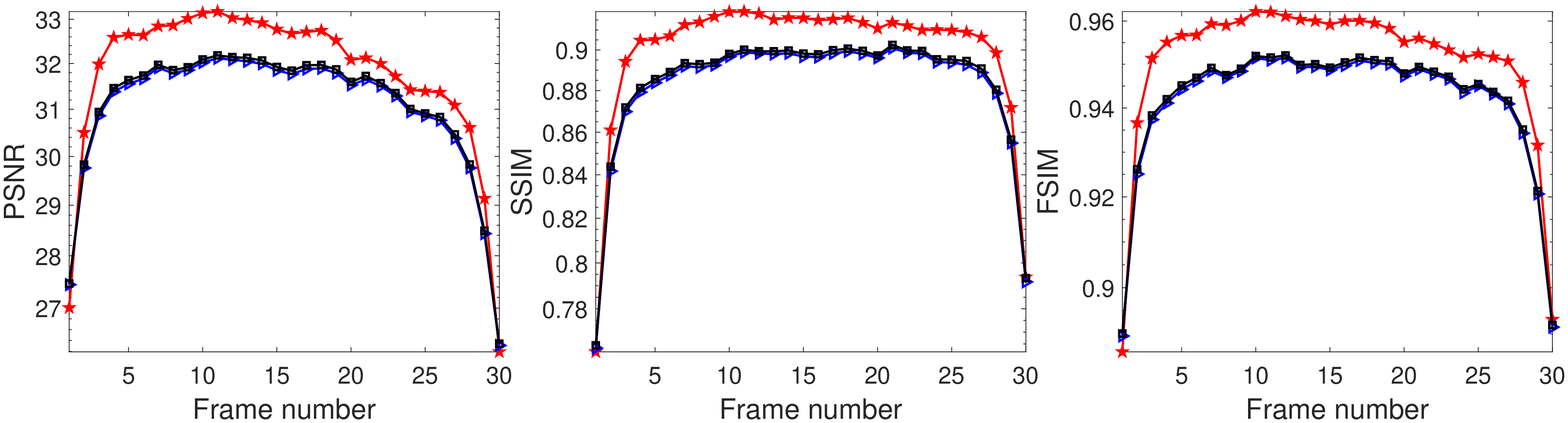}
	\caption{All frontal slices obtained by different methods on the video ``Akiyo" and ``Coastguard" with $ SR=30\% $. For better visualization, we list all frontal slices obtained by TNNR, TNN and PSTNN.}
	\label{fig:Frame}
\end{figure*}

\section{Conclusion}
In this paper, we propose a general and flexible rank relaxation function, named TNNR relaxation function, for efficiently solving the third order tensor recovery problem.
We develop the general inertial smoothing proximal gradient method to accelerate the proposed model. Besides, we prove that our proposed optimization method can converge to a critical point. Combining the KL property with some milder assumptions, we further give its global convergence guarantee. We compare the performance of the proposed method with state-of-the-art methods via numerical experiments on color images, texture images and video. Our method outperforms many state-of-the-art methods quantitatively and visually.

\appendices
\onecolumn

\section{Proof of Theorem \ref{thm:CC}}

Before proving the Theorem \ref{thm:CC}, we first recall the following well-known and fundamental property for a smooth function in the class $C^{1,1}$.

\begin{lemma}\cite{BT09,Ber99}\label{lem:smooth}
	Let function $f(\cdot)$ satisfy Assumption \ref{ass:smooth}. Then, for any $\X_1, \X_2 \in \mathbb{R}^{n_1 \times n_2 \times n_3}$, one has
	\begin{align}\label{eq:smooth}
		f(\X_1) \leq f(\X_2)+\langle\X_1-\X_2, \nabla f(\X_2)\rangle+\frac{L_f}{2}\|\X_1-\X_2\|^{2}.	
	\end{align}
\end{lemma}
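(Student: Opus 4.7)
The plan is to derive the quadratic upper bound from the Lipschitz continuity of $\nabla f$ via the fundamental theorem of calculus along the line segment joining $\X_2$ and $\X_1$, a standard argument for $C^{1,1}$ functions on a Hilbert space (here, the tensor space $\mathbb{R}^{n_1\times n_2\times n_3}$ equipped with the Frobenius inner product).

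First I would define the auxiliary scalar function $\varphi(t) := f(\X_2 + t(\X_1 - \X_2))$ on $[0,1]$. Because $f$ is continuously differentiable, the chain rule gives $\varphi'(t) = \langle \nabla f(\X_2 + t(\X_1-\X_2)),\, \X_1 - \X_2 \rangle$, and integrating from $0$ to $1$ yields
$$f(\X_1) - f(\X_2) = \int_0^1 \langle \nabla f(\X_2 + t(\X_1-\X_2)),\, \X_1 - \X_2\rangle\, dt.$$
Subtracting the constant-in-$t$ quantity $\langle \nabla f(\X_2),\X_1-\X_2\rangle = \int_0^1 \langle \nabla f(\X_2),\X_1-\X_2\rangle\, dt$ reshapes this into
$$f(\X_1) - f(\X_2) - \langle \nabla f(\X_2),\X_1-\X_2\rangle = \int_0^1 \langle \nabla f(\X_2 + t(\X_1-\X_2)) - \nabla f(\X_2),\, \X_1-\X_2\rangle\, dt.$$

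Next I would bound the right-hand side using Cauchy--Schwarz with the Frobenius inner product, followed by Assumption \ref{ass:smooth}, which gives $\|\nabla f(\X_2 + t(\X_1-\X_2)) - \nabla f(\X_2)\| \leq L_f\, t\, \|\X_1 - \X_2\|$. The integrand is therefore dominated in absolute value by $L_f\, t\, \|\X_1-\X_2\|^{2}$, and $\int_0^1 t\, dt = 1/2$ produces exactly the desired slack $\tfrac{L_f}{2}\|\X_1-\X_2\|^{2}$. Dropping the absolute value (only the upper bound on $f(\X_1)-f(\X_2)-\langle\nabla f(\X_2),\X_1-\X_2\rangle$ is needed) yields the claim.

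No serious obstacle is expected: the only subtlety is justifying the integral identity, which requires $\varphi$ to be absolutely continuous on $[0,1]$; this is guaranteed since $f \in C^1$ under Assumption \ref{ass:smooth} makes $\varphi'$ continuous (hence Riemann-integrable) on $[0,1]$. Everything else is a routine estimate. This is precisely the classical ``descent lemma'' from \cite{BT09,Ber99}, and its role in the appendix is to serve as the workhorse for analyzing the quadratic surrogate $Q(\X,\Y^t,\Z^t)$ and establishing the sufficient-decrease inequalities underlying the convergence of Algorithm \ref{alg}.
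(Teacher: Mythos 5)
Your argument is correct and is precisely the classical descent-lemma proof (fundamental theorem of calculus along the segment, Cauchy--Schwarz, Lipschitz bound, $\int_0^1 t\,dt = 1/2$) that the paper itself does not reproduce but simply defers to the cited references \cite{BT09,Ber99}. There is nothing to compare against in the paper's text, and no gap in your derivation.
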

Next, we are ready to discuss the convergence by constructing the auxiliary sequence. For any $t \in \mathbb{N}$, define
\begin{equation}\label{H}
	H_{\delta_{t+1}}\left(\X^{t+1}, \X^{t}\right):=F\left(\X^{t+1}\right)+\delta_{t+1} \left\|\X^{t+1}-\X^{t}\right\|^{2},
\end{equation}
where $ \delta_{t+1}=\mu^{t+1}\theta_1^{t+1}/2 $.

Then, we present two lemmas, which will be used later.

\begin{lemma}
	Let function $f(\cdot)$ satisfy Assumption \ref{ass:smooth}. Then, $ g\left(\X\right):=f\left(\X\right)+\frac{L_f}{2}\left\|\X\right\|^2 $ is a convex function.
\end{lemma}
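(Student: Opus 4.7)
The plan is to verify convexity by proving monotonicity of the gradient $\nabla g$, which is the standard characterization of convexity for a continuously differentiable function. First I would observe that since $f$ is continuously differentiable and the squared Frobenius norm is smooth, the function $g$ is differentiable with
$$\nabla g(\X) = \nabla f(\X) + L_f \X.$$

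Next I would compute, for arbitrary $\X_1,\X_2\in\mathbb{R}^{n_1\times n_2\times n_3}$,
$$\langle \nabla g(\X_1)-\nabla g(\X_2),\, \X_1-\X_2\rangle = \langle \nabla f(\X_1)-\nabla f(\X_2),\, \X_1-\X_2\rangle + L_f\|\X_1-\X_2\|^2,$$
and then lower-bound the inner product involving $\nabla f$. Applying Cauchy--Schwarz together with Assumption \ref{ass:smooth}, I get
$$\langle \nabla f(\X_1)-\nabla f(\X_2),\, \X_1-\X_2\rangle \geq -\|\nabla f(\X_1)-\nabla f(\X_2)\|\cdot\|\X_1-\X_2\| \geq -L_f\|\X_1-\X_2\|^2.$$
Combining the two displays yields $\langle \nabla g(\X_1)-\nabla g(\X_2),\X_1-\X_2\rangle\ge 0$, i.e.\ $\nabla g$ is monotone, so $g$ is convex.

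This argument is essentially routine and I do not expect a real obstacle; the only point requiring care is invoking the correct direction of the Cauchy--Schwarz bound (lower-bounding a signed inner product by the negative of its magnitude), since Lemma \ref{lem:smooth} itself only supplies a quadratic upper bound on $f$ and is not strong enough to conclude convexity of $g$ directly. If one preferred a path using Lemma \ref{lem:smooth}, an alternative would be to add the descent inequality applied to $(\X_1,\X_2)$ to its companion with the roles swapped and reorganize, but the gradient-monotonicity route above is the cleanest and I would adopt it.
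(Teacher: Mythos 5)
Your proposal is correct and follows essentially the same route as the paper: both establish monotonicity of $\nabla g$ by expanding $\langle\nabla g(\X_1)-\nabla g(\X_2),\X_1-\X_2\rangle$ and lower-bounding the $\nabla f$ term via Cauchy--Schwarz together with the Lipschitz bound from Assumption \ref{ass:smooth}. No gaps.
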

\begin{proof}
For any $\X_1, \X_2 \in \mathbb{R}^{n_1 \times n_2 \times n_3}$, we have 
\begin{equation}
	\begin{aligned}
		\left(\nabla g\left(\X_1\right)-\nabla g\left(\X_2\right)\right)^T\left(\X_1-\X_2\right) &=L_f\left\|\X_1-\X_2\right\|^{2}+\left(\nabla f\left(\X_1\right)-\nabla f\left(\X_2\right)\right)^T\left(\X_1-\X_2\right) \\
		& \geq L_f\left\|\X_1-\X_2\right\|^{2}-\left\|\X_1-\X_2\right\|\left\|\nabla f\left(\X_1\right)-\nabla f\left(\X_2\right)\right\| \geq 0.
	\end{aligned}
\end{equation}
Thus, $ g\left(\X\right) $ is a convex function.
\end{proof}

\begin{lemma}\label{thm:SDC}
	The sequences $\left\{\X^{t}\right\}_{t \in \mathbb{N}}$ generated by TNNR own the following properties:
	\begin{itemize}
		\item[(i)] $ H_{\delta_{t+1}}\left(\X^{t+1}, \X^{t}\right) $ is monotonically decreasing. Indeed,
		\begin{equation}\label{pro:sdc}
			H_{\delta_{t+1}}\left(\X^{t+1}, \X^{t}\right)-H_{\delta_{t}}\left(\X^{t}, \X^{t-1}\right) \le -\frac{\varepsilon L_f}{2}\left\|\X^{t+1}-\X^t\right\|^2;
		\end{equation}
		\item[(ii)] $\lim\limits_{t \rightarrow +\infty}\left(\X^{t}-\X^{t+1}\right)=0$.
	\end{itemize}
\end{lemma}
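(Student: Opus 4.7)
\textbf{Proof proposal for Lemma \ref{thm:SDC}.}

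The plan is to establish the descent inequality (i) by combining three ingredients: optimality of $\X^{t+1}$ in the surrogate subproblem, concavity of $\rho$, and the Lipschitz-gradient property of $f$. Once (i) is in hand, (ii) will follow by telescoping and coercivity.

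First, from the optimality of $\X^{t+1}$ as the minimizer of $Q(\cdot,\Y^t,\Z^t)$, I would test with $\X^t$ to obtain
$$
\lambda\|\X^{t+1}\|_{\rho_{\alpha^t}^{\beta^t}}-\lambda\|\X^{t}\|_{\rho_{\alpha^t}^{\beta^t}} \le \langle \X^t-\X^{t+1},\nabla f(\Z^t)\rangle+\tfrac{\mu^t}{2}\bigl(\|\X^t-\Y^t\|^2-\|\X^{t+1}-\Y^t\|^2\bigr).
$$
Second, applying the two concavity inequalities in Lemma \ref{lem:gradient} with the specific choice $\alpha^t_k=\rho'(\sum_i\rho(\rho(\sigma_i^{k,t})))$ and $\beta^{k,t}_i=\rho'(\rho(\sigma_i^{k,t}))$ bounds the decrease of the nonconvex term in $F$ by the decrease of its reweighted surrogate, i.e.
$$
\lambda\sum_{k}\!\rho\!\Bigl(\sum_{i}\rho(\rho(\sigma_i^{k,t+1}))\Bigr)-\lambda\sum_{k}\!\rho\!\Bigl(\sum_{i}\rho(\rho(\sigma_i^{k,t}))\Bigr)\le \lambda\|\X^{t+1}\|_{\rho_{\alpha^t}^{\beta^t}}-\lambda\|\X^{t}\|_{\rho_{\alpha^t}^{\beta^t}}.
$$
Third, combining Lemma \ref{lem:smooth} applied at $(\X^{t+1},\Z^t)$ with the lower bound on $f(\X^t)$ obtained from convexity of $g(\X)=f(\X)+\frac{L_f}{2}\|\X\|^2$ (established in the preceding lemma), yields
$$
f(\X^{t+1})-f(\X^t)\le \langle \X^{t+1}-\X^t,\nabla f(\Z^t)\rangle+\tfrac{L_f}{2}\bigl(\|\X^{t+1}-\Z^t\|^2+\|\X^t-\Z^t\|^2\bigr).
$$
Adding the three inequalities cancels the gradient inner products and leaves
$$
F(\X^{t+1})-F(\X^t)\le \tfrac{L_f-\mu^t}{2}\|\X^{t+1}-\X^t\|^2+(\mu^t\theta_1^t-L_f\theta_2^t)\langle \X^{t+1}-\X^t,\X^t-\X^{t-1}\rangle+L_f(\theta_2^t)^2\|\X^t-\X^{t-1}\|^2,
$$
after substituting $\Y^t-\X^t=\theta_1^t(\X^t-\X^{t-1})$, $\Z^t-\X^t=\theta_2^t(\X^t-\X^{t-1})$ and expanding the squared norms.

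The next step is to add $\delta_{t+1}\|\X^{t+1}-\X^t\|^2-\delta_t\|\X^t-\X^{t-1}\|^2$ to both sides and apply Young's inequality to the inner-product term (which is legitimate because Assumption \ref{ass:pra} forces $\mu^t\theta_1^t\ge L_f\theta_2^t$). This produces two squared-norm terms whose coefficients must be controlled. The coefficient of $\|\X^t-\X^{t-1}\|^2$ is nonpositive precisely when $\theta_2^t\le 1/2$, which is one half of Assumption \ref{ass:pra}. The coefficient of $\|\X^{t+1}-\X^t\|^2$ being at most $-\varepsilon L_f/2$ reduces, after using $\mu^{t+1}\le \mu^t$, to the inequality $\mu^t(1-\theta_1^t-\theta_1^{t+1})\ge L_f(1-\theta_2^t+\varepsilon)$, which in turn follows from the lower bound $\mu^t\ge (1-\theta_2^t)L_f/(1-\theta_1^t-\theta_1^{t+1}-\varepsilon)$ together with $\mu^t\ge L_f$ (noted in the Remark). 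This yields (i).

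I expect this constant-tracking step to be the main technical obstacle: the bound on $\mu^t$ in Assumption \ref{ass:pra} has been engineered exactly so that the surplus $\mu^t\varepsilon$ absorbs the $L_f\varepsilon$ created when shifting $\theta_1^{t+1}$ from the denominator. For part (ii), summing the inequality in (i) from $t=0$ to $T$ gives
$$
\tfrac{\varepsilon L_f}{2}\sum_{t=0}^{T}\|\X^{t+1}-\X^t\|^2\le H_{\delta_0}(\X^0,\X^{-1})-H_{\delta_{T+1}}(\X^{T+1},\X^T).
$$
Since $H_{\delta_{T+1}}(\X^{T+1},\X^T)\ge F(\X^{T+1})\ge \inf F>-\infty$ by Assumption \ref{ass:bounded}, the partial sums are uniformly bounded, hence $\sum_{t=0}^{\infty}\|\X^{t+1}-\X^t\|^2<+\infty$, and therefore $\|\X^{t+1}-\X^t\|\to 0$, which proves (ii).
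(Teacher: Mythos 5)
Your proposal is correct and follows essentially the same route as the paper's proof: the same three ingredients (optimality of $\X^{t+1}$ in the surrogate subproblem, the concavity inequalities of Lemma \ref{lem:gradient} with the hypergradient weights, and the descent lemma combined with convexity of $f(\X)+\frac{L_f}{2}\|\X\|^2$), the same substitution $\Delta_t=\X^t-\X^{t-1}$ with Cauchy--Schwarz/Young on the cross term, and the same use of Assumption \ref{ass:pra} to make the coefficient of $\|\Delta_{t+1}\|^2$ at most $-\varepsilon\mu^t/2\le-\varepsilon L_f/2$ and that of $\|\Delta_t\|^2$ nonpositive via $\theta_2^t\le 1/2$. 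Part (ii) by telescoping against the lower bound from Assumption \ref{ass:bounded} is likewise identical.
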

\begin{proof}
	(i) From \eqref{weight} and Lemma \ref{lem:gradient}, we have
	\begin{equation}\label{pro:weight}
		\begin{aligned}
			\rho\left(g_k^{t+1}\right) \leq \rho\left(g_k^{t}\right)+\alpha_k^{t}\left(g_k^{t+1}-g_k^{t}\right),\quad
			\rho\left(h_i^{k,t+1}\right) \leq \rho\left(h_i^{k,t}\right)+ \beta_i^{k,t}\left(h_i^{k,t+1}-h_i^{k,t}\right), 	
		\end{aligned}
	\end{equation}	
where 
	$$ \sigma_i^{k,t}=\sigma_i\left(\bar X^{(k,t)}\right),\, h_i^{k,t}=\rho\left(\sigma_i^{k,t}\right),\, g_k^{t}=\sum_{i=1}^{r}\rho\left(h_i^{k,t}\right) .$$
Since $ \X^{t+1} $ is a global solution to problem \eqref{X}, we get
	\begin{equation}\label{pro:opt}
		\begin{aligned}
			&\lambda\sum_{k=1}^{n_3}\sum_{i=1}^{r}\alpha_k^t\beta_i^{k,t}\rho\left(\sigma_i^{k,t+1}\right)+\left\langle\nabla f\left(\Z^t\right), \X^{t+1}-\Y^t\right\rangle+\frac{\mu^t}{2}\left\|\X^{t+1}-\Y^t\right\|^{2}	\\\le & \lambda\sum_{k=1}^{n_3}\sum_{i=1}^{r}\alpha_k^t\beta_i^{k,t}\rho\left(\sigma_i^{k,t}\right) +\left\langle\nabla f\left(\Z^t\right), \X^{t}-\Y^t\right\rangle+\frac{\mu^t}{2}\left\|\X^{t}-\Y^t\right\|^{2}.
		\end{aligned}
	\end{equation}
From Lemma \ref{lem:smooth}, we obtain
	\begin{align}\label{eq:smooth-jl}
	f(\X^{t+1}) \leq f(\Z^t)+\langle\X^{t+1}-\Z^t, \nabla f(\Z^t)\rangle+\frac{L_f}{2}\|\X^{t+1}-\Z^t\|^{2}.	
\end{align}
Moreover, by the convexity of $ f\left(\X\right)+\frac{L_f}{2}\left\|\X\right\|^2 $, it holds that
\begin{equation}\label{pro:con}
	f\left(\Z^t\right)+\left\langle \X^{t}-\Z^{t}, \nabla f\left(\Z^{t}\right)\right\rangle \leq f\left(\X^{t}\right)+\frac{L_f}{2}\left\|\X^{t}-\Z^{t}\right\|^2.
\end{equation}
	From \eqref{pro:weight}-\eqref{pro:con}, we have
\begin{equation}\label{pro:dec}
\begin{aligned}
	F\left(\X^{t+1}\right)-F\left(\X^{t}\right)&\le-\frac{\mu^t}{2}\left\|\X^{t+1}-\Y^t\right\|^{2}+\frac{\mu^t}{2}\left\|\X^{t}-\Y^t\right\|^{2}+\frac{L_f}{2}\|\X^{t+1}-\Z^t\|^{2}+\frac{L_f}{2}\left\|\X^{t}-\Z^{t}\right\|^2\\
		&= \mu^{t}\left\langle \Y^{t}-\X^{t}, \X^{t+1}-\X^{t}\right\rangle-\frac{\mu^t}{2}\left\|\X^{t+1}-\X^t\right\|^{2}+\frac{L_f}{2}\|\X^{t+1}-\Z^t\|^{2}+\frac{L_f}{2}\left\|\X^{t}-\Z^{t}\right\|^2.
\end{aligned}
\end{equation}	
Denote $\Delta_{t}:=\X^{t}-\X^{t-1}$, we have
\begin{equation}\label{del}
	\theta_1^t\Delta_{t}=\Y^{t}-\X^{t},\quad \theta_2^t\Delta_{t}=\Z^{t}-\X^{t},\quad \theta_2^t\Delta_{t}-\Delta_{t+1}=\Z^{t}-\X^{t+1},
\end{equation}
combining which with \eqref{pro:dec}, we obtain
\begin{equation}\label{pro:cau}
	\begin{aligned}
		F\left(\X^{t+1}\right)-F\left(\X^{t}\right)&\le\mu^{t}\theta_1^t\left\langle \Delta_{t}, \Delta_{t+1}\right\rangle-\frac{\mu^t}{2}\left\|\Delta_{t+1}\right\|^{2}+\frac{L_f}{2}\|\theta_2^t\Delta_{t}-\Delta_{t+1}\|^{2}+\frac{L_f\left(\theta_2^t\right)^2}{2}\left\|\Delta_{t}\right\|^2\\
		&\le \frac{L_f-\mu^{t}+\mu^{t}\theta_1^t-L_f\theta_2^t}{2}\left\|\Delta_{t+1}\right\|^2+\frac{2L_f\left(\theta_2^t\right)^2+\mu^{t}\theta_1^t-L_f\theta_2^t}{2}\left\|\Delta_{t}\right\|^2,
	\end{aligned}
\end{equation}
where the second inequality comes from $ \mu^{t}\theta_1^t-L_f\theta_2^t\ge 0 $ and Cauchy-Schwartz inequality.

Combining \eqref{pro:cau}, \eqref{H} and Assumption \ref{ass:pra}, one has
$$ H_{\delta_{t+1}}\left(\X^{t+1}, \X^{t}\right)-H_{\delta_{t}}\left(\X^{t}, \X^{t-1}\right) \le -\frac{\varepsilon \mu^t}{2}\left\|\X^{t+1}-\X^t\right\|^2\le -\frac{\varepsilon L_f}{2}\left\|\X^{t+1}-\X^t\right\|^2. $$
Thus $ H_{\delta_{t+1}}\left(\X^{t+1}, \X^{t}\right) $ is monotonically decreasing.
	
(ii) Summing all the inequalities in \eqref{pro:sdc} for $ t\ge 1 $, we get
	\begin{equation*}
		\frac{\varepsilon L_f}{2} \sum_{t=1}^{+\infty}\left\|\X^{t+1}-\X^{t}\right\|^{2} \le H_{\delta_{1}}\left(\X^{1}, \X^{0}\right)-F\left(\X^{+\infty}\right)<+\infty.
	\end{equation*}
	In particular, it implies that $\lim\limits_{t \rightarrow +\infty}\left(\X^{t}-\X^{t+1}\right)=0$.
\end{proof}

\textbf{Now, we prove Theorem \ref{thm:CC}.}

\begin{proof}
(i) It follows from Lemma \ref{thm:SDC}-(i) that
\begin{equation*}
	F\left(\X^{t+1}\right) \le	H_{\delta_{t+1}}\left(\X^{t+1}, \X^{t}\right) \le	H_{\delta_{1}}\left(\X^{1}, \X^{0}\right) < +\infty.
\end{equation*}
Then the boundedness of $ \left\lbrace\X^{t}\right\rbrace $ is obtained based on the Assumption \ref{ass:bounded}. According to the Bolzano-Weirstrass theorem \cite{Fri93}, $ \left\lbrace\X^{t}\right\rbrace $ must have at least one accumulation point.	
	
(ii) By the definition of $ H_{\delta_{t+1}}\left(\X^{t+1}, \X^{t}\right) $ in \eqref{H}, Assumption \ref{ass:bounded} and Theorem \ref{thm:SDC}-(i), we have that the sequence $ \left\lbrace H_{\delta_{t+1}}\left(\X^{t+1}, \X^{t}\right) \right\rbrace  $ is bounded and monotonically nonincreasing. It implies that $ \left\lbrace H_{\delta_{t+1}}\left(\X^{t+1}, \X^{t}\right) \right\rbrace  $ is convergent. Furthermore, since $\lim_{t \rightarrow +\infty}\left(\X^{t}-\X^{t+1}\right)=0$, the sequence $ \left\lbrace F\left(\X^{t}\right)\right\rbrace $ is convergent.

(iii) From the first-order optimality condition of \eqref{X}, we get
	\begin{equation}
	\mathcal{G}^{t_l+1}+\mu^{t_l}\left(\X^{t_l+1}-\Y^{t_l}\right)+\nabla f\left(\Z^{t_l}\right)=0,
\end{equation}
where
$$\mathcal{G}^{t_l+1} \in \partial\left(\lambda\sum_{k=1}^{n_3}\sum_{i=1}^{r}\alpha_k^{t_l}\beta_i^{k,t_l}\rho\left(\sigma_i^{k,t_l+1}\right)\right).$$ 
Thus
\begin{equation}\label{pro:gra}
\mathcal{G}^{t_l+1}+\mu^{t_l}\left(\Delta_{t_l+1}-\theta_1^{t_l}\Delta_{t_l}\right)+\nabla f\left(\X^{t_l}+\theta_2^{t_l}\Delta_{t_l}\right)=0.
\end{equation}
From the fact that $\lim_{t \rightarrow +\infty}\left(\X^{t}-\X^{t+1}\right)=0$ in Theorem \ref{thm:SDC}, we have $\lim_{l \rightarrow +\infty} \X^{t_{l}+1}=\X^\star$. Thus, $\sigma_i^{k,t_l+1} \rightarrow \sigma_i^{k,\star}$
hold as $ l \rightarrow +\infty $. From Assumption \ref{ass:rho}, we can conclude that $ \alpha_k^{t_l}\beta_i^{k,t_l}\rightarrow \alpha_k^{\star}\beta_i^{k,\star}  $. Passing to the limit in \eqref{pro:gra}, from the continuity of $ \nabla f $ and the closedness of $\partial \rho$, we obtain
$$ 0= \partial\left(\lambda\sum_{k=1}^{n_3}\sum_{i=1}^{r}\alpha_k^{\star}\beta_i^{k,\star}\rho\left(\sigma_i^{k,\star}\right)\right) +\nabla f\left(\X^\star\right)\in \partial F\left(\X^\star\right) .$$
Thus, $ \X^\star$ is a stationary point of $ F\left(\X\right) $. From Assumptions \ref{ass:rho} and \ref{ass:smooth}, we know that $ f $ and $ \rho $ are continuous, thus
	\begin{equation*}
	\begin{aligned}
		\lim_{l \rightarrow +\infty}F\left(\X^{t_{l}}\right)
		=\lim_{l \rightarrow +\infty}\left\lbrace\lambda\sum_{k=1}^{n_3}\rho\left(\sum_{i=1}^{r} \rho\left(\rho\left(\sigma_i^{k,t_l}\right)\right)\right)+f\left(\X^{t_l}\right)\right\rbrace
		=F\left(\X^\star\right).	
	\end{aligned}
\end{equation*}	
This completes the proof.
\end{proof}

\section{Proof of Theorem \ref{thm:whole}}
Before we prove Theorem \ref{thm:whole}, we first present four lemmas.

\begin{lemma}\cite{KM11}\label{lem:equ}
Suppose that $\mathcal{A} ,\, \mathcal{B} $ are two arbitrary tensors. Let $\mathcal{F}=\mathcal{A} * \mathcal{B}$, $ \Q $ is an orthogonal tensor. Then, the following properties hold.
\begin{itemize}
	\item[(i)] $\|\mathcal{A}\|^{2}=\frac{1}{n_{3}}\|\bar A\|^{2}$;
	\item[(ii)] $\mathcal{F}=\mathcal{A} * \mathcal{B}$ and $\bar F=\bar A \bar B$ are equivalent to each other;
	\item[(iii)] $ \left\| \A*\Q\right\|=\left\| \A\right\|.  $
\end{itemize}
\end{lemma}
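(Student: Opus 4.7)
The plan is to prove the three properties in order, since (iii) follows from (i) and (ii). The main tool throughout is the DFT relation $\bar{\mathcal{A}}(i,j,:)=F_{n_3}\mathcal{A}(i,j,:)$ together with the fact that $F_{n_3}$ is a scaled unitary matrix, namely $F_{n_3}^* F_{n_3}=n_3 I_{n_3}$.

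For part (i), I would write the squared Frobenius norm tube-by-tube. Fix indices $i,j$ and let $\mathbf{a}=\mathcal{A}(i,j,:)\in\mathbb{R}^{n_3}$. By Parseval applied to $F_{n_3}$, one has $\|F_{n_3}\mathbf{a}\|^2 = n_3\|\mathbf{a}\|^2$, so $\sum_k |\bar{\mathcal{A}}(i,j,k)|^2 = n_3\sum_k |\mathcal{A}(i,j,k)|^2$. Summing over $i,j$ gives $\|\bar{\mathcal{A}}\|^2 = n_3\|\mathcal{A}\|^2$. Then, by the block-diagonal construction $\bar A=\mathrm{bdiag}(\bar{\mathcal{A}})$ in \eqref{bdiag}, the nonzero entries of $\bar A$ are exactly the entries of the frontal slices $\bar A^{(k)}$, so $\|\bar A\|^2=\sum_k \|\bar A^{(k)}\|^2=\|\bar{\mathcal{A}}\|^2$. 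Combining these gives $\|\mathcal{A}\|^2=\tfrac{1}{n_3}\|\bar A\|^2$.

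For part (ii), I would use the standard block-diagonalization of a block circulant matrix by the DFT. Letting $\otimes$ denote the Kronecker product and $\tilde F_{n_3} = \tfrac{1}{\sqrt{n_3}}F_{n_3}$, one has the identity
\begin{equation*}
(\tilde F_{n_3}\otimes I_{n_1})\,\mathrm{bcirc}(\mathcal{A})\,(\tilde F_{n_3}^{*}\otimes I_{n_2}) \;=\; \bar A,
\end{equation*}
and analogously for $\mathcal{B}$ and $\mathcal{F}$. From the definition $\mathcal{F}=\mathcal{A}*\mathcal{B}=\mathrm{fold}(\mathrm{bcirc}(\mathcal{A})\cdot\mathrm{unfold}(\mathcal{B}))$ it follows that $\mathrm{bcirc}(\mathcal{F})=\mathrm{bcirc}(\mathcal{A})\,\mathrm{bcirc}(\mathcal{B})$ (both sides being block circulant with the same first block column). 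Conjugating this matrix identity by $\tilde F_{n_3}\otimes I$ on the left and $\tilde F_{n_3}^*\otimes I$ on the right converts it into $\bar F=\bar A\bar B$, and the conjugation is a bijection, giving the equivalence.

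For part (iii), I would combine the two previous parts. Since $\mathcal{Q}$ is orthogonal, $\mathcal{Q}^**\mathcal{Q}=\mathcal{I}$ implies by part (ii) that $\bar Q^* \bar Q = \bar I = I$, so each frontal block $\bar Q^{(k)}$ of $\bar Q$ is unitary. Now using parts (i) and (ii),
\begin{equation*}
\|\mathcal{A}*\mathcal{Q}\|^2 \;=\; \tfrac{1}{n_3}\|\overline{\mathcal{A}*\mathcal{Q}}\|^2 \;=\; \tfrac{1}{n_3}\|\bar A\bar Q\|^2 \;=\; \tfrac{1}{n_3}\sum_{k=1}^{n_3}\|\bar A^{(k)}\bar Q^{(k)}\|^2 \;=\; \tfrac{1}{n_3}\sum_{k=1}^{n_3}\|\bar A^{(k)}\|^2 \;=\; \|\mathcal{A}\|^2,
\end{equation*}
where the second-to-last equality uses unitary invariance of the matrix Frobenius norm.

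The only non-routine step is the block-diagonalization identity in part (ii); all else is Parseval plus unitary invariance. I expect that obstacle to be handled by citing the standard result that a block circulant matrix is block-diagonalized by the (block) DFT, which is the foundational identity underlying the whole t-SVD framework reviewed in Section II.
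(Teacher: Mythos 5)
Your proof is correct. The paper itself offers no argument for this lemma---it is simply quoted from [KM11] as a known fact---so there is no in-paper proof to compare against; your tube-wise Parseval argument for (i), the block-circulant diagonalization $(\tilde F_{n_3}\otimes I)\,\mathrm{bcirc}(\cdot)\,(\tilde F_{n_3}^{*}\otimes I)$ for (ii), and the slice-wise unitary invariance for (iii) are exactly the standard derivations underlying the t-SVD framework. The only step you use silently is that $\overline{\mathcal{Q}^{*}}=\bar Q^{*}$ (so that orthogonality of $\mathcal{Q}$ really does make each $\bar Q^{(k)}$ unitary); this follows from (ii) applied to $\mathcal{Q}^{*}*\mathcal{Q}=\mathcal{I}$ together with the definition of the tensor conjugate transpose, and is worth one explicit line, but it is not a gap in substance.
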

\begin{lemma}
Suppose that $\mathcal{A} \in \mathbb{R}^{n_{1} \times n_{2} \times n_{3}},\, \mathcal{B} \in\mathbb{R}^{n_{2} \times n_{4} \times n_{3}}$ are two arbitrary tensors. Then $ \left\|\A*\B\right\|\le \sqrt{n_3}\left\|\A\right\|\left\|\B\right\| $ and $ \left\|\A*\B\right\|\le \left\|\A\right\|\max_k\left\|\bar B^{(k)}\right\|_2 $.
\end{lemma}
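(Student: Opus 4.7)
The plan is to reduce both inequalities to block-diagonal matrix computations in the Fourier domain using the previous lemma. First I would invoke parts (i) and (ii) of the preceding Lemma \ref{lem:equ}: since $\A*\B$ corresponds to $\bar A \bar B$ and $\|\mathcal{C}\|^{2}=\frac{1}{n_{3}}\|\bar C\|^{2}$ for any tensor, we have
\begin{equation*}
\|\A*\B\|^{2}=\frac{1}{n_{3}}\|\bar A\bar B\|^{2}.
\end{equation*}
Because $\bar A$ and $\bar B$ are block diagonal (see \eqref{bdiag}), their product is block diagonal as well with $k$-th block $\bar A^{(k)}\bar B^{(k)}$, so
\begin{equation*}
\|\bar A\bar B\|^{2}=\sum_{k=1}^{n_{3}}\|\bar A^{(k)}\bar B^{(k)}\|_F^{2}.
\end{equation*}

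For the first bound I would apply the submultiplicativity of the Frobenius norm on each block, $\|\bar A^{(k)}\bar B^{(k)}\|_F\le\|\bar A^{(k)}\|_F\|\bar B^{(k)}\|_F$, followed by the elementary inequality $\sum_k a_k b_k\le\bigl(\sum_k a_k\bigr)\bigl(\sum_k b_k\bigr)$ applied to the nonnegative quantities $a_k=\|\bar A^{(k)}\|_F^{2}$ and $b_k=\|\bar B^{(k)}\|_F^{2}$. This yields
\begin{equation*}
\sum_{k=1}^{n_{3}}\|\bar A^{(k)}\bar B^{(k)}\|_F^{2}\le\|\bar A\|^{2}\|\bar B\|^{2}=n_{3}^{2}\|\A\|^{2}\|\B\|^{2},
\end{equation*}
and dividing by $n_{3}$ and taking square roots gives $\|\A*\B\|\le\sqrt{n_{3}}\|\A\|\|\B\|$.

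For the second bound I would instead use the mixed Frobenius--spectral inequality $\|XY\|_F\le\|X\|_F\|Y\|_2$ on each block, obtaining
\begin{equation*}
\sum_{k=1}^{n_{3}}\|\bar A^{(k)}\bar B^{(k)}\|_F^{2}\le\Bigl(\max_{k}\|\bar B^{(k)}\|_{2}^{2}\Bigr)\sum_{k=1}^{n_{3}}\|\bar A^{(k)}\|_F^{2}=\Bigl(\max_{k}\|\bar B^{(k)}\|_{2}^{2}\Bigr)\|\bar A\|^{2}=n_{3}\|\A\|^{2}\max_{k}\|\bar B^{(k)}\|_{2}^{2},
\end{equation*}
so that $\|\A*\B\|\le\|\A\|\max_{k}\|\bar B^{(k)}\|_{2}$. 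There is no serious obstacle here; the only point that needs a moment of care is recognizing that the block-diagonal structure of $\bar A$ and $\bar B$ lets $\|\bar A\bar B\|_F^{2}$ split as a sum over the frontal slices, after which both bounds follow from standard matrix norm inequalities and the Parseval-type identity of Lemma \ref{lem:equ}(i).
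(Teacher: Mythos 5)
Your proof is correct and follows essentially the same route as the paper: both pass to the Fourier domain via Lemma \ref{lem:equ}, identify $\|\A*\B\|$ with $\frac{1}{\sqrt{n_3}}\|\bar A\bar B\|$, and then apply the standard Frobenius submultiplicativity and mixed Frobenius--spectral inequalities. The only cosmetic difference is that you carry out these matrix-norm bounds slice by slice over the block-diagonal structure, while the paper applies them directly to the full block-diagonal matrices $\bar A$ and $\bar B$; the content is identical.
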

\begin{proof}
	From Lemma \ref{lem:equ}, we have
\begin{equation*}
\left\|\A*\B\right\|=\frac{1}{\sqrt{n_3}}\left\|\bar A\bar B\right\|\le\frac{1}{\sqrt{n_3}}\left\|\bar A\right\|\left\|\bar B\right\|=\sqrt{n_3}\left\|\A\right\|\left\|\B\right\|
\end{equation*}
and 
\begin{equation*}
	\left\|\A*\B\right\|=\frac{1}{\sqrt{n_3}}\left\|\bar A\bar B\right\|\le\frac{1}{\sqrt{n_3}}\left\|\bar A\right\|\left\|\bar B\right\|_2=\left\|\A\right\|\max_k\left\|\bar B^{(k)}\right\|_2.
\end{equation*}

\end{proof}

\begin{lemma}\label{lem:whole}
	Suppose that Assumption \ref{ass:rho}-\ref{ass:pra} hold and $\delta_{t} \equiv \delta$ for all $t \in \mathbb{N}$. Let $\left\{\X^{t}\right\}$ be the sequence generated by TNNR. Consider the function
	\begin{equation*}
		\begin{aligned}
			H: \mathbb{R}^{n_1\times n_2\times n_3} \times \mathbb{R}^{n_1\times n_2\times n_3} \rightarrow(-\infty,+\infty], \quad H(\X, \Y):=F(\X)+\delta\|\X-\Y\|^{2}.
		\end{aligned}
	\end{equation*}
	Then $\left\{H\left(\X^{t+1}, \X^{t}\right)\right\}$ satisfies the following assertions:
	\begin{itemize}
		\item [(H1)] $H\left(\X^{t+1}, \X^{t}\right)+\frac{\varepsilon L_f}{2}\left\|\Delta_{t+1}\right\|^{2} \leq H\left(\X^{t}, \X^{t-1}\right)$ for all $t \geq 1$;
		\item [(H2)] for all $t \in \mathbb{N}$, there exist $\omega^{t+1} \in \partial H\left(\X^{t+1}, \X^{t}\right)$ such that $$\omega^{t+1} \leq \left(L_f+\frac{\mu^0\pi+\left(1+\pi r \right)\pi^2 \tilde{L}L_\rho\sqrt{n_3r}}{\tau}+4\delta \right)\left(\left\|\Delta_{k+1}\right\|+\left\|\Delta_{k}\right\|\right);$$
		\item[(H3)] there exists a subsequence $\left\lbrace\X^{t_l}\right\rbrace_{l \in \mathbb{N}}$ such that $ \lim_{l\rightarrow \infty}\X^{t_{l}}=\X^\star $, and it further holds that $\lim _{l \rightarrow \infty} H\left(\X^{t+1}, \X^{t}\right)=H\left(\X^\star, \X^\star\right)$.
	\end{itemize}	
\end{lemma}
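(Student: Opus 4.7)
The plan is to verify the three items (H1)--(H3) in sequence, leaning on the machinery already established in the proofs of Theorem \ref{thm:CC} and Lemma \ref{thm:SDC}.

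For (H1), with $\delta_t\equiv\delta$ the auxiliary function $H_{\delta_{t+1}}$ from \eqref{H} collapses to the present $H$, so the claim is exactly the sufficient-decrease inequality \eqref{pro:sdc} proved in Lemma \ref{thm:SDC}(i).

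For (H2), I would construct $\omega^{t+1}=(\omega_1^{t+1},\omega_2^{t+1})\in\partial H(\X^{t+1},\X^t)$ block by block. Differentiating $\delta\|\X-\Y\|^2$ in its second argument gives $\omega_2^{t+1}:=-2\delta\Delta_{t+1}$ immediately. For the first block, the chain rule applied to the triple composition defining $F$ shows that, with $\alpha_k^{t+1},\beta_i^{k,t+1}$ denoting the weights recomputed from $\X^{t+1}$ through \eqref{weight}, any element
\[
\mathcal{G}^{t+1,\star}\in\partial\Bigl(\lambda\sum_{k=1}^{n_3}\sum_{i=1}^{r}\alpha_k^{t+1}\beta_i^{k,t+1}\rho(\sigma_i^{k})\Bigr)\Bigl|_{\X=\X^{t+1}}
\]
also belongs to $\partial F(\X^{t+1})-\nabla f(\X^{t+1})$; set $\omega_1^{t+1}:=\mathcal{G}^{t+1,\star}+\nabla f(\X^{t+1})+2\delta\Delta_{t+1}$. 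Using the first-order optimality condition \eqref{pro:gra} of the subproblem (which involves the \emph{old} weights $\alpha_k^t,\beta_i^{k,t}$) together with the identities in \eqref{del}, I would rewrite
\[
\omega_1^{t+1}=(\mathcal{G}^{t+1,\star}-\mathcal{G}^{t+1})+\bigl[\nabla f(\X^{t+1})-\nabla f(\Z^t)\bigr]-\mu^t(\Delta_{t+1}-\theta_1^t\Delta_t)+2\delta\Delta_{t+1}.
\]
The Lipschitz-gradient term is bounded by $L_f(\|\Delta_{t+1}\|+\|\Delta_t\|)$ via Assumption \ref{ass:smooth} and $\X^{t+1}-\Z^t=\Delta_{t+1}-\theta_2^t\Delta_t$; the stepsize term by $\mu^0(\|\Delta_{t+1}\|+\|\Delta_t\|)$ via Assumption \ref{ass:pra} with $\theta_1^t<1$; and $\omega_2^{t+1}$ together with the residual $2\delta\Delta_{t+1}$ contribute a total of $4\delta\|\Delta_{t+1}\|$.

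The heart of (H2), and the main obstacle, is bounding $\|\mathcal{G}^{t+1,\star}-\mathcal{G}^{t+1}\|$. Both subgradients admit a t-SVD representation at the \emph{same} factorization of $\X^{t+1}$ and differ only in their diagonal tubal entries $\lambda\alpha_k\beta_i^{k}\rho'(\sigma_i^{k,t+1})$, so by Lemma \ref{lem:equ}(i) and (iii) the difference is controlled by
\[
\lambda\sqrt{n_3r}\,\max_{k,i}\bigl|\alpha_k^{t+1}\beta_i^{k,t+1}-\alpha_k^t\beta_i^{k,t}\bigr|\cdot\max_{k,i}\rho'(\sigma_i^{k,t+1}).
\]
I would telescope $\alpha_k^{t+1}\beta_i^{k,t+1}-\alpha_k^t\beta_i^{k,t}=\alpha_k^{t+1}(\beta_i^{k,t+1}-\beta_i^{k,t})+(\alpha_k^{t+1}-\alpha_k^t)\beta_i^{k,t}$, apply the $L_g$-Lipschitz property of $\rho'$ from Assumption \ref{ass:rho} once at the inner $\rho'(\rho(\cdot))$ and once at the outer $\rho'(\sum\cdot)$, and transport the singular-value perturbations through Weyl's inequality combined with Lemma \ref{lem:equ}(i), namely $|\sigma_i^{k,t+1}-\sigma_i^{k,t}|\le\|\X^{t+1}-\X^t\|$, to obtain a bound proportional to $\|\Delta_{t+1}\|$. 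The constants $\pi$ and $\tilde L$ in the stated bound absorb uniform upper bounds of singular values and of $\rho'$ along the trajectory, both available because Theorem \ref{thm:CC}(i) has already established that $\{\X^t\}$ is bounded, while $\tau$ absorbs a uniform positive lower bound on the scaling denominator. Summing the four contributions yields the claimed estimate on $\|\omega^{t+1}\|$. This weight-propagation bound is delicate because the subproblem uses weights computed at $\X^t$ while the target subgradient lives at $\X^{t+1}$, and the triple nesting forces two cascaded applications of the Lipschitz hypothesis.

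For (H3), Theorem \ref{thm:CC}(i) supplies a subsequence $\X^{t_l}\to\X^\star$, and Lemma \ref{thm:SDC}(ii) gives $\Delta_{t_l+1}\to 0$, hence $\X^{t_l+1}\to\X^\star$ as well. Combining Theorem \ref{thm:CC}(iii), which yields $F(\X^{t_l+1})\to F(\X^\star)$, with $\|\X^{t_l+1}-\X^{t_l}\|^2\to 0$ shows that
\[
H(\X^{t_l+1},\X^{t_l})=F(\X^{t_l+1})+\delta\|\X^{t_l+1}-\X^{t_l}\|^2\longrightarrow F(\X^\star)=H(\X^\star,\X^\star),
\]
completing the verification of all three KL-type conditions.
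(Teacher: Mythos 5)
Your proposal is correct, and for (H1) and (H3) it coincides with the paper's argument ((H1) is read off from \eqref{pro:sdc} with $\delta_t\equiv\delta$; (H3) combines Theorem \ref{thm:CC}(i),(iii) with $\Delta_{t+1}\to 0$). For (H2) you take a genuinely different, and arguably more transparent, algebraic route. The paper keeps the optimality condition \eqref{bound-1} intact and \emph{conjugates} it by the diagonal ratio tensor, writing the new-weight subgradient as $-\U^{t+1}*\D_{t+1}*\D_t^{-1}*(\U^{t+1})^**\bigl(\mu^t(\X^{t+1}-\Y^t)+\nabla f(\Z^t)\bigr)$; the weight drift then enters through $\|\D_{t+1}-\D_t\|$ multiplied by the gradient bound $\tilde L$ and the lower bound $\tau$ on the weights, which is exactly how the constant $\frac{(1+\pi r)\pi^2\tilde L L_\rho\sqrt{n_3 r}}{\tau}$ arises. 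You instead subtract the two weighted subgradients directly, exploiting that they share the t-SVD factors $\U^{t+1},\V^{t+1}$ and differ only in the diagonal tubes $(\alpha_k\beta_i^k)\rho'(\sigma_i^{k,t+1})$; the drift term is then controlled by $\lambda$, the uniform bound $\pi$ on $\rho'$, and the same cascaded Lipschitz/singular-value-perturbation estimate the paper uses in \eqref{bound-6}. Both decompositions rest on the same three ingredients (boundedness of the iterates from Theorem \ref{thm:CC}(i), the $L_g$-Lipschitz property of $\rho'$ applied twice through the nesting, and the Mirsky--Weyl inequality via Lemma \ref{lem:equ}), so your bound is again of the form $C(\|\Delta_{t+1}\|+\|\Delta_t\|)$; the explicit constant $C$ you obtain differs from the one printed in the lemma (yours carries $\lambda\pi$ where the paper's carries $\tilde L/\tau$), but this is immaterial for the intended application of the relative-error condition in the KL framework of \cite[Theorem 2]{WL19}. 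The one point worth making explicit in a write-up is the justification that the chain-rule element $\mathcal{G}^{t+1,\star}+\nabla f(\X^{t+1})$ genuinely lies in $\partial F(\X^{t+1})$ --- the paper asserts this in \eqref{bound-2} with the same lack of detail, so you are no worse off.
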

\begin{proof}
(1) The assertion in (H1) can be obtained directly by Theorem \ref{thm:SDC}-(i).

(2) Using the first-order optimality condition of \eqref{X}, we have
\begin{equation*}
	\lambda\partial\left\|\X^{t+1}\right\|_{\rho_{\alpha^t}^{\beta^t}}+\mu^{t}\left(\X^{t+1}-\Y^{t}\right)+\nabla f\left(\Z^{t}\right)=0.
\end{equation*}
Thus,
\begin{equation}\label{bound-1}
	\lambda\U^{t+1}*\W_{t+1}*\left( \V^{t+1}\right)^*+\mu^{t}\left(\X^{t+1}-\Y^{t}\right)+\nabla f\left(\Z^{t}\right)=0,
\end{equation}
where $\X^{t+1}=\U^{t+1}*{\mathcal S^{t+1}}*\left(\V^{t+1}\right)^*$ be the t-SVD of $\X^{t+1}$,
\begin{equation}
	\bar W_{t+1}^{(k)}=\mathscr{D}^i_r\left(\alpha_k^t\beta_i^{k,t}\rho'\left(\sigma_i^{k,t+1}\right)\right),\,\alpha_k^{t} = \rho'\left(\sum_{i=1}^{r} \rho\left(\rho\left(\sigma_i^{k,t}\right)\right)\right),\,
	\beta_i^{k,t}= \rho'\left(\rho\left(\sigma_i^{k,t}\right)\right).
\end{equation}
From the definition of $ H\left(\X, \Y\right) $, we get $ \nabla_{\Y} H\left(\X^{t+1}, \Y^{t+1}\right)=-2 \delta(\X^{t+1}-\Y^{t+1}) $ and
\begin{equation*}
	\lambda\partial\sum_{k=1}^{n_3}\rho\left(\sum_{i=1}^{r} \rho\left(\rho\left(\sigma_i^{k,t+1}\right)\right)\right)+\nabla f(\X^{t+1})+2 \delta(\X^{t+1}-\Y^{t+1}) \in \partial_{\X} H\left(\X^{t+1}, \Y^{t+1}\right).
\end{equation*}
Thus, 
\begin{equation}\label{bound-2}
	\lambda\U^{t+1}*\M_{t+1}*\left( \V^{t+1}\right)^*+\nabla f(\X^{t+1})+2 \delta(\X^{t+1}-\Y^{t+1}) \in \partial_{\X} H\left(\X^{t+1}, \Y^{t+1}\right),
\end{equation}
where $ \bar M_{t+1}^{(k)}=\mathscr{D}^i_r\left(\alpha_k^{t+1}\beta_i^{k,t+1}\rho'\left(\sigma_i^{k,t+1}\right)\right) $. Let $ \D_t $ is a tensor with $ \bar D_{t}^{(k)}=\mathscr{D}^i_r\left(\alpha_k^t\beta_i^{k,t}\right) $. From Assumption \ref{ass:rho}, we get $ \D_t $ is invertible. From \eqref{bound-1} and \eqref{bound-2}, we obtain
\begin{equation*}
\begin{aligned}
	\omega^{t+1}_\X:&=-\U^{t+1}*\D_{t+1}*\D_{t}^{-1}*\left(\U^{t+1}\right)^**\left(\mu^{t}\left(\X^{t+1}-\Y^{t}\right)+\nabla f\left(\Z^{t}\right)\right)+\nabla f(\X^{t+1})+2 \delta(\X^{t+1}-\Y^{t+1})\\ &\in \partial_{\X} H\left(\X^{t+1}, \Y^{t+1}\right). 
\end{aligned}
\end{equation*}
Let $ \omega_\Y^{t+1}:=-2 \delta(\X^{t+1}-\X^{t}) $, we get $\omega^{t+1}:=\left(\omega_\X^{t+1}, \omega_\Y^{t+1}\right) \in \partial H\left(\X^{t+1}, \X^{t}\right)$.

It follows from Theorem \ref{thm:CC}-(i) that $\left\{\X^{t}\right\}$ is bounded. Then the sequence $\left\{\Z^{t}\right\}$ is also bounded by \eqref{X}. Combining that $\nabla f$ is continuous, there exists $\tilde{L}>0$ such that
$$
\left\|\nabla f\left(\Z^{t}\right)\right\| \leq \tilde{L} .
$$
Considering that $\rho^{\prime}$ is positive and continuous, for any $t$, $ k\in[m] $ and $i \in [r]$, and $\left\{\sigma_i^{k,t}\right\}$ is bounded for the boundness of $\left\{\X^{t}\right\}$. Therefore, there exist $\tau,\, \pi>0$ such that
$$
\tau \leq \alpha_k^t\beta_i^{k,t} \leq \pi,\quad \alpha_k^t \leq \pi,\quad \beta_i^{k,t} \leq \pi,\quad
\rho'\left(\sigma_i^{k,t}\right) \leq \pi.
$$
Hence, we have
\begin{equation}\label{bound-3}
	\max_k\left\|\left(\bar D_{t}^{(k)}\right)^{-1}\right\|_2 \leq \max _{i,k} \frac{1}{\alpha_k^t\beta_i^{k,t}} \leq \frac{1}{\tau}
\end{equation}
and
\begin{equation}\label{bound-4}
	\max_k\left\|\bar D_{t+1}^{(k)}\left(\bar D_{t}^{(k)}\right)^{-1}\right\|_2 \leq \max _{i,k} \frac{\alpha_k^{t+1}\beta_i^{k,t+1}}{\alpha_k^t\beta_i^{k,t}} \leq \frac{\pi}{\tau}.
\end{equation}
Combining \eqref{bound-3} and \eqref{bound-4}, we get
\begin{equation}\label{bound-5}
	\begin{aligned}
		\left\|\omega^{t+1}\right\| \le& \left\|\omega_\X^{t+1}\right\|+ \left\|\omega_\Y^{t+1}\right\|\\
		\le& \left\|\nabla f(\X^{t+1})-\U^{t+1}*\D_{t+1}*\D_{t}^{-1}*\left(\U^{t+1}\right)^**\nabla f\left(\Z^{t}\right)\right\|+\mu^t\left\|\X^{t+1}-\Y^{t}\right\|\max_k\left\|\bar\D_{t+1}^{(k)}\left(\bar\D_{t}^{(k)}\right)^{-1}\right\|_2		
		+4\delta\left\|\Delta_{t+1}\right\|\\
		\le& \left\|\nabla f\left(\Z^{t}\right)-\U^{t+1}*\D_{t+1}*\D_{t}^{-1}*\left(\U^{t+1}\right)^**\nabla f\left(\Z^{t}\right)\right\|+\left\|\nabla f\left(\X^{t+1}\right)-\nabla f\left(\Z^{t}\right)\right\|+ \frac{\mu^t\pi}{\tau}\left\|\X^{t+1}-\Y^{t}\right\|	\\&+4\delta\left\|\Delta_{t+1}\right\|\\
		 \le&\left\|\U^{t+1}*\D_{t}*\D_{t}^{-1}*\left(\U^{t+1}\right)^**\nabla f\left(\Z^{t}\right)-\U^{t+1}*\D_{t+1}*\D_{t}^{-1}*\left(\U^{t+1}\right)^**\nabla f\left(\Z^{t}\right)\right\|
		 \\&+ L_f\left\|\Delta_{t+1}\right\|+L_f\theta_2^t\left\|\Delta_{t}\right\|+\frac{\mu^t\pi}{\tau}\left\|\Delta_{t+1}\right\|+\frac{\mu^t\theta_1^t\pi}{\tau}\left\|\Delta_{t}\right\|+4\delta\left\|\Delta_{t+1}\right\|\\
		 \le&  \frac{\sqrt{n_3}\tilde{L}}{\tau}\left\|\D_{t+1}-\D_{t} \right\|+\left(L_f\theta_2^t+\frac{\mu^t\theta_1^t\pi}{\tau}\right)\left\|\Delta_{t}\right\|+\left(L_f+\frac{\mu^t\pi}{\tau}+4\delta \right)\left\|\Delta_{t+1}\right\|,
	\end{aligned}
\end{equation}
where the last inequality follows from 
\begin{equation*}
\begin{aligned}
	&\left\|\U^{t+1}*\D_{t}*\D_{t}^{-1}*\left(\U^{t+1}\right)^**\nabla f\left(\Z^{t}\right)-\U^{t+1}*\D_{t+1}*\D_{t}^{-1}*\left(\U^{t+1}\right)^**\nabla f\left(\Z^{t}\right)\right\|\\
	\le &\sqrt{n_3}\left\|\U^{t+1}*\D_{t}*\D_{t}^{-1}*\left(\U^{t+1}\right)^*-\U^{t+1}*\D_{t+1}*\D_{t}^{-1}*\left(\U^{t+1}\right)^*\right\|\left\|\nabla f\left(\Z^{t}\right)\right\|\\
	\le &\sqrt{n_3}\tilde{L}\left\|\D_{t}*\D_{t}^{-1}-\D_{t+1}*\D_{t}^{-1}\right\|\\
	\le &  \sqrt{n_3}\tilde{L}\left\|\D_{t}-\D_{t+1}\right\|\max_k\left\|\left(\bar D_{t}^{(k)}\right)^{-1}\right\|_2\\
	\le &\frac{\sqrt{n_3}\tilde{L}}{\tau}\left\|\D_{t+1}-\D_{t} \right\|.
\end{aligned}
\end{equation*}
It follows from Assumption \ref{ass:rho} that
\begin{equation}
\begin{aligned}
	&\left| \alpha_k^{t+1}\beta_i^{k,t+1}-\alpha_k^t\beta_i^{k,t}\right| \\
	\le&\left| \alpha_k^{t+1}\beta_i^{k,t+1}-\alpha_k^{t+1}\beta_i^{k,t}\right|+\left| \alpha_k^{t+1}\beta_i^{k,t}-\alpha_k^t\beta_i^{k,t}\right|\\
	\le&\pi\left| \beta_i^{k,t+1}-\beta_i^{k,t}\right|+\pi\left| \alpha_k^{t+1}-\alpha_k^t\right|\\
	\le&\pi L_\rho\left| \rho\left(\sigma_i^{k,t+1}\right)-\rho\left(\sigma_i^{k,t}\right)\right|+\pi L_\rho\left| \sum_{i=1}^{r} \rho\left(\rho\left(\sigma_i^{k,t+1}\right)\right)-\sum_{i=1}^{r} \rho\left(\rho\left(\sigma_i^{k,t}\right)\right)\right|\\
	\le&\pi^2 L_\rho\left| \sigma_i^{k,t+1}-\sigma_i^{k,t}\right|+\pi^2 L_\rho\sum_{i=1}^{r}\left|  \rho\left(\sigma_i^{k,t+1}\right)- \rho\left(\sigma_i^{k,t}\right)\right|\\
	\le & \pi^2 L_\rho\left| \sigma_i^{k,t+1}-\sigma_i^{k,t}\right|+\pi^3 L_\rho\sum_{i=1}^{r}\left|  \sigma_i^{k,t+1}- \sigma_i^{k,t}\right|\\
	\le & \pi^2 L_\rho\left\| \bar X^{(k,t+1)}-\bar X^{(k,t)}\right\|+\pi^3 L_\rho\sum_{i=1}^{r}\left\| \bar X^{(k,t+1)}-\bar X^{(k,t)}\right\|\\
	=&\left(1+\pi r \right)\pi^2 L_\rho\left\| \bar X^{(k,t+1)}-\bar X^{(k,t)}\right\|,
\end{aligned}
\end{equation}
where the last inequality follows from \cite[Theorem 3.3.16]{Mer92}. Thus,
\begin{equation}\label{bound-6}
	\sqrt{n_3}\left\|\D_{t+1}-\D_{t} \right\|=\sum_{k=1}^{n_3}\left\|\bar D_{t+1}^{(k)}-\bar D_{t}^{(k)} \right\|\le\left(1+\pi r \right)\pi^2 L_\rho\sqrt{r}\sum_{k=1}^{n_3}\left\| \bar X^{(k,t+1)}-\bar X^{(k,t)}\right\|=\left(1+\pi r \right)\pi^2 L_\rho\sqrt{n_3r}\left\| \Delta_{t+1}\right\|.
\end{equation}
Combining \eqref{bound-5} and \eqref{bound-6}, we obtain
\begin{equation}\label{}
\begin{aligned}
	\left\|\omega^{t+1}\right\|&\le  \left(L_f\theta_2^t+\frac{\mu^t\theta_1^t\pi}{\tau}\right)\left\|\Delta_{t}\right\|+\left(L_f+\frac{\mu^t\pi+\left(1+\pi r \right)\pi^2 \tilde{L}L_\rho \sqrt{n_3r}}{\tau}+4\delta \right)\left\|\Delta_{t+1}\right\|\\
	&\le\left(\frac{L_f}{2}+\frac{\mu^0\pi}{2\tau}\right)\left\|\Delta_{t}\right\|+\left(L_f+\frac{\mu^0\pi+\left(1+\pi r \right)\pi^2 \tilde{L}L_\rho\sqrt{n_3r}}{\tau}+4\delta \right)\left\|\Delta_{t+1}\right\|\\
	&\le \left(L_f+\frac{\mu^0\pi+\left(1+\pi r \right)\pi^2 \tilde{L}L_\rho\sqrt{n_3r}}{\tau}+4\delta \right)\left(\left\|\Delta_{t}\right\|+\left\|\Delta_{t+1}\right\|\right). 
\end{aligned}
\end{equation}

(3) The proof is similar to Theorem \ref{thm:CC}-(ii) and hence we omit it here.

\end{proof}

\textbf{Now, we prove Theorem \ref{thm:whole}.}

\begin{proof}
From Lemma \ref{lem:whole}, we obtain the conclusion according to \cite[Theorem 2]{WL19}. The desired result is obtained.	
\end{proof}


\ifCLASSOPTIONcaptionsoff
  \newpage
\fi



%
\twocolumn
\bibliographystyle{IEEEtran}
\bibliography{TNNR}

\begin{thebibliography}{10}
\providecommand{\url}[1]{#1}
\csname url@samestyle\endcsname
\providecommand{\newblock}{\relax}
\providecommand{\bibinfo}[2]{#2}
\providecommand{\BIBentrySTDinterwordspacing}{\spaceskip=0pt\relax}
\providecommand{\BIBentryALTinterwordstretchfactor}{4}
\providecommand{\BIBentryALTinterwordspacing}{\spaceskip=\fontdimen2\font plus
\BIBentryALTinterwordstretchfactor\fontdimen3\font minus
  \fontdimen4\font\relax}
\providecommand{\BIBforeignlanguage}[2]{{%
\expandafter\ifx\csname l@#1\endcsname\relax
\typeout{** WARNING: IEEEtran.bst: No hyphenation pattern has been}%
\typeout{** loaded for the language `#1'. Using the pattern for}%
\typeout{** the default language instead.}%
\else
\language=\csname l@#1\endcsname
\fi
#2}}
\providecommand{\BIBdecl}{\relax}
\BIBdecl

\bibitem{LAAW20}
X.-Y. Liu, S.~Aeron, V.~Aggarwal, and X.~Wang, ``Low-tubal-rank tensor
  completion using alternating minimization,'' \emph{IEEE Transactions on
  Information Theory}, vol.~66, no.~3, pp. 1714--1737, Mar. 2020.

\bibitem{HDXY15}
H.~Zhou, D.~Zhang, K.~Xie, and Y.~Chen, ``Spatio-temporal tensor completion for
  imputing missing internet traffic data,'' in \emph{2015 {IEEE} 34th
  {International Performance Computing} and {Communications Conference}
  ({IPCCC})}.\hskip 1em plus 0.5em minus 0.4em\relax {Nanjing, China}: {IEEE},
  Dec. 2015, pp. 1--7.

\bibitem{XWW+18}
K.~Xie, L.~Wang, X.~Wang, G.~Xie, J.~Wen, G.~Zhang, J.~Cao, and D.~Zhang,
  ``Accurate recovery of internet traffic data: A sequential tensor completion
  approach,'' \emph{IEEE/ACM Transactions on Networking}, vol.~26, no.~2, pp.
  793--806, Apr. 2018.

\bibitem{YZH20}
Q.~Yu, X.~Zhang, and Z.-H. Huang, ``Multi-tubal rank of third order tensor and
  related low rank tensor completion problem,'' \emph{arXiv preprint
  arXiv:2012.05065}, 2020.

\bibitem{YZ22a}
Q.~Yu and X.~Zhang, ``Tensor factorization based method for low rank matrix
  completion and its application on tensor completion,'' \emph{arXiv preprint
  arXiv:2201.09197}, Jan. 2022.

\bibitem{ZLLZ18}
P.~Zhou, C.~Lu, Z.~Lin, and C.~Zhang, ``Tensor factorization for low-rank
  tensor completion,'' \emph{IEEE Transactions on Image Processing}, vol.~27,
  no.~3, pp. 1152--1163, Mar. 2018.

\bibitem{HTZ+17}
W.~Hu, D.~Tao, W.~Zhang, Y.~Xie, and Y.~Yang, ``The twist tensor nuclear norm
  for video completion,'' \emph{IEEE Transactions on Neural Networks and
  Learning Systems}, vol.~28, no.~12, pp. 2961--2973, Dec. 2017.

\bibitem{LPW19}
C.~Lu, X.~Peng, and Y.~Wei, ``Low-rank tensor completion with a new tensor
  nuclear norm induced by invertible linear transforms,'' in \emph{2019
  IEEE/CVF Conference on Computer Vision and Pattern Recognition (CVPR)}.\hskip
  1em plus 0.5em minus 0.4em\relax {Long Beach, CA, USA}: {IEEE}, Jun. 2019,
  pp. 5989--5997.

\bibitem{QBNZ21}
D.~Qiu, M.~Bai, M.~K. Ng, and X.~Zhang, ``Nonlocal robust tensor recovery with
  nonconvex regularization,'' \emph{Inverse Problems}, vol.~37, no.~3, p.
  035001, Mar. 2021.

\bibitem{JNZH20}
T.-X. Jiang, M.~K. Ng, X.-L. Zhao, and T.-Z. Huang, ``Framelet representation
  of tensor nuclear norm for third-order tensor completion,'' \emph{IEEE
  Transactions on Image Processing}, vol.~29, pp. 7233--7244, 2020.

\bibitem{ZHZ+20a}
Y.-B. Zheng, T.-Z. Huang, X.-L. Zhao, T.-X. Jiang, T.-Y. Ji, and T.-H. Ma,
  ``Tensor {N-tubal} rank and its convex relaxation for low-rank tensor
  recovery,'' \emph{Information Sciences}, vol. 532, pp. 170--189, Sep. 2020.

\bibitem{QBNZ21b}
D.~Qiu, M.~Bai, M.~K. Ng, and X.~Zhang, ``Robust low-rank tensor completion via
  transformed tensor nuclear norm with total variation regularization,''
  \emph{Neurocomputing}, vol. 435, pp. 197--215, May 2021.

\bibitem{ZBN20}
X.~Zhao, M.~Bai, and M.~K. Ng, ``Nonconvex optimization for robust tensor
  completion from grossly sparse observations,'' \emph{Journal of Scientific
  Computing}, vol.~85, no.~2, p.~46, Nov. 2020.

\bibitem{ZZXC16a}
H.~Zhou, D.~Zhang, K.~Xie, and Y.~Chen, ``Robust spatio-temporal tensor
  recovery for internet traffic data,'' in \emph{2016 IEEE
  Trustcom/BigDataSE/ISPA}.\hskip 1em plus 0.5em minus 0.4em\relax {Tianjin,
  China}: {IEEE}, Aug. 2016, pp. 1404--1411.

\bibitem{CC70}
J.~D. Carroll and J.-J. Chang, ``Analysis of individual differences in
  multidimensional scaling via an n-way generalization of ``{Eckart-Young}"
  decomposition,'' \emph{Psychometrika}, vol.~35, no.~3, pp. 283--319, sep
  1970.

\bibitem{Kie00}
H.~A.~L. Kiers, ``Towards a standardized notation and terminology in multiway
  analysis,'' \emph{Journal of Chemometrics}, vol.~14, no.~3, pp. 105--122,
  2000.

\bibitem{LBF12}
X.~Liu, S.~Bourennane, and C.~Fossati, ``Denoising of hyperspectral images
  using the {PARAFAC} model and statistical performance analysis,''
  \emph{{IEEE} Transactions on Geoscience and Remote Sensing}, vol.~50, no.~10,
  pp. 3717--3724, oct 2012.

\bibitem{HXZZ20}
X.~Huang, S.~Xu, C.~Zhang, and J.~Zhang, ``Robust {CP} tensor factorization
  with skew noise,'' \emph{{IEEE} Signal Processing Letters}, vol.~27, pp.
  785--789, 2020.

\bibitem{Tuc66}
L.~R. Tucker, ``Some mathematical notes on three-mode factor analysis,''
  \emph{Psychometrika}, vol.~31, no.~3, pp. 279--311, sep 1966.

\bibitem{ZEA14}
Z.~Zhang, G.~Ely, S.~Aeron, N.~Hao, and M.~Kilmer, ``Novel methods for
  multilinear data completion and de-noising based on tensor-{SVD},'' in
  \emph{2014 IEEE Conference on Computer Vision and Pattern Recognition}, Jan.
  2014, pp. 3842--3849.

\bibitem{KBHH13}
M.~E. Kilmer, K.~Braman, N.~Hao, and R.~C. Hoover, ``Third-order tensors as
  operators on matrices: A theoretical and computational framework with
  applications in imaging,'' \emph{SIAM Journal on Matrix Analysis and
  Applications}, vol.~34, no.~1, pp. 148--172, Jan. 2013.

\bibitem{SHKM14}
O.~Semerci, N.~Hao, M.~E. Kilmer, and E.~L. Miller, ``Tensor-based formulation
  and nuclear norm regularization for multienergy computed tomography,''
  \emph{{IEEE} Transactions on Image Processing}, vol.~23, no.~4, pp.
  1678--1693, apr 2014.

\bibitem{KM11}
M.~E. Kilmer and C.~D. Martin, ``Factorization strategies for third-order
  tensors,'' \emph{Linear Algebra and its Applications}, vol. 435, no.~3, pp.
  641--658, Aug. 2011.

\bibitem{JHZD20}
T.-X. Jiang, T.-Z. Huang, X.-L. Zhao, and L.-J. Deng, ``Multi-dimensional
  imaging data recovery via minimizing the partial sum of tubal nuclear norm,''
  \emph{Journal of Computational and Applied Mathematics}, vol. 372, p. 112680,
  Jul. 2020.

\bibitem{SYL18}
Y.~Sun, J.~Yang, Y.~Long, Z.~Shang, and W.~An, ``Infrared patch-tensor model
  with weighted tensor nuclear norm for small target detection in a single
  frame,'' \emph{{IEEE} Access}, vol.~6, pp. 76\,140--76\,152, 2018.

\bibitem{XQLJ18}
S.~Xue, W.~Qiu, F.~Liu, and X.~Jin, ``Low-rank tensor completion by truncated
  nuclear norm regularization,'' in \emph{2018 24th International Conference on
  Pattern Recognition (ICPR)}.\hskip 1em plus 0.5em minus 0.4em\relax {IEEE},
  aug 2018.

\bibitem{LZT20}
M.~Liu, X.~Zhang, and L.~Tang, ``Weighted t-schatten-$p$ norm minimization for
  real color image denoising,'' \emph{{IEEE} Access}, vol.~8, pp.
  150\,350--150\,359, 2020.

\bibitem{ZGQ19}
H.~Zhang, C.~Gong, J.~Qian, B.~Zhang, C.~Xu, and J.~Yang, ``Efficient recovery
  of low-rank matrix via double nonconvex nonsmooth rank minimization,''
  \emph{{IEEE} Transactions on Neural Networks and Learning Systems}, vol.~30,
  no.~10, pp. 2916--2925, oct 2019.

\bibitem{WL19}
Z.~Wu and M.~Li, ``General inertial proximal gradient method for a class of
  nonconvex nonsmooth optimization problems,'' \emph{Computational Optimization
  and Applications}, vol.~73, no.~1, pp. 129--158, feb 2019.

\bibitem{RR04}
O.~Rojo and H.~Rojo, ``Some results on symmetric circulant matrices and on
  symmetric centrosymmetric matrices,'' \emph{Linear Algebra Appl}, vol. 392,
  no.~15, pp. 211--233, Nov. 2004.

\bibitem{LFC20}
C.~Lu, J.~Feng, Y.~Chen, W.~Liu, Z.~Lin, and S.~Yan, ``Tensor robust principal
  component analysis with a new tensor nuclear norm,'' \emph{{IEEE}
  Transactions on Pattern Analysis and Machine Intelligence}, vol.~42, no.~4,
  pp. 925--938, Apr. 2020.

\bibitem{PLYT18}
X.~Peng, C.~Lu, Z.~Yi, and H.~Tang, ``Connections between nuclear-norm and
  frobenius-norm-based representations,'' \emph{{IEEE} Transactions on Neural
  Networks and Learning Systems}, vol.~29, no.~1, pp. 218--224, jan 2018.

\bibitem{YZ22}
Q.~Yu and X.~Zhang, ``A smoothing proximal gradient algorithm for matrix rank
  minimization problem,'' \emph{Computational Optimization and Applications},
  vol.~81, no.~2, pp. 519--538, jan 2022.

\bibitem{GYZC22}
H.~Guo, Q.~Yu, X.~Zhang, and L.~Cheng, ``Low rank matrix minimization with a
  truncated difference of nuclear norm and frobenius norm regularization,''
  \emph{Journal of Industrial and Management Optimization}, 2022.

\bibitem{ZQZ20}
H.~Zhang, J.~Qian, B.~Zhang, J.~Yang, C.~Gong, and Y.~Wei, ``Low-rank matrix
  recovery via modified schatten-$p$ norm minimization with convergence
  guarantees,'' \emph{{IEEE} Transactions on Image Processing}, vol.~29, pp.
  3132--3142, 2020.

\bibitem{SWKT19}
X.~Su, Y.~Wang, X.~Kang, and R.~Tao, ``Nonconvex truncated nuclear norm
  minimization based on adaptive bisection method,'' \emph{{IEEE} Transactions
  on Circuits and Systems for Video Technology}, vol.~29, no.~11, pp.
  3159--3172, nov 2019.

\bibitem{XGL16}
Y.~Xie, S.~Gu, Y.~Liu, W.~Zuo, W.~Zhang, and L.~Zhang, ``Weighted schatten
  $p$-norm minimization for image denoising and background subtraction,''
  \emph{{IEEE} Transactions on Image Processing}, vol.~25, no.~10, pp.
  4842--4857, oct 2016.

\bibitem{LTYL14}
C.~Lu, J.~Tang, S.~Yan, and Z.~Lin, ``Generalized nonconvex nonsmooth low-rank
  minimization,'' in \emph{2014 {IEEE} Conference on Computer Vision and
  Pattern Recognition}.\hskip 1em plus 0.5em minus 0.4em\relax {IEEE}, jun
  2014.

\bibitem{Bor01}
\BIBentryALTinterwordspacing
K.~Border, ``The supergradient of a concave function,'' 2001. [Online].
  Available: \url{https://healy.econ.ohio-state.edu/kcb/Notes/Supergrad.pdf}
\BIBentrySTDinterwordspacing

\bibitem{WBSS04}
Z.~Wang, A.~Bovik, H.~Sheikh, and E.~Simoncelli, ``Image quality assessment:
  From error visibility to structural similarity,'' \emph{{IEEE} Transactions
  on Image Processing}, vol.~13, no.~4, pp. 600--612, Apr. 2004.

\bibitem{ZZMZ11}
L.~Zhang, L.~Zhang, X.~Mou, and D.~Zhang, ``{FSIM}: A feature similarity index
  for image quality assessment,'' \emph{{IEEE} Transactions on Image
  Processing}, vol.~20, no.~8, pp. 2378--2386, Aug. 2011.

\bibitem{LMWY13}
J.~Liu, P.~Musialski, P.~Wonka, and J.~Ye, ``Tensor completion for estimating
  missing values in visual data,'' \emph{IEEE Transactions on Pattern Analysis
  and Machine Intelligence}, vol.~35, no.~1, pp. 208--220, Jan. 2013.

\bibitem{BT09}
A.~Beck and M.~Teboulle, ``A fast iterative shrinkage-thresholding algorithm
  for linear inverse problems,'' \emph{{SIAM} Journal on Imaging Sciences},
  vol.~2, no.~1, pp. 183--202, jan 2009.

\bibitem{Ber99}
D.~P. Bertsekas, \emph{\BIBforeignlanguage{eng}{Nonlinear programming}},
  2nd~ed.\hskip 1em plus 0.5em minus 0.4em\relax Belmont, Mass: Athena
  scientific, 1999.

\bibitem{Fri93}
J.~A. Fridy, ``Statistical limit points,'' \emph{Proceedings of the American
  Mathematical Society}, vol. 118, no.~4, pp. 1187--1192, 1993.

\bibitem{Mer92}
D.~I. Merino, \emph{Topics in Matrix Analysis}.\hskip 1em plus 0.5em minus
  0.4em\relax Baltimore: Johns Hopkins University Press, 1992.

\end{thebibliography}


\begin{thebibliography}{1}

\bibitem{IEEEhowto:kopka}
H.~Kopka and P.~W. Daly, \emph{A Guide to \LaTeX}, 3rd~ed.\hskip 1em plus
  0.5em minus 0.4em\relax Harlow, England: Addison-Wesley, 1999.

\end{thebibliography}

%




\end{document}